\documentclass[12pt, reqno]{amsart}

\usepackage{latexsym,amsfonts,amsmath,amssymb,amsthm,url,graphics,psfrag,amscd,stmaryrd, mathrsfs}
\usepackage[english]{babel}
\usepackage[T1]{fontenc}

\usepackage{graphics}
\usepackage{graphicx}
\usepackage{subcaption}
\usepackage{color}

\usepackage{ifthen}
\newboolean{longversion}
\setboolean{longversion}{true}
\usepackage{bbm}

\paperheight=29.7cm
 \paperwidth=21cm
 \setlength\textwidth{16cm}
\hoffset=-1in
  \setlength\marginparsep{0.5cm}
 \setlength\marginparwidth{0.5cm}
  \setlength\marginparpush{0.5cm}
  \setlength\evensidemargin{2.5cm}
 \setlength\oddsidemargin{2.5cm}
  \setlength\topmargin{2.4cm}
  \setlength\headheight{0.5cm}
  \setlength\headsep{0.5cm}
  \voffset=-1in
\setlength\textheight{23.5cm}

\usepackage{hyperref}

\newcommand{\Var}{\mathrm{Var}}
\newcommand{\tmix}{t_{\mathrm{mix}}}
\newcommand{\tmixh}{\hat{t}_{\mathrm{mix}}}
\newcommand{\muhbb}{\hat{\mu}_n^{\beta,B}}
\newcommand{\mubb}{\mu^{\beta,B}_n}
\newcommand{\hvarphibb}{\hat{\varphi}_{\beta, B}}
\newcommand{\hvarphibo}{\hat{\varphi}_{\beta, 0}}
\newcommand{\varphibb}{\varphi_{\beta, B}}
\newcommand{\varphibo}{\varphi_{\beta, 0}}
\newcommand{\tvarphibo}{\tilde{\varphi}_{n}^{\beta,0}}

\newcommand{\w}{\mathrm{W}}
\newcommand{\R}{\mathbb{R}}
\newcommand{\T}{\mathbb{T}}

\newcommand{\E}{\mathbb{E}}
\newcommand{\Z}{\mathbb{Z}}

\newcommand{\pp}{\mathbb{P}}

\newcommand{\kC}{\mathcal{C}}

\newcommand{\kO}{\mathcal{O}}

\newcommand{\kF}{\mathcal{F}}

\newcommand{\kE}{\mathcal{E}}
\newcommand{\kH}{\mathcal{H}}

\newcommand{\en}{\mathbb{E} }

\newcommand{\saw}{\mathrm{saw}}

\newcommand{\lin}{\left[\kern-0.15em\left[}
\newcommand{\rin} {\right]\kern-0.15em\right]}
\newcommand{\linf}{[\kern-0.15em [}
\newcommand{\rinf} {]\kern-0.15em ]}
\newcommand{\ilin}{\left]\kern-0.15em\left]}
\newcommand{\irin} {\right[\kern-0.15em\right[}

\newtheorem{lem}{Lemma}[section]

\newtheorem{prop}[lem]{Proposition}
\newtheorem{theo}[lem]{Theorem}

\newtheorem {rem}[lem] {Remark}

\newtheorem*{ack}{Acknowledgments}

\usepackage{color}
\newcommand{\eqn}[1]{\begin{equation} #1 \end{equation}}

\newcommand{\colrev}[1]{\textcolor[rgb]{0,0,0}{#1}}
\newcommand{\vep}{\varepsilon}
\newcommand{\Prob}{{\mathbb P}}
\newcommand{\expec}{{\mathbb E}}
\newcommand{\op}{o_{\sss \Prob}}

\newcommand{\indic}[1]{\mathbbm{1}_{\{#1\}}}
\newcommand{\indicwo}[1]{\mathbbm{1}_{#1}}
\newcommand{\e}{{\mathrm e}}
\newcommand{\sss}{\scriptscriptstyle}


\numberwithin{equation}{section}

\title[Ising model on random regular graphs]
{\bf Glauber dynamics for Ising models on
random regular graphs: cut-off and metastability}
\author{Van Hao Can}
\address{VHC: Research Institute for Mathematical Sciences,
Kyoto University, Kyoto 606-8502, Japan 
 \&	Institute of Mathematics, Vietnam Academy of Science and Technology, 18 Hoang Quoc Viet, 10307 Hanoi, Vietnam}
\author{Remco van der Hofstad}
\address{RvdH: Eindhoven University of Technology, P.O. Box 513, 5600 MB Eindhoven, The Netherlands}
 \author{Takashi Kumagai}
\address{TK: Research Institute for Mathematical Sciences,
Kyoto University, Kyoto 606-8502, Japan} 

\begin{document}

\maketitle

\begin{abstract}
Consider random $d$-regular graphs, i.e., random graphs such that there are exactly $d$ edges from each vertex for some $d\ge 3$. We study both the configuration model version of this graph, which has occasional multi-edges and self-loops, as well as the simple version of it, which is a $d$-regular graph chosen uniformly at random from the collection of all $d$-regular graphs.

In this paper, we discuss mixing times of Glauber dynamics for the Ising model with an external magnetic field on a random $d$-regular graph, both in the quenched as well as the annealed settings. Let $\beta$ be the inverse temperature, $\beta_c$ be the critical temperature and $B$ be the external magnetic field. Concerning the annealed measure, we show that for $\beta > \beta_c$ there exists $\hat{B}_c(\beta)\in (0,\infty)$ such that the model is {\em metastable} (i.e., the mixing time is exponential in the graph size $n$) when $\beta> \beta_c$ and $0 \leq B < \hat{B}_c(\beta)$, whereas it exhibits the cut-off phenomenon at $c_\star n \log n$ with a window of order $n$ when  $\beta < \beta_c$ or $\beta > \beta_c$ and $B>\hat{B}_c(\beta)$. Interestingly, $\hat{B}_c(\beta)$ coincides with the critical external field of the Ising model on the $d$-ary tree (namely, above which the model has a unique Gibbs measure). Concerning the quenched measure, we show that there exists $B_c(\beta)$ with $B_c(\beta) \leq \hat{B}_c(\beta)$ such that for $\beta> \beta_c$, the mixing time is at least exponential along some subsequence $(n_k)_{k\geq 1}$ when $0 \leq B <  B_c(\beta)$, whereas it is less than or equal to $Cn\log n$ when $B>\hat{B}_c(\beta)$. The quenched results also hold for the model conditioned on simplicity, for the annealed results this is unclear.
\end{abstract}

\section{Introduction}

The Ising model is a paradigmatic model in statistical mechanics. It was invented by Ising and his PhD-supervisor Lenz to model magnetism, for which it was considered on regular lattices (see \cite{Niss05, Niss09} for the interesting history of the Ising model, as well as the standard books \cite{Bovi06, Elli85} and the references therein). With the view that the Ising model can also model cooperative behavior, its relevance 
in the area of  
network science has increased, and the literature on the Ising model on random graphs, invented to model complex networks, has exploded. See \cite{DorGolMen08} for the physics perspective on critical phenomena on random graphs. Initially, the focus was on establishing the thermodynamic limit of the quenched Ising model on random graphs \cite{DM, DemMon10b, DomGiaHof10} as well as on its critical behavior \cite{DomGiaHof12, GiaGibHofPri15}. In the past years, also the annealed Ising model has attracted considerable attention \cite{C, Can17b, CanGiaGibHof19, DomGiaGibHofPri16, GiaGibHofPri16}. As we explain in more detail below, the quenched and annealed settings for the Ising model describe different physical realities in the dynamics of the underlying graph and the Ising model on it. The local weak limit of the Ising model on locally tree-like random graphs was studied in \cite{BasDem17, MonMosSly12}.

\colrev{Recently, the {\em dynamical} properties of the Ising model have attracted attention, focusing on its metastable behavior \cite{BMP,D, DHJN,HJ} in two frameworks. In the first one (see \cite{D,DHJN}),  large random graphs were considered, on which an Ising model lives with a slightly positive field at very low temperature. In such settings, the all-plus state is the ground state, and thus the most likely state for the system to be in. However, due to the strong coupling, the all-minus state is a metastable state, and it takes the system a very long time to leave this state when started from it. For large network size, the main results in \cite{D, DHJN} give detailed estimates for the transition time to move from the metastable state to the stable state, as the temperature tends to zero, for graphs of fixed large size.  In the second framework, the  temperature of the system is fixed and we are interested in what happens as the system size tends to infinity  The concrete analyses of metastable hitting time  for the Glauber dynamics  on dense Erd\H{o}s-R\'eyni random graphs have been given in \cite{BMP, HJ}, by using a potential theoretic approach as discussed in detail in \cite{BH}.  }

 \colrev{In this paper, we follow the second framework. However, rather than treating it as a {\em metastable} system, we approach it as a Markov chain mixing-time problem. Our paper provides the first results dealing with {\em sparse random graphs} instead of dense ones.}  In general, it is believed that for supercritical temperature, mixing is fast (mixing time of order $n\log{n}$ where $n$ is the size of the graph) \cite{MS}, while for subcritical temperatures and small external fields, the mixing time is exponentially large in the graph size. We focus on both the quenched as well as the annealed Ising model on random regular graphs, where our results are the strongest in the annealed setting. In particular, our main results and innovations are as follows:

\begin{itemize}
\item[(a)] For subcritical temperatures, where the Ising model at zero external field has two Gibbs measures, we identify the critical value for the field in the annealed setting. More precisely, for large field, mixing is rapid (mixing time of order $n\log{n}$), while for small field, mixing is slow (mixing time exponential in the system size). The latter corresponds to the metastable setting. The proofs rely on a close relation between the annealed Ising model and birth-death chains, for which such results have been established in \cite{BBF,CC,DLP,DLP2,LLP}. 

\item[(b)] For the quenched Ising model, we prove similar properties, however, we are not able to identify the {\it exact} critical value of the external magnetic field, but instead resort to bounds on it. The results rely on proofs of mixing times for Glauber dynamics on general graphs such as proved in the literature, and most effort is in proving that the necessary conditions hold for degree-regular configuration models.

\item[(c)] While the mixing time is at least $\e^{\lambda n}$ in both models for some appropriate $\lambda>0$ in the slow-mixing regime, in the annealed setting, the constant $\lambda$ is {\em independent} of $\beta$, while in the quenched setting, the constant $\lambda$ is {\em linear} in $\beta$ for large $\beta$.
\end{itemize}

The remainder of this section is organized as follows. We start in Section \ref{sec-RRG} by defining the random regular graphs that we will be working on. In Section \ref{sec-Ising}, we define the Ising model on random graphs in its quenched and annealed settings. In Section \ref{sec-previous-Ising}, we recall some previous results on the Ising model proved by the first author \cite{C} in the annealed setting and by Dembo and Montanari \cite{DM} in the quenched setting. In Section \ref{sec-results}, we state our main results. We close in Section \ref{sec-disc} with discussion and open problems.

\subsection{Random regular graphs}  
\label{sec-RRG}
Let us start by defining the configuration model introduced by Bollob\'as \cite{Boll80b} in the degree-regular case, and by Molloy and Reed \cite{MolRee95} in the general degree case.  We consider a sequence $(G_n)_{n\geq 1}$ of such graphs. To define it, start for each $n$ with the vertex set $[n]=\{1, \ldots, n\}$. Construct the edge set as follows. Consider a sequence of degrees $(d_i)_{i \in[n]}$ and assume that $\ell_n =\sum_{i\in[n]} d_i$ is even. For each vertex $i\in[n]$, start with $d_i$ half-edges incident to $i$.  Denote the set of all the half-edges by $\kH$. Select $h_1\in \kH$ arbitrarily, and then choose a half-edge $h_2$ uniformly from $\kH \setminus \{h_1\}$, and pair $h_1$ and $h_2$ to form an edge. Next, select an arbitrarily half-edge $h_3\in \kH \setminus \{h_1, h_2\}$, and pair it to $h_4$ uniformly chosen from  $\kH \setminus \{h_1, h_2, h_3\}$. Continue this procedure until there are no more half-edges.  The resulting graph is called the {\em configuration model}, see \cite[Chapter 7]{Hofs17} for an extensive introduction. In particular, it is known that the configuration model conditioned on simplicity is a {\em uniform} random graph with the prescribed degrees $(d_i)_{i \in[n]}$ \cite[Proposition 7.7]{Hofs17}.

In this paper, we consider the {\it random $d$-regular graph}, that is $d_i =d$ for all $i\in[n]$, with $d\geq 3$ and $nd$ assumed to be even. We let $\pp$ and $\en$ denote the probability measure and expectation with respect to the random regular graph. We say that a sequence of events $(A_n)_{n\geq 1}$ occurs with high probability (which we abbreviate as whp) if $\pp(A_n)=1-o(1)$ as $n\rightarrow \infty$.

In the degree-regular setting, it is also known that the probability that the configuration model is simple converges to a positive value \cite[Theorem 7.12]{Hofs17} (see also \cite{AngHofHol16, Boll80b, Jans06b, Jans13a}), which implies that any result that holds whp for the configuration model, also holds whp for the random regular graph. This implies that all the results in the {\em quenched} setting also hold for the random regular graph. In the annealed setting, this is less obvious, as we are taking expectations with respect to exponential functionals in the Ising model, as we now explain in more detail.

\subsection{Ising model and Glauber dynamics} 
\label{sec-Ising}
In this section, we define the quenched and annealed Ising models, as well as Glauber dynamics for it.

\subsubsection{Ising model}
Let $\Omega_n=\{-1,1\}^n$ be the space of spin configurations. For any spin vector $\sigma=(\sigma_1, \ldots, \sigma_n) \in \Omega_n$, the Hamiltonian is given by  
	\[
	H_n(\sigma)=H_n^{\beta, B}(\sigma)= -\beta  \sum_{i,j\in[n]\colon  i \leq j} k_{i,j} \sigma_i \sigma_j - B \sum_{i\in[n]} \sigma_i,
	\]
where $k_{i,j}$ is the number of edges between vertices $i$ and $j$, $\beta \geq 0$ is the inverse temperature and $B \in \R$ is the uniform external magnetic field.  

The {\it quenched} measure is defined  by Boltzmann law, defined, for any $\sigma \in \Omega_n$, as
	\[
	\mu_n^{\beta, B}(\sigma)= \frac{\exp (-H_n(\sigma))}{Z_n^{\beta, B} },
	\] 
where $Z_n^{\beta,B}$ is the partition function given by
	\[
	Z_n^{\beta,B}= \sum\limits_{\sigma \in \Omega_n} \exp( -H_n^{\beta,B}(\sigma)).
	\]
Similarly, the {\it annealed } measure is defined, for any $\sigma \in \Omega_n$, as 
	\[
	\muhbb (\sigma)= \frac{\E[\exp (-H_n(\sigma))]}{\E[Z_n^{\beta, B} ]},
	\] 
where $\E$ is the expectation \colrev{with respect to} the random graph under consideration. These two measures concern different physical realities (see  \cite{CanGiaGibHof19}). While  the random graph in the quenched measure is fixed, or can be thought of as varying very slowly compared to the Ising Glauber dynamics,  in the annealed law, the Glauber dynamics only observes an {\em average} graph \colrev{instant}, which, by the ergodic theorem, can be thought of as an expectation with respect to the graph randomness. As discussed before, we will work on the $d$-regular configuration model, but in our discussion we will also discuss more general degree settings. Throughout the paper, we consider the case $B\geq 0$; the other case can be treated identically by symmetry.

\subsubsection{Glauber dynamics} 
Let us first recall the Glauber dynamics for a given reversible measure $\nu_n$ on $\Omega_n$.  Let $(\xi_t)_{t\geq 0}$ be a discrete Markov chain on $\Omega_n$ with transitions as follows. Assume that $\xi_t= \sigma$, let $I$ be a random index in $[n]$ chosen uniformly at random.  Then,  
	\begin{equation} \label{trsp}
	\xi_{t+1} =  
	\begin{cases}
	\sigma^{+I}  & \textrm{ with probability } \frac{\nu_n(\sigma^{+I})}{\nu_n(\sigma^{+I}) +\nu_n(\sigma^{-I})},\\
	\sigma^{-I}  & \textrm{ with probability } \frac{\nu_n(\sigma^{-I})}{\nu_n(\sigma^{+I}) +\nu_n(\sigma^{-I})}, 
	\end{cases}
	\end{equation}
where, for all $\sigma \in \Omega_n$ and $i\in[n]$, we define $\sigma^{+i}$ and $\sigma^{-i}$ to be the $i$-spin-flipped version of $\sigma$, i.e.,
	\[
	\sigma^{+i}_j = \sigma^{-i}_j=\sigma_j \mbox{ for }j\neq i,\quad  \mbox{and }\sigma^{+i}_i = -\sigma^{-i}_i = 1.
	\]
We denote by $(\xi_{t}^{\sigma})_{t\geq 0}$ the Glauber dynamics starting from the configuration $\sigma$. It is well known that $(\xi_t)_{t\geq 0}$ is a reversible Markov chain with stationary measure $\nu_n$. Hence, as $t \rightarrow \infty$, the distribution of $(\xi_t)_{t\geq 0}$ converges to $\nu_n$.  The distance to stationary of the Glauber dynamics is defined as
 	$$
	d_n(t)= \max \limits_{\sigma  \in \Omega_n} \|\pp(\xi^{\sigma}_t \in \cdot) - \nu_n(\cdot)\|_{\sss \rm TV},
	$$
where $\|\mu - \nu\|_{\sss \rm TV}$ is the total variation distance between the probability measures $\mu$ and $\nu$.  Then, the {\it mixing time}  is defined as  
	$$
	\tmix = \min \{ t\colon d_n(t) \leq \tfrac{1}{4}\}.
	$$ 
The value $\tfrac{1}{4}$ is arbitrary and can be replaced by any other value $\vep\in (0,1)$.

In this paper, we study the mixing time with respect to the quenched and annealed measures $\mubb$ and $\muhbb$ defined previously.  From now on, we let $\tmix$ be the quenched mixing time for $\mubb$, and $\tmixh$ the annealed mixing time for $\muhbb$, respectively. Notice that while $\tmix$ is random as it depends on the random graph, $\tmixh$ is non-random.

For $d\ge 3$, the Ising model on random regular graphs exhibits a phase transition at the critical value $\beta_c= \textrm{atanh}(1/(d-1))$, see e.g.\ \cite{DM}. (Note that this model does not exhibit a phase 
transition for $d=1,2$, and that is why we consider $d\ge 3$.) The mixing time has been studied in the high-temperature regime:
	\begin{theo}[Mixing times high-temperature Ising model \protect{\cite[Theorem 1]{MS}}]
	\label{lot}
	There exists a positive constant $C$, such that if $\beta < \beta_c$, then whp $\tmix \leq C n \log n$.
	\end{theo}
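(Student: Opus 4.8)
\emph{Proof proposal.} Since this is \cite[Theorem 1]{MS}, I outline only the strategy. The plan is to exploit the fact that the random $d$-regular graph is locally tree-like at scale $\Theta(\log n)$ in order to reduce the analysis of the single-site Glauber dynamics to the contractivity of the Ising belief-propagation recursion on the infinite $(d-1)$-ary tree, a contractivity that holds exactly in the regime $\beta<\beta_c=\mathrm{atanh}(1/(d-1))$.

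First I would record the tree estimate. On a rooted tree of maximal degree $d$, the Ising marginal at the root with a boundary condition imposed at depth $r$ is computed by the standard recursion on effective fields, where a vertex with children transmitting fields $h_1,\dots,h_k$ (here $k\le d-1$) transmits $\sum_{j}\mathrm{atanh}(\tanh\beta\,\tanh h_j)+B$ to its parent; this map has derivative at most $(d-1)\tanh\beta$ everywhere, uniformly in $B\ge 0$, and is therefore a strict contraction with rate $\gamma:=(d-1)\tanh\beta<1$ precisely when $\beta<\beta_c$. Iterating the recursion shows that the Ising model on any such tree has strong spatial mixing: the total-variation influence of the depth-$r$ boundary condition on the root marginal is at most $C\gamma^{r}$, with $C=C(d,\beta)$. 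Next I would use that whp the random $d$-regular graph $G_n$ has the property that every ball $B_v(r)$ with $r=r_n=\lceil A\log_{d-1}n\rceil$ (for a large constant $A$) contains at most one cycle and at most $n^{o(1)}$ vertices, while the set $\kB$ of vertices $v$ whose $r$-ball fails to be a tree satisfies $|\kB|=n^{o(1)}$; I would condition on this whp event and argue deterministically from here on.

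The core is the coupling argument. A naive path-coupling/Dobrushin computation on single sites fails here, since each vertex has $d$ neighbours of influence $\tanh\beta$ and $d\tanh\beta>1$ as $\beta\uparrow\beta_c$; the point of working at scale $r$ is that the relevant contraction factor becomes $\gamma^{r}=((d-1)\tanh\beta)^{r}\to 0$. Concretely, I would introduce the block dynamics that, for a uniformly chosen $v$, resamples $\sigma_{B_v(r)}$ from the conditional Gibbs measure given $\sigma_{\partial B_v(r)}$; this chain is reversible for $\mubb$, and by the strong spatial mixing of the previous step a monotone coupling of two such chains differing at a single site contracts a suitably weighted expected disagreement, so that, after handling the $n^{o(1)}$ vertices of $\kB$ separately, the per-step expected change is $\le 1-c/n$. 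This yields mixing in $O(n r)=O(n\log n)$ steps for the block chain, which one must then transfer to the single-site chain; this transfer, carried out in \cite{MS} by a direct coupling that tracks how disagreements percolate through the tree-like neighbourhoods and contract at rate $\gamma$ \emph{at every scale} rather than only at scale $r$, is the technical heart and the step I expect to be the main obstacle, since a crude Dirichlet-form comparison between the two chains would only give $n^{1+o(1)}$ rather than the sharp $n\log n$. It is also at this step that the threshold $\beta_c$ enters, inherited entirely from the tree contraction of Step~1.

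Finally, a conceptually cleaner alternative that I would keep in reserve is to prove, again from tree uniqueness together with local tree-likeness, that the Ising model on $G_n$ is $O(1)$-spectrally independent for $\beta<\beta_c$ — its pairwise influences being dominated by the contracting tree computations above — and then, using the bounded degree $d$, to invoke the spectral-independence machinery to conclude $\tmix\le Cn\log n$ for single-site Glauber directly, bypassing the explicit block-to-single-site comparison.
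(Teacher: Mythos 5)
You should first note that the paper contains no proof of Theorem \ref{lot} at all: it is quoted verbatim from \cite[Theorem 1]{MS}, and the surrounding text only remarks that the Mossel--Sly bound holds for \emph{every} graph of maximum degree $d$ whenever $\beta<\mathrm{atanh}(1/(d-1))$, which is why it covers the whole high-temperature regime here. So there is no internal argument to compare yours against; your proposal has to be judged as a reconstruction of the Mossel--Sly proof. Read that way, your driving estimate is the right one (the tree recursion on effective fields contracts at rate $(d-1)\tanh\beta<1$ exactly for $\beta<\beta_c$), but your architecture differs from the actual one in a way that matters: you condition on whp local tree-likeness of the random $d$-regular graph (one cycle per ball, a small bad set), whereas \cite{MS} prove a deterministic statement for arbitrary bounded-degree graphs by transporting the tree contraction to the graph through Weitz's tree of self-avoiding walks and then invoking their general criterion (volume + local mixing + spatial mixing $\Rightarrow$ $O(n\log n)$), i.e.\ \cite[Theorem 3]{MS}, which this paper reproduces as Theorem \ref{rmi} together with Lemmas \ref{vol}, \ref{lmi} and \ref{low}. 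The whp one-cycle-per-ball argument is where the present paper genuinely needs graph randomness, but only later, in the quenched low-temperature large-field case (Proposition \ref{smi} feeding Theorem \ref{mt}(ii)); for Theorem \ref{lot} it is unnecessary and would only yield a weaker, graph-dependent statement.

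More importantly, as a proof your proposal is incomplete precisely at the step you yourself flag as the ``technical heart'': the passage from the radius-$r$ block dynamics (or from strong spatial mixing) back to the single-site Glauber chain with only an $O(n\log n)$ cost. That transfer is exactly what \cite[Theorem 3]{MS} supplies, so in your write-up it is assumed rather than proved; a crude comparison of Dirichlet forms, as you note, does not give the sharp bound, and path coupling on single sites fails since $d\tanh\beta>1$ near $\beta_c$. The spectral-independence route you keep in reserve is indeed a valid modern alternative in the tree-uniqueness regime, but it too is an appeal to external machinery rather than an argument. In short, nothing in your outline is wrong, and the threshold enters for the right reason, but the gap your sketch leaves open is the very content that the citation to \cite{MS} is there to provide, so the proposal does not constitute an independent proof of the statement.
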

We notice that the results in \cite{MS} hold for the general class of finite graphs with degree bounded by $d$ for any $\beta<\textrm{atanh}(1/(d-1))$. For us, it is crucial that $\textrm{atanh}(1/(d-1))$ is the critical value for the Ising model on the configuration model, so that Theorem \ref{lot} holds throughout the high-temperature regime.

In this paper, apart from studying the mixing times of the Ising model, we also study its cut-off behavior. This is related to the speed at which $t \mapsto d_n(t)$ decreases. Formally, with $\tmix(\vep)=\min \{ t\colon d_n(t) \leq \vep\}$, we say that the cut-off phenomenon occurs at $a_n$ with a window of order $b_n\ll a_n$ when, for all $\vep\in (0,\tfrac{1}{2})$,
	\eqn{
	\tmix(\vep)/a_n \rightarrow 1,
	\qquad
	\text{while}
	\qquad
	\tmix(\vep)-\tmix(1-\vep)=\Theta(b_n).
	}

\subsection{Previous results about Ising models on the configuration model}
\label{sec-previous-Ising}
In this section, we state some important previous results about the annealed and quenched Ising model that we shall rely upon. We first define the {\it fixed-spin partition function}, which will play an important role in our results. For $\sigma \in \Omega_n$, we write
	\[
	\sigma_+= \{i\in[n]\colon \sigma_i=1\} \qquad \textrm{and} \qquad \sigma_-= \{i\in[n]\colon \sigma_i=-1\}.
	\] 
For any $t\in (0,1)$, we define 
	\begin{equation*}
	\varphi_{n}^{\beta,B}(t) = \frac{1}{n }\log Z_n^{\beta,B}(t), 
	\end{equation*} 
where 
	\begin{equation*}
 	Z_n^{\beta,B}(t)=\sum_{\sigma\colon |\sigma_+| =\lceil nt \rceil} \e^{-H_n(\sigma)},
	\end{equation*}
and 
	\begin{equation*}
	\hat{\varphi}_{n}^{\beta,B}(t) = \frac{1}{n }\log \E \big[ Z_n^{\beta,B}(t)\big]. 
	\end{equation*} 
It has been shown in the proof of \cite[Theorem 1.1(i)]{C} that 
	\begin{equation} \label{znphk}
	\sup_{0\leq k \leq n} \Big | \frac{1}{n} \log \sum_{\sigma\colon |\sigma_+|=k} \E \left( \e^{-H_n(\sigma)}\right) -  \hat{\varphi}_{\beta, B}\left(k/n\right) \Big| = \kO(1/n),
	\end{equation}
with 
	\begin{equation} \label{limhvarphibb}
 	\hat{\varphi}_{\beta, B}(t) =  \frac{\beta d}{2}  -B+ I(t) + 2Bt + d \int_0^{t\wedge 1-t} \log f_{\beta}(s)ds,
	\end{equation}
where  $I(0)=I(1)=0$ and, for $t \in (0,1)$
	\begin{equation*}
	I(t) = (t-1) \log (1-t) -t \log t,
	\end{equation*}
and 
	\begin{equation}\label{dnwofbeta}
	f_{\beta}(t) = \frac{\e^{-2\beta}(1-2t)+ \sqrt{1+(\e^{-4\beta}-1)(1-2t)^2}}{2(1-t)}.
	\end{equation}
Thus, for all $t\in [0,1]$,
	\begin{eqnarray*} 
	\lim\limits_{n \rightarrow \infty} \hat{\varphi}_n^{\beta, B}(t) =  \hat{\varphi}_{\beta, B}(t).
	\end{eqnarray*}
\colrev{Denote the quenched and annealed pressures by, respectively},
	\eqn{
	\varphi(\beta,B)=\lim\limits_{n \rightarrow \infty} \frac{1}{n} \log Z_n^{\beta,B},
	\qquad
	\hat{\varphi}(\beta, B) 
	=  \lim\limits_{n \rightarrow \infty} \frac{1}{n} \log \E(Z_n^{\beta,B}).
	}
It is shown in \cite[Theorem 1]{DMSS} and \cite[Theorem 1.1 \& Proposition 3.2]{C} that the annealed and quenched pressures are equal and have a variational expression as
	\begin{eqnarray} 
	\label{an=qu}
	\varphi(\beta,B)\quad \overset{\rm a.s.}{=} \quad \hat{\varphi}(\beta, B) 
	= \max_{t \in [0,1]} \hvarphibb(t).
	\end{eqnarray}
These results will be crucial to establish the `energy landscape' of Glauber dynamics for the Ising model. 

\subsection{Main results}  
\label{sec-results}
In this section, we state our main results. We start with the results for the annealed Ising model, followed by our results on the quenched Ising model.

\subsubsection{Main results for the annealed Ising model} 
For the annealed case with $\beta>\beta_c$, we can show (see Lemma \ref{ll} below) that there is a threshold $\hat{B}_c=\hat{B}_c(\beta)$, such that if $B>\hat{B}_c$, then the function $t\mapsto \hvarphibb(t)$ is unimodular, i.e., $t\mapsto \hvarphibb(t)$ has only one critical point which is the maximizer characterizing the annealed pressure in \eqref{an=qu}. On the other hand, if $0\leq B<\hat{B}_c$, then $t\mapsto \hvarphibb(t)$ has three critical points (one local maximizer, one global maximizer and one local minimizer), and thus the graph of $t\mapsto \hvarphibb(t)$ has a valley. This particular observation suggests that for $\beta>\beta_c$, there is a phase transition in the mixing time of the annealed Ising model when $B$ crosses the value $\hat{B}_c$. Indeed, we will show in Theorem \ref{amt} below that for $\beta>\beta_c$, the mixing time increases exponentially in the graph size when $B<\hat{B}_c$ but it is of order $n\log{n}$ when $\beta>\beta_c, B>\hat{B}_c$, or when $\beta\in[0,\beta_c)$. 

We observe that $\hat{\varphi}_n^{\beta,B}(t) = \hat{\varphi}_n^{\beta,0}(t) + B(2t-1)$ and thus $\hvarphibb(t) = \hvarphibo(t) + B(2t-1)$. Therefore, the unimodularity of $\hvarphibb$ is strongly related  to the reflection point of $t\mapsto \hvarphibo(t)$. Indeed, we will show in Lemma \ref{ll} below that 
	\begin{equation} 
	\label{def-hbc}
	\hat{B}_c=\hat{B}_c(\beta):=-\frac{1}{2}\hvarphibo'(t_u) = \sup_{t \in (0,\tfrac{1}{2})}\big( -\frac{1}{2}\hvarphibo'(t) \big) = \sup_{0<s<t<\tfrac{1}{2}} \frac{\hvarphibo(s)-\hvarphibo(t)}{2(t-s)} ,
	\end{equation} 
where $t_u$ is the reflection point of $\hvarphibo$ determined as in \eqref{eotu}. With this notation in hand, we now state our main result for the Glauber dynamics on the annealed Ising model on the $d$-regular configuration model:

\begin{theo}[Annealed Glauber dynamics] 
\label{amt}  
Consider the annealed Glauber dynamics on the $d$-regular configuration model. 
    \begin{itemize}
	\item [(i)] For $\beta> \beta_c$ and $0 \leq B < \hat{B}_c$,  there exist positive constants $C$ and $\lambda$ such that 
	$$C^{-1}\exp(\lambda n)  \leq \tmixh \leq Cn^4\exp(\lambda n).$$ 
	Moreover, 
		\eqn{
		\label{bounded-valley}
		\sup \{\lambda\colon \beta > \beta_c, 0\leq B <\hat{B}_c \}<\infty.
		} 
		
		\item[(ii)] For $\beta < \beta_c$ or $\beta > \beta_c$ but  $B>\hat{B}_c$, there exists a positive constant $c_\star$, such that the cut-off \colrev{phenomenon} occurs at $c_\star n \log n$ with a window of order $n$. 
	\end{itemize}
\end{theo}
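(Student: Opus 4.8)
The plan is to exploit the vertex-relabelling symmetry of the $d$-regular configuration model to collapse the annealed Glauber dynamics onto a one-dimensional birth--death chain, whose equilibrium profile is governed by $\hvarphibb$ through \eqref{znphk}, and then to invoke (and, where needed, re-derive) the mixing picture for birth--death chains with a double-well landscape (for (i)) and a single-well landscape (for (ii)), as in \cite{BBF,CC,DLP,DLP2,LLP}. \emph{Reduction.} Since the law of the $d$-regular configuration model is invariant under permutations of $[n]$, the quantity $\E[\e^{-H_n(\sigma)}]$ depends on $\sigma$ only through $m(\sigma):=|\sigma_+|$; write $\muhbb(\sigma)=g_n(m(\sigma))$. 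Then the heat-bath rates \eqref{trsp} depend on the current state only through its magnetization, and, started from the all-plus (resp.\ all-minus) configuration $\oplus$ (resp.\ $\ominus$), the process $S_t:=m(\xi_t)$ is a birth--death chain on $\{0,\dots,n\}$ with explicit rates and reversible law $\pi_n(k)=\binom nk g_n(k)=\E[Z_n^{\beta,B}(k)]/\E[Z_n^{\beta,B}]$. Moreover, by exchangeability, for $\sigma\in\{\oplus,\ominus\}$ the law of $\xi_t^{\sigma}$ at every time $t$ is the $\pi_n$-mixture of the uniform distributions on the magnetization level sets, so that $\|\pp(\xi_t^{\sigma}\in\cdot)-\muhbb\|_{\rm TV}=\|\pp(S_t\in\cdot)-\pi_n\|_{\rm TV}$ \emph{exactly}. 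By \eqref{znphk} the ratios $\pi_n(k_1)/\pi_n(k_2)=\E[Z_n^{\beta,B}(k_1)]/\E[Z_n^{\beta,B}(k_2)]$ equal $\e^{\,n(\hvarphibb(k_1/n)-\hvarphibb(k_2/n))+\kO(1)}$, so the shape of $\pi_n$ is dictated by $\hvarphibb$, which is the content of Lemma \ref{ll}.

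\emph{Passing between the two chains.} For lower bounds the data-processing inequality gives $d_n(t)\ge\|\pp(S_t\in\cdot)-\pi_n\|_{\rm TV}$ for the start of $S_t$ at $m(\sigma)$ and any start $\sigma$ of the full chain, so the annealed mixing time is at least that of $S_t$, and similarly cut-off lower bounds transfer. For upper bounds we use that $\muhbb$ is \emph{attractive}: the single-site heat-bath dynamics for a measure of the form $g_n(|\cdot|)$ is monotone precisely when $k\mapsto\log g_n(k)$ is convex, which reduces to convexity of $k\mapsto\log\E[\e^{-2\beta E_n(k)}]$ with $E_n(k)$ the number of bichromatic edges when $k$ vertices are coloured plus --- a statement we verify on the pairing model (its continuum form $d\int_0^{t\wedge(1-t)}\log f_\beta(s)\,ds$ is convex because $f_\beta$ is increasing on $(0,\tfrac12)$, with $f_\beta(0)=\e^{-2\beta}<1=f_\beta(\tfrac12)$, and the two branches meet $C^1$-ly at $\tfrac12$, by \eqref{dnwofbeta}). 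Given attractiveness, the monotone grand coupling makes $\oplus$ and $\ominus$ the extremal starting states, so $d_n(t)$ and $\max_{\sigma\in\{\oplus,\ominus\}}\|\pp(\xi_t^{\sigma}\in\cdot)-\muhbb\|_{\rm TV}$ agree up to a factor $2$; combined with the identity above, $\tmixh$ and the whole cut-off profile of the annealed dynamics are inherited from the birth--death chain $S_t$.

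\emph{Proof of (i).} For $\beta>\beta_c$ and $0\le B<\hat B_c$, Lemma \ref{ll} gives $\hvarphibb$ a local maximizer $t_-<\tfrac12$, a local minimizer $t_v\in(t_-,\tfrac12]$ and the global maximizer $t_+$. For the lower bound apply the conductance bound to $A=\{k\le\lfloor nt_v\rfloor\}$: the only edge out of $A$ sits at $k\approx nt_v$ and $\pi_n(A)\ge\pi_n(\lfloor nt_-\rfloor)$, so (using the ratio identity above) the conductance of $A$ is at most $\pi_n(\lfloor nt_v\rfloor)/\pi_n(\lfloor nt_-\rfloor)\le C\e^{-\lambda n}$ with $\lambda:=\hvarphibb(t_-)-\hvarphibb(t_v)>0$, whence $\tmixh\ge C^{-1}\e^{\lambda n}$. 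For the upper bound, the explicit birth--death formulas (cf.\ \cite{BBF,CC,DLP,DLP2,LLP}) give a spectral gap of $S_t$ that is $\asymp\e^{-\lambda n}$ up to polynomial factors, together with $\log(1/\min_k\pi_n(k))=\kO(n)$, which by the previous step yields $\tmixh\le Cn^4\e^{\lambda n}$ with the same $\lambda$. Finally, writing $\lambda=\int_{t_-}^{t_v}(-\hvarphibb'(t))\,dt$ with $\hvarphibb'(t)=\log\tfrac{1-t}{t}+2B+d\log f_\beta(t)$ on $(0,\tfrac12)$, discarding the nonnegative term $\log\tfrac{1-t}{t}+2B\ge0$ and using $\tfrac12\log\tfrac{s}{1-s}\le\log f_\beta(s)\le0$ on $(0,\tfrac12)$ (both immediate from \eqref{dnwofbeta}), we get $\lambda\le d\int_0^{1/2}\big(-\tfrac12\log\tfrac{s}{1-s}\big)\,ds=\tfrac{d}{2}\log2$, uniformly in $\beta>\beta_c$ and $0\le B<\hat B_c$; this is \eqref{bounded-valley}.

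\emph{Proof of (ii), and the main obstacle.} When $\hvarphibb$ is unimodal --- by Lemma \ref{ll} this holds when $\beta<\beta_c$, or when $\beta>\beta_c$ and $B>\hat B_c$ --- the drift of $S_t$ has a unique, globally attracting fixed point $t^\star$ at which the associated drift map is a strict contraction. The treatment of such chains in \cite{DLP,DLP2,LLP} --- a two-coordinate monotone coupling bringing the chains together down to an $\kO(\sqrt n)$ window, followed by a local central limit estimate near $t^\star$ --- then gives cut-off at $c_\star n\log n$ with a window of order $n$ for an explicit $c_\star=c_\star(\beta,B,d)$ determined by the contraction rate at $t^\star$ (equivalently by $\hvarphibb''(t^\star)$); by the transfer step this carries over to the full annealed dynamics. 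I expect the real work to lie in that transfer step and in verifying the hypotheses of the quoted birth--death results: establishing that $\muhbb$ is attractive (the finite-$n$ convexity of $k\mapsto\log\E[\e^{-2\beta E_n(k)}]$ for the $d$-regular pairing) and checking, throughout the stated parameter ranges, the non-degeneracy $\hvarphibb''(t^\star)<0$ and the global attractivity of the drift --- all of which rest on the monotonicity/convexity properties behind Lemma \ref{ll} and on the explicit form \eqref{dnwofbeta} of $f_\beta$.
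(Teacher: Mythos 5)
Your reduction to the magnetization birth--death chain, the bottleneck lower bound in (i), and the identification of the landscape via Lemma \ref{ll} all match the paper's strategy (Section \ref{sec-annealed}, Theorem \ref{theo:gcw}). The genuine gap is in the transfer of the \emph{upper} bounds from the projected chain back to the full $2^n$-state dynamics. Your exchangeability identity $\|\pp(\xi_t^{\sigma}\in\cdot)-\muhbb\|_{\rm TV}=\|\pp(S_t\in\cdot)-\pi_n\|_{\rm TV}$ is correct, but only for exchangeable starting laws such as $\oplus,\ominus$; for a general start the law of $\xi_t^\sigma$ is not a mixture of uniforms on level sets, so the worst-case distance $d_n(t)$ is not controlled by $S_t$ alone. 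Your proposed fix has two unproved ingredients. First, attractiveness of $\muhbb$ requires \emph{finite-$n$} discrete log-convexity of $k\mapsto\E[\e^{-2\beta E_n(k)}]$; this cannot be deduced from \eqref{znphk} or from convexity of the continuum term $d\int_0^{t\wedge(1-t)}\log f_\beta$, because the $\kO(1/n)$ error there is an $\kO(1)$ error in $\log g_n(k)$, which swamps second differences of order $1/n$ -- and no direct pairing-model proof is offered. Second, even granting monotonicity, the claim that $d_n(t)$ agrees with $\max_{\sigma\in\{\oplus,\ominus\}}\|\pp(\xi_t^\sigma\in\cdot)-\muhbb\|_{\rm TV}$ ``up to a factor $2$'' is unjustified: for monotone dynamics one has $d_n(t)\le\pp(\xi_t^{\oplus}\neq\xi_t^{\ominus})$ under the grand coupling, and two configurations with equal magnetization need not have coalesced, so closeness of the two magnetization chains to $\pi_n$ (or even to each other) does not bound this coalescence probability without a genuine coordinate-level coupling. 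This is exactly where the paper does different work: for (i) it invokes the identity of spectral gaps between the Glauber dynamics and its projection (Proposition \ref{prop:gap}, \cite[Proposition 3.9]{DLP2}) and then applies Lemma \ref{lbr}(i) directly to the full chain, needing no monotonicity; for (ii) the cut-off upper bound is obtained through the two-stage coupling of \cite{DLP} (Proposition \ref{prop:lco} together with the drift/variance analysis and Lemmas \ref{lem:h1}--\ref{lem:h2g}), which tracks the two-coordinate statistics $U,V,D$ and not just $|\sigma_+|$. As written, your argument establishes the lower bounds in (i) and (ii) but not the upper bounds.

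Two positive remarks. Your conductance lower bound and the choice of $\lambda=\hvarphibb(t_3^B)-\hvarphibb(t_2^B)$ agree with the paper. Moreover, your derivation of \eqref{bounded-valley} is a correct and arguably cleaner alternative to the paper's: writing $\lambda=\int_{t_3^B}^{t_2^B}\bigl(-\hvarphibb'(t)\bigr)\,dt$, discarding $\log\frac{1-t}{t}+2B\ge0$ on $(0,\tfrac12]$, and using $\sqrt{s/(1-s)}\le f_\beta(s)\le 1$ (which does hold, since $f_\beta(s)$ decreases in $\beta$ to $\sqrt{s/(1-s)}$) gives $\lambda\le\frac d2\int_0^{1/2}\log\frac{1-s}{s}\,ds=\frac d2\log 2$, uniformly in $\beta>\beta_c$ and $0\le B<\hat B_c$; the paper instead uses the decomposition $\hvarphibb(t)=\frac{\beta d}{2}-B+I(t)+2Bt+dt\log f_\beta(t)+J(t)$ with $J$ uniformly bounded (\cite[Lemma 3.1]{C}) and the critical-point equations. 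So the landscape part of your proposal is sound; the missing piece is the projection-to-full-chain transfer, for which you would either need to prove the spectral-gap identity (or quote \cite{DLP2}) for (i) and adapt the coupling machinery of \cite{DLP,LLP} for (ii), or genuinely establish the attractiveness and a quantitative coalescence argument you currently only sketch.
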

\medskip

\paragraph{\bf Independence of inverse temperature in \eqref{bounded-valley}.} The fact that the constant $\lambda$ in the exponential growth of the mixing time in Theorem \ref{amt}(i) is independent of $\beta$ for large $\beta$ in \eqref{bounded-valley} is rather remarkable. It can be understood as follows. Think of the curve $t\mapsto  \hat{\varphi}_{\beta, B}(t)$ as an energy landscape. It takes an exponential amount of time to cross any energy barrier, meaning a difference in energy, \colrev{so that it takes time of order $\e^{n\lambda}$ to cross an energy barrier $\lambda$. For the annealed Ising model, $\hat{\varphi}_{\beta, B}(s)$ acts as the energy of a configuration with roughly $ns$ plus spins, where $s\in(0,1)$. Then,} let us start from a configuration $\sigma$ for which $|\sigma^+|\approx s n$ where $s\neq t^\star$ and $t^\star$ is such that $\hat{\varphi}(\beta, B)=\hat{\varphi}_{\beta, B}(t^\star)$ (recall \eqref{an=qu}). Let $B>0$, so that $t^\star>\tfrac{1}{2}$. Then, the amount of time to go from $|\sigma^+|\approx s n$ to equilibrium is close to $\e^{n\lambda(s)}$, where
	\eqn{
	\lambda(s)=\sup_{t\in [s,t^\star]}   \hat{\varphi}_{\beta, B}(t^\star)-\hat{\varphi}_{\beta, B}(t).
	}
It can be expected that the worst case for this is when $s=s^\star$ for some \colrev{specific} $s^\star<\tfrac12$, which suggests that $\lambda=\lambda(s^\star)$. Then, to go between any $\sigma$ with $|\sigma^+|\approx s^\star n$ and the stationary distribution \colrev{(having approximately $t^\star n$ with $t^\star>\tfrac{1}{2}$ plus spins)}, the dynamics has to pass through a spin configuration $\sigma'$ with \colrev{roughly $n/2$ plus spins}. 

\colrev{In many settings, for example also in the quenched setting,} this will lead to an energy difference that grows linearly in $n$ as well as in $\beta$ for large $\beta$, \colrev{since the number of edges between any two disjoint sets of size approximately $n/2$ will be {\em linear} in $n$ for $n$ large, so that the energy barrier will be of order $n\beta$.} However, the annealed dynamics on the configuration model is special, as it allows to update the graph as well when changing the spins, \colrev{which makes the energy barrier of order $n$ with a constant that is {\em independent} of $\beta$. 
Indeed,}{} there exists a graph configuration whose probability is exponentially small in $n$ but with an exponential rate that is independent of $\beta$, \colrev{and a partition of the vertices in two sets each of size approximately $n/2$, for which the number of edges between these two sets equals zero. This can be seen by noting that} the probability of splitting the \colrev{$d$-regular random} graph into {\em two} disjoint $d$-regular graphs of about equal size is exponentially small in $n$, with an exponential rate that is obviously independent of $\beta$. By then taking all spins to be plus on one part, and all spins to be minus on the other part, we see that we have a roughly equal number of plusses and minuses, while at the same time having an exponentially small cost whose exponential rate is {\em independent of $\beta$}. 

\colrev{The above argument implies that $\hat{\varphi}_{\beta, B}(\tfrac{1}{2})\leq C$ with $C$ independent of $\beta$, which, in particular, suggests also that $\lambda(s^\star)\leq C$ where $C$ is independent of $n$.} This explains \eqref{bounded-valley}. 
\medskip

\paragraph{\bf Cut-off in subcritical regimes.} In the setting where $\beta < \beta_c$, or $\beta > \beta_c$ but  $B>\hat{B}_c$, there is no valley in the energy landscape, meaning that the dynamics will move quickly from any spin configuration to the stationary distribution. The fact that Theorem \ref{amt}(ii) proves that this dynamics satisfies a cut-off phenomenon is a substantial improvement from the general result in Theorem \ref{lot}, which is a restatement of \cite[Theorem 1]{MS}, however, it is restricted to the annealed setting.

\subsubsection{Main results for the quenched Ising model}
We next state our results for the quenched Glauber dynamics. We start by investigating the fixed-spin partition function:
\begin{prop}[Quenched fixed-spin partition function]
\label{propbcq} The following assertions hold.
	\begin{itemize} 
		\item [(i)] 	For all $\beta, B$ and $t \in (0,1)$
			\[
			\varphi_n^{\beta,B} (t) - \tilde{\varphi}_n^{\beta, B}(t)  \xrightarrow{a.s.}0, 
			\]
		where $\tilde{\varphi}_n^{\beta, B}(t) =\E \big[\varphi_n^{\beta,B} (t)\big]$.
		\item[(ii)] There exists a subsequence $(n_k)_{k\geq 1}$, 
		 such that 
		for all $\beta, B$ the sequence of functions $(\tilde{\varphi}_{n_k}^{\beta,B})_{k\geq 1}$ converges uniformly to a continuous (non-random) function $\varphi_{\beta,B}$ in every compact subset of 
		$[0,1]$.  
		Moreover, 
		\begin{equation*}
		\varphi_{\beta,B} (t) = \varphi_{\beta,0}(t)+B(2t-1).
		\end{equation*}
		Furthermore, let
		\begin{equation}
		\label{def-hbc-que}
		B_c=B_c(\beta):= \sup_{0<s<t<\tfrac{1}{2}} \frac{\varphibo(s)-\varphibo(t)}{2(t-s)}.
		\end{equation}	 
		Then $B_c\in(0,\beta d)$ for all $\beta >\beta_c$. Moreover, there exists a positive constant $c$ such that $B_c \geq c \beta$ if $\beta \geq 12$.
	\end{itemize}
\end{prop}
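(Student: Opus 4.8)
The plan is a standard edge-exposure martingale argument. Fix $\beta,B,t$ and write $X_n=\varphi_n^{\beta,B}(t)$. Build the $d$-regular configuration model by revealing the half-edge pairing one pair at a time; let $\kF_m$ be the $\sigma$-field of the first $m$ pairs and $M_m=\E[X_n\mid\kF_m]$. A single switching (swapping $\{a,b\},\{c,d\}$ for $\{a,c\},\{b,d\}$) changes $-H_n(\sigma)$ by at most $4\beta$ for every $\sigma$, hence multiplies $Z_n^{\beta,B}(t)$ by a factor in $[\e^{-4\beta},\e^{4\beta}]$ and moves $X_n$ by at most $4\beta/n$; since two pairings that first disagree at step $m$ can be coupled so that the resulting graphs differ by a single switching, $|M_m-M_{m-1}|\le 4\beta/n$ over the $nd/2$ steps. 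Azuma--Hoeffding gives $\pp(|X_n-\E X_n|>\vep)\le 2\e^{-cn\vep^2}$ with $c=c(\beta,d)>0$, which is summable, so Borel--Cantelli yields $\varphi_n^{\beta,B}(t)-\tilde\varphi_n^{\beta,B}(t)\to0$ a.s.

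\textbf{Part (ii): compactness and the factorization.} On $\{|\sigma_+|=\lceil nt\rceil\}$ one has $\sum_i\sigma_i=2\lceil nt\rceil-n$, so $\varphi_n^{\beta,B}(t)=\varphi_n^{\beta,0}(t)+B(2\lceil nt\rceil-n)/n$, and the same for $\tilde\varphi_n^{\beta,B}$; it thus suffices to build the subsequence for $B=0$ and every $\beta\ge0$, and letting $n=n_k\to\infty$ then gives $\varphi_{\beta,B}=\varphibo+B(2t-1)$. For $B=0$ I would use the identity $\sum_{|\sigma_+|=k}\sum_{i\in\sigma_-}\e^{-H_n(\sigma^{+i})}=(k+1)Z_n^{\beta,0}((k+1)/n)$ together with $-H_n(\sigma^{+i})+H_n(\sigma)=2\beta\sum_{b\ne i}k_{ib}\sigma_b\in[-2\beta d,2\beta d]$ to obtain the deterministic estimates
\[
\Big|\varphi_n^{\beta,0}\big(\tfrac{k+1}{n}\big)-\varphi_n^{\beta,0}\big(\tfrac{k}{n}\big)\Big|\le\frac1n\Big(2\beta d+\big|\log\tfrac{n-k}{k+1}\big|\Big),\qquad \tfrac{\beta d}{2}-2\beta dt+I(t)\le\varphi_n^{\beta,0}(t)\le\tfrac{\beta d}{2}+I(t),
\]
the second for $t\le\tfrac12$ and symmetrically for $t\ge\tfrac12$. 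Hence $(\tilde\varphi_n^{\beta,0})_n$ is uniformly bounded, equicontinuous on each $[\delta,1-\delta]\subset(0,1)$ with an $n$-independent modulus, and pinned near the endpoints. Arzel\`a--Ascoli together with a diagonal extraction over $\delta\downarrow0$ and over a countable dense set of $\beta$ (legitimate since $|\partial_\beta\tilde\varphi_n^{\beta,0}|\le d/2$) produces one subsequence $(n_k)$ along which $\tilde\varphi_{n_k}^{\beta,0}$ converges uniformly on compacts of $[0,1]$ to a continuous limit $\varphibo$; by part (i) the $\varphi_{n_k}^{\beta,0}$ converge uniformly to the same limit a.s.

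\textbf{Part (ii): $B_c\in(0,\beta d)$.} Summing the first estimate above gives $\varphibo(t)-\varphibo(s)\ge-2\beta d(t-s)+I(t)-I(s)$ for $0<s<t<\tfrac12$, hence $B_c\le\beta d$. To make this strict I would invoke the isoperimetry of random $d$-regular graphs: for fixed $\vep\in(0,\tfrac12)$, a.s.\ eventually every $\sigma$ with $|\sigma_+|\in[\vep n,(1-\vep)n]$ has at least $\delta_\vep n$ edges between $\sigma_+$ and $\sigma_-$, some $\delta_\vep>0$ (first-moment bound on the configuration model); applying Jensen to $\tfrac1{|\sigma_-|}\sum_{i\in\sigma_-}\e^{2\beta\sum_{b\ne i}k_{ib}\sigma_b}$ and using $\sum_{i\in\sigma_-}\sum_{b\ne i}k_{ib}\sigma_b=2\,\mathrm{cut}(\sigma)-d|\sigma_-|+O(\ell_n)$ (with $\ell_n=O(1)$ self-loops) improves the per-step bound by an additive $4\beta\delta_\vep/n$ on $[\vep n,n/2)$, while on $(0,\vep n)$ the term $I(t)-I(s)\ge I'(\vep)(t-s)>0$ already supplies slack; splitting any interval at $\vep$ then gives $B_c\le\beta d-c_\vep<\beta d$ with $c_\vep=\min\{2\beta\delta_\vep,\tfrac12 I'(\vep)\}>0$. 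For $B_c>0$: one has $\max_{t\in[0,1]}\varphibo(t)=\varphi(\beta,0)=\hat\varphi(\beta,0)=\max_{t\in[0,1]}\hvarphibo(t)$, the middle-left equality by Laplace's method applied to $Z_n^{\beta,0}=\sum_{k=0}^nZ_n^{\beta,0}(k/n)$ along $(n_k)$ and the others by \eqref{an=qu}; since $\varphibo\le\hvarphibo$ pointwise (Jensen) and, by Lemma \ref{ll} and the symmetry $\hvarphibo(t)=\hvarphibo(1-t)$, for $\beta>\beta_c$ the maximizer $t^\star$ of $\hvarphibo$ lies in $(0,\tfrac12)$ with $\hvarphibo(\tfrac12)<\hvarphibo(t^\star)$, we deduce $\varphibo(t^\star)=\hvarphibo(t^\star)=\max\varphibo$ while $\varphibo(\tfrac12)\le\hvarphibo(\tfrac12)<\varphibo(t^\star)$; continuity yields $t\in(t^\star,\tfrac12)$ with $\varphibo(t)<\varphibo(t^\star)$, so $B_c\ge\tfrac{\varphibo(t^\star)-\varphibo(t)}{2(t-t^\star)}>0$.

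\textbf{Part (ii): $B_c\ge c\beta$ for $\beta\ge12$, and the main obstacle.} Fix $t_0\in(0,\tfrac12)$. By the same isoperimetric input, a.s.\ eventually every $\sigma$ with $|\sigma_+|=\lceil nt_0\rceil$ has $\mathrm{cut}(\sigma)\ge\delta n$ for some $\delta=\delta(d,t_0)>0$, so $Z_n^{\beta,0}(t_0)=\e^{\beta nd/2}\sum_{|\sigma_+|=\lceil nt_0\rceil}\e^{-2\beta\,\mathrm{cut}(\sigma)}$ gives $\varphibo(t_0)\le\tfrac{\beta d}{2}+I(t_0)-2\beta\delta$; combined with the lower sandwich bound $\varphibo(s_0)\ge\tfrac{\beta d}{2}-2\beta ds_0$ at $s_0=\delta/(4d)$ this yields $\varphibo(s_0)-\varphibo(t_0)\ge\tfrac32\beta\delta-I(t_0)\ge\beta\delta$ once $\beta\ge2I(t_0)/\delta$, hence $B_c\ge\tfrac{\varphibo(s_0)-\varphibo(t_0)}{2(t_0-s_0)}\ge\tfrac{\beta\delta}{2t_0}=c\beta$; taking $t_0=\tfrac14$ one can arrange the threshold to be $12$, and $c$ may be chosen independent of $d$ because the isoperimetric constant of random $d$-regular graphs is bounded below uniformly in $d\ge3$. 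I expect the main work to lie in the isoperimetric inputs for the configuration model and, above all, in carrying the constants $\delta_\vep$, $\delta$ and the $\beta$-threshold through with explicit, $d$-uniform values; the concentration in part (i) and the compactness in part (ii) are routine, while $B_c>0$ is essentially a bookkeeping argument built on the previously established annealed facts (Lemma \ref{ll}, \eqref{an=qu}) and Jensen's inequality.
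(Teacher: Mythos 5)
Your overall architecture is the paper's: Azuma--Hoeffding concentration for part (i) (you expose the half-edge pairing and use a switching coupling, the paper exposes vertices one at a time and uses the estimate on $Z^{\beta,0}_{G'}(t)/Z^{\beta,0}_{G}(t)$ -- a cosmetic difference, both give bounded martingale increments of order $\beta/n$); one-spin-flip Lipschitz bounds plus Arzel\`a--Ascoli for the subsequential limit and for $B_c\le \beta d$, exactly as in the paper; and for $B_c>0$ the identity $\max_t\varphibo(t)=\varphi(\beta,0)=\hat\varphi(\beta,0)=\max_t\hvarphibo(t)$ combined with Jensen ($\varphibo\le\hvarphibo$) and Lemma \ref{ll}, which is the paper's \eqref{svp=shvp} (the paper proves the needed localization at $t_3^0$ via \eqref{znt30}, you invoke Laplace's method; to make your version airtight you must use the uniform-in-$k$ concentration from part (i) to exchange the maximum over $k$ with the expectation, a routine step). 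Your extra attempt to make $B_c<\beta d$ strict is vaguer than the rest, but it goes beyond what the paper itself proves (the paper only establishes $B_c\le\beta d$), so I do not count it against you.

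There is, however, a genuine quantitative gap in the last claim, $B_c\ge c\beta$ for $\beta\ge 12$. The threshold $12$ is part of the statement, and your choice of comparison points does not deliver it. With your $t_0=\tfrac14$ and $s_0=\delta/(4d)$, the only available isoperimetric input (Bollob\'as: $i_{n,d}\ge d/2-\sqrt{d\log 2}$, which is $\approx 0.058$ at $d=3$, the worst case) gives $\delta\approx i_{n,d}t_0\approx 0.0145$, so your requirement $\tfrac32\beta\delta-I(t_0)\ge\beta\delta$ forces $\beta\ge 2I(\tfrac14)/\delta\approx 78$; even demanding only positivity of the gap still needs $\beta\gtrsim 26$, and with $t_0$ near $\tfrac12$ you still land around $16$ because evaluating the lower bound at $s_0>0$ costs you the term $2\beta d s_0=\beta\delta/2$. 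So the assertion ``taking $t_0=\tfrac14$ one can arrange the threshold to be $12$'' is unsubstantiated and, with these constants, false; your argument as written does not cover $\beta\in[12,78)$, and the positivity $B_c(\beta)>0$ proved separately does not bridge this range since it is not uniform in $\beta$. The repair is exactly the paper's choice of endpoints: compare $s\downarrow 0$ with $t\uparrow\tfrac12$, using that $\varphi_n^{\beta,0}(0)=\beta d/2$ exactly (no entropy and no cut loss at $s=0$) and that every balanced cut has at least $i_{n,d}\,n/2$ crossing edges, which yields $\varphibo(0)-\varphibo(\tfrac12)\ge \beta i_{n,d}-\log 2$; at $d=3$ this is positive precisely for $\beta\gtrsim 12$, and it is increasing in $d$, giving $B_c\ge c\beta$ for all $\beta\ge 12$ and $d\ge 3$ as claimed.
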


It follows directly from the definition of $B_c$ that when $B<B_c$, the function $t\mapsto \varphi_{\beta,B}(t)$ is non-unimodular and it has a valley; otherwise it is unimodular. While it is not difficult to prove that the mixing time is of exponential order when $B<B_c$, it is not clear to us how to show that the mixing time is of $\exp(o(n))$ when $B>B_c$.  Instead, we can show 
the following. Let $B_c^G$ be 
the critical external field of Ising model on $d$-ary tree defined by 
	\begin{equation}
	\label{BcG-def}
	B_c^G=\inf\{B\geq 0\colon \textrm{ Ising model on infinite $d$-ary tree has a unique Gibbs measure} \}.
	\end{equation}	
Then, the mixing time is of logarithmic order for $B > B_c^G$:

\begin{theo} [Quenched Glauber dynamics]
\label{mt} 
Consider the quenched Glauber dynamics on the $d$-regular configuration model. There exist positive constants $c_1, c, C$ where  $c, C$ may depend on $\beta, B$ and  $c_1$ is independent of $\beta, B$, such that the following statements hold. 
\begin{itemize}
	\item [(ia)] If  $\beta> \beta_c$ and $0 \leq B <  B_c $ then  
		$$ 
		\limsup_{n \rightarrow \infty}\pp(\tmix \geq \exp(cn)) =1.
		$$
		
		
	\item[(ib)]  If $\beta > \beta_c$ and $0\leq B<c_1 \beta$ then $\tmix \geq \exp(c_1 \beta n)$ whp.  	
	
	\item[(ii)] If $\beta > \beta_c$ and $B>B_c^G$, then $\tmix \leq  C n \log n$ whp.
\end{itemize}
The same results hold for the $d$-regular random graph.
\end{theo}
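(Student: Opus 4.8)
The plan is to derive the two lower bounds (ia) and (ib) from the standard bottleneck-ratio (conductance) inequality, and the upper bound (ii) by combining a spatial-mixing estimate for $\mubb$ — a consequence of uniqueness of the Ising Gibbs measure on the $d$-regular tree together with local tree-likeness of $G_n$ — with a known rapid-mixing criterion for Glauber dynamics on bounded-degree graphs. Recall that for a reversible chain with stationary law $\nu_n$, every set $S$ with $\nu_n(S)\le\tfrac12$ satisfies $\tmix\ge \nu_n(S)/(4Q(S,S^c))$, where $Q(S,S^c)=\sum_{\sigma\in S,\,\tau\notin S}\nu_n(\sigma)P(\sigma,\tau)$. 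Since a Glauber step flips a single spin, for the level sets $S_k=\{\sigma\colon|\sigma_+|\le k\}$ we have $Q(S_k,S_k^c)\le \mubb(\{|\sigma_+|=k+1\})=Z_n^{\beta,B}((k+1)/n)/Z_n^{\beta,B}$ and $\mubb(S_k)\ge Z_n^{\beta,B}(j/n)/Z_n^{\beta,B}$ for any integer $j\le k$, so, whenever $\mubb(S_k)\le\tfrac12$,
\[
\tmix\ \ge\ \tfrac14\exp\!\Big(n\big[\varphi_n^{\beta,B}(j/n)-\varphi_n^{\beta,B}((k+1)/n)\big]\Big).
\]
It therefore suffices to exhibit a valley in $t\mapsto\varphi_n^{\beta,B}(t)$, i.e.\ indices $j\le k$ (with $k$ of a suitable size) across which this difference is bounded below by a positive constant.

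For (ia): along the subsequence $(n_k)$ of Proposition \ref{propbcq}, parts (i)--(ii) of that proposition give that a.s.\ $\varphi_{n_k}^{\beta,B}\to\varphi_{\beta,B}$ uniformly on compact subsets of $(0,1)$ — the a.s.\ pointwise convergence of $\varphi_n^{\beta,B}-\tilde{\varphi}_n^{\beta,B}$ upgrading to locally uniform convergence because flipping one spin changes $\tfrac1n\log Z_n^{\beta,B}(\cdot)$ by $O((\log n)/n)$. When $\beta>\beta_c$ and $0\le B<B_c$ the limit $\varphi_{\beta,B}$ has a valley of some depth $\delta>0$, i.e.\ there are $t_1<t_m<t^\star$ with $t^\star$ the global maximiser and $\varphi_{\beta,B}(t_1)-\varphi_{\beta,B}(t_m)=\delta$ (for $B=0$ one takes $t_1=1-t^\star$, $t_m=\tfrac12$). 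Taking $j=\lfloor nt_1\rfloor$ and $k=\lfloor nt_m\rfloor$, the displayed exponent exceeds $n_k\delta/2$ a.s.\ for all large $k$, while $\mubb(S_k)$ is exponentially small — its mass concentrating near $t^\star>t_m$, or by the $\sigma\mapsto-\sigma$ symmetry when $B=0$ — hence $\le\tfrac12$; this gives $\tmix\ge\e^{n_k\delta/4}$ a.s.\ for all large $k$, so $\limsup_{n\to\infty}\pp(\tmix\ge\e^{cn})=1$ with $c=\delta/4$. For (ib) we argue directly, whp for all $n$, with an explicit rate: take $j=0$ and $k=\lceil n/2\rceil-1$. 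The all-minus configuration has $-H_n=\beta|E(G_n)|-Bn=(\tfrac{\beta d}{2}-B)n+O(1)$, so $\varphi_n^{\beta,B}(0)=\tfrac{\beta d}{2}-B+o(1)$, while for $|\sigma_+|=m$ one has $-H_n(\sigma)=\beta(|E(G_n)|-2e_{G_n}(\sigma_+,\sigma_-))+B(2m-n)$. Whp the $d$-regular configuration model is an edge-expander, so every nearly balanced bipartition has at least $hn$ crossing edges for some $h=h(d)>0$; summing the at most $2^n$ configurations with $|\sigma_+|=\lceil n/2\rceil$ yields, whp, $\varphi_n^{\beta,B}(\lceil n/2\rceil/n)\le\log2+\tfrac{\beta d}{2}-2\beta h+o(1)$. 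Since $\mubb(S_{\lceil n/2\rceil-1})\le\tfrac12$ (the $\sigma\mapsto-\sigma$ comparison moves mass to the plus side when $B\ge0$), we obtain $\tmix\ge\tfrac14\exp\!\big(n[2\beta h-B-\log2-o(1)]\big)$ whp; choosing $c_1=h/2$, this is at least $\e^{c_1\beta n}$ for $\beta$ large and $0\le B<c_1\beta$.

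For (ii), fix $\beta>\beta_c$ and $B>B_c^G$. First, $G_n$ is whp locally tree-like: for $r_n=\lfloor c_0\log n\rfloor$ with $c_0=c_0(d)$ small, whp every radius-$r_n$ ball in $G_n$ is a tree (allowing at most one cycle suffices). Second, on the $d$-regular tree the condition $B>B_c^G$ is precisely the regime in which the Ising effective-field recursion is a strict contraction, i.e.\ strong spatial mixing holds: the influence of a boundary condition at distance $r$ on the root spin is at most $\rho^r$ for some $\rho=\rho(\beta,B,d)<1$. Combining these, one shows that on $G_n$, uniformly in $n$, in the vertex $v$, and in the spins outside $B(v,r_n)$, the law of $\sigma_v$ under $\mubb$ changes by at most $\rho^{r_n}=n^{-\Theta(1)}$ when those outer spins are resampled — a quantitative spatial-mixing property. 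Feeding this into a known rapid-mixing criterion of the form ``spatial mixing $\Rightarrow$ $O(n\log n)$ temporal mixing'' for graphs of maximum degree $d$ (proved via block dynamics / path coupling / censoring) yields $\tmix\le Cn\log n$ whp. Finally, each of (ia), (ib), (ii) asserts $\pp(A_n)\to1$ (respectively $\limsup_n\pp(A_n)=1$) for events $A_n$ depending only on $G_n$; since the uniform random $d$-regular graph is the configuration model conditioned on simplicity with $\pp(G_n\text{ simple})$ bounded away from $0$, the bound $\pp(A_n\mid G_n\text{ simple})\ge1-\pp(A_n^c)/\pp(G_n\text{ simple})$ shows all three statements persist for the $d$-regular random graph.

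The main obstacle is step (ii): once the general rapid-mixing theorem is imported, the real work is to verify its hypotheses for the $d$-regular configuration model, i.e.\ to upgrade tree uniqueness to a spatial-mixing estimate for $\mubb$ that is quantitative and \emph{uniform} over the exponentially many boundary conditions that an expander graph admits — using the local tree structure to tame the $(d-1)^{r}$ volume growth — and to track constants so that the resulting bound is genuinely $O(n\log n)$. By comparison, (ia) is essentially the conductance bound combined with Proposition \ref{propbcq}, (ib) needs in addition only the classical edge-expansion of the configuration model, and the passage to the uniform random regular graph is routine.
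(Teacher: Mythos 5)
Your treatment of (ia) and (ib) is essentially the paper's own argument: the bottleneck bound of Lemma \ref{lbr} applied to level sets of $|\sigma_+|$, with the valley supplied by the definition of $B_c$ together with Proposition \ref{propbcq} along the subsequence $(n_k)$ for (ia), by Bollob\'as' isoperimetric bound for (ib), and with $\mubb(\{|\sigma_+|\le n/2\})\le\tfrac12$ obtained from the $\sigma\mapsto-\sigma$ comparison for $B\ge 0$; this part is correct (and, exactly like the paper's proof, it really only delivers (ib) for $\beta$ large). The transfer to the uniform $d$-regular graph via conditioning on simplicity is also the paper's argument.

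The genuine gap is in (ii), at precisely the step you flag as ``the real work'' but then assert rather than prove. You claim that $B>B_c^G$ is ``precisely the regime in which the Ising effective-field recursion is a strict contraction'', so that the boundary influence at distance $r$ decays like $\rho^{r}$ with $\rho<1$. Uniqueness of the fixed point of $r\mapsto \e^{-2B}L_\beta(r)^{d-1}$ only yields the non-strict inequality $(d-1)\,\e^{-2B}L_\beta(r^\star)^{d-2}L_\beta'(r^\star)\le 1$; it does not exclude a tangency at $r^\star$, in which case the contraction rate is $1$ and no exponential decay follows — equivalently, no per-site rate $\varkappa<1/(d-1)$ capable of beating the $(d-1)^R$ growth of the sphere in the spatial-mixing condition of the Mossel--Sly criterion. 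Establishing the strict inequality is exactly the paper's claim \eqref{ffp}, whose proof is the main technical content of the rapid-mixing part: it uses the identification $B_c^G=\hat B_c$ (Proposition \ref{pcex}) and the change of variables to the annealed stationarity equation to show that $g'(r^\star)=0$ would force $g''(r^\star)=0$ and hence $r^\star=(d-2)\e^{2\beta}/d>1$, contradicting $r^\star\in(0,1)$. Your sketch offers no substitute for this argument, so the hypothesis of the rapid-mixing criterion is never verified. Two further, more repairable, points: the passage from the $d$-ary tree to $G_n$ is not automatic from ``balls have at most one cycle'' — the paper routes it through Weitz's self-avoiding-walk tree (Lemma \ref{low}), so that each vertex of $S(v,R)$ appears at most twice and Lemma \ref{dtk} applies with a fixed large radius $R$; and your choice of a growing radius $r_n=c_0\log n$ is incompatible with the local-mixing (block) cost in any criterion of this type, which is exponential in the block volume $n^{\Theta(1)}$ — one must keep $R$ fixed, and then the per-site rate $\varkappa<1/(d-1)$, not merely a whole-boundary rate $\rho<1$ at diverging distance, is what has to be proved.
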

\medskip

It follows directly from our results that $B_c \leq B_c^G$. Surprisingly, we can show that $B_c^G=\hat{B}_c$. This means that the \colrev{critical magnetic field} for the annealed Ising model on $G_n$ is the same as that of the Ising model on the $d$-ary tree, which equals the local limit of $G_n$:

\begin{prop}[Identification of $B_c^G$] \label{pcex}
For any $\beta>\beta_c$,  $B_c^G(\beta)=\hat{B}_c(\beta)$.
\end{prop}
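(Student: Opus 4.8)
The plan is to establish $B_c^G = \hat B_c$ by comparing two variational descriptions of the critical field. On the one side, by \eqref{def-hbc} we have the closed form $\hat B_c(\beta) = -\tfrac12 \hvarphibo'(t_u) = \sup_{t\in(0,\tfrac12)}\bigl(-\tfrac12\hvarphibo'(t)\bigr)$, and using \eqref{limhvarphibb} one can differentiate explicitly: $\hvarphibo'(t) = I'(t) + d\log f_\beta(t\wedge(1-t)) - d\log f_\beta\bigl((1-t)\wedge t\bigr)$ — wait, more carefully, for $t<\tfrac12$ one gets $\hvarphibo'(t) = \log\frac{1-t}{t} + d\log\frac{f_\beta(t)}{f_\beta(1-t)}$. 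So the left-hand side reduces to optimizing a fully explicit one-variable function involving $f_\beta$ from \eqref{dnwofbeta}. On the other side, $B_c^G$ is the uniqueness threshold for the Ising model on the infinite $d$-ary tree (the $(d-1)$-ary tree in the rooted sense), which is governed by the fixed-point analysis of the tree recursion for the ratio of ``plus'' to ``minus'' boundary-to-root messages. The bulk of the work is to show these two quantities coincide.

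First I would set up the tree recursion. For the Ising model on the rooted tree where each vertex has $d$ children (or $d-1$ below the root), let $x$ denote the effective field transmitted from a subtree to its parent; the recursion is $x \mapsto B + (d-1)\,\mathrm{atanh}\bigl(\tanh\beta\tanh x\bigr)$, or in the appropriate ``magnetization'' coordinate, a Möbius-type map. Uniqueness of the Gibbs measure holds iff this map has a unique fixed point, and $B_c^G$ is the infimum of $B$ for which uniqueness holds; at $B=B_c^G$ the map is tangent to the diagonal, giving the two equations ``fixed point'' and ``derivative equals $1$''. Next I would solve these two equations. The key algebraic step is to recognize that the function $f_\beta(t)$ in \eqref{dnwofbeta}, which arose in \cite{C} from a large-deviation/annealing computation, is exactly the generating object for the tree recursion: one should be able to check that $t_u$ (the reflection point of $\hvarphibo$, where $\hvarphibo''(t_u)=0$) corresponds precisely to the tangency condition of the tree map, and that the value $-\tfrac12\hvarphibo'(t_u)$ equals the external field $B$ solving the tangency system. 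Concretely, I would change variables from the proportion $t$ of plus spins to the local magnetization/field variable, show $\hvarphibo'$ maps to the log-ratio of messages, and match $\hvarphibo''(t_u)=0$ with the criticality condition for the fixed point.

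Alternatively — and this may be cleaner — I would route the proof through quantities already in the paper. We know $B_c \le B_c^G$ from Theorem \ref{mt} (parts (ia) and (ii) together force this) and we expect $B_c$ is defined via the quenched pressure $\varphibo$ exactly as $\hat B_c$ is via $\hvarphibo$ in \eqref{def-hbc-que} versus \eqref{def-hbc}. The classical results on the Ising model on locally tree-like graphs (e.g. \cite{DM, DMSS, MonMosSly12}) identify the quenched pressure $\varphi(\beta,B)$ with a tree formula (the Bethe/replica-symmetric free energy of the $d$-regular tree), and the non-analyticity point of $B\mapsto\varphi(\beta,B)$ — equivalently the disappearance of the valley in $t\mapsto\varphi_{\beta,B}(t)$ — is exactly $B_c^G$, the uniqueness threshold on the tree. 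Combined with \eqref{an=qu}, which says the annealed and quenched pressures are equal, and with the observation that $\hat B_c$ (resp.\ $B_c$) is precisely the field at which the valley of $\hvarphibo$ (resp.\ $\varphibo$) tilted by $B(2t-1)$ first disappears, one gets $\hat B_c = B_c = B_c^G$. Here I would have to be careful, since a priori only $B_c\le\hat B_c$ is asserted; I would argue that equality of the pressures \eqref{an=qu} forces equality of the maximizers and hence of the supergraphical structure of $\hvarphibo$ and $\varphibo$ at the level of the Legendre-type quantity in \eqref{def-hbc}–\eqref{def-hbc-que}.

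The main obstacle I expect is the explicit identification at the heart of either route: verifying that the tangency condition for the $d$-ary-tree recursion at $B=B_c^G$ is algebraically the same as the reflection condition $\hvarphibo''(t_u)=0$ together with $B=-\tfrac12\hvarphibo'(t_u)$. This requires massaging the square-root expression \eqref{dnwofbeta} for $f_\beta$ into the $\tanh\beta$-form of the tree recursion and doing a somewhat delicate change of variables; it is ``just'' calculus and algebra but it is the crux, and one must also handle the direction of the inequalities and the fact that $B_c^G$ is defined as an infimum over $B$ with a \emph{global} uniqueness statement (so one should check there is a single tangency, i.e.\ no re-entrant non-uniqueness, which for the ferromagnetic Ising model on a tree is standard but should be cited). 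The remaining parts — monotonicity of the relevant functions in $t$ on $(0,\tfrac12)$ so that the $\sup$ in \eqref{def-hbc} is attained at an interior reflection point, and positivity/finiteness of $\hat B_c$ for $\beta>\beta_c$ — are already packaged into Lemma \ref{ll} and can be invoked directly.
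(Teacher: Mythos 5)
Your first route is in substance the paper's proof, and it does go through; but the paper's execution is simpler than what you outline, because no tangency analysis is needed at all. Both thresholds are already characterized as uniqueness-of-solution thresholds: by Lemma \ref{lem-crit-ext-Gibbs}, $B_c^G$ is the infimum of $B$ for which the tree fixed-point equation $r=\e^{-2B}L_\beta(r)^{d-1}$ (equivalently \eqref{eqgr}) has a unique solution, while by Lemma \ref{ll}, $\hat B_c$ is the infimum of $B$ for which $\hvarphibb'(t)=0$, i.e.\ \eqref{eqac}, $\log\frac{1-t}{t}+2B+d\log f_\beta(t)=0$, has a unique solution. The crux you defer (``massaging the square-root expression'') is a one-line substitution: $t=p(r)=1/(rL_\beta(r)+1)$ is a decreasing bijection from $(0,\infty)$ onto $(0,1)$ with inverse $r=f_\beta(t)(1-t)/t$, and under it $\frac{1-t}{t}=rL_\beta(r)$ and $f_\beta(t)=1/L_\beta(r)$, so \eqref{eqac} and \eqref{eqgr} transform into one another. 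Hence for every $B$ the two equations have the same number of solutions, the sets of $B$ with uniqueness coincide, and the two infima are equal; there is no need to locate a tangency point or to rule out re-entrant non-uniqueness separately. (A small slip in your sketch: for $t<\tfrac12$ one has $\hvarphibo'(t)=\log\frac{1-t}{t}+d\log f_\beta(t)$ as in \eqref{hvbbp}, not $d\log\bigl(f_\beta(t)/f_\beta(1-t)\bigr)=2d\log f_\beta(t)$.) So route 1 is the right plan, but as written it stops exactly at the computation that constitutes the proof.

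Your alternative route should be dropped, as it contains a genuine gap. The equality of pressures \eqref{an=qu} constrains only the maxima of $\varphibb$ and $\hvarphibb$, not their valleys, and the valleys genuinely differ: Section \ref{sec-quen-vs-anne-press} shows $\varphibb(t)=\hvarphibb(t)$ only on $[0,1-t(\beta)]\cup[t(\beta),1]$, and that $\varphibb(\tfrac12)$ lies below $\hvarphibb(\tfrac12)$ by an amount of order $\beta$ for large $\beta$. Consequently the quantities \eqref{def-hbc} and \eqref{def-hbc-que} are not forced to agree; indeed whether $B_c=B_c^G$ (equivalently $B_c=\hat B_c$) is explicitly left open in the paper, and your second route would prove it, which should be a warning sign. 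Moreover, the disappearance of the valley is not a non-analyticity of $B\mapsto\varphi(\beta,B)$: for $B>0$ the pressure equals $\hvarphibb(t_1^B)$ with $t_1^B$ a nondegenerate maximizer in $(\tfrac12,1)$, bounded away from the degenerating critical point $t_u<\tfrac12$, so the pressure is smooth across $\hat B_c$ and its only non-analyticity in $B$ is at $B=0$. Neither the locally tree-like limit theorems nor \eqref{an=qu} can therefore detect $\hat B_c$ or $B_c^G$ through the pressure alone.
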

\medskip

However, we do not know whether the quenched quantity $B_c$ is equal to $B_c^G$ or not. In particular, we do not know the quenched behavior  of $\tmix$ when $B_c \leq B \leq B_c^G$.

\subsection{Discussion} 
\label{sec-disc}

Here we give some comments on our results, and state some open problems.

\paragraph{\bf Metastability for Ising models on configuration models.} There are some related works on the metastability of Glauber dynamics on random graphs at zero temperature. In \cite{D,DHJN}, the authors study the Ising model on configuration models. They show that the hitting time to all-plus configuration for the dynamics starting from all minus (which we denote by $T_n$) grows exponentially fast at zero temperature, i.e., $\lim_{\beta \rightarrow \infty} \mubb(T_n \geq \exp(c \beta n)) =1$ whp for the random graph.
\medskip

\paragraph{\bf Cut-off for quenched setting.} In Theorem \ref{amt}(ii), we show the cut-off phenomenon for the dynamics under the annealed law in the high-temperature regime and in the low-temperature regime with high external field. We guess that the same phenomenon occurs in the quenched setting. As far as we know, the best result is due to Lubetzky and Sly \cite{LS}, who prove the cut-off phenomenon for the Ising model on graphs with bounded degrees at sufficiently high temperature (more precisely, $\beta \leq \varepsilon/\Delta$ with $\Delta$ the maximal degree and $\varepsilon$ a universal constant).
\medskip

\paragraph{\bf Extension to configuration models with general degrees.} It is natural to extend our results to the setting of Ising models on configuration models with general degrees. However, it is not immediate how to appropriately define the critical external fields. One may be tempted to conjecture that the definitions in \eqref{def-hbc} and \eqref{def-hbc-que} are still the right critical values for $B$. However, in the non-regular case, this is quite unclear. Indeed, 
to our best knowledge there is no result for the critical external field $B_c^{GW}$ for the uniqueness of Gibbs measure on a Galton-Watson tree (the weak limit of configuration model), see also \eqref{BcG-def}. Thus, extensions of our results to non-regular cases require considerable novel ideas.
\medskip

\paragraph{\bf Slow mixing times and their dependence on $\beta$ for low temperatures.} Fix $\beta$ large. In Theorem \ref{amt}, we show that $\tmixh = \exp(\lambda n(1+o(1)))$ with $\lambda$ bounded by a universal constant for the whole regime of temperatures and external fields. In contrast, Theorem \ref{mt} says that $\tmix \geq \exp(c_1 \beta n)$  \colrev{whp when}  $B\leq c_1\beta$. That means that the annealed dynamics mixes much faster than the quenched dynamics at low temperature (i.e., with large $\beta$).
\medskip

\paragraph{\bf Local spin and graph configurations for wrong magnetizations.} Recall the discussion of \eqref{bounded-valley} below Theorem \ref{amt}(i). It would be of interest to investigate the local neighborhoods and Ising spin configurations around a uniform vertex when there are around $s n$ plusses, for general $s$. It can be expected that the local graph configuration remains on being a $d$-ary regular tree. If the discussion below Theorem \ref{amt}(i) is indeed correct, then the spin configuration either equals the plus or the minus configuration, each with a specific probability. 
\medskip

\paragraph{\bf Organisation of the proof.}
We start in Section \ref{sec-prelim} by \colrev{quantifying} various preliminaries on the critical external field for the Ising model on $d$-regular configuration models, as well as general results on Markov chain mixing times.
We continue in Section \ref{sec-annealed} by discussing the mixing of annealed Ising models and prove Theorem \ref{amt}. This is achieved by comparing the annealed Ising model to a generalized Curie-Weiss model that makes the message that the annealed setting is close to mean-field precise. 
In Section \ref{sec-fixed-spin}, we identify the quenched fixed-spin partition function in Proposition \ref{propbcq}.
In Section \ref{sec-quenched}, we investigate the mixing of quenched Ising models and prove Theorem \ref{mt}.
\colrev{In Section \ref{sec-BcG}, we prove   Proposition \ref{pcex}, i.e. the identification of $B_c^G$.}
In Section \ref{sec:dntn}, we investigate the annealed cut-off behavior stated formally in Proposition \ref{prop:dntn}, and give a sketch of its proof.
\iflongversion
The full proof is given in Appendix \ref{sec:dntn-app}.

\else
The full proof of Proposition \ref{prop:dntn} is given in the appendix of the extended version of this paper \cite{CanGiaGibHof19b}.
\fi

\section{Preliminaries: Critical external fields and mixing times}
\label{sec-prelim}
In this section, we list some preliminary results that are used later. We start by investigating critical externals fields.

\subsection{Critical external fields}
\colrev{The following lemma gives a quantitative definition of the critical external field $B_c^G$. We will omit the proof, which follows by  standard calculations as in  \cite[Proposition 4.5]{MSW}:}

\begin{lem}[Critical external field for uniqueness of the Gibbs measure]
\label{lem-crit-ext-Gibbs}
The critical external field $B_c^G$ for the uniqueness of the Ising Gibbs measure on a $d$-ary tree satisfies
	\begin{equation*}
	B_c^G=\inf \{B \geq 0\colon \textrm{solution to $\theta=-2B+(d-1) \log L_{\beta}(\e^{\theta})$ is unique}\},
	\end{equation*} 
where
	\begin{equation} \label{def-F}
	L_{\beta}(x)=\frac{\e^{2 \beta} x +1}{\e^{2 \beta} +x}.
	\end{equation}
\end{lem}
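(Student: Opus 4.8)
The plan is to run the classical tree-recursion (``cavity'') analysis, suppressing the computational details exactly as \cite[Proposition 4.5]{MSW} does. Fix $\beta\ge 0$ and $B\ge 0$ and consider the ferromagnetic Ising model on the rooted $d$-ary tree, by which we mean here the tree in which the root has $d$ children and every other vertex has $d-1$ children (equivalently, the infinite $d$-regular tree, the local limit of the $d$-regular configuration model), with edge weights $\e^{\beta\sigma_u\sigma_v}$ and vertex weights $\e^{B\sigma_v}$, i.e.\ the local limit of the weight $\e^{-H_n}$. For a vertex $v$ with descendant subtree $T_v$, let $Z_v^{\pm}$ be the partition function on $T_v$ with the spin at $v$ conditioned to be $\pm1$, and put $\theta_v:=\log(Z_v^-/Z_v^+)$. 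Summing over the children $u$ of $v$, collecting the factors $\e^{\pm B}$ and $\e^{\pm\beta}$, and using the identity $L_\beta(1/x)=1/L_\beta(x)$ gives the exact one-step recursion
	\begin{equation*}
	\theta_v=-2B+\sum_{u\ \text{child of}\ v}\log L_\beta\!\left(\e^{\theta_u}\right),
	\end{equation*}
with $L_\beta$ precisely the function in \eqref{def-F}; the sign in front of $B$ and the rational form of $L_\beta$ are just the bookkeeping of these factors. Since every non-root vertex has $d-1$ children, a spatially homogeneous message is exactly a fixed point of
	\begin{equation*}
	\Phi_B(\theta):=-2B+(d-1)\log L_\beta\!\left(\e^{\theta}\right).
	\end{equation*}

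The second step is to identify uniqueness of the Gibbs measure with uniqueness of the fixed point of $\Phi_B$. As the model is attractive, monotonicity (FKG/Holley) makes the all-plus and all-minus phases the extremal Gibbs measures on the tree, so the Gibbs measure is unique if and only if these two phases coincide. Propagating boundary conditions inward, the two phases correspond to iterating $\Phi_B$ from the two extremes $\theta=\pm\infty$; since $x\mapsto L_\beta(x)$ is strictly increasing for $\beta\ge 0$, the map $\Phi_B$ is continuous and increasing and sends $\R$ into the bounded interval $[-2B-2\beta(d-1),\,-2B+2\beta(d-1)]$, so one of these iterations decreases to the \emph{largest} fixed point of $\Phi_B$ and the other increases to the \emph{smallest} one. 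These two limits agree exactly when $\Phi_B$ has a unique fixed point. Hence, for every $B\ge 0$, the Ising Gibbs measure on the $d$-ary tree is unique if and only if $\theta=-2B+(d-1)\log L_\beta(\e^\theta)$ has a unique solution.

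Since this equivalence holds for every $B\ge 0$, the two subsets of $[0,\infty)$ — the one defining $B_c^G$ in \eqref{BcG-def} and the one appearing in the statement — are literally equal, so their infima coincide, which is the assertion. Optionally one checks that both sets are half-lines, so that the infimum is a genuine threshold: a fixed point $\theta^\star$ is attracting iff $(d-1)\,\e^{\theta^\star}L_\beta'(\e^{\theta^\star})/L_\beta(\e^{\theta^\star})<1$, and this quantity is maximized at $\theta^\star=0$ with value $(d-1)\tanh\beta$; non-uniqueness requires a repelling fixed point, and a short convexity computation shows the number of solutions of $\theta=\Phi_B(\theta)$ is non-increasing in $B$. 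In particular $B_c^G>0$ precisely when $(d-1)\tanh\beta>1$, that is $\beta>\beta_c$.

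I expect the only genuinely non-routine ingredient to be the reduction in the second paragraph: on the (non-amenable) tree, uniqueness of the Gibbs measure is equivalent to the agreement of the all-plus and all-minus boundary recursions. This rests on the extremality of the $\pm$ phases for monotone systems and on the identification of boundary-to-bulk information flow with iteration of the scalar map $\Phi_B$; both are classical (see \cite{MSW} and the standard Gibbs-measures-on-trees literature), which is exactly why the proof is omitted. Everything else is elementary calculus with the explicit function $L_\beta$.
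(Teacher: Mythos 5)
Your proposal is correct and is exactly the argument the paper has in mind: the lemma's proof is omitted there with a pointer to \cite[Proposition 4.5]{MSW}, and your cavity recursion $R_v=\e^{-2B}\prod_{u\prec v}L_\beta(R_u)$ together with the monotone plus/minus boundary iteration converging to the largest/smallest fixed point of $\Phi_B$ is precisely that standard calculation (and it matches the recursion \eqref{rvw} and fixed-point equation \eqref{wibqx} the paper uses later). The only point worth making explicit is the one you already flag: uniqueness of the Gibbs measure reduces, via FKG/extremality of the $\pm$ phases and homogeneity of the tree, to agreement of the root marginals under the two extreme boundary conditions, which is the classical ingredient borrowed from the Gibbs-measures-on-trees literature.
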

\medskip

In the next lemma, we  give a characterization of the annealed critical external field $\hat{B}_c$. 

\begin{figure}[h!]
	\centering
	\begin{subfigure}[b]{0.4\linewidth}
		\includegraphics[width=\linewidth]{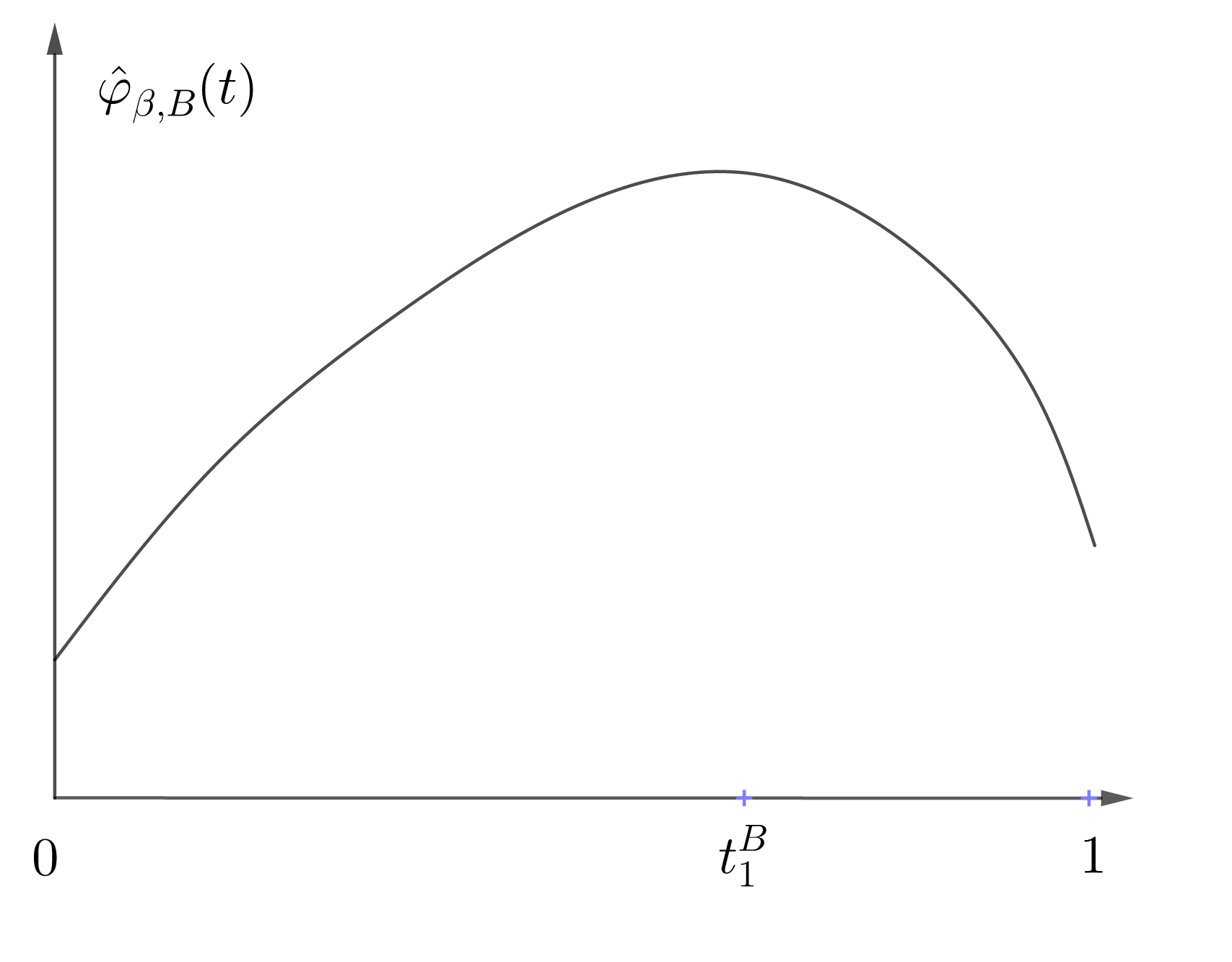}
				\caption{$\beta<\beta_c$, \, or $\beta > \beta_c$, $B > \hat{B}_c$}
	\end{subfigure}
	\begin{subfigure}[b]{0.4\linewidth}
		\includegraphics[width=\linewidth]{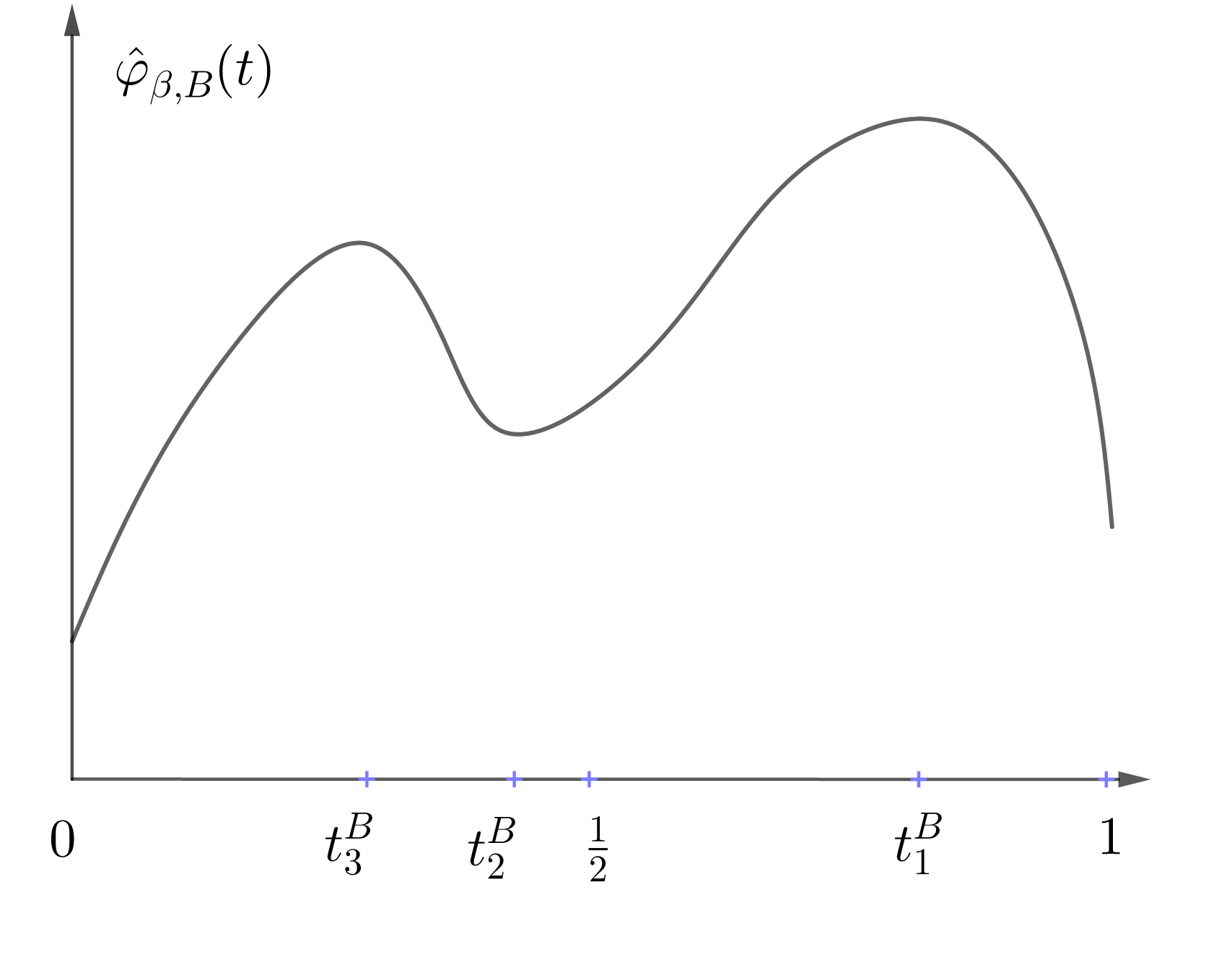}
		\caption{$\beta>\beta_c$,  $0<B<\hat{B}_c$}
	\end{subfigure}
	\caption{Graphs of $\hvarphibb$}
	\label{fig:mag}
\end{figure}
	
\begin{lem}[Critical external field for fast mixing annealed Ising]
\label{ll}
	Fix $\beta > \beta_c$, then the following assertions hold:
	\begin{itemize}
		\item [(i)] For any $B\geq 0$, the function $t\mapsto \hvarphibb''(t)=\hvarphibo''(t)$ has two zeros $t_u$ and $1-t_u$, with 
		\begin{equation} \label{eotu}
		t_u=\frac{1}{2} \Big( 1- \sqrt{1-\frac{4\e^{-4\beta}(d-1)}{(d-2)^2(1-\e^{-4\beta})}} \Big) \in (0, \tfrac{1}{2}).
		\end{equation}
		\item[(iia)] The equation \eqref{def-hbc} holds. Furthermore, $\hvarphibb'(t)$ has three solutions $1>t_1^B>\tfrac{1}{2} \geq t_2^B > t_3^B>0$ for $0\leq B <\hat{B}_c$. In particular, $t_1^B$, $t_2^B$, $t_3^B$ are the global maximizer,  the local  minimizer,  and the local maximizer, respectively, of the function $t\mapsto \hvarphibb(t)$. Moreover, $t_2^0=\tfrac{1}{2}$ for $B=0$. Further, $t_1^0 =1-t_3^0$ and $t_3^0$ are both global maximizers of $\hvarphibo(t)$. In addition, 
			$$
			\sup \{\hvarphibb(t_3^B) -\hvarphibb(t_2^B) \colon \beta > \beta_c, 0\leq B < \hat{B}_c\} < \infty.
			$$
		\item[(iib)] If $\beta <\beta_c$, or $\beta >\beta_c$ and $B>\hat{B}_c$, then $\hvarphibb'(t)=0$ has a unique solution $t_1^B\in (\tfrac{1}{2},1)$, which is the global maximizer of $\hvarphibb(t)$. 
	\end{itemize}
\end{lem}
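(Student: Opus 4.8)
\textbf{Proof plan for Lemma \ref{ll}.}

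The plan is to work entirely with the zero-field function $\hvarphibo$ and exploit the affine relation $\hvarphibb(t)=\hvarphibo(t)+B(2t-1)$, so that $\hvarphibb'(t)=\hvarphibo'(t)+2B$ and $\hvarphibb''(t)=\hvarphibo''(t)$. For part (i), I would differentiate the explicit expression \eqref{limhvarphibb} twice. Since $I''(t)=-\frac1{t(1-t)}$ and the integral term contributes $d\,\frac{d}{dt}\bigl(\log f_\beta(t\wedge(1-t))\bigr)$, the equation $\hvarphibo''(t)=0$ reduces, after substituting the closed form \eqref{dnwofbeta} for $f_\beta$ and simplifying, to a quadratic in $(1-2t)^2$ (equivalently in $t$ via $t\mapsto t(1-t)$). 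The symmetry $t\leftrightarrow 1-t$ of $\hvarphibo$ forces the two roots to be $t_u$ and $1-t_u$, and solving the quadratic yields the formula \eqref{eotu}; one then checks that the discriminant $1-\frac{4\e^{-4\beta}(d-1)}{(d-2)^2(1-\e^{-4\beta})}$ is strictly between $0$ and $1$ precisely when $\beta>\beta_c=\mathrm{atanh}(1/(d-1))$, which places $t_u\in(0,\tfrac12)$. The main bookkeeping obstacle here is the differentiation of the $f_\beta$ term; I would simplify using the identity that $f_\beta$ satisfies (it comes from a quadratic equation), rather than brute-force differentiating the square root.

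For part (iia), the convexity information from (i) is the engine: $\hvarphibo''$ is negative on $(0,t_u)$, positive on $(t_u,1-t_u)$, and negative on $(1-t_u,1)$, so $\hvarphibo'$ is strictly decreasing, then strictly increasing, then strictly decreasing, hence $\hvarphibo'$ has at most three zeros, and likewise $\hvarphibb'=\hvarphibo'+2B$. At $B=0$ we know $\hvarphibo$ is symmetric about $\tfrac12$ with $\hvarphibo'(\tfrac12)=0$; for $\beta>\beta_c$ the local minimum of $\hvarphibo'$ at $t=1-t_u$ and local maximum at $t=t_u$ straddle zero (this is where the sub/supercriticality enters: at $\beta>\beta_c$ the curve $\hvarphibo$ develops the double-well shape, so $t=\tfrac12$ is a local min and there are symmetric global maxima $t_3^0$ and $1-t_3^0$). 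Then I would define $\hat B_c$ by the requirement that $\hvarphibb'$ have a double root, i.e. $2\hat B_c=-\min_{t\in(0,1/2)}\hvarphibo'(t)=-\hvarphibo'(t_u)$ — here I use that on $(0,t_u)$ the function $-\hvarphibo'$ is increasing and on $(t_u, 1-t_u)$ decreasing so its max on $(0,\tfrac12)$ is at $t_u$. For $0\le B<\hat B_c$ the three zeros $t_3^B<t_2^B\le t_1^B$ of $\hvarphibb'$ persist, and by the sign pattern of $\hvarphibb''$ they are local max, local min, global max respectively; the values $t_2^0=\tfrac12$, $t_1^0=1-t_3^0$ follow from symmetry at $B=0$. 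The three equivalent formulas for $\hat B_c$ in \eqref{def-hbc} I would get from: (a) $-\tfrac12\hvarphibo'(t_u)$ by definition; (b) $\sup_{t\in(0,1/2)}(-\tfrac12\hvarphibo'(t))$ since the sup is attained at $t_u$ as just noted; (c) $\sup_{0<s<t<1/2}\frac{\hvarphibo(s)-\hvarphibo(t)}{2(t-s)}$ by the mean value theorem, observing that the supremum of difference quotients of $-\hvarphibo/2$ over the interval where $-\hvarphibo'$ is first increasing then decreasing is exactly the max of the derivative, $-\tfrac12\hvarphibo'(t_u)$ (letting $s\uparrow t\to t_u$).

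For the boundedness claim $\sup\{\hvarphibb(t_3^B)-\hvarphibb(t_2^B):\beta>\beta_c,\,0\le B<\hat B_c\}<\infty$, and the analogous claim that the $\sup$ over the same region of this energy barrier is finite, I would bound $\hvarphibb(t_3^B)-\hvarphibb(t_2^B)\le \hvarphibb(t_1^B)-\hvarphibb(t_2^B)\le \hvarphibb(t_1^B)-\hvarphibb(\tfrac12)=\hvarphibo(t_1^B)-\hvarphibo(\tfrac12)+B(2t_1^B-1)$, and then use that $B<\hat B_c\le$ some explicit $\beta$-uniform bound while $\hvarphibo(t_1^B)-\hvarphibo(\tfrac12)\le \max_t\hvarphibo(t)-\hvarphibo(\tfrac12)$, which I would estimate crudely from \eqref{limhvarphibb} using $0\le\log f_\beta\le\log\frac{1}{2(1-t)}$-type bounds plus the discussion below Theorem \ref{amt}(i) (the $d$-regular graph splits into two halves with exponentially small but $\beta$-independent cost, giving $\hvarphibo(\tfrac12)\ge -C$ uniformly in $\beta$; combined with $\max_t\hvarphibo(t)=\hat\varphi(\beta,0)\le \tfrac{\beta d}{2}+\log 2$ this is not yet $\beta$-uniform, so I would instead argue directly that $\hvarphibo(t_1^B)-\hvarphibo(\tfrac12)$ stays bounded by tracking that $t_1^B$ is near the symmetric maximizer $t_3^0$ where the barrier height is controlled — this is the genuinely delicate point and the one I expect to be the main obstacle, since it requires a $\beta$-uniform estimate of the well depth rather than a soft argument). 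Finally, part (iib): when $\beta<\beta_c$ the discriminant in \eqref{eotu} is negative so $\hvarphibo''<0$ throughout $(0,1)$, hence $\hvarphibo$ is strictly concave and $\hvarphibb$ has a unique critical point, which lies in $(\tfrac12,1)$ for $B>0$ since $\hvarphibb'(\tfrac12)=\hvarphibo'(\tfrac12)+2B=2B>0$; when $\beta>\beta_c$ but $B>\hat B_c$, the min of $\hvarphibb'$ over $(0,1-t_u)$ is $\hvarphibo'(t_u)+2B>\hvarphibo'(t_u)+2\hat B_c=0$, so $\hvarphibb'>0$ on $(0,1-t_u)$ and strictly decreasing on $(1-t_u,1)$ from a positive value to $-\infty$, giving exactly one zero, necessarily in $(\tfrac12,1)$ since $\hvarphibb'(\tfrac12)=2B>0$.
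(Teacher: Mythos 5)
Your treatment of (i), (iib), and the structural parts of (iia) follows essentially the same route as the paper: reduce $\hvarphibo''(t)=0$ to a quadratic in $t$ (the paper simply cites the proof of \cite[Theorem 1.1(ii)]{C} for this), use the sign pattern of $\hvarphibb''$ together with $\hvarphibb'(0^+)=+\infty$, $\hvarphibb'(1^-)=-\infty$ to get at most three zeros and their max/min/max nature, identify $\hat{B}_c=-\tfrac12\hvarphibo'(t_u)$ as the maximum of $-\tfrac12\hvarphibo'$ on $(0,\tfrac12)$, obtain the difference-quotient formula in \eqref{def-hbc} from monotonicity/MVT, and use $\hvarphibb'(\tfrac12)=2B$ (via $f_\beta(\tfrac12)=1$) for $t_2^0=\tfrac12$ and for locating the unique root in (iib). (Minor slip: $t_u$ is a local \emph{minimizer} of $\hvarphibo'$ and $1-t_u$ a local maximizer, not the other way around; your later use of $-\hvarphibo'$ is consistent with the correct orientation.)

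The genuine gap is the uniform bound $\sup\{\hvarphibb(t_3^B)-\hvarphibb(t_2^B)\colon\beta>\beta_c,\,0\le B<\hat{B}_c\}<\infty$, which you yourself flag as unresolved, and whose proposed reduction is broken in two places. First, the step $\hvarphibb(t_1^B)-\hvarphibb(t_2^B)\le\hvarphibb(t_1^B)-\hvarphibb(\tfrac12)$ goes the wrong way: $t_2^B$ is the minimizer of $\hvarphibb$ on the interval between the two local maxima and $\tfrac12$ lies in that interval, so $\hvarphibb(t_2^B)\le\hvarphibb(\tfrac12)$ and replacing the well bottom by the value at $\tfrac12$ only \emph{decreases} the gap, so it cannot upper-bound it. Second, the input "$B<\hat{B}_c\le$ a $\beta$-uniform constant" is false: $\hat{B}_c(\beta)$ (equal to $B_c^G(\beta)$ by Proposition \ref{pcex}) grows linearly in $\beta$ as $\beta\to\infty$, so no soft bound of the form $B(2t_1^B-1)\le C$ is available. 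The paper avoids all of this with a cancellation argument: by \cite[Lemma 3.1]{C} one has the representation \eqref{fohvbb}, $\hvarphibb(t)=\tfrac{\beta d}{2}-B+I(t)+2Bt+dt\log f_\beta(t)+J(t)$ with $J$ uniformly bounded on $[0,\tfrac12]$, and substituting the stationarity equation \eqref{hvbbp}, i.e. $d\log f_\beta(t_i^B)=-\log\bigl(\tfrac{1-t_i^B}{t_i^B}\bigr)-2B$, at \emph{both} critical points $t_2^B$ and $t_3^B$ collapses the $\beta$- and $B$-dependent terms to the common constant $\tfrac{\beta d}{2}-B$, giving $\hvarphibb(t_i^B)=\tfrac{\beta d}{2}-B-\log(1-t_i^B)+J(t_i^B)$ and hence $|\hvarphibb(t_3^B)-\hvarphibb(t_2^B)|\le\log 2+2\max_{[0,1/2]}|J|$, uniformly in $\beta$ and $B$. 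Without this (or an equivalent quantitative estimate of the well depth), the boundedness claim — and hence \eqref{bounded-valley} in Theorem \ref{amt}(i) — remains unproven in your proposal.
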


See Figure \ref{fig:mag} for what $t\mapsto \hvarphibb$ looks like. Figure \ref{fig:mag}(B) gives an example of the setting in Lemma \ref{ll}(iia), while Figure \ref{fig:mag}(A) gives an example of the setting in Lemma \ref{ll}(iib).


\begin{proof}
	It has been shown in the proof of \cite[Theorem 1.1 (ii)]{C} that the equation $\hvarphibb''(t)=\hvarphibo''(t)=0$ is equivalent to 
	\begin{equation*}
	t^2-t+\frac{(\e^{-4\beta}-1)(d-1)}{(d-2)^2(1-\e^{-4\beta})}=0.
	\end{equation*}
The above equation	has two solutions $0<t_u<\tfrac{1}{2}<1-t_u$ as stated in (i), where $t_u$ is given in \eqref{eotu}.

We turn to proving (ii). Since $t_u$ is the unique solution of $\hvarphibo''(t)=0$ in $[0,\tfrac{1}{2}]$ and $\hvarphibo''(0^+)=-\infty$ (since $\hvarphibo''(t)$ equals $I''(t) +$ bounded terms), the function $-\hvarphibo''$ changes its sign from plus to minus at $t_u$ and thus $t_u$ is the maximizer of $-\hvarphibo'$ in $[0,\tfrac{1}{2}]$. Thus, the equation \eqref{def-hbc} holds. Since $\hvarphibb''(t)=0$ has two solutions,  $\hvarphibb'$ has at most three zeros. Moreover, $\hvarphibb'(0^+)=+\infty$ and $\hvarphibb'(1^-)=-\infty$ and $t_u$ is the local minimizer and $1-t_u$ is the local maximizer of $\hvarphibb'$. Therefore, $\hvarphibb'(t)=0$  has a unique solution if $\hvarphibb'(t_u)>0$. Moreover, $\hvarphibb'(t)=\hvarphibo'(t)+2B$. Thus $\hvarphibb'(t_u)>0$ is equivalent to $B > \hat{B}_c$. That gives (iib).

On the other hand, if $B<\hat{B}_c$ (or equivalently $\hvarphibb'(t_u)<0$) then  $\hvarphibb'$  has three solutions $0<t_3^B<t_u<t_2^B<1-t_u<t_1^B<1$. \colrev{Note that $f_{\beta}(t) f_{\beta}(1-t)=1$ for all $t \in (0,1)$.} Thus  for all $0 \leq t\leq 1 $, 
 	\begin{equation} \label{hvbbp}
	\hvarphibb'(t)=\log \left(\frac{1-t}{t}\right)+d\log f_{\beta}(t)+2B.
 	\end{equation}
Thus $\hvarphibb'(\tfrac{1}{2})=2B$, since $f_{\beta}(\tfrac{1}{2})=1$. Therefore, if $B=0$, then $t_2^0=\tfrac{1}{2}$ and if $B>0$ then $t_2^B<\tfrac{1}{2}$. 

It remains to bound the gap  $\hvarphibb(t_3^B) -\hvarphibb(t_2^B)$ uniformly in $\beta>\beta_c$. It has been proved in \cite[Lemma 3.1]{C} that 
	\begin{eqnarray} \label{fohvbb}
	\hvarphibb(t) =  \frac{\beta d}{2}  -B+ I(t) + 2Bt + d t \log f_{\beta}(t) + J(t), 
	\end{eqnarray} 
where $t\mapsto J(t)$ is a uniformly bounded function on $[0,\tfrac{1}{2}]$.
Since $t_3^B$ and $t_2^B$ are solutions of $\hvarphibb'(t)=0$, using \eqref{hvbbp} and \eqref{fohvbb}, we get that 
	\begin{eqnarray*}
	\hvarphibb(t_3^B) &=&  \frac{\beta d}{2}  -B+ I(t_3^B) - t_3^B \log \left(\frac{1-t_3^B}{t_3^B}\right) + J(t_3^B) \\
	&=& \frac{\beta d}{2}  -B - \log (1-t_3^B) + J(t_3^B),
	\end{eqnarray*}
and the same equation holds for $t_2^B$. Hence,
	\begin{equation*}
	|\hvarphibb(t_3^B)-\hvarphibb(t_2^B)| \leq \log 2 + 2 \max_{0\leq t\leq \tfrac{1}{2}} |J(t)|.
	\end{equation*}
This completes the proof of (iia), and thus of the lemma. 
\end{proof}

\subsection{Bottleneck and spectral gap bounds for mixing times}
We next recall a result relating the hitting time and spectral gap of a Markov chain with the bottleneck ratio, which has been proved in the 
book \cite[Theorem 12.3, 12.4 and 13.14]{LPW}:

\begin{lem}[Mixing time bounds]
\label{lbr}
Let $P$ be the transition matrix of a Markov chain on a finite state space $\Omega$ with reversible measure $\pi$ and let $\gamma$ be the spectral gap of this chain. 
\begin{itemize}
	\item [(i)]   In terms of the above notation, 
		\[ 
		\left(\gamma^{-1}-1\right) \log 2\leq \tmix \leq \log \left(\frac{4}{\min_{x \in \Omega} \pi (x)}\right) \gamma ^{-1}. 
		\]
	\item[(ii)] Define the bottleneck ratio as 
		$$
		\Phi^\star= \min \limits_{S\colon \pi(S)\leq \tfrac{1}{2}} \Phi(S),
		\qquad
		\text{where} 
		\qquad
		\Phi(S)=\frac{Q(S,S^c)}{\pi(S)},
		$$
	and
		$$
		Q(x,y)=\pi(x)P(x,y), \qquad Q(K,L)=\sum\limits_{x \in K, y\in L} Q(x,y), \qquad \pi(S)=\sum \limits_{x\in S} \pi(x).
		$$ 
	Then 
		\[
		(\Phi ^\star )^2 \leq \gamma \leq 2 \Phi^\star. 
		\]
As a consequence,
	\[
	\tmix \geq \left(\frac{1}{2\Phi^\star}-1\right) \log 2.
	\]	
\end{itemize}	 
\end{lem}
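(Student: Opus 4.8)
The plan is to deduce all three displayed inequalities from classical facts about reversible finite Markov chains, essentially citing \cite[Chapters 12--13]{LPW}; since the arguments are standard I will only indicate the ideas rather than reproduce them in full.

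For part (i), I would first use reversibility to diagonalize $P$ in $L^2(\pi)$, writing its eigenvalues as $1=\lambda_1>\lambda_2\geq\cdots\geq\lambda_{|\Omega|}\geq-1$, so that $\gamma=1-\lambda_2$ (if the chain has negative eigenvalues one works with the absolute spectral gap $1-\max_{i\geq 2}|\lambda_i|$, which is the relevant quantity for the upper bound). The lower bound $(\gamma^{-1}-1)\log 2\leq\tmix$ comes from testing the distance to stationarity against a second eigenfunction $f_2$: since $P^tf_2=\lambda_2^tf_2$, evaluating at a point where $|f_2|$ is maximal gives $d_n(t)\geq\tfrac12\lambda_2^t\geq\tfrac12(1-\gamma)^t$, and requiring this to drop below $\tfrac14$ forces $t\geq(\gamma^{-1}-1)\log 2$ after using $\log\frac{1}{1-\gamma}\leq\frac{\gamma}{1-\gamma}$; this is \cite[Theorem 12.4]{LPW}. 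For the upper bound, I would expand $P^t(x,\cdot)/\pi(\cdot)-1$ in an orthonormal eigenbasis of $L^2(\pi)$; its $L^2(\pi)$-norm is at most $(\max_{i\geq 2}|\lambda_i|)^t\,\pi(x)^{-1/2}\leq\e^{-\gamma t}\pi(x)^{-1/2}$, and Cauchy--Schwarz converts this into $\|P^t(x,\cdot)-\pi\|_{\rm TV}\leq\tfrac12\e^{-\gamma t}(\min_y\pi(y))^{-1/2}$. Solving $\tfrac12\e^{-\gamma t}(\min_y\pi(y))^{-1/2}\leq\tfrac14$ then yields $\tmix\leq\gamma^{-1}\log\big(2/\sqrt{\min_y\pi(y)}\big)\leq\gamma^{-1}\log\big(4/\min_y\pi(y)\big)$, which is \cite[Theorem 12.3]{LPW}.

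For part (ii), I would use the Dirichlet-form characterization of the spectral gap, $\gamma=\min\{\kE(f,f)/\Var_\pi(f)\}$ over non-constant $f$, with $\kE(f,f)=\tfrac12\sum_{x,y}(f(x)-f(y))^2Q(x,y)$. The easy inequality $\gamma\leq 2\Phi^\star$ follows by taking $f=\indic{x\in S}$ for a set $S$ realizing $\Phi^\star$: then $\kE(f,f)=Q(S,S^c)$ and $\Var_\pi(f)=\pi(S)(1-\pi(S))\geq\tfrac12\pi(S)$ since $\pi(S)\leq\tfrac12$, whence $\gamma\leq 2Q(S,S^c)/\pi(S)=2\Phi^\star$. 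The reverse Cheeger bound $(\Phi^\star)^2\leq\gamma$ is the substantive point: one takes $f_2$, replaces it by the positive part of a suitable translate so that $\{f_2>0\}$ (or its complement) has $\pi$-measure $\leq\tfrac12$, and applies a co-area/layer-cake decomposition over the sublevel sets $\{f_2>s\}$ together with Cauchy--Schwarz to bound $\kE(f_2^+,f_2^+)$ from below by $\Phi^\star$ times a quadratic expression in $f_2^+$; this is exactly \cite[Theorem 13.14]{LPW}. Finally, the stated consequence is immediate: from $\gamma\leq 2\Phi^\star$ we get $\gamma^{-1}\geq 1/(2\Phi^\star)$, and substituting into the lower bound of part (i) gives $\tmix\geq(\gamma^{-1}-1)\log 2\geq\big(1/(2\Phi^\star)-1\big)\log 2$.

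The main obstacle is the Cheeger-type lower bound $(\Phi^\star)^2\leq\gamma$ in part (ii); everything else is either a direct spectral computation or a one-line test-function argument. Since a complete proof of the Cheeger inequality is somewhat delicate — the level-set estimate requires care in controlling the Dirichlet form of the truncated eigenfunction — and since the statement is not new, I would simply cite \cite[Theorems 12.3, 12.4 and 13.14]{LPW} rather than reproduce the proof.
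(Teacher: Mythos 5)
Your proposal is correct and follows essentially the same route as the paper: the paper offers no proof of Lemma \ref{lbr} at all, simply quoting \cite[Theorems 12.3, 12.4 and 13.14]{LPW}, and your sketches of the eigenfunction lower bound, the $L^2(\pi)$ upper bound, and the Cheeger/test-function argument are exactly the standard proofs behind those citations, which you then also invoke. The only (shared, harmless) imprecision is that $\gamma$ should strictly be the absolute spectral gap and LPW's Cheeger lower bound reads $\Phi^{\star 2}/2\leq\gamma$ rather than $\Phi^{\star 2}\leq\gamma$, but since only the upper bound $\gamma\leq 2\Phi^\star$ enters the mixing-time consequence, nothing used later is affected.
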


\section{Mixing of annealed Ising models: Proof of Theorem \ref{amt}}
\label{sec-annealed}

In this section, we consider a generalized Curie-Weiss model whose Hamiltonian depends only on the number of positive spin. This section is organized as follows.
In Section \ref{sec-gen-CW}, we state the model and the main result under some smoothness conditions on the Hamiltonian in Theorem \ref{theo:gcw}. The remainder of the section is devoted to the proof of Theorem \ref{theo:gcw}, as well as on its application to the proof of Theorem \ref{amt}. In Section \ref{subsec:theo3.1(i)}, we prove Theorem \ref{theo:gcw}(i), in Section \ref{poamt(i)}, we use Theorem \ref{theo:gcw}(i) to prove Theorem \ref{amt}(i). A major part of this analysis consists in proving that the Hamiltonian appearing in the annealed Ising model satisfies the requested smoothness conditions. \colrev{We conclude   Section \ref{sec-cut-off-annealed} with the proof of the cut-off phenomenon for generalized Curie-Weiss models in Theorems \ref{theo:gcw}(ii) and \ref{amt}(ii)}.

\subsection{Mixing of a generalized Curie-Weiss model}
\label{sec-gen-CW}

\colrev{We first recall  the transition probabilities  of the Glauber dynamics, which are given in equation \eqref{trsp}. Assume that $\xi_t= \sigma$, let $I$ be a random index in $[n]$ chosen uniformly at random.  Then,}  
\begin{equation*} 
\xi_{t+1} =  
\begin{cases}
\sigma^{+I}  & \textrm{ with probability } \frac{\nu_n(\sigma^{+I})}{\nu_n(\sigma^{+I}) +\nu_n(\sigma^{-I})},\\
\sigma^{-I}  & \textrm{ with probability } \frac{\nu_n(\sigma^{-I})}{\nu_n(\sigma^{+I}) +\nu_n(\sigma^{-I})}.
\end{cases}
\end{equation*}
Assume that we are considering Glauber dynamics on $\Omega_n=\{-1,+1\}^n$ with Hamiltonian given by
	\eqn{
	\label{hamilt-GCW}
	-H_n(\sigma) = n F_n(|\sigma_+|),
	}
for some function $F_n\colon [0,n]\mapsto \R^+$.

For any $0 \leq a < b \leq 1$, let us further consider the Glauber dynamics constrained to the subspace 
 	\[
	\Omega_n^{[a,b]} =\{\sigma \colon [an] \leq |\sigma_{+}| \leq [bn]\}.
	\]
Then, the constrained Gibbs measure \colrev{is defined as} 
	\begin{equation} 
	\label{def:gcw}
	\mu_n^{\beta,B; [a,b]} (\sigma) = \frac{\e^{nF_n(|\sigma_+|)}}{\sum_{k=[an]}^{[bn]}\binom{n}{k}\e^{nF_n(k)}}, \qquad \textrm{for all $\sigma \in \Omega_n^{[a,b]}$}.
	\end{equation} 
Clearly, $\Omega_n=\Omega_n^{[0,1]}$, and $\mu_n^{\beta,B}=\mu_n^{\beta,B; [0,1]}$. 

We first formulate a smoothness condition on $F_n$ that quantifies \colrev{how close} $F_n(k)$ is to $F(k/n)$ for some \colrev{limiting} function $F\colon [0,1]\mapsto \R^+$. For this, we assume that there is a function $F \in \kC^2$ and a 
constant $C$ such that for all 
$[an]\leq k \leq [bn]$,
	\[
	(\textrm{C1}) \hspace{1.5 cm} \Big|F_n(k)-F\left(k/n\right)\Big| \leq \frac{C}{n} \quad \textrm{and} \quad \Big|F_n(k+1)-F_n(k)-\frac{F'(k/n)}{n}\Big| \leq \frac{C}{n^2}.
	\]

Define  
	\begin{equation} 
	\label{def:G}
	G(s)=F(s)+ I(s) \hspace{0.5 cm} \textrm{with} \hspace{0.5 cm} I(s)=(s-1) \log (1-s) -s\log s.
	\end{equation}
Our aim is to show some sufficient conditions on $G$ under which the Glauber dynamics on the generalized Curie-Weiss model exhibits the  metastability or the cut-off phenomenon. For this, we consider the following two further conditions:
	\begin{itemize}
	\item   [(C2)] There exist $a<s_3<s_2<s_1<b$ such that $G$ is strictly increasing in the intervals $(a,s_3)$ and $(s_2,s_1)$ and strictly
	decreasing in the intervals $(s_3,s_2)$ and $(s_1,b)$. 
	\item[(C3)]  The function 
	$G'(s)=0$ has a unique solution $s^\star \in (a,b)$, which is the maximizer of $G(s)$.
	\end{itemize}
We remark that the condition (C3) implies that $G$ is strictly increasing in $(a,s^\star)$ and strictly decreasing in $(s^\star, b)$, $G'(s^\star) =0$ and $G''(s^\star)<0$. Condition (C2) implies that the Glauber dynamics mixes slowly, while condition (C3) implies that the Glauber dynamics mixes quickly, as formalized in the following theorem:

\begin{theo}[Mixing times of generalized constrained Curie-Weiss models]
\label{theo:gcw}
For $0\leq a < b \leq 1$,  consider the Glauber dynamics on the generalized constrained Curie-Weiss model defined by \eqref{def:gcw}.
\begin{itemize}
	\item [(i)] Suppose that (C1) and (C2) hold. Then  there exists a positive constant $C$, such that, for all $n$ large enough,
		\[
		C^{-1} \exp ( \lambda n) \leq \tmix \leq Cn^4 \exp ( \lambda n), 
		\]
	with 	
		\[
		\lambda = \min \{G(s_3),G(s_1)\}-G(s_2)>0.
		\]
	\item[(ii)] Suppose that (C1) and (C3) hold. Then the dynamics exhibits the cut-off phenomenon at $c_\star n \log n$ with window of order $n$, where $c_\star = (2s^\star(1-s^\star)|G''(s^\star)|)^{-1}$.
\end{itemize}
\end{theo}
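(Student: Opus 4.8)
\emph{Sketch of proof.} The plan is to reduce everything to a one-dimensional birth--death chain. For a Hamiltonian of the form \eqref{hamilt-GCW} the magnetization $M_t:=|(\xi_t)_+|$ is itself a Markov chain on $\{[an],\dots,[bn]\}$: from $k$ it moves to $k+1$ with probability $\tfrac{n-k}{n}\,\tfrac{\e^{nF_n(k+1)}}{\e^{nF_n(k+1)}+\e^{nF_n(k)}}$, to $k-1$ with probability $\tfrac{k}{n}\,\tfrac{\e^{nF_n(k-1)}}{\e^{nF_n(k-1)}+\e^{nF_n(k)}}$, and stays otherwise, so it is a birth--death chain whose stationary law is the push-forward of $\mu_n^{\beta,B;[a,b]}$, namely $\pi_n(k)\propto\binom nk\e^{nF_n(k)}$. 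Stirling's formula and (C1) give $\tfrac1n\log\big(\binom nk\e^{nF_n(k)}\big)=G(k/n)+O(\tfrac{\log n}{n})$ uniformly in $k$, and a Laplace estimate at a nondegenerate critical point sharpens this to $\pi_n(k)\asymp n^{-1/2}\e^{n(G(k/n)-\max_{[a,b]}G)}$; thus $\pi_n$ concentrates on the maximizers of $G$. I would first analyze the one-dimensional chain and then transfer the conclusions to the configuration chain on $\Omega_n^{[a,b]}$.

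For part (i), the lower bound $\tmix\ge C^{-1}\e^{\lambda n}$ would come from Lemma~\ref{lbr}(ii) with $S=\{\sigma\colon|\sigma_+|\le[s_2n]\}$ (or its complement, whichever has $\pi$-mass $\le\tfrac12$, which by (C2) is the basin of the smaller of the two local maxima $s_3,s_1$). The flow $Q(S,S^c)$ is carried by the single edge at the valley, so $Q(S,S^c)\asymp\pi_n([s_2n])\asymp n^{-1/2}\e^{n(G(s_2)-\max G)}$, while $\pi_n(S)\asymp\e^{n(\min\{G(s_3),G(s_1)\}-\max G)}$, whence $\Phi^\star\le Cn^{-1/2}\e^{-\lambda n}$ and $\tmix\ge(\tfrac1{2\Phi^\star}-1)\log2\ge C^{-1}\e^{\lambda n}$; positivity of $\lambda=\min\{G(s_3),G(s_1)\}-G(s_2)$ is immediate from (C2), since $s_2$ is a strict local minimum strictly below each neighbouring local maximum. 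For the upper bound I would lower-bound the spectral gap of the birth--death magnetization chain: the classical birth--death relaxation-time estimate (in the spirit of \cite{BBF,CC,DLP,DLP2,LLP}, or via a decomposition into the two wells separated by $s_2$, inside each of which $G$ is unimodal so the restricted chain mixes in polynomial time, glued by a spectral-gap decomposition theorem) gives $\gamma_{\mathrm{magn}}^{-1}\asymp n^{1/2}\e^{\lambda n}$. Transferring to the configuration chain by a two--stage coupling---first couple the two magnetization chains (they are driven by the same kernel) in $O(\gamma_{\mathrm{magn}}^{-1}\log(1/\min\pi_n))$ steps, then, once magnetizations agree, couple the residual configurational discrepancy in polynomially many steps---yields $\tmix\le Cn^{O(1)}\e^{\lambda n}$; a careful accounting of the polynomial factors together with $\log\!\big(4/\min_\sigma\mu_n^{\beta,B;[a,b]}(\sigma)\big)=O(n)$ (valid since $F_n$ is bounded by (C1)) and Lemma~\ref{lbr}(i) gives the exponent $n^4$.

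For part (ii), under (C3) the drift of the magnetization chain is, using (C1), $\E[M_{t+1}-M_t\mid M_t=k]=h(k/n)+O(1/n)$ with $h(s)=\psi(F'(s))-s$ and $\psi(x)=(1+\e^{-x})^{-1}$. The identity $I'(s)=\log\tfrac{1-s}{s}$ (so $F'(s^\star)=\log\tfrac{s^\star}{1-s^\star}$ and $\psi(F'(s^\star))=s^\star$) gives $h(s^\star)=0$ and, since $F''(s^\star)=G''(s^\star)+\tfrac1{s^\star(1-s^\star)}$, $h'(s^\star)=s^\star(1-s^\star)F''(s^\star)-1=s^\star(1-s^\star)G''(s^\star)<0$. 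Hence $s^\star n$ is a contracting fixed point with rate $\rho/n$, $\rho:=s^\star(1-s^\star)|G''(s^\star)|$, and $c_\star=1/(2\rho)$. The upper bound $\tmix(\vep)\le(1+o(1))c_\star n\log n$ would follow from the usual contraction argument: from the worst magnetization, $\E[M_t-s^\star n]\approx\e^{-\rho t/n}(M_0-s^\star n)$ reaches the equilibrium scale $O(\sqrt n)$ at $t=\tfrac{n\log n}{2\rho}=c_\star n\log n$ (using $\Var(M_t)=O(n)$, from the $O(1)$ per-step variance and the contraction), after which an $O(n)$ coupling stage brings the two chains together; the matching lower bound and the $O(n)$ window would come from using $M_t$ as a distinguishing statistic, since for $t=(1-\vep)c_\star n\log n$ one has $\e^{-\rho t/n}|M_0-s^\star n|\gg\sqrt n$ while $M_\infty-s^\star n=O(\sqrt n)$, both at fluctuation scale $\sqrt n$. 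One then transfers to $\Omega_n^{[a,b]}$ as before.

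The main obstacle, in both parts, will be this last transfer from the one-dimensional magnetization chain to the full configuration chain without spoiling the sharp exponent in (i) or the sharp constant and the $O(n)$ window in (ii): the Glauber dynamics here is not monotone in general---(C1) is too weak to force discrete convexity of $F_n$---so one cannot simply invoke a monotone coupling under which equality of magnetizations forces equality of configurations, and instead one must show by a direct coupling that, conditionally on the magnetization path, the configurational degrees of freedom re-equilibrate in only $O(n)$ steps. The remaining effort is the more routine task of making the Laplace/saddle-point estimates for $\pi_n$ uniform via (C1), and of checking that the relaxation-time bound, the contraction rate, and the value of $\lambda$ are uniform in the parameters, as will be needed for the applications to Theorem~\ref{amt}.
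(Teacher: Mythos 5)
Your one-dimensional analysis is essentially the paper's: the same birth--death magnetization chain with stationary law $\pi_n(k)\propto\binom nk\e^{nF_n(k)}$, a spectral-gap/bottleneck estimate at the valley $s_2$ for (i), and for (ii) the same drift computation $R'(s^\star)=s^\star(1-s^\star)G''(s^\star)=-1/(2c_\star)$ together with the magnetization as distinguishing statistic for the lower bound; your bottleneck lower bound in (i) is a legitimate variant of the paper's route via $\tmix\ge(\gamma^{-1}-1)\log 2$. The genuine gap is exactly the step you yourself flag as the main obstacle: in both parts the transfer from the projection chain to the configuration chain is asserted, not proved. For (i) your two-stage coupling (``couple the magnetizations, then re-equilibrate the configurational degrees of freedom in polynomially many steps'') is unjustified --- as you observe, (C1) gives no monotonicity, and nothing in your sketch shows that the conditional configuration distribution relaxes in polynomial time --- but no such coupling is needed at all. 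Since the Hamiltonian \eqref{hamilt-GCW} depends on $\sigma$ only through $|\sigma_+|$, the full Glauber dynamics and its magnetization projection have \emph{the same} spectral gap (Proposition \ref{prop:gap}, quoted from \cite[Proposition 3.9]{DLP2}); combining this lumping identity with the birth--death gap formula of Proposition \ref{prop:gapbd}, which gives $\gamma^{-1}\asymp_{\mathrm{poly}}\e^{\lambda n}$, and with Lemma \ref{lbr}(i) together with $-\log\min_\sigma\mu_n(\sigma)\asymp n$, yields both $C^{-1}\e^{\lambda n}\le\tmix$ and $\tmix\le Cn^4\e^{\lambda n}$ with no configuration-level coupling whatsoever.

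For (ii) the missing transfer is more delicate because the bound must be sharp to within an $O(n)$ window: coupling the magnetizations by time $c_\star n\log n+O(n)$ does not control the total-variation distance of the configurations, since the law of $\xi_t$ given $X_t$ started from a fixed $\sigma_0$ is not exchangeable over the level set. The paper closes this by invoking \cite[Theorem 5.1]{DLP} (Proposition \ref{prop:lco}), which reduces the upper bound to: (a) your magnetization coupling ($H_1$, Lemma \ref{lem:h1}); (b) showing that the overlap statistic $D(t)=|U(\xi_t)-U(\bar\xi_t)|$, with $U(\sigma)=|\{i\colon\sigma(i)=\sigma_0(i)=1\}|$ for a reference $\sigma_0\in\Omega_n^\delta$, is $O(\sqrt n)$ at time $r_1$ (Lemma \ref{lem:dtng}, resting on the variance bound for $X_t(A)$, $A\subset[n]$ arbitrary, of Lemma \ref{xtasa}); (c) keeping both chains in the good set $\Theta$ up to $r_2$ (Lemma \ref{lem:h2g}); after which one pays only an additional $c\alpha\sqrt{n/(r_2-r_1)}$ over an extra window $r_2-r_1=O(\gamma n)$. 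This is precisely the ``re-equilibration conditionally on the magnetization path'' that your sketch names but does not supply, so as written the upper bounds in both (i) and (ii) are incomplete, while your lower bounds and your identification of $\lambda$ and $c_\star$ are correct and match the paper's.
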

\medskip

Below we only present the proof for the case $a=0$ and $b=1$, since the proof for the general case is exactly the same.

 \subsection{Slow mixing for generalized Curie-Weiss models: Proof of Theorem \ref{theo:gcw}(i)} 
 \label{subsec:theo3.1(i)}
  Let $(\xi_t)_{t\geq 0}$ be the Glauber dynamics. Define the projection chain $(X_t)_{t\geq 0}$ by 
 	\begin{equation*}
 	X_t=|\{i\colon \xi_t(i)=1\}|.
 	\end{equation*} 
Then $(X_t)_{t\geq 0}$ is a birth-death process on $\{0,\ldots,n\}$ with probability transitions given by 
 	\begin{eqnarray}
 	p_n(k) &=& \pp (X_{t+1}=k+1 \mid X_t =k) =\frac{n-k}{n} \times \frac{\e^{nF_n(k+1)}}{\e^{nF_n(k+1)}+\e^{nF_n(k)}}, \label{pnk} \notag \\
 	q_n(k) &=& \pp (X_{t+1}=k-1 \mid X_t =k) =\frac{k}{n} \times \frac{\e^{nF_n(k-1)}}{\e^{nF_n(k-1)}+\e^{nF_n(k)}}, \label{qnk}\\
 	r_n(k) &=& 1-p_n(k)-q_n(k). \label{rnk} \notag
 	\end{eqnarray} 
The crucial observation for the proof of Theorem \ref{theo:gcw}(i) is that the Glauber dynamics of generalized Curie-Weiss models and their projections have the same spectral gap:

 \begin{prop}[Spectral gap Curie-Weiss and its projection \protect{\cite[Proposition 3.9]{DLP2}}]
 \label{prop:gap}
 	The Glauber dynamics $(\xi_t)_{t\geq 0}$ of the generalized Curie-Weiss model and the projection chain $(X_t)_{t\geq 0}$ have the same spectral gap.
 \end{prop}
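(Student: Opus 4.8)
The statement to prove is Proposition~\ref{prop:gap}: the Glauber dynamics $(\xi_t)_{t\geq 0}$ of a generalized Curie--Weiss model (with Hamiltonian $-H_n(\sigma)=nF_n(|\sigma_+|)$) and its projection chain $(X_t)_{t\geq 0}$ onto $|\sigma_+|$ share the same spectral gap. Since the paper cites \cite[Proposition 3.9]{DLP2}, the cleanest route is to point to that reference, but a self-contained argument is short enough that I would sketch it. The plan is to exploit the fact that the Hamiltonian is a symmetric function of the coordinates, so the Glauber chain commutes with the action of the symmetric group $S_n$ permuting the $n$ spins, and the map $\sigma\mapsto|\sigma_+|$ is exactly the quotient by this group action. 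Lumpability (in the Dynkin/Kemeny--Snell sense) of the chain with respect to the partition $\{\sigma:|\sigma_+|=k\}$, $k=0,\dots,n$, is immediate from the transition rule \eqref{trsp}: the probability of moving from a configuration $\sigma$ with $|\sigma_+|=k$ into the set $\{|\sigma_+|=k\pm1\}$ depends on $\sigma$ only through $k$, giving precisely $p_n(k),q_n(k),r_n(k)$ as in \eqref{qnk}. Hence $(X_t)$ is an honest Markov chain and its spectrum is a subset of the spectrum of $(\xi_t)$.

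First I would set up the functional-analytic framework: let $P$ be the transition operator of $(\xi_t)$ on $L^2(\Omega_n,\nu_n)$ with $\nu_n=\mu_n^{\beta,B}$, and let $\bar P$ be that of $(X_t)$ on $L^2(\{0,\dots,n\},\bar\nu_n)$ where $\bar\nu_n$ is the pushforward of $\nu_n$. The lift map $\Pi\colon L^2(\{0,\dots,n\},\bar\nu_n)\to L^2(\Omega_n,\nu_n)$, $(\Pi g)(\sigma)=g(|\sigma_+|)$, is an isometry intertwining the two: $P\Pi=\Pi\bar P$, which is exactly the lumpability identity. Consequently every eigenvalue of $\bar P$ is an eigenvalue of $P$, so $\gamma(\bar P)\geq\gamma(P)$ (the spectral gap of the projection is at least that of the full chain). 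The nontrivial inclusion is the reverse: I need that $P$ has no eigenvalue strictly between the second-largest eigenvalue of $\bar P$ and $1$ that fails to come from a permutation-invariant eigenfunction. For this I would invoke the $S_n$-symmetry: $P$ commutes with the unitary permutation action $U_\pi$ on $L^2(\Omega_n,\nu_n)$ (because both $H_n$ and the uniform choice of the index $I$ are permutation-invariant), so $P$ preserves the isotypic decomposition of $L^2(\Omega_n,\nu_n)$ under $S_n$. The trivial isotypic component is exactly the range of $\Pi$, i.e.\ the permutation-invariant functions, on which $P$ acts as $\bar P$.

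The heart of the argument is then to show that on the orthogonal complement of the invariant functions — the sum of all nontrivial isotypic components — the operator $P$ has operator norm at most the second eigenvalue of $\bar P$; equivalently, all "top" eigenfunctions of $P$ (those with eigenvalue exceeding $\lambda_2(\bar P)$) are permutation-invariant, hence descend to $X$. This is the step I expect to be the main obstacle, and it is precisely the content of \cite[Proposition 3.9]{DLP2}; the standard way to run it is a monotone-coupling / censoring argument, or alternatively a direct Dirichlet-form comparison showing that for any $f\perp \mathrm{ran}\,\Pi$ one has $\langle (I-P)f,f\rangle_{\nu_n}\geq (1-\lambda_2(\bar P))\|f\|^2_{\nu_n}$, using that within each level set $\{|\sigma_+|=k\}$ the conditional chain mixes instantaneously relative to the between-levels dynamics because the restriction of $\nu_n$ to a level set is uniform and the "swap a plus with a minus" moves are generated for free by the Glauber updates. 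Given the time constraints I would simply state Proposition~\ref{prop:gap} with the citation to \cite[Proposition 3.9]{DLP2} and the one-line remark that it follows from lumpability plus the $S_n$-symmetry of $H_n$, since reproving it in full would add length without adding content. Either way, combining $\gamma(\bar P)\geq\gamma(P)$ with the symmetry-based reverse inequality $\gamma(\bar P)\leq\gamma(P)$ yields $\gamma(\bar P)=\gamma(P)$, which is the assertion.
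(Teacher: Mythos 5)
The paper contains no proof of Proposition \ref{prop:gap}: it is imported as a black box from \cite[Proposition 3.9]{DLP2}, so your bottom line (state it with that citation) is exactly what the paper does, and the half you actually prove is correct. Since the Hamiltonian \eqref{hamilt-GCW} and the update rule \eqref{trsp} depend on $\sigma$ only through $|\sigma_+|$, the level sets of $|\sigma_+|$ form a lumpable partition, the lift $\Pi$ intertwines $P$ and $\bar P$, every eigenvalue of $\bar P$ is an eigenvalue of $P$, and hence the gap of $P$ is at most the gap of $\bar P$.

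However, the mechanism you sketch for the reverse inequality is not sound, so the proposed ``one-line remark that it follows from lumpability plus the $S_n$-symmetry'' would be wrong. Commutation with $S_n$ only says that $P$ preserves the isotypic decomposition; it does not force the spectral gap to be attained in the symmetric sector, and your stronger claim that \emph{all} eigenfunctions with eigenvalue above $\lambda_2(\bar P)$ are permutation-invariant is false: already for constant $F_n$ (independent spins) the second eigenvalue $1-\tfrac1n$ is attained both by the symmetric function $\sum_i\sigma_i$ and by the non-symmetric functions $\sigma_i-\sigma_j$. The Dirichlet-form route fails as described, too: a single Glauber update never stays inside a level set, so there is no ``conditional chain within a level'' that ``mixes instantaneously,'' and plus--minus swaps are not generated for free --- simulating a swap costs two updates through a neighbouring level, and such comparison arguments can at best give the two gaps up to multiplicative constants, never the exact equality asserted in the proposition. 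What does close this direction (and what your passing mention of monotone couplings points at) is attractivity: when $k\mapsto F_n(k+1)-F_n(k)$ is monotone the single-site dynamics is monotone, the eigenfunction at the second eigenvalue can be chosen increasing, and its average over $S_n$ is then an eigenfunction of the projection chain with the same eigenvalue which is not identically zero --- if all its level averages vanished, then in particular it would vanish at the all-minus and all-plus configurations, which are alone in their levels, and an increasing function vanishing at both extremes vanishes identically. If you want a self-contained proof rather than the citation, this monotonicity input, which is absent from your sketch, is the missing ingredient; $S_n$-equivariance alone does not suffice.
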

 \medskip
 
Combining this result with Lemma \ref{lbr}(i), we can derive bounds for the mixing time of $(\xi_t)_{t\geq 0}$ from the spectral gap of $(X_t)_{t\geq 0}$. The spectral gap of birth-death processes are well understood, as shown in the following proposition:

\begin{prop}[Spectral gaps of birth-death chains \protect{\cite[Theorem 1.2]{CC}}]
\label{prop:gapbd}
	The spectral gap $\gamma$ of an irreducible birth-death chain on $\{0,\ldots,n\}$ with transition probabilities $(p_n(k),q_n(k),$ $r_n(k))$ and stationary measure $\nu_n$   satisfies
		\eqn{
		\label{spec-gap-bound-BDC}
		\frac{1}{4 \ell (i_0)}\leq \gamma \leq  \frac{2}{\ell (i_0)},
		}
	where $i_0$ is the  state such that $\nu_n([0,i_0])\geq \tfrac{1}{2}$ and $\nu_n([i_0,n])\geq \tfrac{1}{2}$, and
		\eqn{
		\label{ell-nu-def}
		\ell (i)=\ell_\nu (i):= \max \left( \max_{j\colon j<i} \sum_{k=j}^{i-1} \frac{\nu_n([0,j])}{\nu_n(k)p_n(k)}, \,\, \max_{j\colon j>i} \sum^{j}_{k=i+1} \frac{\nu_n([j,n])}{\nu_n(k)q_n(k)}  \right), 
		}
	where, for $i\leq j$, 
		$$
		\nu_n([i,j])=\sum_{k=i}^j \nu_n(k).
		$$	
\end{prop}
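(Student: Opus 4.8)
The plan is to establish the two-sided bound \eqref{spec-gap-bound-BDC} directly from the Dirichlet-form characterisation of the spectral gap,
\[
\gamma=\min\Big\{\frac{\mathcal{E}(f,f)}{\Var_{\nu_n}(f)}\colon \Var_{\nu_n}(f)>0\Big\},
\qquad
\mathcal{E}(f,f)=\sum_{k=0}^{n-1}c_n(k)\big(f(k+1)-f(k)\big)^2,
\]
where $c_n(k):=\nu_n(k)p_n(k)=\nu_n(k+1)q_n(k+1)$ are the edge conductances, the second identity being reversibility; for a birth--death chain only nearest-neighbour transitions contribute, which gives the displayed form of $\mathcal{E}$, and irreducibility makes every $c_n(k)>0$. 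It is convenient to write $\ell(i)=\max(\ell^-(i),\ell^+(i))$ where, after using $\nu_n(k)q_n(k)=c_n(k-1)$ to re-express the $q$-sum in \eqref{ell-nu-def}, $\ell^-(i)=\max_{j<i}\nu_n([0,j])\sum_{m=j}^{i-1}c_n(m)^{-1}$ and $\ell^+(i)=\max_{j>i}\nu_n([j,n])\sum_{m=i}^{j-1}c_n(m)^{-1}$.

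For the upper bound $\gamma\le 2/\ell(i_0)$ I would test the Rayleigh quotient on one well-chosen function. Suppose $\ell(i_0)=\ell^-(i_0)$, attained at some $j_0<i_0$ (the other case is symmetric, using $\nu_n([0,i_0])\ge\tfrac12$ in place of $\nu_n([i_0,n])\ge\tfrac12$ below). Put $f(k)=0$ for $k\le j_0$, $f(k)=\sum_{m=j_0}^{k-1}c_n(m)^{-1}$ for $j_0<k\le i_0$, and $f(k)=f(i_0)$ for $k>i_0$. Then $\mathcal{E}(f,f)=\sum_{m=j_0}^{i_0-1}c_n(m)^{-1}=f(i_0)$, whereas, since $f\equiv 0$ on $[0,j_0]$ and $f\equiv f(i_0)$ on the disjoint block $[i_0,n]$, one has $\Var_{\nu_n}(f)\ge\min_{b}\big(\nu_n([0,j_0])b^2+\nu_n([i_0,n])(f(i_0)-b)^2\big)=\tfrac{\nu_n([0,j_0])\nu_n([i_0,n])}{\nu_n([0,j_0])+\nu_n([i_0,n])}f(i_0)^2\ge\tfrac12\nu_n([0,j_0])f(i_0)^2$, the last inequality because $\nu_n([i_0,n])\ge\tfrac12$ and $\nu_n([0,j_0])+\nu_n([i_0,n])\le1$. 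Since $\nu_n([0,j_0])f(i_0)=\ell^-(i_0)=\ell(i_0)$, the quotient is at most $2/\ell(i_0)$.

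For the lower bound $\gamma\ge 1/(4\ell(i_0))$ I would prove the Poincaré inequality $\Var_{\nu_n}(f)\le 4\ell(i_0)\mathcal{E}(f,f)$ for all $f$. Using $\Var_{\nu_n}(f)\le\E_{\nu_n}[(f-f(i_0))^2]$ and splitting the sum at the median $i_0$ gives $\Var_{\nu_n}(f)\le\sum_{k\le i_0}\nu_n(k)(f(k)-f(i_0))^2+\sum_{k\ge i_0}\nu_n(k)(f(k)-f(i_0))^2$ (the $k=i_0$ term being zero). To each piece I would apply the sharp one-sided discrete Hardy (Muckenhoupt) inequality: for $g$ on $\{i_0,\dots,n\}$ with $g(i_0)=0$,
\[
\sum_{k=i_0}^{n}\nu_n(k)g(k)^2\le 4\,\ell^+(i_0)\sum_{k=i_0}^{n-1}c_n(k)\big(g(k+1)-g(k)\big)^2,
\]
and symmetrically on $\{0,\dots,i_0\}$ with constant $4\ell^-(i_0)$, where reversibility is again used to match the conductances to the sums in \eqref{ell-nu-def}. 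Taking $g=f-f(i_0)$ and noting that the edge index sets $\{0,\dots,i_0-1\}$ and $\{i_0,\dots,n-1\}$ partition $\{0,\dots,n-1\}$, the two Dirichlet contributions add up to at most $\mathcal{E}(f,f)$, and hence $\Var_{\nu_n}(f)\le 4\max(\ell^-(i_0),\ell^+(i_0))\mathcal{E}(f,f)=4\ell(i_0)\mathcal{E}(f,f)$, i.e.\ $\gamma\ge 1/(4\ell(i_0))$.

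The main obstacle is the sharp discrete Hardy inequality with constant $4$ used above; more precisely, the statement that in such a one-sided weighted inequality the best Poincaré-type constant is at most $4$ times the associated Muckenhoupt quantity is the delicate classical ingredient (due to Muckenhoupt, in the weighted discrete form used e.g.\ by Miclo) that I would import as a black box. A secondary point that needs care is that taking $i_0$ to be a median, rather than an arbitrary split point, is precisely what makes the variance lower bound in the test-function argument and the boundary masses $\nu_n([0,j])$, $\nu_n([j,n])$ appearing in \eqref{ell-nu-def} produce the exact constants $2$ and $4$ in \eqref{spec-gap-bound-BDC}.
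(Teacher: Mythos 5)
The paper itself contains no proof of this statement: Proposition \ref{prop:gapbd} is quoted directly from \cite[Theorem 1.2]{CC}, so there is no internal argument to compare against. Your proposal is correct and is, in substance, the argument behind the cited result. For the upper bound, the computation $\mathcal{E}(f,f)=\sum_{m=j_0}^{i_0-1}c_n(m)^{-1}=f(i_0)$ and the variance bound $\Var_{\nu_n}(f)\ge \tfrac12\nu_n([0,j_0])f(i_0)^2$ (which is exactly where the median property of $i_0$ enters) both check out and yield $\gamma\le 2/\ell(i_0)$, with the symmetric case handled as you indicate. For the lower bound, the re-indexing $\nu_n(k)q_n(k)=c_n(k-1)$ correctly identifies $\ell^{\pm}(i_0)$ with the one-sided Muckenhoupt quantities, splitting $\E_{\nu_n}[(f-f(i_0))^2]$ at the median is legitimate, and since the edge sets $\{0,\dots,i_0-1\}$ and $\{i_0,\dots,n-1\}$ partition the edges, the two Hardy estimates add up to $4\ell(i_0)\mathcal{E}(f,f)$, giving $\gamma\ge 1/(4\ell(i_0))$. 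The one non-elementary ingredient, the discrete weighted Hardy inequality with factor $4$, which you import as a black box, is precisely the Muckenhoupt/Miclo input on which the proof in \cite{CC} rests; so your proposal is a faithful reconstruction of the proof of the cited theorem rather than a genuinely different route. (Only cosmetic caveat: when $i_0\in\{0,n\}$ one of the two maxima in \eqref{ell-nu-def} is over an empty set and should be read as $0$; this is an edge case of the statement itself, not a gap in your argument.)
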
 
\medskip

Now we are ready to give the proof of Theorem \ref{theo:gcw}(i). We investigate the birth-death chain $(X_t)_{t\geq 0}$.  Under condition (C1),
 	\begin{equation} \label{apppq}
 	\Big |\frac{\e^{nF_n(k+1)}}{\e^{nF_n(k+1)}+\e^{nF_n(k)}} -\frac{\e^{F'(k/n)}}{\e^{F'(k/n)}+1} \Big | + \Big |\frac{\e^{nF_n(k-1)}}{\e^{nF_n(k-1)}+\e^{nF_n(k)}} -\frac{1}{\e^{F'(k/n)}+1} \Big |  \leq \frac{C}{n},
 	\end{equation}
for some constant $C>0$. Hence, 
 	\begin{equation} \label{apqnk}
	\frac{n-k}{An}\leq  p_n(k) \leq \frac{A(n-k)}{n} \quad \textrm{and} \quad \frac{k}{An}\leq  q_n(k) \leq \frac{A k}{n},
 	\end{equation}
 for some universal constant $A\ge 1$.   
 The stationary measure of $(X_t)_{t\geq 0}$ is given by 
 	\begin{equation} \label{nunk}
 	\nu_n(k)=\frac{\pi_n(k)}{\pi_n(0) + \cdots + \pi_n(n)},
 	\end{equation}
where (see e.g., \cite[Section 2.5]{LPW} or \cite[(2.3)]{BBF})
 	\begin{eqnarray}  \label{nunk22}
 	\pi_n(k) = \prod_{j=0}^{k-1} \frac{p_n(j)}{q_n(j+1)} = \prod_{j=0}^{k-1} \left(\frac{n-j}{j}\right) \left(\frac{\e^{nF_n(j+1)}}{\e^{nF_n(j)}}\right) =\binom{n}{k}\e^{n[F_n(k)-F_n(0)]}.
 	\end{eqnarray}
It follows from Stirling's formula that  
 	\begin{equation*}
 	\binom{n}{k} =  \exp(n[I(k/n)+o(1)]).
 	\end{equation*}
Combining the last two estimates  with (C1) implies that 
 	\begin{eqnarray} \label{pik}
 	\pi_n(k) = \exp \left(n \Big[ I(k/n) + F(k/n) -F(0) +o(1)\Big]\right)= \exp \left(n \Big[  G(k/n) -G(0) +o(1)\Big]\right),~~~
 	\end{eqnarray}
and thus 
	\begin{eqnarray} \label{pinsk}
	\frac{\pi_n(s)}{\pi_n(k)} = \exp \left(n \Big[ G(s/n)- G(k/n) +o(1) \Big]\right).
	\end{eqnarray} 
Let us define 
 	\begin{eqnarray*}
 	\ell_{1}(i,j) &=& \sum_{k=j}^{i-1} \frac{\pi_n([0,j])}{\pi_n(k)p_n(k)} = \frac{1}{p_n(k)} \sum_{k=j}^{i-1} \sum_{s=0}^{j}\frac{\pi_n(s)}{\pi_n(k)} \qquad \textrm{for } i>j, \\
 	\ell_2(i,j) &=& \sum^{j}_{k=i+1} \frac{\pi_n([j,n])}{\pi_n(k)q_n(k)} = \frac{1}{q_n(k)} \sum_{k=i+1}^{j} \sum_{s=j}^{n}\frac{\pi_n(s)}{\pi_n(k)} \qquad  \textrm{for } i<j.
 	\end{eqnarray*}
Then recall \eqref{ell-nu-def} to see that
	\begin{equation} 
	\label{ll1l2}
	\ell_\nu(i) =\max \big ( \max_{j \colon j<i} \ell_1(i,j), \,\, \max_{j \colon j>i} \ell_2(i,j)\big ).
	\end{equation}
Using \eqref{apqnk} and \eqref{pinsk} we obtain that there exists a positive constant $C$, such that, for all $1\leq j < i \leq n-1$,
 	\begin{equation} \label{l1ij}
 	C^{-1} \exp \left(n L_1(i,j) \right) \leq \ell_1(i,j) \leq C n^3 \exp \left(n L_1(i,j) \right),
 	\end{equation}
where 
 	\[
	L_1(i,j)= \max_{0 \leq s <j \leq k<i} \Big[ G(s/n)- G(k/n) \Big],
	\]
and, for all $1\leq i<j\leq n-1$,
 	\begin{equation} \label{l2ij}
 	C^{-1} \exp \left(n L_2(i,j) \right) \leq \ell_2(i,j) \leq C n^3 \exp \left(n L_2(i,j) \right),
 	\end{equation}
where 
 	\[
	L_2(i,j)= \max_{i<k\leq j \leq s \leq n } \Big[ G(s/n)- G(k/n) \Big]. 
	\]
By assumption (C2), the function $G$ has two local maximizers $s_3$ and $s_1$. We consider the case  that $G(s_3)<G(s_1)$, the other case \colrev{is}  exactly the same.
 
Since $G(s_3)<G(s_1)$,  $s_1$ is the global maximizer, and thus there exist $\varepsilon, \delta>0$, such that  $s_2 < s_1-\delta <s_1+\delta <1$ and 
 	\begin{equation} \label{gt1d}
 	\varepsilon \leq G(s_1)-\max_{|x-s_1|\geq \delta} G(x), \quad \textrm{ and }  \quad G(s_1)-G(s_1 \pm \delta) \leq \lambda/2,
 	\end{equation} 
where 
 	\[
	\lambda=G(s_3)-G(s_2).
	\] 
 By \eqref{pinsk} and \eqref{gt1d}, if $|k-[ns_1]|\geq \delta n$ and $n$ is sufficiently large, then 
 	\begin{equation*}
 	\pi_n(k) \leq \pi_n([ns_1]) \exp(-(\varepsilon +o(1)) n).
 	\end{equation*}
Therefore, $$\nu_n([0,[n(s_1-\delta)]]\cup[[n(s_1+\delta)],n]) \leq 1/4,$$
so that $i_0$ satisfies $[n(s_1-\delta)] \leq i_0 \leq [n(s_1+\delta)]$. If $[n(s_1-\delta)] \leq i_0 \leq [ns_1]$, then, by \eqref{gt1d} and the assumption (C2),
	\begin{eqnarray*}
	\max_{j <i_0} L_1(i_0,j) &=& G(s_3)-G(s_2) + \kO(1/n) =\lambda + \kO(1/n),\\
	\max_{j>i_0} L_2(i_0,j)  &\leq&  G (s_1) -G(s_1-\delta) + \kO(1/n) \leq \lambda/2 + \kO(1/n).
	\end{eqnarray*} 
Similarly, if $[ns_1]<i_0 \leq [n(s_1+\delta)]$, then 
	\begin{eqnarray*}
	\max_{j <i_0} L_1(i_0,j) &=& \max \{G(s_3)-G(s_2), \, G (s_1) -G(s_1+\delta)  \} + \kO(1/n)=\lambda + \kO(1/n),
	\end{eqnarray*}
while 
	\[
	\max_{j>i_0} L_2(i_0,j) = 0.
	\]
	  
Combining the above estimates with \eqref{ll1l2}, \eqref{l1ij} and \eqref{l2ij}, we obtain
	\begin{equation*}
	A^{-1} \exp (\lambda n)\leq \ell_\nu(i_0) \leq A n^3 \exp (\lambda n),
	\end{equation*} 
for some constant $A$. By \eqref{spec-gap-bound-BDC}, the same bound (with a slightly larger $A$) holds for the \colrev{inverse} spectral gap. Hence, using Lemma \ref{lbr}, Propositions \ref{prop:gap} and \ref{prop:gapbd}, and noting that $-\log (\min_{\sigma \in \Omega_n} \mu_n(\sigma)) \asymp n$, we get that the mixing time of the Glauber dynamics on the generalized Curie-Weiss model satisfies
	\begin{equation*}
	C^{-1} \exp (\lambda n)\leq \tmix \leq C n^4 \exp (\lambda n),
	\end{equation*} 
for some constant $C$ independent of $n$. \hfill $\square$

\subsection{Verifying conditions generalized Curie-Weiss model: Proof Theorem \ref{amt}(i)} 
\label{poamt(i)}
By Theorem \ref{theo:gcw}(i), we only need to verify the conditions (C1) and (C2) for the annealed Ising model on random regular graphs.

It is known (see for instance \cite[Lemma 2.1(i) and (3.2)]{C}) that if $|\sigma_+|=j$ then
	\begin{equation*}
	\hat{\mu}_n(\sigma)= \frac{\exp(nF_n(j))}{\sum_{k=0}^n \binom{n}{k} \exp(nF_n(k))}, 
	\end{equation*}
with
	\begin{equation*}
	F_n(k)=\frac{\beta d}{2}+ \frac{1}{n} \log g(dk,dn) + B\left( \frac{2k}{n}-1\right),
	\end{equation*}
 where  $g(k,m)$ satisfies that, for all $0\leq k \leq \ell \leq  m$,
 	\begin{equation} \label{ppog}
 	\Big | \frac{1}{m}\log g(k,m)-  \frac{1}{m}\log g(\ell,m) - \int^{\ell/m \wedge (1-\ell/m)}_{k/m \wedge(1-k/m)} \log f_{\beta} (s) ds\Big | \leq C\frac{|k-\ell|}{m^2},
 	\end{equation}
with $C$ being a universal constant independent of $k,\ell,m$. By \eqref{ppog},
 	\begin{equation*}
 	\Big | F_n(k) - F(k/n) \Big | \leq 
 	\frac{C}{n^2},
 	\end{equation*}
and
 	\eqn{
	\label{(C1b)}
 	\Big|[F_n(k+1)-F_n(k)]- \frac{F'(k/n)}{n} \Big | \leq \frac{C}{n^2},
 	}
where 
	\eqn{
	\label{F-def}
	F(t) = \frac{\beta d}{2} + B(2t-1)+ d \int_0^{t\wedge 1-t} \log f_{\beta}(s)ds,
	}
which implies that  (C1) holds. The function $G(t)=I(t)+F(t)$ is indeed the function $\hat{\varphi}^{\beta,B}(t)$. Hence, Lemma \ref{ll} implies that (C2) holds. 

Finally, by Lemma \ref{ll}(ii) and Theorem \ref{theo:gcw}(i),  
	\eqn{
	\label{bound-annealed-ind-beta}
	\sup_{\substack{\beta > \beta_c\\0\leq B < \hat{B}_c}} \lambda = \sup_{\substack{\beta > \beta_c\\0\leq B < \hat{B}_c}} G(t_3^B)-G(t_2^B) < \infty,
	}
which completes the proof of Theorem \ref{amt}(i).\hfill $\square$

 \subsection{Cut-off generalized Curie-Weiss models: Proof Theorems \ref{theo:gcw}(ii) and \ref{amt}(ii)}
 \label{sec-cut-off-annealed}
For $\gamma >0$, define 
	\begin{equation*}
	T_n^+(\gamma) = c_\star n  \log n + \gamma n, \qquad T_n^-(\gamma) = c_\star n \log n - \gamma n,
	\end{equation*}
where 
	\begin{equation*}
	c_\star = -[s^\star (1-s^\star) G''(s^\star)]^{-1} >0,
	\end{equation*}
since $G''(s^\star)<0$.   Theorem \ref{theo:gcw}(ii) follows from the following proposition:

\begin{prop}[Annealed cut-off behavior]
\label{prop:dntn}
Suppose that Conditions (C1) and (C3) hold. Then,
	\begin{eqnarray*}		
	\lim_{\gamma \rightarrow \infty} \liminf_{n \rightarrow \infty} d_n(T_n^-(\gamma)) =1,
	\qquad
	\lim_{\gamma \rightarrow \infty} \limsup_{n \rightarrow \infty} d_n(T_n^+(\gamma)) =0,
	\end{eqnarray*}
\colrev{where we recall that, for $T\geq 1$,}
\[ d_n(T) = \sup_{A \subset \Omega_n} \sup_{\sigma \in \Omega_n} |\mu_n(A) - \pp_{\sigma} (\xi_{T} \in A)|. \]
\end{prop}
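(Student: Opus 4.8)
\textbf{Proof proposal for Proposition \ref{prop:dntn}.}

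The plan is to analyze the projection chain $(X_t)_{t\geq 0}$, $X_t = |\{i\colon \xi_t(i)=1\}|$, which is a birth-death chain on $\{0,\ldots,n\}$ with transition probabilities $p_n(k), q_n(k), r_n(k)$ as in \eqref{qnk}. Under Conditions (C1) and (C3), the equilibrium measure $\nu_n$ concentrates (by the estimate \eqref{pik}, with $G$ there unimodal by (C3)) in an $O(\sqrt{n})$-window around $k^\star := \lceil n s^\star\rceil$, and the drift of $X_t$ points toward $k^\star$ with $p_n(k) - q_n(k) \approx \frac{1}{n}\big(G'(k/n) + o(1)\big)$ having a simple zero at $s^\star$ with slope $G''(s^\star)<0$. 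Since total-variation distance to stationarity of the full Glauber chain is controlled by that of the projection chain (the initial-state worst case is $\sigma$ all-minus or all-plus, i.e. $X_0 \in \{0,n\}$, and the spin configuration equilibrates conditionally on $|\sigma_+|$ much faster — indeed one checks $d_n(T)$ is, up to a lower-order correction, the TV distance of the projection chain started from the extreme points), it suffices to prove the two-sided cut-off statement for $(X_t)$ at $c_\star n\log n$ with window $n$.

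The upper bound (the $\limsup d_n(T_n^+(\gamma)) = 0$ half) I would obtain by a coupling/contraction argument. Start two copies of the projection chain from $k$ and $k'$; couple the uniform index $I$ and the spin-flip decision monotonically so the chains stay ordered and the gap $|X_t - X'_t|$ is non-increasing and contracts in expectation: because $p_n$ is decreasing and $q_n$ increasing, the expected gap satisfies $\E[|X_{t+1}-X'_{t+1}| \mid \cdot] \leq (1 - c/n)|X_t - X'_t|$ away from the origin; globally one gets that after time $\sim c_\star n\log n$ the gap from the two extreme starting points has shrunk from $n$ to $O(\sqrt n)$, and then an additional $O(\gamma n)$ steps plus concentration of $\nu_n$ on an $O(\sqrt n)$-window closes the remaining distance. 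The precise constant $c_\star = (2 s^\star(1-s^\star)|G''(s^\star)|)^{-1}$ comes from linearizing the deterministic ODE approximation $\dot{x} = x(1-x)G'(x)/(\ldots)$ — more carefully, the mean-field map has derivative $1 - \frac{1}{n}\cdot 2 s^\star(1-s^\star)|G''(s^\star)|(1+o(1))$ near the fixed point (the factor $s^\star(1-s^\star)$ being the variance of the single-site update, and the $2$ coming from the two-sided return rate); $\log n$ divided by this exponential contraction rate gives exactly $c_\star n\log n$. For the lower bound ($\liminf d_n(T_n^-(\gamma)) = 1$) I would use the standard distinguishing-statistic (Wilson-type) method: track $X_t$ started from $0$; by the linearization its expectation approaches $k^\star$ like $k^\star - c\, n\, \mathrm{e}^{-t/(c_\star n)}$ while $\Var(X_t) = O(n)$ uniformly, so at time $T_n^-(\gamma)$ the displacement $\E[X_t] - k^\star$ is of order $\mathrm{e}^{\gamma/c_\star}\sqrt n \gg \sqrt n$; Chebyshev on this statistic under the chain versus under $\nu_n$ then forces $d_n(T_n^-(\gamma)) \to 1$ as $\gamma\to\infty$.

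The main obstacle I expect is making the two ingredients — (a) the reduction from the full spin Glauber chain to the one-dimensional projection chain, and (b) the sharp second-order control of $\E[X_t]$ and $\Var(X_t)$ over the full range $t \lesssim n\log n$ — uniformly rigorous. For (a) one must show the conditional spin law given the magnetization is effectively product/exchangeable and relaxes on an $O(n\log n)$ but lower-order-in-front-constant timescale, or argue directly that the extremal-start TV distance of $(\xi_t)$ equals that of $(X_t)$ up to $o(1)$; the cleanest route is to invoke monotonicity of the Glauber dynamics (the Hamiltonian \eqref{hamilt-GCW} depends only on $|\sigma_+|$, so the chain is monotone and the grand coupling reduces everything to the two extreme starting configurations, whose trajectories are determined by $X_t$). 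For (b), the difficulty is that the linearization $p_n(k)-q_n(k) \approx G'(k/n)/n$ is only valid with an $O(1/n^2)$ error from (C1), and one must integrate this error over $\Theta(n\log n)$ steps without destroying the $\sqrt n$-scale displacement in the lower bound; this is handled by splitting the trajectory into the macroscopic descent phase (where a Gronwall/comparison argument against the ODE suffices, with multiplicative error $1+o(1)$) and the microscopic phase near $k^\star$ (where one uses the contraction estimate plus a martingale decomposition $X_{t+1} - \E[X_{t+1}\mid \kF_t]$ with bounded increments to control the variance). Since the paper defers the full proof to Appendix \ref{sec:dntn-app}, here I would present only this skeleton: the monotone-coupling reduction, the contraction upper bound giving $c_\star n\log n + O(\gamma n)$, and the Wilson-method lower bound, flagging the uniform error control over the long time horizon as the technical heart.
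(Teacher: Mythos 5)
Your lower-bound half is essentially the paper's argument: use the magnetization as a distinguishing statistic, show that started from a constant configuration its expectation is still $\gg \sqrt n$ away from $ns^\star$ at time $T_n^-(\gamma)$ while its variance is $O(n)$ and the stationary measure concentrates in a $\sqrt n$-window (this is exactly Lemmas \ref{lem:supvar}, \ref{lem:xt-ns}(i) and \ref{munalp}). One bookkeeping slip: the one-step contraction of $\E[X_t]-ns^\star$ is $1-a_0/n$ with $a_0=s^\star(1-s^\star)|G''(s^\star)|=1/(2c_\star)$, not $1-1/(c_\star n)$; the factor $2$ in $c_\star$ comes from the $\sqrt n$ target window ($n\,\e^{-a_0 c_\star\log n}=\sqrt n$), not from a ``two-sided return rate''. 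With the rate you wrote, the displacement at $T_n^-(\gamma)$ would be $O(1)$ rather than $\e^{\gamma a_0}\sqrt n$, and the distinguishing argument would collapse; with the correct rate your computation matches the paper's.

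The upper bound is where there is a genuine gap. First, the reduction of $d_n$ (a supremum over \emph{all} starting configurations) to the projection chain is not available as you state it: a Hamiltonian depending only on $|\sigma_+|$ does \emph{not} make the Glauber dynamics monotone --- attractivity requires $k\mapsto F_n(k+1)-F_n(k)$ to be nondecreasing, which is not implied by (C1) and (C3); and the alternative reduction via exchangeability of the conditional law given $X_t$ only holds for permutation-invariant (e.g.\ constant) initial states, not for the worst-case $\sigma$. Second, and more fundamentally, even if monotonicity held, the grand-coupling/coalescence route cannot reach the sharp time $c_\star n\log n+O(\gamma n)$: under the grand coupling the number of discrepancies between the extreme chains equals $X^+_t-X^-_t$ and contracts at the \emph{same} rate $1-a_0/n$ as the magnetization itself, so it is still of order $\sqrt n$ at time $c_\star n\log n$ and needs another $c_\star n\log n$ steps to vanish; a coalescence bound therefore proves mixing only by time $\approx 2c_\star n\log n$ and misses the cutoff. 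This is precisely why the paper does not wait for coalescence: after bringing both magnetizations into the $O(\sqrt n)$-window (Lemma \ref{lem:xt-ns}(ii)) and making the projection chains meet in $O(n)$ extra steps via a supermartingale argument (Lemma \ref{lem:h1}), it invokes the Ding--Lubetzky--Peres comparison (Proposition \ref{prop:lco}) with the overlap statistics $U,V$, the good-set event $H_2$ (Lemmas \ref{lem:dtng}, \ref{lem:h2g}), and the extra term $c\alpha\sqrt{n/(r_2-r_1)}$, which bounds the TV distance directly with only $r_2-r_1=\Theta(\gamma n)$ additional time. Your sketch is missing this ingredient (or an equivalent one), and without it the claimed upper bound at $T_n^+(\gamma)$ does not follow.
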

For the proof, we will closely follow the strategy of proving cut-off phenomena for censored Curie-Weiss model  in \cite{DLP}. Since the proof is long and technical, we defer it to Section \ref{sec:dntn}.
\medskip

\noindent
{\it Proof of Theorem \ref{amt}(ii).}~
Thanks to Theorem \ref{theo:gcw}, we only need to show that Conditions (C1) and (C3) hold for the annealed Ising model. The condition (C1) is already proved in Section \ref{poamt(i)}. Condition (C3) follows from Lemma \ref{ll} (iib), since $G(t)=F(t)+I(t)=\hvarphibb(t)$.  \hfill $\square$

\section{Quenched fixed-spin partition function: Proof of Proposition \ref{propbcq}}
\label{sec-fixed-spin}
In this section, we prove Proposition \ref{propbcq}. We start in Section \ref{sec-pressures-rel-graphs} by relating the fixed-spin pressures for different values of the total spin.
We continue in Section \ref{sec-conc-pressure} to prove concentration properties of the fixed-graph pressure, and prove Proposition \ref{propbcq}(i). In Section \ref{sec-conv-mean-pressure}, we show that the mean pressure converges along a subsequence and use this to prove Proposition \ref{propbcq}(ii). We conclude in Section \ref{sec-quen-vs-anne-press} by showing where the quenched and annealed pressures agree using large deviation ideas.

\subsection{Relating pressures with different total spins}
\label{sec-pressures-rel-graphs}
Let $G=(V,E)$ be a graph  with degrees bounded by $d$  and consider the Ising model  on $G$ with  Hamiltonian given by
	 \[
	 H^{\beta,0}_G(\sigma)=-\beta\sum_{(i,j)\in E} \sigma_i \sigma_j = -(\beta |E| -2\beta e(\sigma_{+},\sigma_{-})),
	 \]
where $e(A,B)$ is the number of edges between $A$ and $B$ in $G$. For any $0\leq k \leq |V|$, define 
	\[
	Z^{\beta,0}_{G,k} = \sum_{\sigma \colon |\sigma_{+}|=k} \exp(-H^{\beta,0}_G(\sigma)). 
	\]
We have 
  	\begin{eqnarray*}
 	Z_{G,k+\ell}^{\beta, 0} &=& \e^{\beta |E|} \sum_{\substack{U\subset V \\ |U|=k+\ell }} \e^{-2\beta e(U,U^c)} \notag \\
 	&=& \e^{\beta |E|} \frac{1}{\binom{k+\ell }{\ell}} \sum_{\substack{A\subset V \\ |A|=k}} \quad  \sum_{\substack{B\subset V\setminus A \\ |B|=\ell}} \e^{-2\beta e(A\cup B,(A\cup B)^c)} \notag \\
 	&=& \e^{\beta |E|}  \sum_{\substack{A\subset V \\ |A|=k}} \e^{-2\beta e(A,A^c)} \quad   \frac{1}{\binom{k+\ell }{\ell}}\sum_{\substack{B\subset V\setminus A \\ |B|=\ell}} \e^{2\beta [e(A,B)-e( B,(A\cup B)^c)]}. 
 	\end{eqnarray*}
Using the naive bound
	\begin{equation*}
	-d \ell \leq  -d|B| \leq e(A,B)-e( B,(A\cup B)^c) \leq d |B| \leq d \ell,
	\end{equation*}
we get
 	\begin{eqnarray*}
	 \binom{|V|-k}{\ell} \exp(-2\beta d\ell)\leq \sum_{\substack{B\subset V\setminus A \\ 
 	|B|=\ell}} 
 	\e^{2\beta [e(A,B)-e( B,(A\cup B)^c)]} \leq \binom{|V|-k}{\ell} \exp(2\beta d\ell).
 	\end{eqnarray*}
Therefore,
	\begin{eqnarray} \label{zgklk}
	\e^{-2\beta d\ell} \binom{|V|-k}{\ell}/\binom{k+\ell}{\ell} \leq \frac{Z_{G,k+\ell}^{\beta, 0}}{Z_{G,k}^{\beta, 0}}\leq  \e^{2\beta d\ell} \binom{|V|-k}{\ell}/\binom{k+\ell}{\ell}.
	\end{eqnarray}
This shows that the fixed-spin pressures cannot change too much when changing the value of the total spin, a fact that will prove to be useful when deriving properties of the limiting fixed-spin pressure.

\subsection{Concentration of finite-graph pressure: Proof of Proposition \ref{propbcq}(i)} 
\label{sec-conc-pressure}
In this section, we will use a vertex-revealing process, combined with the Azuma-Hoeffding inequality, to show that the quenched finite graph fixed-spin pressure is whp close to its mean. We start by setting up the necessary notation. Let $G'=(V',E')$ be a graph obtained from $G=(V,E)$ by adding one vertex $v$ with at most $d$ edges between $v$ and $V$. Then,
	\begin{eqnarray*}
	Z^{\beta,0}_{G',k} &=& \e^{\beta |E'|} \sum_{\substack{U'\subset V' \\ |U'|=k }} \e^{-2\beta e(U',V'\setminus U')}  \notag \\
	&=& \e^{\beta |E'|} \Big[ \sum_{\substack{U\subset V \\ |U|=k }} \e^{-2\beta e(U,(V\setminus U) \cup \{v\})} + \sum_{\substack{U\subset V \\ |U|=k-1 }} \e^{-2\beta e(U \cup \{v\},V\setminus U) } \Big].
	\end{eqnarray*} 
We observe that $0 \leq |E'|-|E| \leq  d$, and
	\begin{eqnarray*}
 	0 &\leq &e(U,(V\setminus U) \cup \{v\})-e(U,V\setminus U) \leq d, \\
 	0 &\leq& e(U \cup \{v\}, V\setminus U)-e(U,V\setminus U) \leq d. 
	\end{eqnarray*}
Therefore,
	\begin{eqnarray} \label{zgpg}
	\e^{-2\beta d} Z^{\beta,0}_{G,k} \leq Z^{\beta,0}_{G',k} \leq \e^{\beta d} (Z^{\beta,0}_{G,k}+Z^{\beta,0}_{G,k-1}).
	\end{eqnarray}
It follows from \eqref{zgklk} that
	\begin{eqnarray} \label{zgkk1}
	\frac{Z_{G,k}^{\beta, 0}}{Z_{G,k-1}^{\beta, 0}}, \,\, \frac{Z_{G',k+1}^{\beta, 0}}{Z_{G',k}^{\beta, 0}} 
	\in \Big[\e^{-2\beta d} \frac{|V|-k+1}{k+1}, 
 	\e^{2\beta d} \frac{|V|-k+1}{k}\Big].
	\end{eqnarray}
Combining  \eqref{zgpg} and \eqref{zgkk1} we obtain
	\begin{eqnarray} \label{icmoz}
\e^{-2\beta d} \Big(1\wedge \e^{-2\beta d}
\frac{|V|-k+1}{k+1}\Big) &\leq &	\frac{Z^{\beta,0}_{G',k}}{Z^{\beta,0}_{G,k}}, \,\, \frac{Z^{\beta,0}_{G',k+1}}{Z^{\beta,0}_{G,k}}  \\
& \leq& 
	\e^{\beta d} \Big(1+\e^{2\beta d}\frac{k+1}{|V|-k+1}\Big)\Big(1\vee \e^{2\beta d}
	\frac{|V|-k+1}{k}\Big). \notag
	\end{eqnarray}
For $t\in(0,1)$, we define
	\[
	Z^{\beta,0}_G(t)=\sum_{\sigma \colon |\sigma_{+}|=\lceil t|V|\rceil} \exp(-H^{\beta,0}_G(\sigma)). 
	\]
Since   
$0\le\lceil t|V'| \rceil -\lceil t|V| \rceil \leq 1$, 
the inequality \eqref{icmoz} implies that 
	\begin{equation} \label{zgptzgt}
	\e^{-2\beta d} \Big(1\wedge \e^{-2\beta d}
	\frac{1-t}{t+1}\Big) \leq \frac{Z^{\beta,0}_{G'}(t)}{Z^{\beta,0}_{G}(t)} \leq 
	\e^{\beta d} \Big(1+\e^{2\beta d}\frac{t}{1-t}\Big)\Big(1\vee \e^{2\beta d}
	\frac{2-t}{t}\Big).
	\end{equation} 
For any random graph $G_n$, we may consider $\log Z^{\beta,0}_{n}(t)$ as a martingale with respect to the vertex-revealing filtration (see e.g. \cite[Chapter 11.4]{B} or \cite{DMSS} for the construction of the filtration). Indeed, for $k\in[n]$, add vertex $k\in[n]$ along with its edges to the graph, and let $G_{k,n}$ denote the graph with vertex set $[k]$ and edge set the restriction of $G_n$ to $[k]$. Let $\mathscr{F}_k$ denote the sigma-algebra generated by $(G_{l,n})_{l\leq k}$, and define
	\eqn{
	M_k=\expec[\log Z^{\beta,0}_{n}(t)\mid \mathscr{F}_{k-1}],
	}
Then, $M_n= \log Z^{\beta,0}_{n}(t)$ and $M_0=\expec[\log Z^{\beta,0}_{n}(t)]$. Due to \eqref{zgptzgt}, the increments of the martingale are uniformly bounded by 
	$$
	|M_{k+1}-M_k|\leq \vartriangle =\beta d 
	+\log(1+\e^{2\beta}\frac{t}{1-t})+\log(1\vee \e^{2\beta d}\frac{2-t}{t}).
	$$
Hence, it follows from the Azuma-Hoeffding inequality that
	\begin{equation*}
	\pp \left(|n^{-1}\log Z^{\beta,0}_{n}(t) -n^{-1} \E(\log Z^{\beta,0}_{n}(t))| \geq \varepsilon\right) \leq 
	2\exp (-\varepsilon ^2 n/2 \vartriangle ^2).
	\end{equation*}
In other words, 
	\begin{equation}
	\pp \left( |\varphi_{n}^{\beta,0} (t)-\tilde{\varphi}_{n}^{\beta,0} (t)| \geq \varepsilon \right) \leq  2\exp (-\varepsilon ^2 n/2 \vartriangle ^2),
	\end{equation}
which  proves that 
	\begin{equation*}
	\tilde{\varphi}_{n}^{\beta,0} (t) -  \varphi_{n}^{\beta,0} (t) \xrightarrow{a.s.} 0.
	\end{equation*}
The same statement holds for the sequence $\varphi_{n}^{\beta,B} (t)$ since
	\begin{equation} \label{vphB0}
	\varphi_{n}^{\beta,B} (t)= \varphi_{n}^{\beta,0} (t) + B \left(2 \frac{\lceil nt \rceil}{n} -1\right).
	\end{equation}
This completes the proof of Proposition \ref{propbcq}(i).\qed

\subsection{Convergence of the mean pressure: Proof of Proposition \ref{propbcq}(ii)} 
\label{sec-conv-mean-pressure}
In this section, we prove that there exists a subsequence $\{n_k\}_{k\geq 0}$ along which $t\mapsto \tilde{\varphi}_{n}^{\beta,0} (t)$ converges uniformly to a continuous function $t\mapsto \varphi_{\beta ,0}(t)$. By  \eqref{zgklk}, for $0\leq t<t+s \leq \tfrac{1}{2}$,
	\begin{eqnarray} \label{zntts}
	\e^{-2\beta d \ell} \binom{n-k}{\ell}/\binom{k+\ell}{\ell} \leq \frac{Z_{n}^{\beta, 0}(t+s)}{Z_n^{\beta, 0} (t)}\leq  \e^{2\beta d\ell} \binom{n-k}{\ell}/\binom{k+\ell}{\ell},
	\end{eqnarray}
with $k=\lceil nt \rceil$ and $\ell = \lceil ns \rceil$. By Stirling's formula,
	\begin{eqnarray*}
 	\binom{n-k}{\ell}/\binom{k+\ell}{\ell} =  
 	\exp \left[ (1-t)n I\left(\tfrac{s}{1-t} \right)- (t+s)n I \left(\tfrac{s}{t+s}\right) +\kO(\log n)\right].
	\end{eqnarray*} 
Therefore, with $\varphi_n^{\beta,0}(t)=\tfrac{1}{n} \log Z_n^{\beta, 0}(t)$,
there exist non-random constants $c_1,c_2>0$ such that 
	\begin{equation} \label{vpntst}
	-2\beta ds + J(t,s)  - 
	\frac{c_1(\log n)}{n} 
	\leq \varphi_n^{\beta,0}(t+s) -\varphi_n^{\beta,0}(t) \leq 2\beta ds +  J(t,s) + 
	\frac{c_2(\log n)}{n},
	\end{equation}
where 
	\begin{eqnarray*}
	J(t,s)&=&  (1-t) I\left(\tfrac{s}{1-t} \right)- (t+s) I \left(\tfrac{s}{t+s}\right)\\
	&=&s\log(\tfrac 1{t+s}-1)
	-(1-t)\log(1-\tfrac s{1-t})+t\log(1-\tfrac s{t+s}).
	\end{eqnarray*}
This inequality holds with probability $1$, so the same inequality also holds for $\tvarphibo$, i.e.,
	\begin{equation}
	-2\beta ds + J(t,s)  - 
	\frac{c_1(\log n)}{n} 
	\leq \tvarphibo(t+s) -\tvarphibo(t) \leq 2\beta ds +  J(t,s) 
	+ \frac{c_2(\log n)}{n},
	\end{equation}
We observe the following:
\begin{itemize}
	\item [(O1)] $J(t,s)\geq 0$ for all $0<t<t+s\leq \tfrac{1}{2}$, since the function $xI(1/x)$ is increasing in $x>1$ and $1-t\geq \tfrac{1}{2} \geq t+s$,
	\item[(O2)] $(t,s)\mapsto J(t,s)$ is continuous on 
	$\{(t,s)\colon t,s\in [0,\tfrac{1}{2}], s+t\le \tfrac{1}{2}\}$ (and hence uniformly continuous on this compact set). 
	\item[(O3)] For any $t \in (0,1)$ and for $s$ small, $J(t,s)=
	s\log \left(\tfrac{1-t}{t} \right) + \kO(s^2)$. 
\end{itemize}
By (O2), we conclude that the collection of functions $\{\tvarphibo(t), 0 \leq t \leq \tfrac{1}{2}\}$ is equicontinuous.  
Besides, $\tvarphibo(0)=\beta dn/(2n)=d\beta/2$ for all $n\in {\mathbb N}$. Hence, using Ascoli-Arzel$\rm\grave{a}$'s theorem, 
we can extract a subsequence $(n_k)_{k\geq 1}$  along which  $\{\tvarphibo, n\geq 1\}$ converge uniformly in $[0, \tfrac{1}{2}]$ to a continuous function $\varphi_{\beta ,0} $.  
By symmetry,  $ \tvarphibo(t)= \tvarphibo(1-t) + \kO(1/n)$. Moreover, by \eqref{vphB0}, $ \tilde{\varphi}_n^{\beta, B}(t)= \tvarphibo(t) + B(2t-1) +\kO(1/n)$. 
Therefore, $\{\tilde{\varphi}_{n_k}^{\beta,B}, k\geq 1\}$ converges uniformly in $[0,1]$ to the continuous function $\varphibb$ defined by 
	\begin{equation*}
	\varphibb(t)=\varphibo(t)+B(2t-1),
	\end{equation*}
where $\varphi_{\beta, 0}$ is extended to all the interval $[0,1]$ by $\varphi_{\beta ,0}(t)= \varphi_n^{\beta, 0}(1-t)$  for $t\in [\tfrac{1}{2},1]$.

Recall that
	\begin{equation*}
	B_c:=\sup_{0<t<u \leq \tfrac{1}{2}} \frac{\varphibo(t)-\varphibo(u)}{2(u-t)}.
	\end{equation*}
By (O1), we have $\tvarphibo(t) -\tvarphibo(t+s) \leq 
2 \beta d s + 
c_2(\log n/n)$ 
on $0<t<t+s\le \tfrac{1}{2}$, and thus 
	\begin{equation} 
	\label{bcbd}
	B_c \leq \beta d.
	\end{equation}
By Jensen's inequality, $\E[\log Z_n^{\beta,0}(t)] \leq \log \E[Z_n^{\beta,0}(t)]$ for all $t\in(0,1)$. Therefore,
	\begin{equation} 
	\label{plhp}
	\varphibo(t) \leq \hvarphibo (t).
	\end{equation}
We claim  that 
	\begin{equation} \label{svp=shvp}
	\sup_{0\leq  t\leq \tfrac{1}{2}} \varphibo(t) = \sup_{0 \leq t \leq \tfrac{1}{2} } \hvarphibo (t) = \hvarphibo(t_3^0),
	\end{equation} 
where $t_3^0 \in (0,\tfrac{1}{2})$ is one of two global maximizers of $\hvarphibo$ as stated in Lemma \ref{lbr}. 

Suppose for the moment \eqref{svp=shvp} holds, then, for all $\beta >\beta_c$, 
	\begin{eqnarray} \label{bcghvp}
	B_c \geq \sup_{t_3^0<u \leq \tfrac{1}{2}} \frac{\hvarphibo(t_3^0)-\hvarphibo(u)}{2(u-t_3^0)} >0.
	\end{eqnarray}
We now show that $B_c$ grows linearly in $\beta$ when $\beta$ tends to infinity. We first show that there exists a positive constant $c>0$, such that, for $\beta \geq 12$,
	\begin{equation} \label{zoah}
	\lim_{n\rightarrow \infty} \pp \left( Z_n^{\beta,0}(0) \geq \e^{c\beta n}Z_n^{\beta,0}(\tfrac{1}{2})\right) =1. 
	\end{equation}
Indeed, 
	\begin{eqnarray*}  \frac{Z_n^{\beta,0}(0)}{Z_n^{\beta,0}(\tfrac{1}{2})}& =&   \left(\sum_{\sigma \colon |\sigma_{+}|=\lceil n/2\rceil} \exp(-2\beta e(\sigma_+,\sigma_-))\right)^{-1} \notag \\
	&\geq&  \left( \binom{n}{\lceil n/2\rceil} \exp\Big(-2 \beta i_{n,d} \lceil n/2\rceil\Big)\right)^{-1},
	\end{eqnarray*}
where $i_{n,d}$ is the isoperimetric number of the random regular graph defined as 
	$$
	i_{n,d}:=\inf \Big \{ \frac{e(A,A^c)}{|A|} \colon A\subset [n],\, |A|\leq n/2 \Big \}, 
	$$
where $[n]$ is the set of vertices in $G_n$. Bollob\'as \cite{Bo} showed that 
	\begin{equation} \label{ison}
	\lim_{n \rightarrow \infty} \pp \left( i_{n,d} \geq   \frac{d}{2} - \sqrt{d \log 2}\right) =1. 
	\end{equation}
Therefore, whp,
	\begin{equation}
	\label{ratio-Zns}
	\frac{Z_n^{\beta,0}(0)}{Z_n^{\beta,0}(\tfrac{1}{2})} \geq \exp \left( \big(\beta (\tfrac{d}{2} - \sqrt{d \log 2}\big) -\log 2)n\right) 
	\geq \exp( c \beta n),
	\end{equation}
for some $c>0$. Notice that for the second inequality we have used that $\beta \geq 12$ and $d\geq 3$.

Using \eqref{zoah}, we thus conclude that
	\begin{eqnarray} 
	\label{bc1/2}
	B_c \geq \liminf_{n\rightarrow \infty} \frac{\tvarphibo(0)-\tvarphibo(\tfrac{1}{2})}{1} = \liminf_{n\rightarrow \infty} \frac{\varphi_n^{\beta,0}(0)-\varphi_n^{\beta,0}(\tfrac{1}{2})}{1} \geq c\beta.
	\end{eqnarray}
Combining this with \eqref{bcbd} and \eqref{bcghvp}, we obtain the desired result in Proposition \ref{propbcq}(ii). We are thus left to prove \eqref{svp=shvp}.

Now we prove \eqref{svp=shvp}, which is a direct consequence of Proposition \ref{propbcq}(i) and the fact that
	\begin{equation} 
	\label{vpnt30}
	\varphi_{n}^{\beta,0}(t_3^0) \xrightarrow{{\rm a.s.}} \hvarphibo(t_3^0) =\sup_{t\in [0,\tfrac{1}{2}]} \hvarphibo(t).
	\end{equation}
Furthermore,  \eqref{vpnt30} follows from 
\colrev{ the following two claims}
	\begin{eqnarray} 
	\label{def:ke}
 	\pp \left(	\lim_{n\to\infty}\frac{\log Z^{\beta, 0}_n}{n} =  \hvarphibo(t_3^0) =\sup_{t\in [0,\tfrac{1}{2}]} \hvarphibo(t) 
 	\right)=1,
	\end{eqnarray}
and, for any $\delta >0$ and for all $n$ large enough,
	\begin{equation} 
	\label{znt30}
	\pp \left(\Big | \frac{\log Z_n^{\beta, 0}(t_3^0)}{n} -\frac{\log Z_n^{\beta, 0}}{n}\Big | \geq \delta \right) \leq n^{-2}.
	\end{equation}
The first claim \eqref{def:ke} follows from \eqref{an=qu}, so it remains to prove \eqref{znt30}. By \eqref{znphk}, 
	\begin{equation*}
	\E(Z_n^{\beta, 0}(t)) \leq \exp(n\hvarphibo(t)+ \kO(1) ).
	\end{equation*}
Hence, Markov's inequality implies that for any $\varepsilon >0$
	\begin{equation} \label{mifvpt}
	\pp \left(Z_n^{\beta, 0}(t) \geq \exp(n(\hvarphibo(t) + \varepsilon) ) \right) \leq \exp(-\varepsilon n/2).
	\end{equation}
For $k\in[n]$, define $A_k= \{\sigma \colon |\sigma_+|=k\}$ and write 
	\[
	Z^{\beta,0}_{n,k} = \sum_{\sigma \in A_k} \exp(-H_n^{\beta,0}(\sigma)). 
	\]
Then, by symmetry, 
	\begin{eqnarray} \label{zn1/2}
	\sum_{k\leq \lceil n/2 \rceil} Z_{n,k}^{\beta,0} \geq \frac{Z_n^{\beta,0}}{2}.
	\end{eqnarray}
For any $\delta>0$, we define
	\begin{equation*}
	Z_n^{\beta,0}(t_3^0,\delta^+) = \sum_{k=0}^{\lceil (t_3^0-\delta)n \rceil} Z_{n,k}^{\beta,0} +\sum_{k=[(t_3^0+\delta)n] } ^{\lceil n/2 \rceil}Z_{n,k}^{\beta,0},
	\end{equation*}
and 
	\begin{equation*}
	Z_n^{\beta,0}(t_3^0,\delta^-) = \sum_{k=[(t_3^0-\delta)n] }^{\lceil (t_3^0+\delta)n \rceil} Z_{n,k}^{\beta,0}.
	\end{equation*}
As for \eqref{mifvpt}, using Markov's inequality, we see that for any $\varepsilon>0$
	\begin{equation*}
	\pp \left(Z_n^{\beta,0}(t_3^0,
	\delta^+) \geq \exp \Big ( n \sup \{ \hvarphibo(t)\colon  0\leq t_3^0
 	\leq \tfrac{1}{2}, |t-t_3^0| \geq \delta \} + n \varepsilon \Big ) \right) \leq \exp(-\varepsilon n/3).
	\end{equation*}
Combining this with \eqref{def:ke}, for some $\varepsilon =  \varepsilon(\delta)>0$,
	\begin{equation*}
	\pp(Z_n^{\beta,0}(t_3^0,\delta^+) \geq Z_n^{\beta,0}/4) \leq \exp(-\varepsilon n /4).
	\end{equation*}
Thus, by \eqref{zn1/2},
	\begin{equation} \label{znt3m}
	\pp(Z_n^{\beta,0}(t_3^0,\delta^-) \geq Z_n^{\beta,0}/4 ) 
	\geq 1-\pp(Z_n^{\beta,0}(t_3^0,\delta^+) \geq Z_n^{\beta,0}/4 ) 
	\geq 1- \exp(-\varepsilon n /4).
	\end{equation}
On the other hand, using \eqref{zgklk}, \eqref{vpntst} and (O3), it follows that for given small $\delta>0$ for all  $n$ large enough and for $k$ such that $|k-[t_3^0n]| \leq \delta n$ 
and $(\log n)/n\le \delta$, one has, for some $\tilde c>0$,
	\begin{eqnarray*}
	\Big | \frac{\log Z_n^{\beta, 0}(t_3^0)}{n} -\frac{\log Z_{n,k}^{\beta, 0}}{n}\Big | &\leq& 
	2 \beta \delta d+ J(t_3^0,\delta) + 
	\tilde c\left(\frac{ \log n}{n}\right) 
	\leq  2C \delta,
	\end{eqnarray*}
where $C=\beta d+ \log \left(\tfrac{1-t_3^0}{t_3^0}\right)+\tilde c$. Therefore,
	\begin{eqnarray*}
	\Big | \frac{\log Z_n^{\beta, 0}(t_3^0)}{n} -\frac{\log Z_n^{\beta, 0}(t_3^0,\delta^-)}{n}\Big | \leq  3C \delta.
	\end{eqnarray*}
Combining this with \eqref{znt3m} and the fact that  $Z_n^{\beta, 0}(t_3^0,\delta^-) \leq Z_n^{\beta,0}$, we obtain \eqref{znt30}.
This completes the proof of \eqref{svp=shvp}, and thus of Proposition \ref{propbcq}(ii).
\qed


\subsection{Relating the quenched and annealed pressures}
\label{sec-quen-vs-anne-press}

\begin{figure}[h!] 
	\centering
		\includegraphics[width=0.6\linewidth]{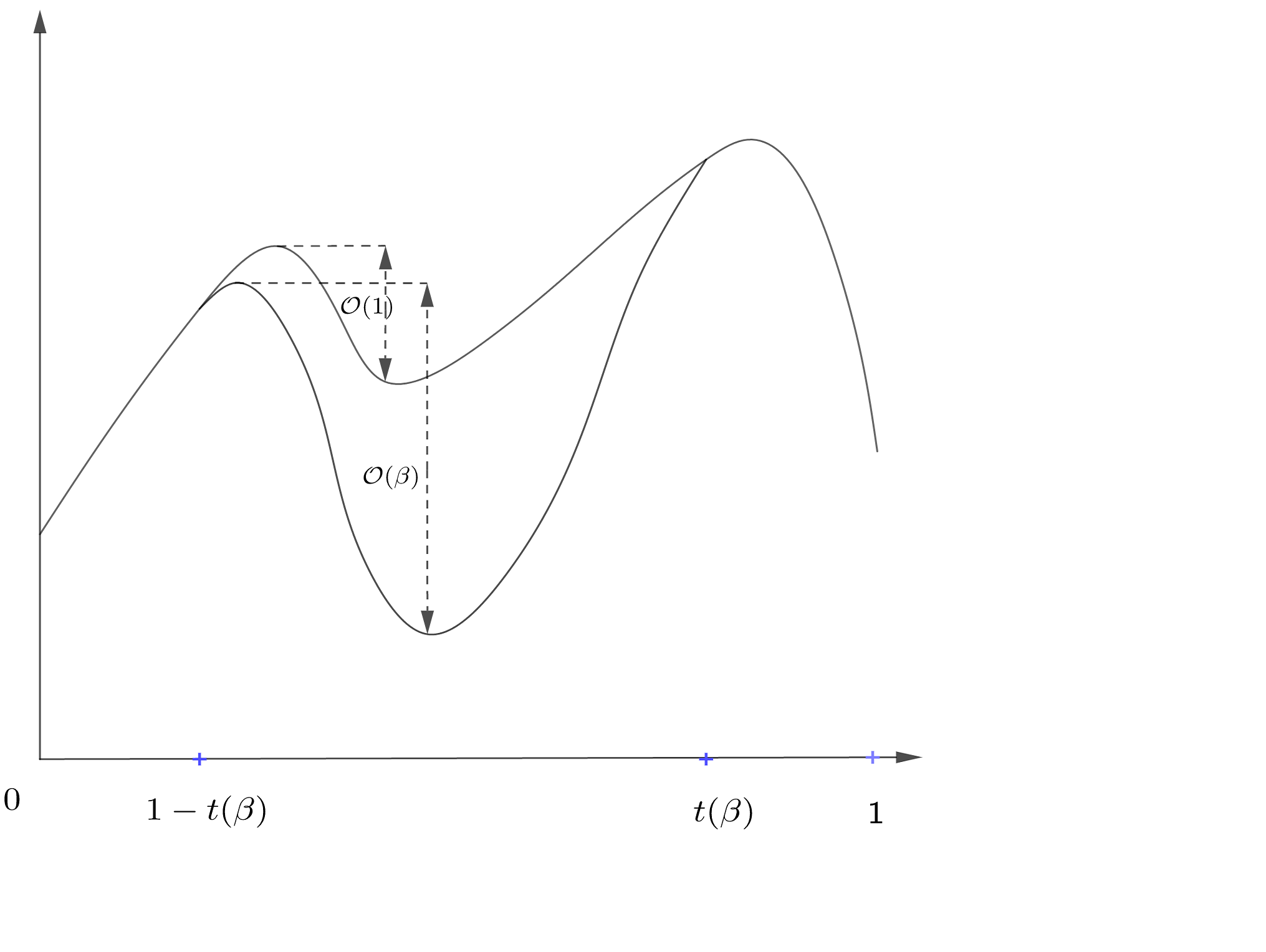}
	\caption{Graphs of $\hvarphibb$ and $\varphi_{\beta, B}$. The two functions are equal in $[0, 1-t(\beta)] \cup [t(\beta),1]$, and in the remaining interval, $\hvarphibb$ is above $\varphi_{\beta,B}$.}
	\label{fig:avq}
\end{figure}

In this section, we relate the pressure in the quenched and annealed settings in more detail. 
 \colrev{Using \cite[Proposition 1.2]{DMSS}, we have $\tfrac{1}{n} \log Z_n^{\beta,B} - \tfrac{1}{n} \E[ \log Z_n^{\beta,B}] \rightarrow 0$ a.s., and thus }
  \begin{equation} \label{vp-vpt}
  \varphi(\beta, B) = \lim_{n\rightarrow \infty} \frac{1}{n} \E [\log Z_n^{\beta,B}] \textrm{ a.s.}
  \end{equation}
 On the other hand, since $Z_n^{\beta,B} =\sum_{i=0}^n Z_n^{\beta,B}(\tfrac{i}{n})$,
   \begin{equation*}
  \frac{1}{n} \max_{0 \leq i \leq n} \log Z_n^{\beta,B}\left(\dfrac{i}{n}\right) \leq \frac{1}{n} \log Z_n^{\beta,B} \leq \frac{1}{n} \left[\log (n+1) + \max_{0 \leq i \leq n} \log Z_n^{\beta,B}\left(\dfrac{i}{n}\right) \right],
   \end{equation*} 
 and hence
   \begin{equation} \label{vpt-max}
 \frac{1}{n}\E[\log Z_n^{\beta,B}] = \max_{0 \leq i \leq n}\tilde{\varphi}_n^{\beta, B}\left(\dfrac{i}{n}\right) + o(1),
 \end{equation}  
 where we recall that $\tilde{\varphi}_n^{\beta, B}(t)= \tfrac{1}{n}\E[\log Z_n^{\beta,B}(t)]$. By Proposition \ref{propbcq} (ii), there is a  subsequence $(n_k)_{k\geq 1}$, such that $(\tilde{\varphi}_{n_k}^{\beta, B})_{k\geq 1}$ converges uniformly to a continuous function $\varphi_{\beta,B}$. Hence
  \begin{equation*}
  \lim_{k\rightarrow \infty} \max_{0 \leq i \leq n_k}\tilde{\varphi}_{n_k}^{\beta, B}\left(\dfrac{i}{n_k}\right) = \max_{0\leq t \leq 1} \varphi_{\beta,B}(t).
  \end{equation*} 
 Combining this with \eqref{vp-vpt} and \eqref{vpt-max}, we obtain that 
 \begin{equation*}
 \varphi(\beta, B) = \max_{0\leq t \leq 1} \varphi_{\beta,B}(t) \textrm{ a.s.}
 \end{equation*}
 In addition, recall from \eqref{an=qu} that $\hat{\varphi}(\beta,B)=\lim_{n\rightarrow \infty} \frac{1}{n} \log \E[Z_n^{\beta,B}]$ exists, and satisfies
 \begin{equation*}
 \varphi(\beta,B) \overset{\textrm{ a.s. }}{=} \hat{\varphi}(\beta,B) =\max_{0\leq t \leq 1} \hat{\varphi}_{\beta,B} (t).
 \end{equation*}
Then, $\varphi_{\beta,B}(t)\leq \hat{\varphi}_{\beta,B}(t)$ for all $t$ by \eqref{plhp}, but 
$Z^{\beta,B}_{n}(t)=\e^{n \varphi_{\beta,B}(t)(1+\op(1))}$ has expected value $\expec[Z^{\beta,B}_{n}(t)]=\e^{n \hat{\varphi}_{\beta,B}(t)(1+o(1))}$. Thus, it is impossible that $\varphi_{\beta,B}(t)<\hat{\varphi}_{\beta,B}(t)$ for all $t\in[0,1]$, while at the same time and instead, these two functions must have the same optimizer and value in their optimum.  Since the optimizer equals $t_1^B$, we thus arrive at the fact that $\varphi_{\beta,B}(t_1^B)=\hat{\varphi}_{\beta,B}(t_1^B)$  holds for all $\beta, B$.


However, we also know that, for every $b$, 
	\[
	\varphi_{\beta,b}(t_1^b)=\varphi_{\beta,B}(t_1^b)+2(b-B)t_1^b, 
	\qquad 
	\hat{\varphi}_{\beta,b}(t_1^b)=\hat{\varphi}_{\beta,B}(t_1^b)+2(b-B)t_1^b.
	\] 
As a result, for all $b$, $\varphi_{\beta,B}(t_1^b)=\hat{\varphi}_{\beta,B}(t_1^b)$ for every $b\in \R$. Now, by varying $b$, we vary $t_1^b$ in a neighborhood of $t_1^B$, which shows that $\varphi_{n}^{\beta,B}(t) \xrightarrow{{\rm a.s.}} \hvarphibb(t)$ for all $t$ in a neighborhood of $t_1^B$. Further, since $t_1^b\rightarrow 1$ as $b\rightarrow \infty$, it follows that 
$\varphi_{\beta,B}(t)=\hat{\varphi}_{\beta,B}(t)$ for all $t\in [t_1^B-\delta,1]$ for some $\delta>0$. 

Next take $\beta>\beta_c$. Then, since $t_1^b\rightarrow t(\beta)\equiv \tfrac{1}{2}(m(\beta)+1)$ as $b\rightarrow 0$, where $m(\beta)$ is the spontaneous magnetization, we conclude that $\varphi_{\beta,B}(t)=\hat{\varphi}_{\beta,B}(t)$ for all $t\in [t(\beta),1]$.
%
Next, we start from $\varphi_{\beta,B}(t_1^b)=\hat{\varphi}_{\beta,B}(t_1^b)$ for all $b$, but now take $b<-B$, so that $t_1^b<\tfrac{1}{2}$. Again, we can take any $b<-B$ in this inequality, and letting $b\searrow -B$ gives $t_1^b\rightarrow 1-t(\beta)$, while for $b\rightarrow -\infty$, $t_1^b\rightarrow 0$. Thus, we conclude also that $\varphi_{\beta,B}(t)=\hat{\varphi}_{\beta,B}(t)$ for all $t\in [0,1-t(\beta)]$. Unfortunately, however, this does not imply that $\varphi_{\beta,B}(t_3^B)=\hat{\varphi}_{\beta,B}(t_3^B)$, since $t_3^B\in (1-t(\beta),\tfrac{1}{2})$.

Since $t(\beta)>\tfrac{1}{2}$ for $\beta>\beta_c$, we cannot conclude that $\varphi_{\beta,B}(t)=\hat{\varphi}_{\beta,B}(t)$ for all $t\in (\tfrac{1}{2}-t(\beta), t(\beta)).$ In fact, this is certainly not true, as we next show.


Indeed, we claim that $\varphi^{\beta,B}(\tfrac{1}{2})\neq\hat{\varphi}^{\beta,B}(\tfrac{1}{2})$ for $\beta$ very large. Indeed, by Lemma \ref{ll}(iia),
	\[
	\hvarphibb(0)-\hvarphibb(\tfrac{1}{2})\leq \hvarphibb(t_3^B)-\hvarphibb(t_2^B) \leq  \lambda,
	\]
which remains finite for large $\beta$. Instead, \eqref{ratio-Zns} shows that 
	\[
	\varphibb(0)-\varphibb(\tfrac{1}{2})\geq c\beta.
	\]
Together with $\varphi_{\beta,B}(0)=\hat{\varphi}_{\beta,B}(0)$ this completes the proof of the claim.\qed

\section{Mixing of quenched Ising models: Proof of Theorem \ref{mt}}
\label{sec-quenched}
In this section, we investigate the mixing of the quenched Ising model on the $d$-regular random graph. We start in Section \ref{sec-quenched-slow} to investigate the slow mixing for $\beta>\beta_c$, $B<B_c(\beta)$. We continue in Section \ref{sec-quenched-fast} to prove the rapid mixing for $\beta>\beta_c$ and $B>B_c^G$.

 \subsection{Slow mixing in the quenched setting: Proof of Theorem \ref{mt}(i)} 
 \label{sec-quenched-slow}
First, we consider the case $\beta \geq 12$.  As we have shown in \eqref{zoah}, whp $Z_n^{\beta,0}(0)\geq \exp(c \beta n) Z_n^{\beta,0}(\tfrac{1}{2})$ for some positive constant $c>0$. Therefore, whp $Z_n^{\beta,B}(0)\geq \exp((c \beta -B)n) Z_n^{\beta,B}(\tfrac{1}{2})$. Hence, for $B \leq c \beta/2$,  whp 
	\begin{equation} 
	\label{zn01/2}
	Z_n^{\beta,B}(0)\geq \exp(c \beta n/2) Z_n^{\beta,B}(\tfrac{1}{2}).
	\end{equation}
\colrev{By Lemma \ref{lbr}, we have} 
  \begin{equation} \label{ltmix}
  \tmix \geq \left( \frac{1}{2 \Phi(U)} -1 \right) \log 2, \qquad \Phi(U) = \frac{Q(U,U^c)}{\mubb(U)},
  \end{equation}
for any $U\subset \Omega_n$ with $\mubb(U) \leq \tfrac{1}{2} +o(1)$. 

We start by proving Theorem \ref{mt}(ib), for which we assume that $\beta>12$ and  consider the bottleneck ratio $\Phi(S)$ of the following set   
 	$$
	S= \{\sigma \colon |\sigma_+| \leq \lceil n/2 \rceil \}.
	$$
 Observe that in each step of the Glauber dynamics, we flip at most one spin. Therefore, after one step, the number of positive spin increases at most by one. Hence,
	\begin{eqnarray} 
	\label{qssc}
	Q(S,S^c)= \sum \limits_{\sigma \in S, \sigma' \in S^c}\!\!\!\mubb (\sigma)P(\sigma, \sigma') = \sum \limits_{\sigma \in A_{\lceil n/2 \rceil}, \sigma' \in A_{\lceil n/2\rceil +1}}
	\!\!\!\!\!\!\mubb(\sigma)P(\sigma, \sigma')  \leq \mubb(A_{\lceil n/2 \rceil}),
	\end{eqnarray} 
\colrev{where, for $k\ge 0$,}
	 \[ A_k = \{ \sigma \in \Omega_n \colon |\sigma_+|=k  \}. \]
Combining this estimate and \eqref{zn01/2} with  the fact that $S$ contains $A_{0}$, we get that, whp,
 	\begin{eqnarray} 
	\label{asos}
	\Phi(S)=\frac{Q(S,S^c)}{\mubb(S)} \leq \frac{\mubb(\sigma \colon |\sigma_+| =\lceil n/2\rceil)}{\mubb(\sigma \colon |\sigma_+|=0)}  =\frac{Z_n^{\beta,B}(\tfrac{1}{2})}{Z_n^{\beta,B}(0)} \leq \exp(-c \beta n/2).
	\end{eqnarray}
To apply \eqref{ltmix}, we need to  show $\mubb(S) \leq  \tfrac{1}{2}+o(1)$ whp. Observe that
	\begin{eqnarray*}
 	\sum_{\sigma\colon |\sigma_{+}| \leq [n/2]} \exp(-H_n^{\beta,B}(\sigma))  &\leq&  \sum_{\sigma \colon |\sigma_{+}| \leq [n/2]} \exp(-H_n^{\beta,0}(\sigma))  
	\leq  \sum_{\sigma\colon |\sigma_{+}| > [n/2]} \exp(-H_n^{\beta,0}(\sigma)) \notag \\
 	&\leq & \sum_{\sigma \colon |\sigma_{+}| > [n/2]} \exp(-H_n^{\beta,B}(\sigma)).
	\end{eqnarray*}
Hence,
	\[
	 \mubb(\sigma \colon |\sigma_+| \leq [n/2]) \leq \tfrac{1}{2}.
	\]
In addition, by \eqref{zn01/2},
	\[
	\mubb (\sigma \colon |\sigma_+| = \lceil n/2\rceil ) = o(1).
	\]
Combining the last two estimates we get $\mubb(S) \leq  \tfrac{1}{2}+o(1)$ whp. 
In conclusion, by \eqref{ltmix} and \eqref{asos}, \colrev{ $\tmix \geq \exp(c \beta n/4)$} whp if $\beta \geq 12$ and $B\leq c \beta/4$.

We now continue to prove Theorem \ref{mt}(ia). Let us fix $\beta >\beta_c$ and  $B<B_c$. Then there exists $0<t<u\leq \tfrac{1}{2}$, such that 
	\begin{equation*}
	\varphibo(t)-\varphibo(u) \geq (u-t)(B_c+B),
	\end{equation*} 
which implies that 
	\begin{equation*}
	\varphibb(t)-\varphibb(u) \geq (u-t)(B_c-B),
	\end{equation*} 
since $\varphibb(t)=\varphibo(t)+B(2t-1)$. Let $(n_k)$ be the subsequence along which $\{\varphi^{\beta, 0}_n\}_{ n\geq 0}$ converges almost surely to $\varphibo$.  
(We can take such a subsequence thanks to Proposition \ref{propbcq}.) Therefore, whp
	\begin{equation} \label{zntu}
	\frac{Z_{n_k}^{\beta,B}(t)}{Z_{n_k}^{\beta,B}(u)} \geq  \exp(n_k(u-t)(B_c-B)/2 ).
	\end{equation} 
Now, by repeating the same arguments for Theorem \ref{mt}(ia) (here we use $S=\{\sigma \colon |\sigma_{+}|=[un]\},$ for which it is obvious that $\mubb(S)\leq \tfrac{1}{2} + o(1)$), 
%
%
%
we obtain that 
	\begin{equation*}
	\lim_{k\rightarrow \infty} \pp (\tmix^{n_k} \geq \exp (c n_k) ) =1,
	\end{equation*}
for some $c>0$, where we write  $\tmix^{n_k}$ for the mixing time considered on the graph $G_{n_k}$.  The above estimate proves Theorem \ref{mt}(ia).
\qed





\subsection{Rapid mixing in quenched setting: Proof of Theorem \ref{mt}(ii)} 
\label{sec-quenched-fast}
We first recall a result in \cite{MS} which gives a sufficient condition for the rapid mixing of Glauber dynamics. 
For a graph $G = (V , E)$ and vertex $v \in V$, we write $B(v, R)$ for the ball of radius $R$ around $v$ and we write
$ S(v, R) = B(v, R) \setminus B(v, R -1)$ for the sphere of radius $R$ around $v$.

\begin{theo}[Fast mixing of Gibbs samplers \protect{\cite[Theorem 3]{MS}}]
\label{rmi}
Let $G$ be a graph on $n \geq  2$ vertices such that there exist constants $
R, T, \chi \geq  1$ such that the following three conditions \colrev{hold} for all $v \in V$:
\begin{description}
	\item[$\rhd$ Volume] The volume of the ball $B(v, R)$ satisfies $|B(v, R)| \leq  \chi$.
	
	\item[$\rhd$ Local mixing] For any configuration $\eta$ on $S(v, R)$, the mixing
	time of the Gibbs sampler on $B(v, R - 1)$ with 
	fixed boundary condition $\eta$ is 
	bounded above by $T$.

    \item[$\rhd$ Spatial mixing] For each vertex 
     $u \in S:=S(v, R)$, define
    	$$
	a_u := \sup_{\eta^+, \eta^-}
    	P \left( \sigma_v = + \mid \sigma_S = \eta^+\right)
    	- P \left( \sigma_v = + \mid \sigma_S = \eta^-\right),
	$$
where the supremum is over configurations $\eta^+, \eta^-$  on $S(v, R)$ differing only at $u$ with $\eta^+_u\equiv +, \eta^-_u \equiv -$.  Then the spatial mixing assumption states that
	\begin{equation*}
	\sum_{u \in S(v,R)} a_u \leq \frac{1}{4}.
	\end{equation*}
\end{description}
Under the above three conditions, there exists a positive constant $C$, such that the mixing time of the Gibbs sampler satisfies $\tmix \leq C T \log (\chi) n\log n$.
\end{theo}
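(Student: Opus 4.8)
The plan is to deduce the bound from a fast-mixing estimate for an auxiliary \emph{block dynamics}, which is then transferred back to the single-site Gibbs sampler via the Local mixing hypothesis. Fix the constants $R,T,\chi$ supplied by the hypotheses. Let $\hat P$ be the block dynamics on $\{-1,+1\}^V$ that, at each step, picks a vertex $v\in V$ uniformly at random and replaces the configuration on $B(v,R-1)$ by a fresh sample from the Gibbs conditional law given the configuration on $V\setminus B(v,R-1)$; by the Markov property this conditional law depends only on the configuration on the sphere $S(v,R)$. The chain $\hat P$ is reversible with respect to the Gibbs measure $\pi$, each of its update blocks has at most $\chi$ vertices by Volume, and every single vertex of $V$ lies in the interior block of at least one update (namely the update centred at that vertex itself).

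\emph{Step 1: $\hat P$ mixes in $O(n\log n)$ steps.} I would run a path coupling of two copies of $\hat P$ started from configurations differing at a single vertex $u$, driven by the common random centre $v$ and the optimal coupling of the two block resamplings. If $d(u,v)\le R-1$, then $u$ lies in the resampled block while the sphere $S(v,R)$ carries identical boundary data in the two copies, so the resamplings can be coupled to agree and the discrepancy disappears; this occurs with probability $\ge 1/n$. If $d(u,v)>R$, nothing changes. If $d(u,v)=R$, the two boundary conditions on $S(v,R)$ differ only at $u$, and resampling $B(v,R-1)$ may create discrepancies inside that ball — this is exactly where the Spatial mixing hypothesis enters, since $\sum_{u}a_u\le\tfrac14$ bounds the total influence of a sphere on its centre, and I would use it to show that, under a suitably \emph{weighted} disagreement metric (the plain Hamming metric will not contract, since at low temperature flipping a boundary vertex can strongly bias an adjacent interior vertex), the expected metric change per step is $\le -c/n$ for some $c>0$. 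Path coupling then yields a mixing time $O\!\big(c^{-1}n\log(n\chi)\big)$ for $\hat P$.

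\emph{Step 2: transfer to the single-site dynamics.} By Local mixing, for every $v$ the Gibbs sampler on $B(v,R-1)$ with any fixed boundary on $S(v,R)$ is within total-variation distance $\varepsilon$ of the block-conditional measure after $O(T\log(1/\varepsilon))$ single-site updates confined to $B(v,R-1)$. Running the single-site dynamics on $G$ and grouping its steps into windows over which, with controlled probability, a given block receives that many internal updates, one couples the single-site trajectory observed at window endpoints to the block dynamics up to a per-window error $\varepsilon$; taking $\varepsilon$ polynomially small in $n$ and using the Volume bound to control how many windows are needed to ``complete'' a block, a standard comparison argument then gives $\tmix\le C\,T\log(\chi)\,n\log n$, with $C$ absorbing the $R$-dependence and the residual logarithmic factors.

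\emph{Main obstacle.} The heart of the argument is the contraction estimate of Step 1: the Spatial mixing hypothesis controls only the influence of a sphere on the \emph{single} centre vertex, whereas the coupling naturally produces discrepancies throughout an interior block, some at vertices adjacent to the perturbed boundary vertex where the influence need not be small. Bridging this gap — either via a weighted metric together with a monotonicity/summation estimate showing that the total weighted influence leaving any ball is $<1$, or via a recursive ``peeling'' of the block in which each newly created interior discrepancy is itself absorbed at a smaller scale so that the effective branching mean is $\sum_u a_u\le\tfrac14$ — is where essentially all the work lies, as is ensuring that the comparison in Step 2 costs only a factor $\log\chi$ rather than $\chi$. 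The remaining ingredients — reversibility of $\hat P$, the path-coupling lemma, and the bottleneck/spectral-gap toolbox of Lemma \ref{lbr} — are standard.
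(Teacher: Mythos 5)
You should first note that the paper does not prove this statement at all: it is quoted verbatim as \cite[Theorem 3]{MS} and used as a black box (with Remark \ref{rem-cont-disc} handling the passage from continuous to discrete time via \cite[Lemma 15]{MS}). So the only honest comparison is between your sketch and the argument in \cite{MS}.

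As a proof, your proposal has a genuine gap, and it is precisely the one you flag yourself in the ``Main obstacle'' paragraph. The Spatial mixing hypothesis bounds only the influence of a boundary flip at $u\in S(v,R)$ on the \emph{single marginal at the centre} $v$; it says nothing about the influence of $u$ on the other (up to $\chi-1$) vertices of $B(v,R-1)$, in particular on interior vertices adjacent to $u$, where the influence can be of order $1$ at low temperature. Consequently, in your path coupling of the block dynamics, the case $d(u,v)=R$ can create an expected number of new discrepancies that is not controlled by $a_u$, the Hamming metric does not contract, and the weighted metric or ``recursive peeling'' that would repair this is not constructed --- it is only named. Since the entire theorem rests on this contraction, the proposal is a plan rather than a proof. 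It is worth knowing that \cite{MS} sidesteps this obstacle altogether by never coupling full block configurations: one runs the single-site dynamics in rounds of length of order $Tn\log\chi$ (long enough that, with high probability, each block is internally equilibrated given its boundary, which is where Local mixing and Volume enter), and tracks only the one-vertex disagreement probabilities $\rho_t(v)=\pp(\xi_t(v)\neq\xi'_t(v))$. Conditioning on the boundary $S(v,R)$ and telescoping over the at most $\chi$ boundary discrepancies gives $\rho_{t+1}(v)\leq \sum_{u\in S(v,R)}a_u\,\rho_t(u)+\mathrm{err}$, so the hypothesis $\sum_u a_u\leq\tfrac14$ yields geometric decay of $\max_v\rho_t(v)$ over $O(\log n)$ rounds, whence $\tmix\leq CT\log(\chi)\,n\log n$. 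This reformulation in terms of marginals, not configurations, is exactly the missing idea; your Step 2 comparison (that the block-to-single-site transfer costs only a factor $\log\chi$) is likewise asserted rather than justified, but it becomes unnecessary once the round-based argument is adopted.
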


\begin{rem}[Continuous vs.\ discrete time]
\label{rem-cont-disc}
\normalfont 
In fact, \cite[Theorem 3]{MS} applies to the {\em continuous-time} Gibbs sampler and has a factor $n$ less in the bound. However, by \cite[Lemma 15]{MS}, the above bound with the extra factor $n$ holds for the discrete-time system.
\end{rem}
\medskip

The volume and local mixing time conditions are verified for any graph with maximum degree at most $d$ in \cite{MS}, as stated in the following two lemmas:
\begin{lem}[Volume bounds \protect{\cite[Lemma 1]{MS}}]
\label{vol}
	Let $G = (V , E)$ be a graph of maximal degree $d$. Then the volume of $B(v,R)$ is less than 
	$$	
	\chi = 1 + d \sum_{i=1}^R (d-1)^{i-1}.
	$$
\end{lem}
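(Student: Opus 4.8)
The statement to prove is Lemma \ref{vol}, the volume bound $|B(v,R)| \le 1 + d\sum_{i=1}^R (d-1)^{i-1}$ for a graph of maximal degree $d$.

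The plan is to argue by a simple layer-by-layer counting on the sphere sizes $|S(v,i)|$. First I would observe that $B(v,R) = \{v\} \cup S(v,1) \cup \cdots \cup S(v,R)$ is a disjoint union, so it suffices to bound $|S(v,i)|$ for each $i$. For $i=0$, $|S(v,0)| = 1$. For the inductive step, every vertex in $S(v,i)$ is adjacent only to vertices in $S(v,i-1) \cup S(v,i) \cup S(v,i+1)$, and in particular each vertex $w \in S(v,i+1)$ has at least one neighbor in $S(v,i)$ (take a shortest path from $v$ to $w$). Each vertex in $S(v,i)$ has degree at most $d$, and for $i \ge 1$ at least one of its edges goes back toward $S(v,i-1)$, so it has at most $d-1$ neighbors in $S(v,i+1)$; for $i=0$ it has at most $d$ such neighbors. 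Hence $|S(v,1)| \le d$ and $|S(v,i+1)| \le (d-1)|S(v,i)|$ for $i \ge 1$, giving $|S(v,i)| \le d(d-1)^{i-1}$ for $i \ge 1$ by induction.

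Then I would just sum: $|B(v,R)| = 1 + \sum_{i=1}^R |S(v,i)| \le 1 + \sum_{i=1}^R d(d-1)^{i-1} = 1 + d\sum_{i=1}^R (d-1)^{i-1} = \chi$, which is exactly the claimed bound.

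There is essentially no obstacle here — this is a textbook breadth-first-search counting argument. The only mild subtlety to be careful about is the $i=0$ versus $i \ge 1$ distinction (at the root all $d$ edges can go outward, whereas at deeper layers one edge is "used up" going back toward $v$), and the fact that a vertex in $S(v,i)$ might have multiple neighbors in $S(v,i-1)$ — but that only helps, since it reduces the count of outward edges, so the bound still holds. One should also note the degenerate cases $d=1,2$ are covered by the same formula (for $d=2$ the sum is $1 + 2R$, matching a path or cycle), though the paper only uses $d \ge 3$ anyway.
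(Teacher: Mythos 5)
Your proof is correct: the layer-by-layer count ($|S(v,1)|\le d$, and $|S(v,i+1)|\le (d-1)|S(v,i)|$ for $i\ge 1$ because each vertex at distance $i\ge 1$ spends at least one of its $\le d$ edges on a neighbor in $S(v,i-1)$) is exactly the standard argument, and summing the geometric bound gives the stated $\chi$. The paper itself offers no proof to compare against — Lemma \ref{vol} is imported verbatim from \cite[Lemma 1]{MS} — so nothing further is needed; the only cosmetic remark is that the bound is really ``at most $\chi$'' (attained with equality on the $d$-regular tree), which is what you prove and all that is used.
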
 

\begin{lem}[Local mixing bounds \protect{\cite[Lemma 2]{MS}}]
\label{lmi}
  Let $G = (V, E)$ be a graph of maximal degree $d$, and consider the ferromagnetic Ising model on $G$. Then, local mixing holds with 
  	$$
  	T = 80d^3\chi^3\e^{5\beta d(\chi+1)}, \quad \chi = 1 + d \sum_{i=1}^R (d-1)^{i-1}.$$
\end{lem}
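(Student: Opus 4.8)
The statement is really one about a \emph{finite} Markov chain of bounded size, so the plan is simply to bound its spectral gap explicitly and uniformly and then quote Lemma~\ref{lbr}(i). Fix $v\in V$, a configuration $\eta$ on $S(v,R)$, write $\Lambda=B(v,R-1)$, $m=|\Lambda|\le|B(v,R)|\le\chi$ (by Lemma~\ref{vol}), and let $\pi_\eta$ be the Ising Gibbs measure on $\Omega_\Lambda=\{-1,+1\}^{\Lambda}$ obtained by freezing the spins on $S(v,R)$ to $\eta$; the Gibbs sampler on $\Lambda$ with boundary $\eta$ is the heat-bath Glauber chain on $\Omega_\Lambda$, reversible for $\pi_\eta$, whose transition graph is the hypercube ($P_\eta(\xi,\xi^{\oplus u})>0$ for every site $u$, where $\xi^{\oplus u}$ flips coordinate $u$). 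For transparency I take the field to be $0$ below; a non-zero $B$ merely replaces $\beta d$ by $\beta d+B$ throughout and is absorbed into the constants as in \cite{MS}. Two crude a priori bounds drive the estimate. First, since $G$ has maximal degree $d$, at most $dm$ edge-endpoints lie in $\Lambda$, so the restricted Hamiltonian ranges over an interval of width $W\le2\beta dm\le2\beta d\chi$; hence $\pi_\eta(\xi)/\pi_\eta(\xi')\le\e^{W}$ for all $\xi,\xi'$, and, as $|\Omega_\Lambda|=2^{m}$, $\pi_{\eta,\min}\ge2^{-m}\e^{-W}$, so that $\log(4/\pi_{\eta,\min})\le\log4+m\log2+W\le5\max(1,\beta d)\,\chi$. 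Second, each site has at most $d$ neighbours, so a single-site heat-bath update assigns mass at least $\delta:=(1+\e^{2\beta d})^{-1}\ge\tfrac12\e^{-2\beta d}$ to either spin value; since the updated site is uniform over $\Lambda$, every hypercube transition satisfies $P_\eta(\xi,\xi^{\oplus u})\ge\delta/m$.

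I would obtain the spectral gap bound by the method of canonical paths. Fix an ordering of the sites of $\Lambda$ and, for $\sigma,\tau\in\Omega_\Lambda$, let $\gamma_{\sigma\tau}$ flip the coordinates where $\sigma,\tau$ disagree one at a time in increasing order, so $|\gamma_{\sigma\tau}|\le m$. For a hypercube edge $e=(\xi,\xi^{\oplus u})$ lying on $\gamma_{\sigma\tau}$ (say $\xi$ comes before $\xi^{\oplus u}$), $\xi$ equals $\tau$ on coordinates $<u$ and $\sigma$ on coordinates $\ge u$; introduce the witness $\rho=\rho_e(\sigma,\tau)$ equal to $\sigma$ on coordinates $<u$ and $\tau$ on coordinates $\ge u$. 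Then $(e,\rho)$ determines $(\sigma,\tau)$, so $(\sigma,\tau)\mapsto\rho$ is injective for fixed $e$; and since the four configurations $\sigma,\tau,\xi,\rho$ have energy in one window of width $W$, $\pi_\eta(\sigma)\pi_\eta(\tau)\le\e^{2W}\pi_\eta(\xi)\pi_\eta(\rho)$. Consequently
\[
\sum_{(\sigma,\tau)\colon\,e\in\gamma_{\sigma\tau}}\pi_\eta(\sigma)\pi_\eta(\tau)\,|\gamma_{\sigma\tau}|
\ \le\ m\,\e^{2W}\,\pi_\eta(\xi)\sum_{\rho}\pi_\eta(\rho)\ \le\ m\,\e^{2W}\,\pi_\eta(\xi),
\]
and dividing by $Q(\xi,\xi^{\oplus u})=\pi_\eta(\xi)P_\eta(\xi,\xi^{\oplus u})\ge\pi_\eta(\xi)\,\delta/m$ bounds the congestion of every edge by $m^{2}\e^{2W}/\delta$. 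Therefore $\gamma^{-1}\le m^{2}\e^{2W}/\delta\le2\chi^{2}\e^{2\beta d(2\chi+1)}$, and Lemma~\ref{lbr}(i) gives
\[
\tmix\ \le\ \gamma^{-1}\log\!\Big(\tfrac{4}{\pi_{\eta,\min}}\Big)\ \le\ 2\chi^{2}\,\e^{2\beta d(2\chi+1)}\cdot 5\max(1,\beta d)\,\chi\ \le\ 10\,\chi^{3}\,\e^{5\beta d(\chi+1)},
\]
using $\max(1,\beta d)\le\e^{\beta d}$ and $2(2\chi+1)+1\le5(\chi+1)$. Since $d\ge3$ this is at most $80d^{3}\chi^{3}\e^{5\beta d(\chi+1)}=T$, uniformly in $G$, $v$ and $\eta$, which is the claim.

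The single place where there is anything to prove is the congestion estimate, and the real point there is the injectivity of $(\sigma,\tau)\mapsto\rho_e(\sigma,\tau)$: it is what lets one replace the naive count $\#\{(\sigma,\tau)\colon e\in\gamma_{\sigma\tau}\}\le2^{m}$ by $\sum_\rho\pi_\eta(\rho)\le1$. This is exactly why one cannot get away with the cruder bottleneck bound of Lemma~\ref{lbr}(ii) --- keeping only one crossing edge per set would cost a spurious factor $\e^{\Theta(\chi)}$, fatal in the high-temperature regime. Everything else --- sharpening the exponent to $5\beta d(\chi+1)$ and the prefactor to $80d^{3}\chi^{3}$, handling orientations of $e$ (a harmless factor $2$), and incorporating the external field --- is pure bookkeeping, and the round constants of \cite[Lemma~2]{MS} simply reflect the slack in the above estimates.
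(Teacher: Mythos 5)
Your proposal is correct in substance, but note that the paper itself does not prove this lemma at all: it is quoted verbatim from \cite[Lemma 2]{MS}, so there is no internal argument to compare against. What you supply is a self-contained derivation by the standard Jerrum--Sinclair canonical-path method: worst-case heat-bath probabilities give $P_\eta(\xi,\xi^{\oplus u})\ge \delta/m$ with $\delta\ge\tfrac12\e^{-2\beta d}$, the energy window $W\le 2\beta d\chi$ controls both $\pi_{\eta,\min}$ and the ratio $\pi_\eta(\sigma)\pi_\eta(\tau)/\pi_\eta(\xi)\pi_\eta(\rho)$, and the injective witness map turns the congestion count into $\sum_\rho\pi_\eta(\rho)\le 1$; combined with Lemma \ref{lbr}(i) this gives $\tmix\le 10\chi^3\max(1,\beta d)\e^{2\beta d(2\chi+1)}\le 80d^3\chi^3\e^{5\beta d(\chi+1)}$. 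I checked the arithmetic (including $\log(4/\pi_{\eta,\min})\le 5\max(1,\beta d)\chi$, which uses $\chi\ge 1+d\ge 4$, and $4\chi+3\le 5(\chi+1)$) and it goes through at zero external field; you correctly identify the injectivity of $(\sigma,\tau)\mapsto\rho_e(\sigma,\tau)$ as the only nontrivial step, and your remark that the bottleneck bound of Lemma \ref{lbr}(ii) would lose a factor $\e^{\Theta(\chi)}$ is apt.

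The one point you should not wave away is the external field. The paper invokes this lemma precisely in the regime $B>B_c^G(\beta)>0$ (Section \ref{sec-quenched-fast}), while the quoted constant $T=80d^3\chi^3\e^{5\beta d(\chi+1)}$ contains no $B$. Your argument, run honestly with $B\neq 0$, replaces $W$ by $2(\beta d+B)\chi$ and $\delta$ by $(1+\e^{2\beta d+2B})^{-1}$, so it proves local mixing with a constant of the form $80d^3\chi^3\e^{5(\beta d+B)(\chi+1)}$ rather than the stated one; saying the field is ``absorbed into the constants as in \cite{MS}'' is not a proof, since the stated constant leaves no room for $B$. This discrepancy is harmless for the purpose the lemma serves here --- in Theorem \ref{mt}(ii) the constant $C$ is allowed to depend on $\beta$ and $B$, and $R$ (hence $\chi$) is fixed --- but you should either state your version of the lemma with the $B$-dependent constant, or give the extra argument needed to remove $B$. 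A similar, genuinely minor, remark applies to the discrete- versus continuous-time reading of the local Gibbs sampler (cf.\ Remark \ref{rem-cont-disc}): this only shifts a factor of order $|B(v,R-1)|\le\chi$, which your slack easily absorbs.
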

\medskip

By Lemmas \ref{vol}--\ref{lmi}, to apply Theorem \ref{rmi}, we only need to prove spatial mixing estimates, as we claim in the following proposition: 
\begin{prop}[Spatial mixing bounds]
\label{smi}
Suppose that $\beta > \beta_c$ and $B > B_c^G(\beta)$. Then there exists a  large $R$ such that the spatial mixing condition holds whp.
\end{prop}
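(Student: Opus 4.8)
The plan is to reduce the spatial mixing condition for the $d$-regular configuration model to an essentially one-vertex (tree) computation, exploiting the fact that the local neighborhood of a uniform vertex is whp a $d$-regular tree up to a large radius. First I would recall that, whp, for any fixed $R$ the number of vertices $v\in[n]$ whose $R$-ball $B(v,R)$ is not a tree is $o(n)$, and in fact one can show that whp \emph{every} vertex has a tree-like neighborhood up to radius $R=R(n)$ growing like $c\log n$; but for this proposition a large fixed $R$ suffices, so I would aim to show that whp \emph{all} balls $B(v,R)$ of a suitably chosen fixed radius $R$ are trees, using a union bound over $n$ vertices together with the standard estimate that $\pp(B(v,R)\text{ not a tree})=\kO((d-1)^{2R}/n)$ — this is $o(1/n)$ once we take $R$ fixed and then... actually it is only $\kO(1/n)$, so the naive union bound fails. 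Instead I would work with the set of \emph{bad} vertices (those within distance $R$ of a cycle): its expected size is $\kO((d-1)^{2R})=\kO(1)$, hence whp it has size at most, say, $n^{1/2}$, and then handle the (few) bad vertices separately using the trivial bound $a_u\le 1$ combined with the fact that the magnetic field $B>B_c^G$ gives uniform exponential decay regardless — more carefully, I would simply note that for bad vertices one still has $\sum_{u\in S(v,R)}a_u\le |S(v,R)|$, which does not obviously suffice, so the cleaner route is:

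Take $R$ fixed but large. For a vertex $v$ whose ball $B(v,R+k)$ is a tree for some additional buffer $k$, the influence $a_u$ of a boundary spin at $u\in S(v,R)$ on $\sigma_v$ is bounded by the corresponding quantity on the infinite $d$-ary tree with $+$ boundary condition far away, by monotonicity/coupling arguments (Griffiths/GKS-type inequalities, or the explicit recursion for the tree). The key input is that for $\beta>\beta_c$ and $B>B_c^G$ the Ising model on the infinite $d$-ary tree has a unique Gibbs measure, and moreover — via Lemma~\ref{lem-crit-ext-Gibbs} and the contraction of the map $\theta\mapsto -2B+(d-1)\log L_\beta(\e^\theta)$ at its unique fixed point — the influence of a spin at tree-distance $R$ from the root decays like $\rho^R$ for some $\rho=\rho(\beta,B)<1$. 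Since there are $(d-1)^{R}$ such vertices in $S(v,R)$ (in the tree case $|S(v,R)|=d(d-1)^{R-1}$), we get $\sum_{u\in S(v,R)}a_u\le d(d-1)^{R-1}\rho^R$, and I must check that the contraction constant $\rho$ beats the growth $(d-1)$: this is where the hypothesis $B>B_c^G$ is used in its strong form, namely that at a \emph{supercritical} field the derivative of the tree recursion at the fixed point is strictly less than $1/(d-1)$, equivalently the reconstruction/influence is subcritical. If that derivative bound holds we can fix $R$ large enough that $d(d-1)^{R-1}\rho^R\le 1/4$.

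The main obstacle I anticipate is precisely establishing that the linearized tree recursion has Lyapunov exponent strictly below $(d-1)^{-1}$ throughout the regime $B>B_c^G$ — uniqueness of the Gibbs measure on the tree is \emph{a priori} weaker than this quantitative "$(d-1)\cdot(\text{Lipschitz constant})<1$" statement (the two coincide for the free-boundary fixed point but one must argue it also controls influences under arbitrary boundary conditions, which is where one invokes monotonicity in boundary conditions plus the fact that the extremal $+$ and $-$ measures coincide). I would resolve this by: (a) using the FKG/monotonicity to sandwich $a_u$ between the $+$ and $-$ boundary versions, (b) noting these agree by uniqueness, and (c) using a direct differentiation of the message-passing recursion along the unique path from $u$ to $v$, bounding the product of derivatives by $(\sup|L_\beta'|)^R$ times combinatorial factors, and checking $(d-1)\sup_{x\ge 0}|L_\beta'(x)|\cdot(\text{something})<1$ may fail pointwise but the relevant product telescopes favorably because the messages are pinned near the fixed point. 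Once the uniform contraction is in hand, the proof concludes: apply the union bound over the $\kO(1)$-in-expectation bad vertices to push them to probability $o(1)$ with at most $n^{1/2}$ exceptions, then handle exceptions by enlarging $R$ or by a cruder decay estimate that still holds off the cycle, giving the spatial mixing bound $\sum_{u\in S(v,R)}a_u\le 1/4$ for \emph{all} $v$ whp, which is exactly Proposition~\ref{smi}.
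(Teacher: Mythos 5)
Your overall strategy (reduce to the $d$-ary tree, use uniqueness at $B>B_c^G$ to get a contraction whose rate beats the $(d-1)^R$ growth of the sphere, then use local tree-likeness of the random regular graph) is the same as the paper's, but the two places where you flag difficulties are exactly where your argument has genuine gaps. First, the quantitative heart of the matter — that the per-edge derivative of the tree recursion at its unique fixed point $r^\star$ is strictly below $1/(d-1)$ — is asserted, not proved. Pinning the messages near $r^\star$ does follow from uniqueness (this is \eqref{mor}--\eqref{rrd} in the paper), but it only reduces the problem to bounding the derivative \emph{at} $r^\star$; saying the product of derivatives "telescopes favorably because the messages are pinned near the fixed point" is circular, since the telescoping is favorable precisely when that derivative beats $1/(d-1)$, which is the claim at issue. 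Uniqueness of the fixed point does not by itself exclude an inflection-type tangency, i.e.\ $g'(r^\star)=0$ for $g(r)=r-\e^{-2B}L_\beta(r)^{d-1}$, in which case the per-edge factor equals $1/(d-1)$ and your bound $d(d-1)^{R-1}\rho^R$ does not go to zero. The paper's Lemma \ref{dtk} rests on the inequality \eqref{ffp}, proved in Section \ref{sec-BcG}: $g(0)<0$ and uniqueness give $g'(r^\star)\geq 0$; the tangency case $g'(r^\star)=0$ is excluded by showing it would force $g''(r^\star)=0$ as well, and that these two equations yield $r^\star=(d-2)\e^{2\beta}/d>1$, contradicting $r^\star=p^{-1}(t^\star)\in(0,1)$ (which uses $t^\star=t_1^B>\tfrac12$, i.e.\ the identification in Proposition \ref{pcex}). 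Some argument of this kind is indispensable; without it your $\rho$ (or $\varkappa$) is unproven.

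Second, your treatment of vertices whose $R$-ball contains a cycle does not close. The spatial-mixing hypothesis of Theorem \ref{rmi} must hold for \emph{every} $v\in[n]$, and the expected number of vertices within distance $R$ of a cycle is of constant order, so with probability bounded away from zero such vertices exist; they cannot be discarded, "enlarging $R$" only puts more cycles into the ball, and no "cruder decay estimate off the cycle" is actually supplied. The paper sidesteps this entirely with Weitz's identity (Lemma \ref{low}): the influence computation is transferred to the self-avoiding-walk tree $T_{\saw}(G,v)$, which is a $d$-regular tree for \emph{any} $d$-regular graph, so no tree-likeness of $B(v,R)$ is needed; what is needed is only a bound on the multiplicity $|\w(u)|$ of sphere vertices in $T_{\saw}$, and here the union bound that does work — $\pp(B(v,R)\text{ contains at least two cycles})=\kO(n^{-2})$ per vertex — shows that whp every ball has at most one cycle, whence $|\w(u)|\leq 2$ and $\sum_{u\in S(v,R)}a_u\leq 2C(d-1)^R\varkappa^{R-K}\leq \tfrac14$ for $R$ large. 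If you want to avoid Weitz, you must give a genuine argument for balls with a single cycle (e.g.\ conditioning on a spin on the cycle to cut it open and compare with the tree), which your sketch does not contain.
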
   
\medskip

\noindent
{\it Proof of Theorem \ref{mt}(ii).} Theorem \ref{mt}(ii) follows immediately from Theorem \ref{rmi} and Lemma \ref{vol}, \ref{lmi}, and Proposition \ref{smi}.  \hfill $\square$
 
\medskip
It remains to prove Proposition \ref{smi}. We follow the approach in \cite{MS} using Weitz's result to compare the Gibbs measure on graphs to the one on the tree of self-avoiding paths. For any $v \in V$, we denote the tree of paths in $G$ \colrev{starting at $v$} that do not intersect themselves, except possibly at the terminal vertex of the path, by $T_{\saw}:=T_{\saw}(G,v)$. 
By the construction, each path in  $T_{\saw}$ can be naturally mapped to a vertex in $G$ which is the terminal vertex. 
For any set $\Lambda \subset V$, let $\mathrm{W}(\Lambda) \subset T_{\saw}(G,v)$ be the pullback of this natural map. 

Using this, we relate configurations $\eta_\Lambda$ to the corresponding configurations $\eta_{\w(\Lambda)}$ on $\w(\Lambda)$. Furthermore, it holds that $d_G(A,B)=d_{T_{\saw}}(\w(A),\w(B))$. \colrev{Here, $d_G(A,B)$ is the graph distance in $G$ between two vertex sets $A$ and $B$, while with a slight  abuse of notation, we denote the graph distance in $T_{\saw}$ between  the vertex sets of the two sub-trees $\w(A)$ and $\w(B)$ by $d_{T_{\saw}}(\w(A),\w(B))$.}  The following lemma describes a relation between the Glauber dynamics for the Ising model on subsets of $V(G)$ and that on $T_{\saw}$:
 
\begin{lem}[Glauber dynamics on $G$ and on $T_{\saw}$ \protect{\cite[Theorem 3.1]{W}}]
\label{low}
For a graph $G$ and $v \in V(G)$, there exists $A \subseteq T_{\saw}(G,v)$ and a configuration $\nu_A$ on $A$ such that for any $\Lambda \subset V$ and configuration $\eta_{\Lambda}$ on $\Lambda$,
	$$
	P_G(\sigma_v =+ \mid \sigma_{\Lambda}) = P_{T_{\saw}} (\sigma_v =+ \mid \sigma_{\w(\Lambda) \setminus A} = \eta_{\w(\Lambda) \setminus A}, \sigma_A = \nu_A).
	$$
Here, the set $A$ is the set of leaves in $T_{\saw}$ corresponding to the terminal vertices of paths which return to a vertex already visited by the path. 
\end{lem}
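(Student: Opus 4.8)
The statement to prove is Lemma \ref{low}, the Weitz self-avoiding-walk tree identity: for a graph $G$ and a vertex $v \in V(G)$, there is a set $A \subseteq T_{\saw}(G,v)$ of leaves and a boundary configuration $\nu_A$ on $A$ such that, for every $\Lambda \subset V$ and every configuration $\eta_\Lambda$ on $\Lambda$,
\[
P_G(\sigma_v = + \mid \sigma_\Lambda = \eta_\Lambda) = P_{T_{\saw}}(\sigma_v = + \mid \sigma_{\w(\Lambda)\setminus A} = \eta_{\w(\Lambda)\setminus A},\ \sigma_A = \nu_A).
\]
Since this is Weitz's theorem \cite[Theorem 3.1]{W}, my plan is to reproduce its proof in the ferromagnetic Ising (two-spin) special case, organized around the recursive computation of the single-site marginal.

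\textbf{Step 1: Reduce to the marginal ratio at the root.} Write $R_G^v := P_G(\sigma_v=+\mid \sigma_\Lambda=\eta_\Lambda)/P_G(\sigma_v=-\mid \sigma_\Lambda=\eta_\Lambda)$; knowing $R_G^v$ is equivalent to knowing the conditional probability. If $v \in \Lambda$ the identity is trivial (both sides are $0$ or $1$, and we put the corresponding constraint into $\nu_A$ or note $\w(\{v\})$ is the root), so assume $v \notin \Lambda$. The goal becomes: exhibit a finite tree with boundary conditions on which the root marginal ratio equals $R_G^v$.

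\textbf{Step 2: The telescoping / recursion on $G$.} Enumerate the neighbours $u_1,\dots,u_k$ of $v$ in $G$. Condition successively: using the Markov (spatial) property of the Ising Gibbs measure, one shows
\[
R_G^v = \e^{2B}\prod_{i=1}^{k} \frac{\e^{2\beta}\, R_{G_i}^{u_i} + 1}{R_{G_i}^{u_i} + \e^{2\beta}},
\]
where $G_i$ is the graph $G$ with the edge $\{v,u_i\}$ (and all later edges $\{v,u_j\}$, $j>i$) removed, and the boundary condition on $G_i$ is $\eta_\Lambda$ augmented by fixing $\sigma_{u_j}$ for $j<i$ to the values being conditioned on in the telescoping (these become part of the extended boundary). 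This is the standard ``unfolding one edge at a time'' computation; it uses only that the Ising interaction is nearest-neighbour, that removing an edge incident to $v$ changes the conditional law of the rest in a controlled multiplicative way (factor $\e^{\pm 2\beta}$ depending on $\sigma_{u_i}$), and the transfer-matrix identity $\sum_{\sigma_{u_i}} \e^{\beta \sigma_v \sigma_{u_i}} x^{\mathbbm{1}[\sigma_{u_i}=+]} = \e^{\beta}x + \e^{-\beta}$ type algebra, which produces exactly the Möbius map $x \mapsto (\e^{2\beta}x+1)/(x+\e^{2\beta}) = L_\beta(x)$ appearing in \eqref{def-F}.

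\textbf{Step 3: Iterate to build $T_{\saw}$, stopping at repeats.} Apply Step 2 recursively. Each application replaces the marginal ratio at a vertex by a product of maps applied to marginal ratios at its neighbours in a graph with one fewer edge at that vertex. Crucially, when the recursion would revisit a vertex $w$ already lying on the current path from $v$ (i.e., the edge being unfolded closes a cycle), the vertex $w$ appears with a \emph{fixed} spin: at that stage of the telescoping its spin has already been conditioned upon, so $R$ at that copy of $w$ is either $0$ or $+\infty$, a constant independent of $\eta_\Lambda$. Collecting all such terminal copies gives the leaf set $A \subseteq T_{\saw}(G,v)$, and recording whether each was frozen to $+$ or $-$ defines the configuration $\nu_A$ (this depends only on the combinatorics of $G$ and the order of edge-unfolding, \emph{not} on $\eta_\Lambda$ — this independence is the content of ``there exists $A$ and $\nu_A$'' in the statement). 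The recursion terminates because $T_{\saw}(G,v)$ is finite (paths in a finite graph that self-intersect only at their terminal vertex have bounded length), and the tree it traces out is precisely $T_{\saw}(G,v)$ with the identification $\w$: a path in $T_{\saw}$ ending at a vertex $w \in \Lambda$ inherits the boundary value $\eta_w$, a path ending at a ``return'' vertex inherits the frozen value from $\nu_A$, and a path ending at a non-boundary non-return leaf contributes a free marginal ratio — but such leaves only occur at the bottom level and their contribution is handled by the base case of the recursion (degree-one effective vertex, ratio $= \e^{2B}$, matching what the tree recursion gives).

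\textbf{Step 4: Identify the two recursions.} Finally, observe that the exact same recursion \eqref{def-F}-style, run on the tree $T_{\saw}(G,v)$ with boundary condition $\eta_{\w(\Lambda)\setminus A}$ on the $\w(\Lambda)\setminus A$ leaves and $\nu_A$ on the $A$ leaves, computes $R_{T_{\saw}}^v$; since the tree recursion and the graph telescoping produce term-by-term identical expressions (same maps $L_\beta$, same tree shape, same leaf data), we get $R_G^v = R_{T_{\saw}}^v$, which is the claimed identity. The main obstacle — and the point requiring genuine care rather than bookkeeping — is Step 3: verifying that the set $A$ and the frozen configuration $\nu_A$ can be chosen \emph{uniformly in $\eta_\Lambda$}, i.e. that the ``which leaves are return-leaves and to what spin they are frozen'' data is a function of $(G,v)$ and the chosen edge-ordering alone. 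One must check that the telescoping order can be fixed in advance (e.g. a canonical DFS order on $G$ from $v$) so that every cycle-closing edge is encountered \emph{after} the spin of its far endpoint has been conditioned, making that endpoint's frozen value a deterministic function of the spins of \emph{earlier} vertices on the path — and then argue that, along any fixed path in $T_{\saw}$, these ``earlier vertices'' are ancestors of the leaf, so the freezing propagates consistently down the tree and defines a single configuration $\nu_A$. This consistency/uniformity is exactly the subtle combinatorial heart of Weitz's construction.
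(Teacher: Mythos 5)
The paper offers no proof of this lemma: it is imported verbatim as Weitz's theorem \cite[Theorem 3.1]{W}, with the construction of $\nu_A$ referenced to the proofs of \cite[Lemma 13]{MS} and \cite[Theorem 3.1]{W}. So there is no in-paper argument to compare yours against; what can be judged is whether your reconstruction of Weitz's proof is sound, and there it has a genuine gap at Step 2. The displayed identity
$R_G^v = \e^{2B}\prod_{i} L_\beta(R_{G_i}^{u_i})$
is the \emph{tree} recursion: it relies on the neighbours $u_1,\dots,u_k$ being conditionally independent given $\sigma_v$, which fails on a graph with cycles because the $u_i$ interact through $G\setminus\{v\}$. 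Making a product formula of this shape exact on a general graph is precisely the content of Weitz's theorem, so you cannot assume it as the engine of the proof. The correct identity is obtained differently: split $v$ into $\deg(v)$ degree-one copies $v_1,\dots,v_k$ (copy $v_i$ attached only to $u_i$, with the external field at $v$ distributed among the copies), note that $Z_G^{\sigma_v=+}/Z_G^{\sigma_v=-}$ equals the ratio of the ``all copies $+$'' to ``all copies $-$'' weights in the split graph, and telescope that ratio over the copies, so that the $i$-th factor is a single-copy ratio computed with $v_j$ frozen to $+$ for $j<i$ and to $-$ for $j>i$ (or the reverse convention). It is exactly this frozen $+/-$ pattern on the copies of already-visited vertices, pushed through the recursion, that produces the leaf set $A$ and the configuration $\nu_A$, and that makes them depend only on $(G,v)$ and a fixed edge-ordering.

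Because your Step 2 conditions on the \emph{neighbours} $u_j$ rather than on copies of the vertex being unfolded, Step 3's key claim --- that when a path in $T_{\saw}$ returns to an ancestor $w$, that copy of $w$ carries a spin already fixed by the telescoping, independently of $\eta_\Lambda$ --- does not follow from anything you have established. You then explicitly defer the consistency and $\eta_\Lambda$-independence of $\nu_A$ to an unverified ``combinatorial heart''; but that statement \emph{is} the lemma, so the argument as written does not close. To repair it you would need to carry out the vertex-splitting telescoping (the two-spin analogue of Weitz's Section 3, or the version spelled out in \cite[Lemma 13]{MS}), from which both the exact factorization and the deterministic rule defining $\nu_A$ fall out simultaneously.
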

\medskip
 
 We note that the construction of $\nu_A$ is described in the proof of 
 \cite[Lemma 13]{MS} and  \cite[Theorem 3.1]{W}. 
 	
When $G$ is $d$-regular then $T_{\saw}(G,v)$ is a $d$-regular tree, denoted by $\T_d$. We now prove the spatial mixing of the Glauber dynamics on $\T_d$ under the uniqueness regime of Gibbs measure, which is the main innovation of this paper in the rapid mixing regime:

\begin{lem}[Spatial mixing in the uniqueness regime]
\label{dtk}
Suppose that $\beta > \beta_c$ and $B>B^G_c(\beta)$.   Let $o$ be the root of $\T_d$, let $U \subseteq \Lambda \subset \T_d$,  and let $\eta^+, \eta^-$ be two configurations that differ only on $U$ with $\eta^+_U \equiv  +, \eta_U^- \equiv -$. Then there exist positive constants $\varkappa,  C, K$ depending only on $\beta$ and $B$,  such that $\varkappa < 1/(d-1)$ and
		$$
		|P(\sigma_o=+ \mid \sigma_{\Lambda} =\eta^+)- P(\sigma_o=+ \mid \sigma_{\Lambda} =\eta^-)| \leq C |U| \varkappa^{d(o,\Lambda)-K}.
		$$
\end{lem}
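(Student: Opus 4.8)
The plan is to reduce the statement to a contraction estimate for the standard ``effective field'' recursion on $\T_d$. For a vertex $v$ and a configuration $\tau$ on the part of $\Lambda$ lying in the subtree $\T_v$ hanging below $v$, set $h_v=h_v(\tau)$ by $\tanh h_v = 2P(\sigma_v=+\mid\sigma_{\Lambda\cap\T_v}=\tau)-1$, i.e. $\e^{2h_v}=P(\sigma_v=+\mid\cdot)/P(\sigma_v=-\mid\cdot)$. Passing from the ratio recursion $\e^{2h_v}=\e^{2B}\prod_{w\text{ child of }v}L_\beta(\e^{2h_w})$ to $h_v$ and using $\tfrac12\log L_\beta(\e^{2h})=\textrm{atanh}(\tanh\beta\tanh h)$, one gets $h_v = B+\sum_{w\text{ child of }v}\vartheta(h_w)$, where $\vartheta(h):=\textrm{atanh}(\tanh\beta\tanh h)$, with the convention that $h_v=\pm\infty$ (hence $\vartheta(h_v)=\pm\beta$) when $v\in\Lambda$ is pinned to $\pm$. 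Here $\vartheta$ is odd, increasing, analytic, $|\vartheta|\le\beta$, $\vartheta'$ is even and decreasing on $(0,\infty)$ with $\vartheta'(0)=\tanh\beta$ and $\vartheta'(h)\to0$ as $|h|\to\infty$. By Lemma~\ref{lem-crit-ext-Gibbs}, the hypothesis $B>B_c^G$ means precisely that $h\mapsto B+(d-1)\vartheta(h)$ has a unique fixed point $h^\star$, which then satisfies $(d-1)\vartheta'(h^\star)<1$. Since $P(\sigma_o=+\mid\sigma_\Lambda=\eta)=\tfrac12(1+\tanh h_o(\eta))$ and $|\tanh a-\tanh b|\le|a-b|$, it suffices to bound $|h_o^+-h_o^-|$, where $h_o^\pm:=h_o(\eta^\pm)$; by the FKG inequality $h_o^+\ge h_o^-$.

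First I would reduce to flipping a single vertex. Writing $U=\{u_1,\dots,u_m\}$ and interpolating from $\eta^+$ to $\eta^-$ by switching $u_1,\dots,u_m$ one at a time, monotonicity makes every increment of $h_o$ nonnegative, so it is enough to prove: for every $u\in\Lambda$ with $r:=d(o,u)\ge d(o,\Lambda)$ and \emph{every} configuration on $\Lambda\setminus\{u\}$, switching the value at $u$ changes $h_o$ by at most $C_0\varkappa^{r}$. Fix such a $u$ with geodesic $o=v_0,v_1,\dots,v_r=u$; if some $v_j$ with $j<r$ lies in $\Lambda$ the pinned spin screens off $u$ and the change is $0$, so assume $v_1,\dots,v_{r-1}\notin\Lambda$. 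Replace the pinning at $u$ by a continuous external field $t\in\R$ (so $t=\pm\infty$ recovers the two pinnings) and let $h_v(t)$ be the resulting fields; they satisfy the recursion, $|h_{v_j}(t)|\le B+(d-1)\beta=:\Xi$ for $1\le j\le r-1$, and differentiating along the path gives $\tfrac{d}{dt}h_o(t)=\vartheta'(t)\prod_{j=1}^{r-1}\vartheta'(h_{v_j}(t))$, so
\[
|h_o^+-h_o^-|=\Big|\int_{-\infty}^{\infty}\tfrac{d}{dt}h_o(t)\,dt\Big|\le \Big(\int_{-\infty}^{\infty}\vartheta'(t)\,dt\Big)\,\sup_{t\in\R}\prod_{j=1}^{r-1}\vartheta'(h_{v_j}(t))=2\beta\,\sup_{t\in\R}\prod_{j=1}^{r-1}\vartheta'(h_{v_j}(t)).
\]
Because $\beta>\beta_c$ forces $\vartheta'(0)=\tanh\beta>1/(d-1)$, the trivial bound $\vartheta'\le\tanh\beta$ on $[-\Xi,\Xi]$ is too weak, and the fields along the path must be handled jointly.

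The remaining and central step is a potential-function contraction. I would show that, in the strict-uniqueness regime $B>B_c^G$, there exist an increasing $C^1$ function $\Psi$ on $\R$ (depending only on $\beta,B,d$) and a constant $\varkappa<1/(d-1)$ with
\[
\vartheta'(h)\,\frac{\Psi'\big(B+\vartheta(h)+c\big)}{\Psi'(h)}\ \le\ \varkappa\qquad\text{for all }h\in[-\Xi,\Xi]\text{ and all }c\in\big[-(d-2)\beta,(d-2)\beta\big].
\]
Applying this with $h=h_{v_j}(t)$ and $c$ the (fixed) contribution of the off-path children of $v_{j-1}$, the factors $\Psi'(h_{v_{j-1}}(t))/\Psi'(h_{v_j}(t))$ telescope along the path, so $\prod_{j=1}^{r-1}\vartheta'(h_{v_j}(t))\le\big(\sup_{[-\Xi,\Xi]}\Psi'/\inf_{[-\Xi,\Xi]}\Psi'\big)\varkappa^{\,r-1}$, uniformly in $t$. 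Hence the single-switch change in $h_o$ is at most $C_0\varkappa^{\,r-1}\le C_0\varkappa^{\,d(o,\Lambda)-1}$ (using $\varkappa<1$); summing over the $|U|$ vertices of $U$ and using $|p_o^+-p_o^-|\le\tfrac12|h_o^+-h_o^-|$ gives the claim with $K=1$ and $C$ absorbing $\beta\sup\Psi'/\inf\Psi'$. The factor from the root having $d$ rather than $d-1$ children is harmless and also goes into $C$.

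The hard part is the potential-function inequality with $\varkappa$ \emph{strictly} below $1/(d-1)$: this is exactly the quantitative form of strong spatial mixing for the Ising model on $\T_d$ throughout the tree-uniqueness regime. I expect it to follow by choosing $\Psi'$ of an explicit shape — e.g. $\Psi'(h)$ comparable to $1/\cosh h$ up to a bounded correction that flattens $\Psi'$ away from $0$ — reducing the displayed bound to checking a smooth one-variable function on $(0,\infty)$ built from $\vartheta$ (equivalently $L_\beta$ of \eqref{def-F}); the slack needed is precisely the gap $1-(d-1)\vartheta'(h^\star)>0$ granted by $B>B_c^G$, with $B=B_c^G$ the tangency case where only $\varkappa=1/(d-1)$ survives. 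A secondary, purely bookkeeping, obstacle is the clean execution of the reduction above: screened (blocked) paths, boundary vertices interior to $\Lambda$, and the passage from the one-vertex estimate to the union bound over $U$.
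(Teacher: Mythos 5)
Your reduction to single-site flips and the passage to the field recursion $h_v=B+\sum_{w\prec v}\vartheta(h_w)$ are fine and parallel the paper's interpolation step. The gap is the central potential-function inequality, which is not just hard but false in the stated form. Indeed, whenever $B\le(d-2)\beta$ (which covers a large part of the regime $B>B_c^G$, e.g.\ $\beta$ near $\beta_c$ and $B$ near $B_c^G$), the value $c=-B$ is admissible, and then $h=0$ is a fixed point of $h\mapsto B+\vartheta(h)+c$; at such a fixed point your weighted factor equals $\vartheta'(0)\,\Psi'(0)/\Psi'(0)=\tanh\beta>1/(d-1)$ for \emph{every} increasing $\Psi$, so no choice of potential can give $\varkappa<1/(d-1)$ uniformly in $h\in[-\Xi,\Xi]$ and $c\in[-(d-2)\beta,(d-2)\beta]$. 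This is not an artifact of the method: for $\beta>\beta_c$, boundary spins placed near the path can neutralize the external field (make the effective field along many consecutive path vertices close to $0$), so the influence of a single far spin, \emph{uniformly over the configuration on $\Lambda\setminus\{u\}$}, can decay as slowly as $(\tanh\beta)^{d(o,u)}$. In particular your intermediate claim ``switching the value at $u$ changes $h_o$ by at most $C_0\varkappa^{d(o,u)}$ for every configuration on $\Lambda\setminus\{u\}$'' is false when $d(o,u)\gg d(o,\Lambda)$ and other vertices of $\Lambda$ sit near the lower portion of the path. The lemma survives only because the exponent is $d(o,\Lambda)$: since \emph{all} pinned vertices are at distance at least $d(o,\Lambda)$ from the root, every message entering the top $d(o,\Lambda)-K$ levels of the geodesic is, by the monotone plus/minus bounds and uniqueness (the paper's \eqref{mor}--\eqref{rrd}), within $\delta$ of the unique fixed point, and there the one-step factor is at most $\varkappa<1/(d-1)$; the last $K$ levels are bounded crudely, which is exactly the origin of the constant $K$ in the statement. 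Your scheme never uses that the whole of $\Lambda$ is far from the root, so it cannot recover this.

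Two smaller points. First, the strict inequality $(d-1)\vartheta'(h^\star)<1$ (equivalently \eqref{ffp}) does not follow from uniqueness of the fixed point alone --- a tangential fixed point is still unique --- and you use it as the sole source of slack; the paper rules out tangency by a separate argument ($g'(r^\star)=0$ would force $g''(r^\star)=0$, leading to $r^\star>1$, contradicting $r^\star=p^{-1}(t^\star)\in(0,1)$). Second, if you repair the main step by splitting the path at distance $K$ from $\Lambda$ and applying the fixed-point contraction only on the upper portion, you essentially reproduce the paper's proof, and the potential function becomes unnecessary.
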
 

\begin{proof} For any $v \in \T_d$ and a boundary condition $\eta$, let us define 
	$$
	R^{\eta}_v=\frac{P(\sigma_v = - \mid \eta)}{P(\sigma_v =+\mid \eta)},
	$$	
where $P(\cdot\mid \eta)$ is the Gibbs measure on the subtree $T_v \subseteq \T_d$ with boundary condition $\eta$ outside $T_v$.	

It is well known that 
	\begin{equation} 
	\label{rvw}
	R^{\eta}_v = \e^{-2B}\prod_{w\prec v} L_{\beta}(R^{\eta}_w),
	\end{equation}
where $w\prec v$ means $w$ is a child of $v$ in $\T_d$, and 
	$$
	L_{\beta}(x)=\frac{\e^{2 \beta} x +1}{\e^{2 \beta} +x}.
	$$ 
Equation \eqref{rvw} can be obtained by taking the conditional expectation of $\sigma_v$ conditionally on $\sigma_w$ for $w\prec v$, and noticing that on a tree these \colrev{expectations factorize}.

\colrev{A boundary condition $\eta$ can be characterized by its  {\it support}, which is defined as  $A_{\eta} =\{v\colon \eta_v \textrm{ is fixed} \}$, and the values of $\eta$ on $A_{\eta}$. Then for any vertex $v$ and boundary condition $\eta$, we set} 
 \begin{equation}
  d(v,\eta) = d(v, A_{\eta}). 
 \end{equation} 
 For any $\ell$, let us denote $R^+_{v, \ell} = R^{\eta^+_{v, \ell}}_v$ (respectively, $R^-_{v, \ell} = R^{\eta^-_{v, \ell}}_v$), where $\eta^+_{v, \ell}$ (resp. $\eta^-_{v, \ell}$) is the plus (respectively minus) boundary condition on $T(v, \ell)$, the subtree of height $\ell$ starting at $v$.   Using \eqref{rvw} and the fact that $L_{\beta}(\cdot)$ is an increasing function, we obtain that, for any boundary condition $\eta$, 
	\begin{equation} 
	\label{mor}
	R^{-}_{v, \ell}\leq R^{\eta}_v \leq R^+_{v, \ell},
	\end{equation}
where $\ell =d(v, \eta)$. By \eqref{rvw}, we have $\log R^+_{v, \ell+1}  = -2B + (d-1)  \log L_{\beta}(R^+_{v, \ell})$ and $\log R^-_{v, \ell+1}  = -2B + (d-1) \log L_{\beta}(R^-_{v, \ell})$. It is well known that if $\beta > \beta_c$ and $B>B_c^G(\beta)$, then  the fixed point equation 
	\begin{equation}
	\label{wibqx}   
	r =e^{-2B}  L_{\beta}(r)^{d-1}
	\end{equation}
has a unique solution, denoted by $r^\star$, and thus  the Gibbs measure is unique (see the proof of \cite[Proposition 4.5]{MSW})
In particular,  as $\ell \rightarrow \infty$
 	\begin{equation*}
 	R^{-}_{v, \ell}, R^{+}_{v, \ell} \rightarrow r^\star.
  	\end{equation*}
 Hence, it follows from \eqref{mor} that 
	\begin{equation*} 
	R^{\eta}_v \rightarrow r^\star \qquad \textrm{ as }\qquad d(v, \eta)  \rightarrow \infty.
	\end{equation*}
In particular, for any $\delta >0$, there exists $K=K(\delta)$, such that if $d(v, \eta) \geq K$, then
	\begin{equation} \label{rrd}
	|R^{\eta}_v-r^\star| \leq \delta.
	\end{equation}
We are now ready to prove the lemma. First we consider the case that $U=\{u\}$. Then $\eta^+$ differs from $\eta^-$ only at $u$. 
Let $o=u_0, \ldots, u=u_{\ell}$ be the geodesic path from $o$ to $u$. We observe that for $\delta >0$ and $K$ as in \eqref{rrd},  if $d(u_1, \eta) \geq K$, then

	\begin{eqnarray} \label{rctr}
	&& |P(\sigma_o=+ \mid \eta^+) - P(\sigma_o=+ \mid \eta^-) | = \Big | \frac{1}{1+ R_o^{\eta^+}} - \frac{1}{1+ R_o^{\eta^-}}\Big |  \notag\\
	&&\quad \leq |R_o^{\eta^+} -R_o^{\eta^-}| = \e^{-2B} \prod_{w \prec o, w \neq u_1} L_{\beta}(R^{\eta}_w) |L_{\beta}(R^{\eta^+}_{u_1})-L_{\beta}(R^{\eta^-}_{u_1})| \notag \\
	&&\quad \leq  \e^{-2B} \prod_{w \prec o, w \neq u_1} L_{\beta}(R^{\eta}_w) \max_{x \in [R^{\eta^-}_{u_1}, R^{\eta^+}_{u_1}]} L_{\beta}'(x) |R^{\eta^+}_{u_1}-R^{\eta^-}_{u_1}| \notag \\
	&&\quad \leq \e^{-2B} \left(\max_{|x-r^\star| \leq \delta} L_{\beta}(x)\right)^{d-1} \left(\max_{|x-r^\star| \leq \delta} L_{\beta}'(x)\right) |R^{\eta^+}_{u_1}-R^{\eta^-}_{u_1}|.
	\end{eqnarray} 
Here, in the second line, we have used \eqref{rvw},  and the fact that the boundary conditions $\eta^+$ and $\eta^-$ constrained on $T_w$ are the same if $w \neq u_1$, while in the third line we have used the mean-value theorem and \eqref{rrd} and 
in the last line, we have used the fact that $L_{\beta}'(x) \geq 0$ for all $x$. Similarly, if $d(u_i, \eta) \geq K$ then 
	\begin{eqnarray} \label{rct}
	|R_{u_i}^{\eta^+} -R_{u_i}^{\eta^-}|  &\leq& \e^{-2B} \prod_{w \prec u_i, w \neq u_{i+1}} L_{\beta}(R^{\eta}_w)  
	\max_{x \in [R^{\eta^-}_{u_{i+1}}, R^{\eta^+}_{u_{i+1}}]} L_{\beta}'(x) |R^{\eta^+}_{u_{i+1}}-R^{\eta^-}_{u_{i+1}}| \notag \\
	&\leq& \e^{-2B} \left(\max_{|x-r^\star| \leq \delta} L_{\beta}(x)\right)^{d-2} \left(\max_{|x-r^\star| \leq \delta} L_{\beta}'(x)\right) |R^{\eta^+}_{u_{i+1}}-R^{\eta^-}_{u_{i+1}}|.
	\end{eqnarray} 
Note that $u_i$ for $i\geq 1$ has only $(d-2)$ children instead of $(d-1)$ as the root. 

Next, we claim that there is $\varkappa < 1/(d-1)$, such that
	\begin{equation} \label{ffp}
	\e^{-2B}L_\beta(r^\star)^{d-2}L_\beta'(r^\star)< \varkappa. 
	\end{equation}
We postpone the proof of this claim to Section \ref{sec-BcG}, since we will use some properties of the equation \eqref{wibqx} proved in Section \ref{sec-BcG}.

We turn to prove the proposition. Using \eqref{rctr}--\eqref{ffp}, for $K$ large enough and $\ell =d(o,\eta)\geq K$, 
 	\begin{eqnarray} \label{pu1}
  	|P(\sigma_o=+ \mid \eta^+) - P(\sigma_o=+ \mid \eta^-) |  &\leq& \sup_{x} L_{\beta}(x) \varkappa^{\ell -K} |R^{\eta^+}_{u_{\ell-K}}-R^{\eta^-}_{u_{\ell -K}}| \notag \\
  	&&\quad \leq \sup_{x} L_{\beta}(x) \varkappa^{\ell -K} \sup_{x}L_{\beta}(x)^{d-1} \notag \\
  	&&\quad \leq \e^{2 \beta d} \varkappa^{\ell -K}.
 	\end{eqnarray}
Note that in the first line, the term $\sup_x L_{\beta}(x)$ appears since in \eqref{rctr} the exponent of $\max_{|x-r^\star| \leq \delta} L_{\beta}(x)$ is $(d-1)$ instead of $(d-2)$ as in \eqref{rct}, and in the second line, we use \eqref{rvw} to obtain the uniform upper bound for $R_u^{\eta}$. 
 
 Now we consider the general case $U=\{u_1, \ldots, u_k\}$. Let $\eta^0, \ldots, \eta^k$ be the interpolation sequence of configurations on $\Lambda$ with $\eta^0=\eta^-$ and $\eta^k=\eta^+$, where $\eta^i$ and $\eta^{i-1}$ differ only at $u_i$ with $\eta^i_{u_i}=+$ and $\eta^{i-1}_{u_i}=-$. By \eqref{pu1}, for all $i$,
  	$$
	|P(\sigma_o=+ \mid \eta^i) - P(\sigma_o=+ \mid \eta^{i-1}) | \leq  \e^{2 \beta d} \varkappa^{\ell -K},
	$$
and thus 
  	$$
	|P(\sigma_o=+ \mid \eta^+) - P(\sigma_o=+ \mid \eta^{-}) | \leq  k \e^{2 \beta d} \varkappa^{\ell -K},
	$$
which proves Lemma \ref{dtk} subject to \eqref{ffp}. 
\end{proof}

With Lemma \ref{dtk} in hand, we are now ready to complete the proof of Proposition \ref{smi}:
\medskip

\noindent
{\it Proof of Proposition \ref{smi}}. For any $v$, let $T$ be the tree of self-avoiding paths starting at $v$. Let $\w(S(v, R))$ denote the vertices in $T$ that correspond to vertices in $S(v, R)$, and, for each $u \in  S(v, R)$, let $ \w (u)$ denote the set of vertices in $T$ that correspond to $u$. Then, by Lemmas \ref{low} and  \ref{dtk},
	\begin{eqnarray} \label{eoau}
	a_u = \sup_{\eta^+, \eta^-} &&\Big |
	P\left(\sigma_v =+ \mid  \sigma_{\w (S(v,R)) \setminus A} =  \eta^+_{\w(S(v,R))\setminus A}, \sigma_A = \nu_A \right)  \notag \\
	&& - P\left(\sigma_v =+ \mid  \sigma_{\w (S(v,R)) \setminus A} = \eta^-_{\w(S(v,R))\setminus A}, \sigma_A = \nu_A \right) \Big | \notag  \\
	& \leq & C |\w(u)|  \varkappa^{R-K},
	\end{eqnarray}
since $d(v, \eta) \geq R$. We observe that for any fixed $R$, 
	\begin{equation*}
	\lim_{n \rightarrow \infty} \pp \left(B(v,R) \textrm{ has at most $1$ cycle for all } v \in [n]\right) =1, 
	\end{equation*}
since by the construction of $d$-regular configuration models using the random pairing of half-edges as described in Section \ref{sec-RRG}, for each $v\in[n]$, the probability that $B(v,R)$ has at least $2$ cycles is of order $(d^R/n)^2 =\kO(n^{-2})$. Assume that $B(v,R)$ has at most $1$ cycle. Then  every $u \in S(v, R)$ appears  at most twice in the tree of self-avoiding paths, which gives $|\w(u)| \leq  2$. Therefore, \eqref{eoau} implies that 
	\begin{equation} 
	\label{soau}
	\sum_{u \in S(v,R)} a_u \leq 2 C (d-1)^R \varkappa^{R-K} \leq \tfrac{1}{4},
	\end{equation}
for some $R=R(K)$ large enough, since $\varkappa < 1/(d-1)$. This completes the proof of Proposition \ref{smi}. \hfill $\square$

\begin{rem}[Spatial mixing and cycles]
\emph{The inequality  \eqref{soau} still holds if $|\w(u)| \leq \varkappa_1^R$, with some $\varkappa_1 < 1/((d-1)\varkappa)$ and $R$ large. Therefore, spatial mixing holds for graphs  satisfying that  the ball $B(v,R)$ has at most $\varkappa_1^R$ cycles for every $v\in V$.}
\end{rem}

\section{Identification of $B_c^G$: Proof of Proposition \ref{pcex}}
\label{sec-BcG}
We recall from \eqref{hvbbp} that the equation of annealed critical points is $\hvarphibb'(t)  =0$, or
     \begin{equation} \label{eqac}
         \log \left(\frac{1-t}{t}\right) +2B+  d \log f_{\beta}(t) =0,
     \end{equation}
where $f_\beta$ is as in \eqref{dnwofbeta}.

On the other hand, the equation \eqref{wibqx}  can be rewritten as 
	\begin{equation} \label{eqgr}
	\log( r L_{\beta}(r)) +2 B - d  \log L_{\beta}(r) =0.
	\end{equation}    
We aim to show that the two equations \eqref{eqac} and \eqref{eqgr} are equivalent. We consider the following change of variable $t=p(r)$, where
	\begin{equation*}
	 p(r)= \frac{1}{r L_{\beta} (r) +1 } = \frac{\e^{2 \beta} +r}{\e^{2\beta}r^2+2r+\e^{2\beta}}.
	\end{equation*}
We remark that the function $p(r)$ is strictly decreasing and is a bijective map from $(0, \infty)$ to $(0,1)$. Then   
	\begin{equation} \label{rpt}
	r =p^{-1}(t)= \frac{\e^{-2\beta}(1-2t) + \sqrt{1+(\e^{-4\beta}-1)(1-2t)^2} }{2t} = 
	\frac{f_{\beta}(t)(1-t)}{t}.
	\end{equation}
Thus
	\begin{equation} \label{eqtr}
	\frac{1-t}{t} = r L_{\beta} (r), \quad f_{\beta}(t) = \frac{t r}{1-t} = \frac{1}{L_{\beta}(r)}.
	 \end{equation}
Combining \eqref{eqac}, \eqref{eqgr} and \eqref{eqtr}, we obtain that the two equations \eqref{eqac} and \eqref{eqgr} are equivalent. In particular,
	$$
	\hat{B}_c= B_c^G,
	$$ 
since 
	\begin{eqnarray*}
	\hat{B}_c &=& \inf \{B >0\colon \textrm{ equation \eqref{eqac} has a unique solution} \}, \\
	B_c^G  &=& \inf \{B >0\colon \textrm{ equation \eqref{eqgr} has a unique solution} \}.
	\end{eqnarray*}
This completes the proof of Proposition \ref{pcex}. \hfil $\square$
\vspace{0.3 cm}
  
\noindent  
{\it Proof of   \eqref{ffp}}. We need to prove that if $B > B_c^G$ then
	\begin{equation} \label{ffpp}
	\e^{-2B}L_\beta(r^\star)^{d-2}L_\beta'(r^\star)< \frac{1}{d-1},
	\end{equation}
where $r^\star$ is the unique solution to \eqref{wibqx}. Let us consider the function 
	\[ 
	g(r) =r - \e^{-2B}  L_{\beta}(r)^{d-1}.  
	\]
Then $g(0)<0$ and $r^\star$ is the unique solution to the equation $g(r)=0$.  Therefore $g'(r^\star)\geq 0$, since otherwise the equation $g(r)=0$ has at least two solutions. 
In other words,
	\begin{equation}
	0 \leq g'(r^\star) 
	= 1- (d-1)\e^{-2B} L_{\beta} (r^\star)^{d-2} L_{\beta}'(r^\star).
	\end{equation} 
Hence, to prove \eqref{ffpp}, we only need to show that $g'(r^\star) \neq 0$. 
  
Assume that $g'(r^\star) =0$. We shall show that this assumption leads to a contradiction.  Since $B>B_c^G=\hat{B}_c$, the equation $\hvarphibb'(t)=0$ has a unique solution $t^\star \in (\tfrac{1}{2},1)$ (which is indeed $t_1^B$). As we have shown in \eqref{rpt}, the unique solution $r^\star$ of  \eqref{eqgr} satisfies
\begin{equation} \label{rs01}
r^\star = p^{-1}(t^\star) \in (0,1), 
\end{equation}
since $t^\star \in  (\tfrac{1}{2},1)$. 

Suppose that $g''(r^\star)<0$. Then there exists $\epsilon>0$ such that $r^\star + \epsilon <1$ and $g''(x)<0$ for all $x\in [r^\star,r^\star +\epsilon]$. Hence, by the Taylor expansion, we have $g(r^\star + \epsilon) =g(r^\star) + \epsilon g'(r^\star) + \tfrac{1}{2} g''(r^\star) x^2 <0$ for some $x \in (r^\star,r^\star +\epsilon)$. Moreover, $g(1)=1-\e^{-2B}>0$. Thus the equation $g(r)=0$ has a solution in $(r^\star +\epsilon,1)$, so  this equation has at least two solutions in $(0,1)$, which is  a contradiction. Using the same argument and the fact that $g(0)<0$, we can also prove that 
$g''(r^\star) >0$ leads to a 
contradiction.  
Therefore, we have $g''(r^\star)=0$.

Since  $g'(r^\star)=g(r^\star) =0$, 
	\begin{eqnarray*}
	1 = \e^{-2B}(d-1)L_{\beta}(r^\star)^{d-2} L_{\beta}'(r^\star), \qquad  r^\star = \e^{-2B}L_{\beta}(r^\star)^{d-1}.
	\end{eqnarray*}
Thus 
	\begin{eqnarray} \label{g=g'=0}
	r^\star = \frac{L_{\beta}(r^\star)}{(d-1) L_{\beta}'(r^\star)}  = \frac{(\e^{2\beta} r^\star +1)(\e^{2\beta}+r^\star)}{(d-1)(\e^{4\beta}-1)}.
	\end{eqnarray} 
On the other hand, $g''(r^\star)=0$ implies that 
  \[ 
   (d-2) L'(r^\star)^2  +L(r^\star)L''(r^\star) =0,
   \]
or equivalently
  \begin{equation} \label{g''=0}
  (d-2)(\e^{4\beta} -1) -2(\e^{2 \beta} r^\star +1) =0.
  \end{equation}
 Combining  \eqref{g=g'=0} and \eqref{g''=0} yields that 
 \[ 
 r^\star = \frac{(d-2)(\e^{2\beta} + r^\star)}{2(d-1)} \quad \Rightarrow \quad r^\star = \frac{(d-2)\e^{2\beta}}{d} >1,
  \]
 since $\beta >\beta_c =\textrm{atanh}(1/(d-1))$. This is contrary to \eqref{rs01}.
 
In conclusion,  $g'(r^\star) >0$ and thus  \eqref{ffpp} (or equivalently \eqref{ffp}) follows.  \hfill $\square$

 \begin{rem} [Relation constrained annealed measure and $d$-ary tree with minus boundary condition]
 \emph{	
 Fix $\beta > \beta_c$ and $0 \leq B < \hat{B}_c$. Then the curve of $\hvarphibb$ has three critical values $t_3^B<t_2^B<t_1^B$. Let $\hat{\mu}_n^-$  be the annealed measure restricted to $\Omega_n^-=\{ \sigma\colon |\sigma_{+}| \leq \lceil t_2^B n \rceil \}$. More precisely, 
	\begin{equation} 
	\hat{\mu}_n^- (\sigma) = \frac{\E[\exp(-H_n (\sigma))]}{\E[Z_n^-]} \indic{\sigma \in \Omega_n^-}, 
	\end{equation}
where 
 	$$
	Z_n^-=\sum_{\sigma \in \Omega_n^-} \exp(-H_n(\sigma)).
	$$ 
Here we omit the superscript $\beta, B$ to simplify the notation. Using the same arguments in \cite[Theorem 1.1]{C}, we can prove that 
	\begin{equation} 
	\label{pamag}
	\lim_{n \rightarrow \infty} \E_{\hat{\mu}_n^-} \left(\frac{\sigma_1 + \ldots+ \sigma_n}{n}\right) =2t_3^B-1.
	\end{equation}
Now we consider the magnetization of the root, denoted by $\rho$, in $\T_d$ with minus boundary conditions at level $\ell$. Recall from \eqref{rvw} that, as $\ell \rightarrow \infty$,
	$$
	R^{\eta}_{\rho, \ell }=\frac{P(\sigma_{\rho} = - \mid \eta^-_{\rho, \ell})}{P(\sigma_\rho =+\mid \eta^-_{\rho, \ell})} \rightarrow \e^{-2B}L_{\beta}(r_-)^d,
	$$	
where $\eta^-_{\rho, \ell}$ is the minus boundary condition on the level $\ell$ of $\T_d$ and $r_-$ is a solution to \eqref{wibqx}, or equivalently $r_-=\e^{-2B}L_{\beta}(r_-)^{d-1}$.  Therefore,
	\[
	\lim_{\ell \rightarrow \infty}  R^{\eta}_{\rho, \ell }= r_-L_{\beta}(r_-),  
	\]
and thus
	\begin{equation} 
	\label{gmag}
 	\lim_{\ell \rightarrow \infty} \langle \sigma_{\rho} \rangle_{\eta^-_{\rho, \ell}} = \lim_{\ell \rightarrow \infty} \frac{1-R^{\eta}_{\rho, \ell }}{1+R^{\eta}_{\rho, \ell }} 
	=  \frac{1-r_- L_{\beta}(r_-)}{1+r_- L_{\beta}(r_-)}.
	\end{equation}
As we have shown above, $t_3^B$ and $r_-$ are related by
 	\[
	t_3^B= \frac{1}{r_- L_{\beta}(r_-)+1}.
	\]
Combining this with  \eqref{pamag} and \eqref{gmag}, we obtain that
	\begin{equation}
	\lim_{n \rightarrow \infty} \langle \sigma_{U_n} \rangle_{\hat{\mu}_n^-}  =  \lim_{\ell \rightarrow \infty} \langle \sigma_{\rho} \rangle_{\eta^-_{\rho, \ell}}, 
	\end{equation} 
where $U_n$ is a random vertex chosen uniformly in $[n]$. We conclude that restricting the annealed measure to $\Omega_n^-$ has the same effect in the large $n$ limit as the minus boundary conditions for the Ising model on the $d$-ary tree. 
}
\end{rem}

\section{Annealed cut-off behavior: Proof of Proposition \ref{prop:dntn}} 
\label{sec:dntn}
Let us recall  the statement of Proposition \ref{prop:dntn}.   For any $\gamma >0$, we define 
 	\begin{equation*}
 	T_n^+(\gamma) = c_\star n  \log n + \gamma n, \qquad T_n^-(\gamma) = c_\star n \log n - \gamma n,
 	\end{equation*}
where 
 	\begin{equation*}
 	c_\star = -[s^\star (1-s^\star) G''(s^\star)]^{-1} >0.
 	\end{equation*}
The aim of Proposition \ref{prop:dntn} is to prove that, under Conditions (C1) and (C3),
  	\begin{equation} \label{dntm}
 	\lim_{\gamma \rightarrow \infty} \liminf_{n \rightarrow \infty} d_n(T_n^-(\gamma)) =1,
 	\end{equation}
and
 	\begin{equation} \label{dntp}
 	\lim_{\gamma \rightarrow \infty} \limsup_{n \rightarrow \infty} d_n(T_n^+(\gamma)) =0.
 	\end{equation}
We give the proof  in three steps. In the first step in Section \ref{sec-proj-chain}, we  analyse the projection chain  $(X_t)_{t\geq 0}$ defined in Section \ref{subsec:theo3.1(i)}.  In the second step in Section \ref{sec-proof-dntm}, we prove \eqref{dntm} and in the final step in Section \ref{sec-proof-dntp}, we prove \eqref{dntp}. 
 
\subsection{A careful analysis of the projection chain} 
\label{sec-proj-chain}
The projection chain $(X_t)_{t\geq 0}$ has been defined as 
  	\begin{equation*}
 	X_t=|\{i \colon \xi_t(i)=1\}|,
 	\end{equation*} 
where $(\xi_t)_{t\geq 0}$ is the Glauber dynamics. Then $(X_t)_{t\geq 0}$ is a birth-death process on $\{0,\ldots,n\}$ with probability transitions given in \eqref{qnk}.  The drift $R_t$ of $(X_t)_{t\geq 0}$ satisfies
 	\begin{eqnarray} \label{def:rt}
 	R_t &:=&\E\left[X_{t+1}-X_t \mid X_t\right] \\
 	&=& \frac{n-X_t}{n} \times \frac{\e^{nF_n(X_t+1)}}{\e^{nF_n(X_t+1)}+\e^{nF_n(X_t)}} -\frac{X_t}{n} \times \frac{\e^{nF_n(X_t-1)}}{\e^{nF_n(X_t-1)}+\e^{nF_n(X_t)}}. \notag 
 	\end{eqnarray}
Then, by \eqref{apppq},
 	\begin{equation} \label{drrt}
 	\Big|R_t-R\left(X_t/n\right)\Big| \leq \frac{C}{n},
 	\end{equation}
where 
 	\begin{equation}\label{defR}
 	R(s)=(1-s)\frac{\e^{F'(s)}}{\e^{F'(s)}+1}-s\frac{1}{\e^{F'(s)}+1} = \frac{\e^{F'(s)}}{\e^{F'(s)}+1} -s.
 	\end{equation}
Combining \eqref{def:rt} and \eqref{drrt}, we get 
 	\begin{equation} 
	\label{ecxt}
 	X_t + R(X_t/n) - C/n \leq \E[X_{t+1} \mid X_t] \leq X_t + R(X_t/n) + C/n.
 	\end{equation}
We recall that $G''(s^\star)<0$  and $G'(s^\star) =0$. Therefore, $I'(s^\star) +F'(s^\star) =0$, or equivalently $s^\star = \e^{F'(s^\star)}/(\e^{F'(s^\star)}+1)$. Thus, $R(s^\star) =0$ and, using that $G''(s)=F''(s)-[s(1-s)]^{-1}$,
	\begin{equation} \label{rrpst}
 	R'(s^\star) = \frac{\e^{F'(s^\star) }  F''(s^\star)}{(\e^{F'(s^\star)}+1)^2} - 1 = s^\star (1-s^\star) F''(s^\star) -1 = s^\star (1-s^\star) G''(s^\star)=-\frac{1}{c_\star} <0.
 	\end{equation}
By \eqref{rrpst} and $R(s^\star) =0$, we can find a positive constant $\delta$ such that 
 	\begin{equation} 
	\label{def:del}
 	R'(s)<0 \textrm{ for all }  s \in (s^\star - 2\delta, s^\star + 2\delta) \subset (0,1).
 	\end{equation}
We define, for $\delta \in (0,1)$
	\begin{equation}
	\label{Omega-delta}
	\Omega_n^{\delta}=\Big \{\sigma \in \Omega_n\colon s^\star - \delta  \leq  \frac{|\sigma_{+}|}{n} \leq s^\star + \delta \Big  \}.
	\end{equation}
Before going into the full details, let us start by summarizing the main steps in the analysis of the projection chain $(X_t)_{t\geq 0}$. Thanks to the assumption that the function $G$ only has one local maximizer at $s^\star$, the measure of $(X_t)_{t\geq 0}$ is highly concentrated around  $ns^\star$. Moreover, we will show in Lemma \ref{lem:hit} that the chain $(X_t)_{t\geq 0}$ accesses $\Omega_n^{\delta}$ very quickly, namely, after a time of order $n$. Then we consider the chain after entering $\Omega_n^{\delta}$. Thus, next assume that $X_0 \in \Omega_n^{\delta}$. Then we will show that the drift function $R(s)$ plays a central role to describe the time for $X_t$ to reach $ns^\star$. Roughly speaking, in Lemma \ref{lem:xt-ns} below, we show that 
	\[
	Y_{t+1} -Y_t \approx \left(1+ \frac{R''(s^\star)}{n}\right) Y_t,
	\]
where $Y_t=X_t-ns^\star$. Notice that $R''(s^\star)<0$, and, therefore, when $Y_0 \in [-\delta n, \delta n]$, the time for $Y_t$ to reach $[-\kO(\sqrt{n}), \kO(\sqrt{n})]$ is $(-R''(s^\star))^{-1} n \log n \pm \kO(n)$. Finally, we prove in Lemma \ref{lem:h1} that if the starting points of $(X_t)_{t\geq 0}$ and $(\tilde{X}_t)_{t\geq 0}$ are both in $[ns^\star - \kO(\sqrt{n}), ns^\star + \kO(\sqrt{n})], $ then the two chains mix after a time of order $n$. Combining the above steps, we conclude that the mixing time of $(X_t)_{t\geq 0}$ concentrates around $c_\star n \log n$ (note here that $c_\star =(-R''(s^\star))^{-1}$) with a window of order $n$.
\medskip

We now present the details of the argument. We start by proving that $\Omega_n^{\delta}$ is hit quickly:
 \begin{lem}[$\Omega_n^{\delta}$ is hit quickly]
 \label{lem:hit} For $\delta$ as in \eqref{def:del}, \colrev{there exists a positive constant} $C=C(\delta)$, such that
 	\begin{equation*}
 	\sup_{\sigma \in \Omega_n} \pp_{\sigma}\left(\inf\{t \colon \xi_t \in \Omega_n^{\delta} \} \geq C n\right) =\kO(1/n).
 	\end{equation*}	
 \end{lem}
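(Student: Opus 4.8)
The plan is to show that the projection chain $(X_t)_{t\geq 0}$ enters the interval $[n(s^\star-\delta), n(s^\star+\delta)]$ in time $\kO(n)$, uniformly in the starting state, with failure probability $\kO(1/n)$. The key point is that outside $\Omega_n^\delta$ the drift $R(X_t/n)$ is bounded away from zero and points \emph{towards} $s^\star$: indeed, since $G$ has its unique maximizer at $s^\star$, we have $R(s)>0$ for $s<s^\star$ and $R(s)<0$ for $s>s^\star$, and on any compact subinterval of $(0,1)$ excluding a neighborhood of $s^\star$ the quantity $|R(s)|$ is bounded below by some $c_0=c_0(\delta)>0$. (Near the endpoints $0$ and $1$ the drift has a definite sign anyway: $R(0)>0$ and $R(1)<0$, with $R$ continuous on $[0,1]$, so $|R|$ is bounded below on $\{s\le \varepsilon_0\}\cup\{s\ge 1-\varepsilon_0\}$ as well.) Combined with \eqref{ecxt}, this says that while $\xi_t\notin\Omega_n^\delta$ and $n$ is large,
\[
\E[X_{t+1}\mid X_t]\ \le\ X_t - \tfrac{c_0}{2}\qquad\text{if }X_t>n(s^\star+\delta),
\]
and symmetrically $\E[X_{t+1}\mid X_t]\ge X_t+\tfrac{c_0}{2}$ if $X_t<n(s^\star-\delta)$.

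The first step is to reduce to a one-sided statement, say starting with $X_0\ge n(s^\star+\delta)$ and tracking the first time $\tau$ that $X_t\le n(s^\star+\delta)$; the other side is identical by symmetry, and once the chain has crossed $n(s^\star+\delta)$ from above (or $n(s^\star-\delta)$ from below) it lies in $\Omega_n^\delta$ because each step changes $X_t$ by at most $1$. The second step is a supermartingale / optional-stopping argument: set $M_t=X_{t\wedge\tau}+\tfrac{c_0}{2}(t\wedge\tau)$. By the drift bound above, $M_t$ is a supermartingale, so $\E[M_t]\le \E[M_0]= X_0\le n$. Since $M_t\ge 0$, this gives $\tfrac{c_0}{2}\,\E[t\wedge\tau]\le n$, hence $\E[\tau]\le 2n/c_0$. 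The third step upgrades this expectation bound to the required tail bound: taking $C=8/c_0$ and applying Markov's inequality on successive blocks of length $Cn/2=4n/c_0$, the probability that $\tau$ exceeds one block is at most $\tfrac{\E[\tau]}{4n/c_0}\le \tfrac12$; by the strong Markov property (restarting from whatever state $X$ is in at the end of a failed block, which still satisfies the same drift inequality), the probability that $\tau$ exceeds $k$ blocks is at most $2^{-k}$. Choosing $k=2$ already gives probability $\le \tfrac14$, but to reach $\kO(1/n)$ we instead note we may take $k=c\log n$ blocks, i.e.\ time $\kO(n\log n)$ — so to get the \emph{linear} time $Cn$ claimed in the lemma with only $\kO(1/n)$ failure, we need the sharper observation below.

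The main obstacle, and where care is needed, is precisely getting failure probability $\kO(1/n)$ within time $\kO(n)$ rather than $\kO(n\log n)$. The fix is to use concentration rather than just Markov: on the event $\{\tau>Cn\}$, the increments $X_{t+1}-X_t$ are bounded by $1$ and have conditional mean $\le -c_0/2$, so $X_{Cn}-X_0 = \sum_{t<Cn}(X_{t+1}-X_t)$ is, up to the drift, a sum of bounded martingale differences; the Azuma--Hoeffding inequality gives
\[
\pp\big(X_{Cn}-X_0\ge -\tfrac{c_0}{4}Cn\big)\ \le\ \exp\!\big(-c' n\big)
\]
for suitable $c'=c'(\delta,C)>0$, once $C$ is chosen large enough that the deterministic drift $-\tfrac{c_0}{2}Cn$ beats the allowed fluctuation plus the $\kO(1)$ error terms from \eqref{ecxt}; but $X_{Cn}-X_0\ge -n$ always (since $X\in[0,n]$), so for $C>4/c_0$ the event $\{\tau>Cn\}$ forces $X_{Cn}-X_0\ge -\tfrac{c_0}{4}Cn$ to fail its typical value — i.e.\ $\pp_\sigma(\tau>Cn)\le \exp(-c'n)=\kO(1/n)$, uniformly in $\sigma$. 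Taking the maximum over the two sides and over $\sigma\in\Omega_n$, and recalling $\inf\{t:\xi_t\in\Omega_n^\delta\}\le\tau$, completes the proof. A minor technical point to handle is that the Azuma argument must be run on the stopped chain $X_{t\wedge\tau}$ so that the one-sided drift bound is valid at every step; this is routine.
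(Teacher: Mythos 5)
Your proposal is correct, but it takes a genuinely different route from the paper's. The paper stays entirely inside the projected birth--death chain and uses its explicit stationary measure: from (C1) it gets $\pi_n(k)\asymp \exp\left(n[G(k/n)-G(0)]\right)$ and $q_n(k)\asymp k/n$, then quotes the standard birth--death identities for $\E[\tau_{k\to k-1}]$ and $\E[\tau^2_{k\to k-1}]$; since $G'\geq\varepsilon$ (resp.\ $\leq-\varepsilon$) outside the $\delta$-window, the ratios $\pi_n(j)/\pi_n(k)$ sum geometrically, giving $\E[\tau_{k\to k-1}]=\kO(n/k)$, $\E[\tau^2_{k\to k-1}]=\kO(1)$, hence mean and variance $\kO(n)$ for the crossing time started from the extreme states $n$ and $0$, and Chebyshev delivers exactly the $\kO(1/n)$ bound. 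You instead use the drift directly: the sign identity $R(s)>0\iff G'(s)>0$ on $(0,1)$ (a one-line computation from $R(s)=\e^{F'(s)}/(\e^{F'(s)}+1)-s$ and $G'=F'+I'$, which you should spell out rather than assert), together with (C3), continuity of $R$ on $[0,1]$ and $R(0)>0$, $R(1)<0$, gives $|R|\geq c_0(\delta)$ off the window; \eqref{ecxt} converts this into a uniform conditional drift of size $c_0/2$ toward $s^\star$, and Azuma--Hoeffding applied to the stopped chain $X_{t\wedge\tau}$ (so the drift bound is only invoked at times $t<\tau$, as you note) turns a drift of order $n$ into a failure probability $\e^{-c'n}$, which is stronger than the required $\kO(1/n)$. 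Your intermediate optional-stopping bound $\E[\tau]\leq 2n/c_0$ and the Markov-block iteration are dead weight — as you yourself observe they only yield time $\kO(n\log n)$ — and should simply be deleted; the Azuma step carries the whole proof. In exchange for checking the sign of $R$, your argument avoids the hitting-time formulas and the stationary-measure asymptotics altogether and gives a sharper (exponential) tail, while the paper's computation is more mechanical once the birth--death identities are quoted and reuses estimates that also appear in the rest of the proof of Proposition \ref{prop:dntn}.
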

\medskip

\iflongversion
Below, we give a sketch of the proof, the full proof can be found in Appendix \ref{sec-proj-chain-app}.

\else
Below, we give a sketch of the proof, the full proof can be found in \cite[Appendix A.1]{CanGiaGibHof19b} of the extended version.
\fi

\begin{proof} Since $G$ is strictly increasing in $(0,s^\star)$ and strictly decreasing in $(s^\star, 1)$, there exists \colrev{a positive constant} $\varepsilon$, such that
 \begin{equation} \label{gped}
 G'(s) \geq \varepsilon \quad \textrm{if} \quad s\in (0,s^\star -\delta) \quad \textrm{and} \quad  G'(s) \leq -\varepsilon \quad \textrm{if} \quad s\in (s^\star +\delta, 1). 
 \end{equation}	
 	As in \eqref{nunk} and \eqref{nunk22},  the stationary measure of $(X)_{t\geq 0}$ satisfies
 \begin{equation} \label{pink}
 \nu_n(k) \sim \pi_n(k) \asymp \exp \left(n \Big[  G(k/n) -G(0)\Big]\right). 
 \end{equation}
 For each $k$ and $\ell$, we define the waiting time for $(X_t)$ going from $k$ to $\ell$ as
 	\begin{equation*}
 	\tau_{k\rightarrow \ell} =\inf \{t: X_t=\ell \mid X_0=k\}.
 	\end{equation*}
  	To prove Lemma \ref{lem:hit}, it suffices to show that 
 	\begin{equation} \label{taunnd}
 	\pp\left(\max\{\tau_{n\rightarrow [n(s^\star+\delta)]},  \tau_{0\rightarrow [n(s^\star-\delta)]}\} 
 	\geq Cn\right) =\kO(1/n),
 	\end{equation}
 for some positive constant $C>0$.  A  standard computation for the birth-death chain (see e.g.\ \cite[Proposition 2]{BBF}) gives that 
 	\begin{eqnarray*}
 	\E(\tau_{k\rightarrow k-1}) &=& \frac{1}{q_n(k)} \sum_{j=k}^n \frac{\pi_n(j)}{\pi_n(k)}, \\
 	\E(\tau^2_{k\rightarrow k-1}) &=& \frac{2}{q_n(k)\pi_n(k)} \sum_{j=k}^n \E(\tau_{j\rightarrow k-1}) \pi_n(j) - \E(\tau_{k\rightarrow k-1}). 
 	\end{eqnarray*}
By \eqref{qnk},  $ 	q_n(k) \asymp (k/n)$. Thus  using \eqref{gped} and \eqref{pink}, we obtain that 
 	\begin{eqnarray} 
	\label{spinjk}
 	\sum_{j=k}^n \frac{\pi_n(j)}{\pi_n(k)} \asymp \sum_{j=k}^n  \exp \left(n \Big[  G(j/n) -G(k/n)\Big]\right)  = \kO(1). 
 	\end{eqnarray}
 Therefore, 
 	\begin{equation*} 
 	\E(\tau_{k\rightarrow k-1}) =\kO(n/k), \quad \E(\tau^2_{k\rightarrow k-1}) =\kO(1).
 	\end{equation*}
Hence, 
	\begin{equation*}
	\max \{\E(\tau_{n\rightarrow [n(s^\star+\delta)]}), \Var(\tau_{n\rightarrow [n(s^\star+\delta)]})\} = \kO(n),
	\end{equation*}
which implies by using Chebyshev's inequality that 
	\begin{equation*}
	\pp\left(\tau_{n\rightarrow [n(s^\star+\delta)]}
	\geq Cn\right) =\kO(1/n), 
	\end{equation*}
for some $C>0$. A similar estimate holds for $\tau_{0\rightarrow [n(s^\star-\delta)]}$, and then we get \eqref{taunnd}.
\end{proof}

 \begin{lem}[It takes long to leave $\Omega_n^{2\delta}$]
 \label{lem:stay}
 For each $\gamma>0$, there exists a positive constant $C=C(\gamma)$, such that 
 	\begin{equation*}
 	\sup_{\sigma \in \Omega_n^{\delta}}\pp_{\sigma } \left(\inf\{t\colon \xi_t \not \in\Omega_n^{2\delta}\} \leq T^{+}_n(\gamma)\right) \leq \frac{C (\log n)^2}{n^2}.
 	\end{equation*}	
\end{lem}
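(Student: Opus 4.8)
The plan is to control the projection chain $(X_t)_{t\geq 0}$ and show it takes a time of order at least $cn^2/(\log n)^2$ -- hence in particular longer than $T_n^+(\gamma)=c_\star n\log n+\gamma n$ -- to exit the slightly larger window $\Omega_n^{2\delta}$ when started inside $\Omega_n^{\delta}$. The key point is that on $\Omega_n^{2\delta}$ the drift $R(s)$ from \eqref{defR} satisfies $R'(s)<0$ by \eqref{def:del}, so the chain is strongly mean-reverting toward $ns^\star$; crossing the ``moat'' of width $\delta n$ between level $n(s^\star\pm\delta)$ and level $n(s^\star\pm 2\delta)$ against a drift that pushes inward requires an atypically long sequence of coherent increments, which is exponentially unlikely on the scale of a single attempt, and the number of attempts before time $T_n^+(\gamma)$ is only polynomial in $n$.

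First I would set $Y_t = X_t - ns^\star$ and record the one-step estimates: by \eqref{ecxt}, $\E[Y_{t+1}\mid Y_t] \leq Y_t(1 - c/n)$ for $|Y_t|\leq 2\delta n$ and some $c>0$ (using $R'(s)\leq -c<0$ on $(s^\star-2\delta,s^\star+2\delta)$ after shrinking $\delta$, plus the $\kO(1/n)$ error), while the increments $|Y_{t+1}-Y_t|\leq 1$ are bounded. Consider the process restricted to the event that the chain stays in $\Omega_n^{2\delta}$. I would then build a supermartingale: for a suitable $\theta=\theta(n)$ of order $(\log n)/n$, the quantity $W_t = \cosh(\theta Y_t)$ (or more simply $\e^{\theta|Y_t|}$ handled on each side separately) satisfies $\E[W_{t+1}\mid W_t]\leq W_t$ as long as $|Y_t|\leq 2\delta n$, because the negative linear drift dominates the $O(\theta^2)$ second-order term from the bounded increments once $\theta$ is small enough relative to $c/n$. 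Alternatively, and perhaps cleaner, I would follow the censoring/exit-time technique of \cite{DLP}: by optional stopping applied to $W_{t\wedge\tau}$ with $\tau$ the exit time from $\Omega_n^{2\delta}$, one gets $\pp_\sigma(\tau\leq T)\leq W_0/\min_{|y|=2\delta n}W_y \cdot$ (a correction), and with $\theta\asymp (\log n)/n$, $W_0\leq \cosh(\theta\delta n)=\e^{\kO(\log n)}$ polynomial in $n$, while $\cosh(\theta\cdot 2\delta n)=\e^{\Theta(\log n)}$ is a larger polynomial; tuning the constant in $\theta$ makes the ratio $\kO((\log n)^2/n^2)$ after also inserting a union bound over the $T_n^+(\gamma)=\kO(n\log n)$ time steps. (One must be slightly careful: since $(X_t)_{t\geq 0}$ holds with probability $r_n(k)$, the supermartingale property is unaffected, as laziness only helps.)

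The translation from the projection chain back to the full Glauber dynamics $(\xi_t)_{t\geq 0}$ is immediate, since $\xi_t\in\Omega_n^{2\delta}$ if and only if $X_t\in[n(s^\star-2\delta),n(s^\star+2\delta)]$, and the law of $(X_t)_{t\geq 0}$ depends on $\xi_0$ only through $X_0=|\sigma_+|$; thus $\sup_{\sigma\in\Omega_n^\delta}\pp_\sigma(\cdot)$ reduces to $\sup_{|y|\leq \delta n}\pp_y(\cdot)$ for the birth-death chain, and $W_0$ is maximized at $|y|=\delta n$, giving exactly the claimed bound. I expect the main obstacle to be calibrating the exponential weight $\theta$ so that simultaneously (i) $W_t$ is a genuine supermartingale on the whole strip $|Y_t|\leq 2\delta n$ -- this needs $\theta$ small enough that the Taylor remainder $\tfrac12\theta^2 e^{\theta}$ from the bounded jump is beaten by the drift gain $\theta c|Y_t|/n$, which is delicate near $Y_t=0$ where the drift vanishes and one has to absorb the remainder into the $\cosh$ curvature rather than the drift -- and (ii) $\theta$ large enough that $\cosh(2\theta\delta n)/\cosh(\theta\delta n)$ is as large as $n^2/(\log n)^2$. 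Choosing $\theta = K(\log n)/n$ with $K$ a sufficiently large constant depending on $c,\delta,\gamma$ resolves both, but verifying the supermartingale inequality uniformly, including the awkward region $|Y_t|=\kO(1)$, is where the real work lies; there one uses that $\cosh$ is convex so $\E[\cosh(\theta Y_{t+1})\mid Y_t]\leq \cosh(\theta Y_t)\cosh(\theta) + \sinh(\theta Y_t)\,\E[Y_{t+1}-Y_t\mid Y_t]\cdot\theta\,(1+o(1))$, and the sign of $\sinh(\theta Y_t)$ matches the sign of the drift, so the cross term is nonpositive, leaving only $\cosh(\theta Y_t)(\cosh\theta - 1) = \kO(\theta^2)\cosh(\theta Y_t)$, which over $T_n^+(\gamma)$ steps contributes a factor $\e^{\kO(\theta^2 n\log n)} = \e^{\kO((\log n)^3/n)} = 1+o(1)$.
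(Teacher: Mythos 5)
Your proposal is correct in outline, and it takes a genuinely different route from the paper's proof. The paper replaces the chain outside the strip $|X_t-ns^\star|\le 2\delta n$ by an auxiliary birth--death chain whose drift is contracting everywhere, proves $\sup_t|\E[X_t']-ns^\star|\le 9n\delta/8$ and $\Var(X_t')=\kO(n)$ via a one-step variance contraction (using the FKG inequality and monotonicity of the modified drift, following \cite[Lemma 4.12]{DLP}), and then converts these moment bounds into the stated estimate by an occupation-time argument combined with a two-stage strong-Markov decomposition: the chain must first cross level $3\delta n/2$ and then level $2\delta n$, each crossing costs $\kO(\log n/n)$ because, by the bounded increments, a crossing forces at least $\delta n/4$ consecutive steps at distance $\ge 5\delta n/4$ from $ns^\star$, while Chebyshev gives expected total such occupation time $\kO(\log n)$ up to $T_n^+(\gamma)$; the product of the two crossing probabilities yields $(\log n)^2/n^2$. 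You instead run a Lyapunov/exponential-supermartingale argument with $W_t=\cosh(\theta Y_t)$, $\theta\asymp \log n/n$, stopped at the exit time of the $2\delta$-strip, and the ratio $\cosh(\theta\delta n)/\cosh(2\theta\delta n)\approx n^{-K\delta}$ gives an arbitrary polynomial rate, so $K\delta\ge 2$ already beats the claimed bound; your bookkeeping of the per-step defect $(1+\kO(\theta^2))$, accumulating to $\exp(\kO(\theta^2 n\log n))=1+o(1)$ over the horizon $T_n^+(\gamma)=\kO(n\log n)$, is exactly right. What each approach buys: yours is more self-contained for this lemma (no auxiliary chain, no FKG-based contraction) and is quantitatively stronger, since increasing $K$ improves the polynomial; the paper's route is weaker here but its intermediate product, the $\kO(n)$ variance bound for the modified chain, is reused later (Lemmas \ref{lem:supvar} and \ref{lem:xt-ns}), so it is economical within the paper's overall structure. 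Two details to tidy when writing yours out: (i) the one-step bound $\E[Y_{t+1}\mid Y_t]\le Y_t(1-c/n)$ fails for $|Y_t|=\kO(1)$ because of the $\kO(1/n)$ error in \eqref{ecxt}, so there the cross term need not be nonpositive; but it is then of size $\kO(\theta^2/n)$ and is absorbed into the same $\kO(\theta^2)$ defect, exactly as you indicate; (ii) with the stopped process and Markov's inequality applied at the single time $T_n^+(\gamma)\wedge\tau$ no union bound over time steps is needed (using one merely costs an extra factor $n\log n$, absorbable by enlarging $K$).
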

\iflongversion
Again, we give a sketch of the proof below, the full proof can be found in Appendix \ref{sec-proj-chain-app}.

\else
Again, we give a sketch of the proof below, the full proof can be found in \cite[Appendix A.1]{CanGiaGibHof19b} of the extended version.
\fi
 	\begin{proof} 
 	By \eqref{rrpst}, it holds that
 		\[a:=-\inf \{R'(s): |s-s^\star| \leq 2\delta\} >0.\]
 		Therefore there exists a function $R_{2 \delta}: [0,1] \mapsto \R$, such that
 		\begin{equation} \label{r2dt}
 		R_{2\delta} (s) = R(s) \textrm{ for } |s-s^\star|\leq 2 \delta \textrm{ and } 
 		R'_{2\delta}(s) \leq -a \textrm{ for all } s \in [0,1]. 
  		\end{equation}	 
 		Define also
 		\begin{equation*}
 		T_1 =\inf\{t: |X_t-ns^\star| \geq 2 n\delta\}.
 		\end{equation*}
 		Observe that if $|X_0-ns^\star| \leq 2 n\delta$ then   the waiting time $T_1$ depends only on the  transition probabilities $\{(p_n(k),q_n(k),r_n(k)): |k-ns^\star| \leq 2 n\delta \}$. We define an auxiliary birth-death chain $(X'_t)$ with transition probabilities $\{(p'_n(k),q'_n(k),r'_n(k)): k \in \Z \}$ defined as 
 		\begin{equation}
		\label{aux-BD-chain}
 		(p'_n(k),q'_n(k),r'_n(k)) = 
		\begin{cases}
 		(p_n(k),q_n(k),r_n(k)) 
 		&{\rm if } \quad  |k-ns^\star| \leq 2 n \delta, \\
 		(R_{2 \delta} (k/n),0,1-R_{2 \delta} (k/n)) 
 		&{\rm otherwise.} 
 		\end{cases}
 		\end{equation}
 Then $R_{2\delta}$ is related to the drift of $(X_t')_{t\geq 0}$ by
 		\[
		\E[X_{t+1}'-X_t' \mid X_t'] 
 		=R_{2\delta} (X_t'/n)+\kO(\tfrac 1n).
		\]
Moreover,	
 		\begin{equation*}
 		\sup_{|x_0-ns^\star| \leq \delta n} \pp_{x_0} (T_1 \leq T^{+}_n(\gamma)) = \sup_{|x_0'-ns^\star| \leq \delta n} \pp_{x_0'} (T'_1 \leq T^{+}_n(\gamma)),
 		\end{equation*}
where 
 		\begin{equation*}
 		T'_1 =\inf\{t\colon |X'_t-ns^\star| \geq 2 n\delta\}.
 		\end{equation*}
By the strong Markov property,
 		\begin{eqnarray}
		\label{t12p}
 		&&\sup_{|x_0'-ns^\star| \leq \delta n} \pp_{x_0'} (T'_1 \leq T^{+}_n(\gamma)) \notag \\
 		&&\qquad \leq  \sup_{|x_0'-ns^\star| \leq \delta n} \pp_{x_0'} (T'_2 \leq T^{+}_n(\gamma)) \times \sup_{|x_1'-ns^\star| \leq 3 n\delta/2} \pp_{x_1'} (T'_1 \leq T^{+}_n(\gamma)),
 		\end{eqnarray}
where 
 		$$
		T_2'=\inf\{t\colon |X_t'-ns^\star|>3n\delta/2\}.
		$$
By using the same arguments as in \cite[Lemma 4.12]{DLP}, we can prove that 
 		\begin{equation} 
		\label{varxtp}
		\sup_{0\leq t \leq T^+_n(2\gamma)} |\E(X_t')-ns^\star| \leq  9 n\delta/8, \quad  \Var(X_t') = \kO(n).
	 	\end{equation}
The crucial point in the proof of \eqref{varxtp} is the contraction property of $\Var(X_t')$ that $\Var(X_{t+1}') \leq \Var(X_t') (1-\tfrac{\varepsilon}{n})$ for some $\varepsilon >0$. As for  \cite[Lemma 4.12]{DLP}, 
this contraction property can be shown when the drift function satisfies $f(x)=R_{2\delta}(x)-ax$ is decreasing for some $a>0$ (this holds by \eqref{r2dt}). Note that in \cite[Lemma 4.12]{DLP} the authors proved \eqref{varxtp} directly for $(X_t)_{t\geq 0}$ with  the function $f(x)=\textrm{tanh}(\beta x) - (1-\epsilon)x$, for some $\epsilon >0$. In our case, we do not know the behavior of the drift function outside the interval $(s^\star -2 \delta, s^\star + 2 \delta)$ well, so we need to consider the modification chain $(X_t')_{t\geq 0}$.
	
It follows from \eqref{varxtp} and Chebyshev's inequality that 
 		\begin{eqnarray*}
 		\sup_{0\leq t \leq T^+_n(2\gamma)}  \pp \left(|X_t'-ns^\star| \geq  5 n\delta/4\right) \leq \sup_{0\leq t \leq T^+_n(2\gamma)}  \pp \left(|X_t'-\E(X_t')| \geq   n\delta/8\right) = \kO(1/n).
 		\end{eqnarray*}
 As a consequence,
 		\begin{equation} \label{eon}
 		\E(N') \leq C_1 \log n,
 		\end{equation}
 where $C_1=C_1(a,\delta)$ is a large constant and 
  		\[
		N'=\#\{t \leq T_n^+(2\gamma): |X_t'-ns^\star|\geq 5n\delta/4\}.
		\] 
 Since $|X'_{t+1}-X_t'| \leq 1$ for all $t$,   if $|X_{t_0}'-ns^\star| \geq 3n\delta/2$ for some $t_0$ then $|X_{t}'-ns^\star| \geq 5n\delta/4$ for all $t_0 \leq t \leq t_0 + \delta n/4$. Hence,
 		\begin{equation} 
		\label{ceon}
 		\E[N' \mid T_2' \leq T_n^+(\gamma)] \geq \delta n/4.
 		\end{equation}
 Combining \eqref{eon} and \eqref{ceon}, we arrive at
 		\begin{equation*}
 		\sup_{|x_0'-ns^\star| \leq \delta n} \pp_{x_0'}(T_2' \leq T_n^+(\gamma)) \leq \frac{\E[N']}{\E[N \mid T_2' \leq T_n^+(\gamma)]} =\frac{\kO(\log n)}{n}.
 		\end{equation*}
Similarly, we also have 
 		\begin{equation*}
 		\sup_{|x_0'-ns^\star| \leq 3\delta n/2} \pp_{x_0'}(T_1' \leq T_n^+(\gamma)) =\frac{\kO(\log n)}{n}.
 		\end{equation*}
Now combining the last two estimates and \eqref{t12p}, we arrive at the claim in Lemma \ref{lem:stay}.
\end{proof}
 
\begin{lem}[A variance bound on $X_t$]
\label{lem:supvar}
 There exists a positive constant  $C$, such that for all $\gamma >1$  and $n \geq n_0=n_0(\gamma)$, with $n_0$ a large number, 
 	\begin{equation*}
 	\sup_{\sigma \in \Omega_n} \Var_{\sigma}(X_t) \leq Cn \qquad \mbox{\rm for all } \quad Cn \leq t\leq T_n^+(\gamma).
 	\end{equation*}	
\end{lem}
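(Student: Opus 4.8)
The plan is to reduce, via Lemma~\ref{lem:hit}, to a chain that \emph{starts inside} the band $\Omega_n^{\delta}$ of \eqref{Omega-delta}, for which the auxiliary chain of Lemma~\ref{lem:stay} and estimate \eqref{varxtp} already furnish an $\kO(n)$ variance bound, and then to transfer this to an arbitrary start by conditioning on the first hitting time $\tau=\inf\{t\colon \xi_t\in\Omega_n^{\delta}\}$. The structural observation that makes this work is that, since $|X_{t+1}-X_t|\le 1$, the chain can only enter $\Omega_n^{\delta}$ by landing exactly on its boundary (or by already starting inside it); hence $X_{\tau}$ is a \emph{deterministic} function $c_{\sigma}$ of the initial configuration $\sigma$, so that after conditioning on $\mathcal F_{\tau}$ the only surviving randomness in the conditional mean of $X_t$ is that of the one-dimensional hitting time $\tau$, which has variance of order $n$.

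\emph{Step 1: variance from a localised start.} First I would show $\sup_{z\in\Omega_n^{\delta}}\sup_{0\le s\le T_n^+(\gamma)}\Var_z(X_s)=\kO(n)$. Couple $(X_s)_{s\ge0}$ started from $z$ with the auxiliary chain $(X'_s)_{s\ge0}$ of Lemma~\ref{lem:stay} started from $z$; the two coincide up to the first exit of $\Omega_n^{2\delta}$, which by Lemma~\ref{lem:stay} does not occur before $T_n^+(\gamma)$ outside an event of probability $\kO((\log n)^2/n^2)$. Since $(X'_s)$ has the globally contractive drift $R_{2\delta}$, estimate \eqref{varxtp} gives $\Var_z(X'_s)=\kO(n)$ for $0\le s\le T_n^+(2\gamma)$, and then
\[
\Var_z(X_s)\le\E_z\!\big[(X_s-\E_z[X'_s])^2\big]\le \Var_z(X'_s)+n^2\,\kO\!\big((\log n)^2/n^2\big)=\kO(n)
\]
for $n\ge n_0(\gamma)$.

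\emph{Step 2: decomposition at $\tau$.} Denote by $C_0=C_0(\delta)$ the constant from Lemma~\ref{lem:hit}, fix $\sigma\in\Omega_n$ and $C_0 n\le t\le T_n^+(\gamma)$, and set $\tau^{\ast}=\tau\wedge t$. The law of total variance with respect to $\mathcal F_{\tau^{\ast}}$ gives
\[
\Var_{\sigma}(X_t)=\E_{\sigma}\!\big[\Var_{\sigma}(X_t\mid\mathcal F_{\tau^{\ast}})\big]+\Var_{\sigma}\!\big(\E_{\sigma}[X_t\mid\mathcal F_{\tau^{\ast}}]\big).
\]
On $\{\tau>t\}$ one has $\mathcal F_{\tau^{\ast}}=\mathcal F_t$, so the conditional variance vanishes there, while on $\{\tau\le t\}$ the strong Markov property gives $\Var_{\sigma}(X_t\mid\mathcal F_{\tau})=\Var_{c_{\sigma}}(X_{t-\tau})$, which is $\kO(n)$ uniformly by Step~1 (as $c_\sigma$ lies in the index range of $\Omega_n^{\delta}$ and $t-\tau\le T_n^+(\gamma)$); thus the first term is $\kO(n)$. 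For the second term, on $\{\tau\le t\}$ we have $\E_{\sigma}[X_t\mid\mathcal F_{\tau^{\ast}}]=\psi(\tau)$ with $\psi(u):=\E_{c_{\sigma}}[X_{t-u}]$, and from \eqref{ecxt}, $|R(s)|\le A|s-s^{\star}|$ (recall $R(s^{\star})=0$, $R\in\kC^1$), the bound $|X_s/n-s^{\star}|\le 2\delta$ on $\Omega_n^{2\delta}$, and the escape estimate of Lemma~\ref{lem:stay}, one checks $|\psi(u-1)-\psi(u)|\le\E_{c_{\sigma}}[|R(X_{t-u}/n)|]+\kO(1/n)\le 3A\delta$ for $n$ large, i.e.\ $\psi$ is Lipschitz with constant $L_{\psi}\le 3A\delta$. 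With $U:=\psi(\tau\wedge t)$,
\[
\Var_{\sigma}(U)\le L_{\psi}^{2}\,\Var_{\sigma}(\tau)=\kO(n),
\]
the last step because $\tau$ has mean and variance of order $n$, which follows from the birth--death estimates in the proof of Lemma~\ref{lem:hit} together with the independence of the successive ``$k\to k+1$'' passage times (so $\Var_{\sigma}(\tau)$ is bounded by its value at an extremal start). Finally $\E_{\sigma}[X_t\mid\mathcal F_{\tau^{\ast}}]-U=\indic{\tau>t}\big(X_t-\psi(t)\big)$ and $\pp_{\sigma}(\tau>t)\le\pp_{\sigma}(\tau\ge C_0 n)=\kO(1/n)$ by Lemma~\ref{lem:hit}, so $\Var_{\sigma}(\E_{\sigma}[X_t\mid\mathcal F_{\tau^{\ast}}])\le 2\Var_{\sigma}(U)+2n^2\pp_{\sigma}(\tau>t)=\kO(n)$. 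Adding the two contributions yields $\Var_{\sigma}(X_t)=\kO(n)$ uniformly in $\sigma$ and in $C_0 n\le t\le T_n^+(\gamma)$; taking $C$ to be $C_0$ enlarged so as to dominate the implied constant completes the proof.

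\emph{Main obstacle.} The delicate point is the second (``between-group'') term. A naive conditioning at $\tau$ would appear to spread the conditional means of $X_t$ over the full width $\asymp\delta n$ of $\Omega_n^{\delta}$, which would only give an $\kO(n^2)$ bound (and indeed one cannot afford to ``forget'' an $\Theta(n)$-scale initial discrepancy through only $\kO(n)$ steps of exponential-rate-$\kO(1/n)$ contraction). The resolution is precisely the deterministic nature of $X_{\tau}$, which collapses that spread to the $\kO(n)$-variance fluctuations of the scalar hitting time $\tau$; the real work is to extract $\Var_{\sigma}(\tau)=\kO(n)$ uniformly in $\sigma$ from the proof of Lemma~\ref{lem:hit} and to control the Lipschitz constant of $u\mapsto\E_{c_{\sigma}}[X_{t-u}]$ through the smallness of the drift inside $\Omega_n^{2\delta}$ and the confinement estimate of Lemma~\ref{lem:stay}.
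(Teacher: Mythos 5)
Your proof is correct, but it is organized differently from the paper's. The paper's proof of Lemma \ref{lem:supvar} is a short truncation argument: it introduces the confinement event $\kE=\{|X_s-ns^\star|\le 2\delta n \mbox{ for all } Cn\le s\le T_n^+(\gamma)\}$, notes that $\pp(\kE^c)=\kO(1/n)$ by Lemmas \ref{lem:hit} and \ref{lem:stay}, so that the contribution of $\kE^c$ to the variance is at most of order $n^2\pp(\kE^c)=\kO(n)$, and then bounds $\Var_\sigma(X_t\indicwo{\kE})$ by identifying the confined chain after time $Cn$ with the auxiliary chain $(X'_t)_{t\geq 0}$ of \eqref{aux-BD-chain} and importing the contraction bound \eqref{varxtp}, via the supremum over deterministic starting points in \eqref{xtxtp}. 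You use the same three inputs (fast hitting from Lemma \ref{lem:hit}, confinement plus \eqref{varxtp} from Lemma \ref{lem:stay}, and the birth--death passage-time moments from the proof of Lemma \ref{lem:hit}), but you assemble them through the law of total variance at the hitting time $\tau$ of the band, exploiting that $X_\tau$ is deterministic given $\sigma$, that $u\mapsto\E_{c_\sigma}[X_{t-u}]$ has $\kO(1)$ increments, and that $\Var_\sigma(\tau)=\kO(n)$ because $\tau$ decomposes into independent one-step passage times each with $\kO(1)$ second moment. What your route buys is an explicit treatment of the ``between-group'' variance created by the random entry time into the band, a point which the paper's transfer inequality \eqref{xtxtp} handles only implicitly through the coupling statement with $(X'_t)_{t\geq 0}$; the price is extra length. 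Two small remarks: the drift computation for the Lipschitz constant of $\psi(u)=\E_{c_\sigma}[X_{t-u}]$ is superfluous, since $|X_{s+1}-X_s|\le 1$ already gives $|\psi(u-1)-\psi(u)|\le 1$, which is all you need; and note that \eqref{varxtp} is proved for starting points with $|x_0'-ns^\star|\le\delta n$, which matches your Step 1 exactly, so the reduction is consistent.
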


\begin{proof} 
Fix $C>0$ sufficiently large. Define 
 	\begin{equation*}
 	\kE=\{|X_t-ns^\star| \leq 2 \delta n \quad \mbox{\rm for all } \quad  Cn \leq t \leq T_n^+(\gamma)\}.
 	\end{equation*}
By Lemmas \ref{lem:hit} and \ref{lem:stay},
 	\begin{equation*}
 	\pp(\kE^c) =\kO(1/n).
 	\end{equation*} 	
Therefore,
 	\begin{equation} \label{varxte}
 	\Var(X_t) \leq  \Var (X_t\indicwo{\kE}) + \E (X_t^2\indicwo{\kE^c}) = \Var (X_t\indicwo{\kE}) + \kO(n).
 	\end{equation}
Suppose that $\kE$ happens. Then $(X_{t+Cn})_{t\geq 0}$ has the same law as the auxiliary chain $(X'_t)_{t\geq 0}$ defined in \eqref{aux-BD-chain} in the proof of Lemma \ref{lem:stay} for all $Cn \leq t \leq T_n^+(\gamma)$. Therefore,
 	\begin{equation} 
	\label{xtxtp}
 	\sup_{\sigma \in \Omega_n} \Var_{\sigma} (X_t \indicwo{\kE}) \leq \sup_{\sigma \in \Omega^{2\delta}_n} \Var_{\sigma} (X'_{t-Cn} \indicwo{\kE'}),
 	\end{equation}	
where
 	\[
	\kE'=\{|X'_s-ns^\star| \leq 2 \delta n \quad {\rm for \, all } \quad  s \leq T_n^+(\gamma-C)\}.
	\]	
By Lemma \ref{lem:stay}, 
 	\begin{equation*}
 	\pp(\kE'^c) =\kO((\log n)^2/ n^2).
 	\end{equation*}
Hence, by \eqref{varxtp} for all $0\leq s \leq T_n^+(\gamma-C)$,
 	\begin{eqnarray*}
 	\Var(X_s'\indicwo{\kE'}) &\leq& \Var(X_s') + \E(X_s'^2 \indicwo{\kE'^c}) + 2 \E(X_s') \E(X_s'\indicwo{\kE'^c}) \\
 	&\leq& \Var(X_s') + 3 n^2 \pp(\kE'^c) =\kO(n).
 	\end{eqnarray*}
Combining this with \eqref{varxte} and \eqref{xtxtp} we arrive at the claim in Lemma \ref{lem:supvar}.	
\end{proof}

\begin{lem}[Relating $X_{T_n^{\pm}(\gamma)}$ to $ns^\star$]
\label{lem:xt-ns} 
 	The following assertions hold:
 	\begin{itemize}
 		\item [(i)] For any $\alpha>0$, 
 		\begin{equation*}
 		\lim_{\gamma \rightarrow \infty} \limsup_{n\rightarrow \infty} \pp_{{\bf 1}} (X_{T_n^-(\gamma)} \leq ns^\star + \alpha \sqrt{n}) =0.
 		\end{equation*}
 		\item[(ii)] It holds that  
 		\begin{equation*} 
 		\lim_{\ell \rightarrow \infty}\lim_{\gamma \rightarrow \infty} \limsup_{n\rightarrow \infty} \sup_{\sigma \in \Omega_n} \pp_{\sigma}(|X_{T^+_n(\gamma)}-ns^\star| \geq \ell \sqrt{n}) = 0.
 		\end{equation*}
 	\end{itemize}
\end{lem}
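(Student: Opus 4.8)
The plan is to study the centred projection chain $Y_t:=X_t-ns^\star$ and to compare its trajectory, once it has entered the region $\Omega_n^{\delta}$ of \eqref{Omega-delta}, with the deterministic flow $\dot y(\tau)=R(s^\star+y(\tau))$ read at the slow time scale $\tau=t/n$, where $R$ is as in \eqref{defR}. Write $\kappa:=-R'(s^\star)$; by \eqref{rrpst} we have $\kappa\in(0,\infty)$, and $c_\star$ is chosen so that $2\kappa c_\star=1$. Under (C3) the drift $R(s^\star+y)$ is negative for $y>0$ and positive for $y<0$, so every solution of the flow started with $|y(\tau_0)|\le\delta$ converges to the stable fixed point $0$, and since $R'(s^\star)=-\kappa<0$ it does so with $y(\tau)=C'\,\e^{-\kappa\tau}(1+o(1))$ as $\tau\to\infty$, where $C'=C'(y(\tau_0))$ has the sign of $y(\tau_0)$ and $|C'|$ is bounded uniformly over $|y(\tau_0)|\le\delta$. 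From \eqref{ecxt} the chain satisfies $\E[Y_{t+1}\mid X_t]=Y_t+R(s^\star+Y_t/n)+O(1/n)$ and $|Y_{t+1}-Y_t|\le1$.

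First I would perform a burn-in. By Lemma \ref{lem:hit} there is a constant $C$ and a random time $t_0\le Cn$ with $X_{t_0}\in\Omega_n^{\delta}$, with probability $1-O(1/n)$, uniformly in the starting configuration; from the all-plus state the chain descends into $\Omega_n^{\delta}$ from above, so there $Y_{t_0}\in(\delta n-1,\delta n]$, while for a general start $|Y_{t_0}|\le\delta n$. By Lemma \ref{lem:stay}, with probability $1-O((\log n)^2/n^2)$ one has $X_t\in\Omega_n^{2\delta}$ for all $t_0\le t\le T_n^+(\gamma)$; intersecting these two events, and working with the chain stopped on first leaving $\Omega_n^{2\delta}$ after $t_0$ (which happens after $T_n^+(\gamma)$ with probability $1-o(1)$), the increment estimate above is valid with $|Y_t|\le 2\delta n$ throughout $[t_0,T_n^+(\gamma)]$.

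The core step is to track the mean $m_t:=\E[Y_t]$ (under $\pp_{\mathbf 1}$ for part (i), under $\pp_\sigma$ for part (ii)). Taylor-expanding $R$ around $s^\star$, the fluctuation of $Y_t$ about $m_t$ enters only through $\Var(Y_t)$, which is $O(n)$ by Lemma \ref{lem:supvar} on $Cn\le t\le T_n^+(\gamma)$; hence $m_t$ obeys the closed recursion $m_{t+1}=m_t+R(s^\star+m_t/n)+O(1/n)$ on that range. This is the forward-Euler scheme, step $1/n$, for the flow above; since the recursion contracts toward $0$ at rate $\kappa/n$ and only fails to contract over the initial $O(n)$ steps, the accumulated error stays $O(1)$, so $m_t=n\,y(t/n)+O(1)$ with $y$ the flow solution through $Y_{t_0}/n$. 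Using $t_0=O(n)$, $2\kappa c_\star=1$ and $\e^{-\kappa T_n^{\pm}(\gamma)/n}=n^{-1/2}\e^{\mp\kappa\gamma}$, this yields
\[
\E_{\mathbf 1}\!\big[Y_{T_n^-(\gamma)}\big]=C'\,\e^{\kappa\gamma}\sqrt{n}\,(1+o(1)),
\qquad
\sup_{\sigma}\big|\E_{\sigma}[Y_{T_n^+(\gamma)}]\big|\le C''\,\e^{-\kappa\gamma}\sqrt{n}\,(1+o(1)),
\]
with $C'=C'(\delta)>0$ fixed for the all-plus start and $C''$ independent of $\sigma$ (the contribution of the bad event, of probability $O(1/n)$, is $\le n\cdot O(1/n)=O(1)$ and is absorbed). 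Part (i) now follows from Chebyshev: $\Var_{\mathbf 1}(Y_{T_n^-(\gamma)})=O(n)$ by Lemma \ref{lem:supvar}, so $\pp_{\mathbf 1}(Y_{T_n^-(\gamma)}\le\tfrac12 C'\e^{\kappa\gamma}\sqrt n)\le O(\e^{-2\kappa\gamma})$ as $n\to\infty$; for $\gamma$ large enough that $\tfrac12 C'\e^{\kappa\gamma}>\alpha$ this gives $\lim_{\gamma\to\infty}\limsup_{n}\pp_{\mathbf 1}(X_{T_n^-(\gamma)}\le ns^\star+\alpha\sqrt n)=0$. Part (ii) follows from Markov: $\E_\sigma[Y_{T_n^+(\gamma)}^2]=\Var_\sigma(Y_{T_n^+(\gamma)})+(\E_\sigma Y_{T_n^+(\gamma)})^2=O(n)$ uniformly in $\sigma$ and in $\gamma\ge1$, whence $\sup_\sigma\pp_\sigma(|X_{T_n^+(\gamma)}-ns^\star|\ge\ell\sqrt n)\le O(\ell^{-2})$, and letting $\ell\to\infty$ gives the claim.

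The main obstacle is the third step: one needs $\E_{\mathbf 1}[Y_{T_n^-(\gamma)}]$ accurate up to a multiplicative $1+o(1)$ at the scale $\sqrt n$, not merely up to a polynomial or logarithmic factor, for Chebyshev against $\Var=O(n)$ to work. Any crude linear bound on the drift replaces $\kappa$ by a strictly larger constant, which over the $\Theta(n\log n)$ horizon destroys the comparison with the equilibrium scale $\sqrt n$; the resolution is that $(X_t)$ spends only $O(n)$ steps outside any fixed small neighbourhood of $ns^\star$ (so the nonlinear part of the descent costs an $O(n)$ shift in time, i.e. an $O(1)$ factor absorbed into $C'$), that inside that neighbourhood the drift equals $-\kappa Y_t/n$ up to its quadratic correction, and that this quadratic correction interacts with the variance bound of Lemma \ref{lem:supvar} so as to close the recursion for $m_t$. (Under the $\mathcal C^2$ generality of (C1), where $R$ need only be $\mathcal C^1$, one instead iterates the confinement estimate of Lemma \ref{lem:stay} down to neighbourhoods of radius $o(1)$; for the annealed Ising model $R$ is real-analytic near $s^\star$ and this refinement is not needed.)
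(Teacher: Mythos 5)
Your proposal follows the same strategy as the paper's proof: burn in via Lemma \ref{lem:hit}, confine via Lemma \ref{lem:stay} (working with the chain killed on exiting $\Omega_n^{2\delta}$, exactly the paper's $\tilde Y_t=Y_t\indic{t\le T_1}$), derive a recursion for the mean of the centred projection chain from \eqref{ecxt}, close it with the quadratic Taylor term controlled by the $O(n)$ variance bound of Lemma \ref{lem:supvar}, and finish with Chebyshev/Markov at scale $\sqrt n$. The only genuine difference is how the closed recursion $b_{t+1}=(1-a_0/n)b_t\pm\bigl(a_1(b_t/n)^2+a_2/n\bigr)$ is analysed: you compare with the forward-Euler scheme for the flow $\dot y=R(s^\star+y)$ and claim $m_t=ny(t/n)+O(1)$, hence $(1+o(1))$-accuracy of the mean at scale $\sqrt n$; the paper instead runs a discrete level-crossing argument (hitting times $S_k$ of the levels $n\e^{-k}$, following Ding--Lubetzky--Peres) and settles for the cruder bounds $b_{T_n^-(\gamma)}\ge\sqrt n\,\e^{\gamma a_0/2}$ and $b_{T_n^+(\gamma)}\le\sqrt n$, with timing errors of order $n\log\gamma$. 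Your "main obstacle" diagnosis is therefore overstated: because the conclusion is a double limit ($n\to\infty$ first, then $\gamma\to\infty$, with window $\gamma n$), any lower bound of the form $\sqrt n\,h(\gamma)$ with $h(\gamma)\to\infty$ suffices for Chebyshev against $\Var=O(n)$; one does not need multiplicative $1+o(1)$ precision, and losing a constant in the exponent (as the paper does) is harmless. Your sharper ODE-comparison route is fine but buys nothing extra here, and it puts more weight on regularity: the asymptotics $y(\tau)=C'\e^{-\kappa\tau}(1+o(1))$ with $C'$ bounded away from $0$ uniformly over initial data needs $R''$ bounded near $s^\star$ — a point you flag correctly, and which the paper also uses implicitly through $C_1=\sup|R''|/2$ even though (C1) only states $F\in\mathcal C^2$; for the annealed Ising application $F$ is smooth, so both arguments go through. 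One small bookkeeping remark: the bad event in the mean contributes via the paper's term $n\pp(T_1=t+1\mid X_t)$ summed over $t\le T_n^+(\gamma)$, i.e. at most $n\cdot O((\log n)^2/n^2)=o(1)$ in total, consistent with your "absorbed" claim.
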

 
\begin{proof} By Lemma \ref{lem:hit}, the chain $(X_t)_{t\geq 0}$ is in $\Omega_n^{\delta}$ after $\kO(n)$ steps with probability $1-\kO(1/n)$. Therefore, in the proof of Lemma \ref{lem:xt-ns}, we can assume that $X_0 \in \Omega_n^{\delta}$ by replacing $T_n^{\pm}(\gamma)$ by $T_n^{\pm}(\gamma-C)$. 
 
By Lemma \ref{lem:stay},
	\begin{equation} 
	\label{T1lTn}
	\pp(T_1\leq T_n^+(\gamma)) = \frac{\kO((\log n)^2)}{n^2},
	\end{equation}	
where 
	\begin{equation*}
	T_1 =\inf\{t\colon |X_t-ns^\star| \geq 2 n\delta\}.
	\end{equation*}	
Define 
 	$$
	Y_t = X_t -ns^\star \qquad \textrm{and} \qquad \tilde{Y}_t=Y_t \indic{t \leq T_1}.
	$$
Then,
 	\begin{eqnarray*} 
 	\E[\tilde{Y}_{t+1}-\tilde{Y}_t \mid X_t] &=& \E[Y_{t+1} \indic{t+1 \leq T_1}-Y_t \indic{t \leq T_1} \mid X_t]  \\
 	&=& \E[(Y_{t+1}-Y_t) \indic{t \leq T_1}-Y_{t+1} \indic{T_1=t+1} \mid X_t] \notag \\
 	&=& \indic{t \leq T_1}\E[X_{t+1}-X_t \mid X_t] -\E[Y_{t+1} \indic{T_1=t+1}\mid X_t]. \notag 
 	\end{eqnarray*}
Combining this with \eqref{ecxt} and the fact that $Y_{t+1} \leq n$, we get
 	\begin{eqnarray}
	\label{donaq}
 	\Big | \E[\tilde{Y}_{t+1}-\tilde{Y}_t \mid X_t] - R \left(X_t/n \right)\indic{t\leq T_1} \Big |\leq\frac{C}{n} +n\pp(T_1=t+1 \mid X_t). 
 	\end{eqnarray}
If $t\leq T_1$ then $X_t/n \in (s^\star - 2 \delta, s^\star + 2 \delta)$, and thus using $R(s^\star) =0$ we get
 	\begin{equation*}
 	\Big |R \left(X_t/n\right)\indic{t\leq T_1} -R'(s^\star) \tilde{Y}_t/n \Big| = \Big |R \left(s^\star + \tilde{Y}_t/n\right) -R'(s^\star) \tilde{Y}_t/n \Big| 
	\leq  C_1 \left(\tilde{Y}_t/n \right)^2, 
 	\end{equation*}
where $C_1=\sup_{s\in(s^\star-2\delta, s^\star +2 \delta)} |R''(s)|/2$. Combining this estimate with 
 \eqref{donaq} leads to 
 	\begin{eqnarray*}
 	\Big |\E[\tilde{Y}_{t+1}-\tilde{Y}_t \mid X_t] - R'(s^\star) \tilde{Y}_t/n \Big | \leq C_1\left(\tilde{Y}_t/n \right)^2 
 	+n\pp(T_1=t+1 \mid X_t) +\frac{C}{n},
 	\end{eqnarray*}
and thus by taking the expectation and using \eqref{T1lTn}, we get, for $t\leq T_n^+(\gamma)$,
 	\begin{eqnarray} \label{eyplr}
 	\Big | \E[\tilde{Y}_{t+1}] -    \left(1+ \frac{R'(s^\star)}{n} \right) \E[\tilde{Y}_t] \Big | \leq  \frac{C_1\E[(\tilde{Y}_t)^2]}{n^2}  + \frac{2C}{n}.
 	\end{eqnarray}
 	By Lemma \ref{lem:supvar}, $\Var(\tilde{Y}_t) =\kO(n)$. Thus it follows from \eqref{eyplr} that 
 	\begin{eqnarray*} 
 	\Big | \E[\tilde{Y}_{t+1}] -    \left(1+ \frac{R'(s^\star)}{n} \right) \E[\tilde{Y}_t] \Big | \leq  C_1\left( \frac{\E[\tilde{Y}_t]}{n} \right)^2 + \frac{3C}{n}.
 	\end{eqnarray*} 
Let us denote $b_{t}=\E(\tilde{Y}_t)$. Then $|b_{t+1}-b_t| \leq 2$  since $|\tilde{Y}_{t+1}-\tilde{Y}_t| \leq 2$. Moreover, the above estimate gives that, for all $t\geq 1$,
 	\begin{equation*} 
 	\left(1-\frac{a_0}{n}\right) b_t - a_1\left(\frac{b_t}{n} \right)^2 -\frac{a_2}{n} \leq b_{t+1} \leq \left(1-\frac{a_0}{n}\right) b_t + a_1\left(\frac{b_t}{n} \right)^2 +\frac{a_2}{n},
 	\end{equation*} 
with $|b_0| \leq \delta n$ and $a_0=-R''(s^\star)>0$ and $a_1=C_1=\sup_{s\in(s^\star-2\delta, s^\star +2 \delta)} |R''(s)|/2$. \\

A straightforward analysis using recursion (which is quite similar to the  proof of estimates for a process $(Z_t)_{t\geq 0}$ 
in \cite[Sections 4.3.2 and 4.4.1]{DLP};
\iflongversion
see Appendix \ref{sec-proj-chain-app} for the full proof)
\else
see \cite[Appendix A.1]{CanGiaGibHof19b} of the extended version for the full proof)
\fi
gives that if $b_0=\delta n$, then 
 	\begin{equation} 
	\label{eypt}
 	b_{T_n^-(\gamma)}\geq \sqrt{n} \e^{\gamma a_0/2},
 	\end{equation}
and
 	\begin{equation} 
	\label{btnp}
 	\sqrt{n}\e^{-2 \gamma a_0} \leq b_{T_n^+(\gamma)}\leq \sqrt{n}. 
 	\end{equation}
By Lemma \ref{lem:supvar},
 	\begin{equation} \label{varmp}
 	\Var(\tilde{Y}_{T_n^-(\gamma)}), \Var(\tilde{Y}_{T_n^+(\gamma)}) =\kO(n).
 	\end{equation}
Using Chebyshev's inequality, \eqref{eypt} -- \eqref{varmp}, 
 	\begin{eqnarray*}
 	\pp\left(\tilde{Y}_{T_n^-(\gamma)} \leq \alpha \sqrt{n} \right) \leq \pp\left(\tilde{Y}_{T_n^-(\gamma)} - b_{T_n^-(\gamma)} \leq (\alpha-\e^{\gamma a_0/2}  ) \sqrt{n} \right) 
	\leq \frac{\Var(\tilde{Y}_{T_n^-(\gamma)})}{n(\alpha-\e^{\gamma a_0/2})^2} =\kO(\e^{-\gamma a_0}),
 	\end{eqnarray*} 
and  
 	\begin{eqnarray*}
	\pp\left(\tilde{Y}_{T_n^+(\gamma)} \geq \ell \sqrt{n} \right) \leq \pp\left(\tilde{Y}_{T_n^+(\gamma)} - b_{T_n^+(\gamma)} \geq (\ell-1  ) \sqrt{n} \right) 
	\leq \frac{\Var(\tilde{Y}_{T_n^+(\gamma)})}{n(\ell-1)^2} =\kO(\ell^{-2}).
\end{eqnarray*} 
The first inequality implies Lemma \ref{lem:xt-ns}(i), while the second proves Lemma \ref{lem:xt-ns}(ii) by taking $\ell \rightarrow \infty$.   	 
\end{proof} 
 
\subsection{The left side of the critical window: Proof of \eqref{dntm}}
\label{sec-proof-dntm}

\begin{lem}[Concentration of the number of plus spins]
\label{munalp}
 	We have 
 	\begin{equation*}
 	\lim_{\alpha \rightarrow \infty} \lim_{n \rightarrow \infty} \mu_n\left(\sigma \colon \big | |\sigma_+|-ns^\star \big| > \alpha\sqrt{n}\right)=0. 
 	\end{equation*}
\end{lem}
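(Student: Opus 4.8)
The plan is to reduce the statement to a Laplace (saddle‑point) estimate for the stationary measure $\nu_n$ of the projection chain, and then carry out the standard local analysis around the unique maximizer $s^\star$ of the function $G=F+I$ from \eqref{def:G}. First I would note that, since $\mu_n(\sigma)=\e^{nF_n(|\sigma_+|)}/\sum_k\binom nk\e^{nF_n(k)}$ depends on $\sigma$ only through $|\sigma_+|$, one has $\mu_n(\sigma\colon|\sigma_+|=k)=\nu_n(k)=\pi_n(k)/\sum_j\pi_n(j)$ with $\pi_n(k)=\binom nk\e^{n[F_n(k)-F_n(0)]}$ exactly as in \eqref{nunk}--\eqref{nunk22}. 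Hence it suffices to show $\lim_{\alpha\to\infty}\limsup_{n\to\infty}\nu_n(\{k\colon|k-ns^\star|>\alpha\sqrt n\})=0$. For this I would use the elementary two‑sided bound $\tfrac1{n+1}\e^{nI(k/n)}\le\binom nk\le\e^{nI(k/n)}$, valid for all $0\le k\le n$, together with (C1), to obtain the uniform estimate $\tfrac{\e^{-C}}{n+1}\e^{n[G(k/n)-G(0)]}\le\pi_n(k)\le\e^{C}\e^{n[G(k/n)-G(0)]}$ (using $G(0)=F(0)$ since $I(0)=0$), as well as the sharper local estimate $\pi_n(k)\asymp n^{-1/2}\e^{n[G(k/n)-G(0)]}$ on a fixed neighbourhood of $ns^\star$, coming from Stirling's formula with its $\tfrac12\log n$ correction, the implied constants depending only on $s^\star$, $\delta$ and $C$.

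The heart of the argument is a lower bound on the normalization that is of order $\e^{n[G(s^\star)-G(0)]}$ with no spurious polynomial factor. I would fix $\delta>0$ small enough that $[s^\star-\delta,s^\star+\delta]\subset(0,1)$ and $G''\le\tfrac12 G''(s^\star)<0$ on this interval (possible since $G\in\kC^2$ near $s^\star$ and, by (C3) and the remark following it, $G''(s^\star)<0$). A second‑order Taylor expansion of $G$ at $s^\star$, using $G'(s^\star)=0$, gives $G(k/n)\ge G(s^\star)-c_2(k/n-s^\star)^2$ on this interval with $c_2=\tfrac12\sup_{[s^\star-\delta,s^\star+\delta]}|G''|$; summing the sharp lower bound for $\pi_n(k)$ over the $\asymp\sqrt n$ integers $k$ with $|k-ns^\star|\le\sqrt n$ then yields $\sum_j\pi_n(j)\ge c_\star\,\e^{n[G(s^\star)-G(0)]}$ for some constant $c_\star>0$ independent of $n$.

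It then remains to control the numerator in two regimes. For the far region $|k/n-s^\star|>\delta$ one has $G(k/n)\le G(s^\star)-\rho$, where $\rho=G(s^\star)-\sup\{G(s)\colon s\in[0,1],\,|s-s^\star|>\delta\}>0$, because $s^\star$ is the unique maximizer of the continuous function $G$ on the compact set $[0,1]$ (by (C3) and strict monotonicity of $G$ on either side of $s^\star$); hence $\sum_{|k-ns^\star|>\delta n}\pi_n(k)\le\e^C(n+1)\e^{n[G(s^\star)-\rho-G(0)]}$, so the contribution of this region to $\nu_n$ is at most $\tfrac{\e^C(n+1)}{c_\star}\e^{-n\rho}\to0$. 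For the intermediate region $\alpha\sqrt n<|k-ns^\star|\le\delta n$, the Taylor bound $G(k/n)\le G(s^\star)-c_1(k/n-s^\star)^2$ with $c_1=\tfrac14|G''(s^\star)|>0$ together with the sharp upper bound for $\pi_n(k)$ gives a contribution at most $\tfrac{C'}{\sqrt n}\sum_{|k-ns^\star|>\alpha\sqrt n}\e^{-c_1(k-ns^\star)^2/n}$; comparing this sum with the Gaussian integral $\sqrt n\int_{|u|>\alpha}\e^{-c_1u^2}\,du$ bounds it, for $\alpha$ large, by $C''\e^{-c\alpha^2}$ with $c>0$, uniformly in $n$.

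Combining the three regimes: for $n>\alpha^2/\delta^2$ we get $\nu_n(|k-ns^\star|>\alpha\sqrt n)\le\tfrac{\e^C(n+1)}{c_\star}\e^{-n\rho}+C''\e^{-c\alpha^2}$; letting $n\to\infty$ and then $\alpha\to\infty$ gives the claim, and since $\mu_n(\sigma\colon|\,|\sigma_+|-ns^\star|>\alpha\sqrt n)=\nu_n(|k-ns^\star|>\alpha\sqrt n)$ this is exactly Lemma~\ref{munalp}. The only delicate point is the normalization lower bound in the second paragraph: one must retain the $n^{-1/2}$ Stirling prefactor throughout a window of width $\sqrt n$ around $ns^\star$, because the cruder bound $\pi_n(k)\ge\tfrac{\e^{-C}}{n+1}\e^{n[G(k/n)-G(0)]}$ loses a polynomial factor that would not cancel against the intermediate‑region estimate. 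Everything else is routine Laplace asymptotics.
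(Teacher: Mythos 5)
Your proof is correct and follows essentially the same route as the paper's: both reduce to the magnetization distribution $\propto\binom{n}{k}\e^{nF_n(k)}\asymp\e^{n[G(k/n)-G(0)]}$, split the tail into the far region $|k-ns^\star|>\delta n$ (killed by the uniform gap $G(s)\le G(s^\star)-\rho$ coming from (C3)) and the window $\alpha\sqrt n<|k-ns^\star|\le\delta n$ (killed by the second-order Taylor bound at $s^\star$ with $G''(s^\star)<0$ and a Gaussian-tail comparison). Your explicit lower bound on the normalization via the $\sqrt n$-wide window around $ns^\star$, retaining the Stirling $n^{-1/2}$ prefactor, simply makes precise the polynomial bookkeeping that the paper's ratio estimate \eqref{mnkl} and its sum-to-integral comparison leave implicit.
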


\begin{proof}
For any $k$, let 
	$$
	\mu_n^{k}=\mu_n(\sigma \colon |\sigma_{+}|=k).
	$$
Then, by the definition of $\mu_n$,
	\begin{equation} 
	\label{mnkl}
	\frac{\mu_n^k}{\mu_n^{\ell}} = \frac{\binom{n}{k} 
	\exp(nF_n(k))}{\binom{n}{\ell} \exp(nF_n(\ell))} 
	\asymp \exp \left(n[G(k/n)-G(\ell/n)]\right).
	\end{equation} 
Let $\delta >0$ be any positive constant such that $(s^\star-2 \delta, s+2 \delta) \subset (0,1)$. Since $s^\star$ is the unique global maximizer of $G$, there exists $\varepsilon >0$, such that $G(k/n)-G(s^\star) \leq -\varepsilon$ for all $k$ satisfying $|k-ns^\star| \geq \delta n$. Therefore, by \eqref{mnkl},
	\begin{equation}
	\label{nsdel}
	\mu_n(\sigma \colon ||\sigma_{+}|-ns^\star| \geq \delta n) \lesssim n \exp(-\varepsilon n).
	\end{equation}
For the case $|k-ns^\star| \leq \delta n$, we use $G'(s^\star)=0$ and a Taylor expansion around $s^\star$ to obtain that 
	\begin{equation*}
	G(k/n)-G(s^\star) \leq -\frac{a}{2}\left(\frac{k}{n} -s^\star\right)^2,
	\end{equation*}
where 
	$$
	-a=\inf_{s\colon |s-s^\star| \leq \delta} G''(s)<0,
	$$
where the fact that $G''(s^\star)<0$ follows from condition (C3), which implies that $G''(s) <0$ in a neighborhood of $s^\star$.

Therefore, as $\alpha \rightarrow \infty$,
	\begin{equation*}
	\mu_n(\sigma \colon \alpha \sqrt{n} \leq ||\sigma_{+}|-ns^\star| \leq \delta n) \lesssim \sum_{\alpha \sqrt{n} \leq \ell \leq \delta n} \exp (-an \ell^2/2) \lesssim \int_{\alpha}^{\infty} \exp(-at^2/2)dt \rightarrow 0,
	\end{equation*}
since $a>0$. Combining this estimate with \eqref{nsdel} we get the desired result.
\end{proof}

We are now in the position to prove \eqref{dntm}:

\medskip
{\it Proof of \eqref{dntm}.} For any $\alpha >0$, let 
	\[
	A_{\alpha}=\{\sigma \colon |\sigma_{+}| \leq ns^\star + \alpha \sqrt{n} \}.
	\]

By definition of the total variation distance, and the fact that $\mu_n$ is the stationary distribution,
	\begin{eqnarray}
	d_n(T_n^-(\gamma)) &=& \sup_{A \subset \Omega_n} \sup_{\sigma \in \Omega_n} |\mu_n(A) - \pp_{\sigma} (\xi_{T_n^-(\gamma)} \in A)| \\
	&\geq &   \pp_{{\bf 1}} (\xi_{T_n^-(\gamma)} \in A_{\alpha} ) -  \mu_n(A_{\alpha}).\nonumber
	\end{eqnarray}
Therefore, for all $\alpha >0$,
	\begin{eqnarray}
	\lim_{\gamma \rightarrow \infty} \liminf_{n \rightarrow \infty} d_n(T_n^-(\gamma)) 
	&\geq &  	\lim_{\gamma \rightarrow \infty} \liminf_{n \rightarrow \infty}  \pp_{{\bf 1}} (\xi_{T_n^-(\gamma)} \in A_{\alpha} ) - \lim_{n\rightarrow \infty} \mu_n(A_{\alpha}) \\
	&\geq & \lim_{\gamma \rightarrow \infty} \liminf_{n \rightarrow \infty}  \pp_{{\bf 1}} (X_{T_n^-(\gamma)} \leq ns^\star + \alpha \sqrt{n}  ) - \lim_{n\rightarrow \infty} \mu_n(A_{\alpha}),\nonumber
	\end{eqnarray}
Hence, by taking $\alpha \rightarrow \infty$ and using Lemma \ref{munalp} as well as Lemma \ref{lem:xt-ns}(i), we obtain that 
	\begin{equation}
	\lim_{\gamma \rightarrow \infty} \liminf_{n \rightarrow \infty} d_n(T_n^-(\gamma)) 
	\geq \lim_{\alpha \rightarrow \infty} \lim_{\gamma \rightarrow \infty} \liminf_{n \rightarrow \infty}  \pp_{{\bf 1}} (X_{T_n^-(\gamma)} \leq ns^\star + \alpha \sqrt{n}  ) =1,
	\end{equation}
which proves \eqref{dntm}.
\qed

\subsection{The right side of the critical window: Proof of \eqref{dntp}}
\label{sec-proof-dntp}
Recall $\Omega_n^\delta$ from \eqref{Omega-delta}. 

Let $(\xi_t)_{t\geq 0}$ and $(\bar{\xi}_t)_{t\geq 0}$ be two Glauber dynamics, and let $(X_t)_{t\geq 0}$ and $(\bar{X}_t)_{t\geq 0}$ be the corresponding projection chains, i.e.,
	\begin{equation}
 	X_t=|\{i\colon \xi_t(i)=1\}|, \qquad \bar{X}_t=|\{i\colon \bar{\xi}_t(i)=1\}|.
	\end{equation}
For $\sigma_0 \in \Omega_n^\delta$ and $\sigma\in \Omega_n$, we define 
	\begin{eqnarray*}
 	&& U(\sigma) = |\{i\colon \sigma(i)=\sigma_0(i)=1\}|, \qquad V(\sigma) 
	= |\{i\colon \sigma(i)=\sigma_0(i)=-1\}|, \\
 	&& \Theta =\{\sigma \colon \min \{U(\sigma), V(\sigma), U(\sigma_0)-U(\sigma), V(\sigma_0)-V(\sigma)\} \geq \delta n/8\}. 
	\end{eqnarray*}
We also define 
 	\begin{eqnarray*}
 	&& D(t) = |U(\xi_t)-U(\bar{\xi}_t)|, \qquad H_1(t)=
 	\{\inf\{s\colon X_s =\bar{X}_s\} \leq t\}, \\
  	&& H_2(t_1, t_2) =\bigcap_{t=t_1}^{t_2} \{\xi_t \in \Theta, \bar{\xi}_t \in \Theta \}.
	 \end{eqnarray*}
We will crucially rely on the following result from \cite{DLP}:
 
\begin{prop}[General mixing time bound \protect{\cite[Theorem 5.1]{DLP}}]
\label{prop:lco}
There exists a positive constant $c$, such that for any possible couplings of $(\xi_t)_{t\geq 0}$ and $(\bar{\xi}_t)_{t\geq 0}$, for any $r_1<r_2$ and $\alpha >0$ and all large $n$,
 	\begin{eqnarray*}
 	\max_{\sigma_0 \in \Omega_n^\delta} \|\pp_{\sigma_0} (\xi_{r_2} \in \cdot) -\mu(\cdot)\|_{\sss \rm TV} 
	&\leq& \max_{\sigma_0, \sigma \in \Omega_n^\delta} \Big[\pp_{\sigma_0,\sigma}\left(D(r_1) > \alpha \sqrt{n}\right) 
	+ \pp_{\sigma_0, \sigma}(H_1(r_1)^c)  \\
 	&& \qquad + \pp_{\sigma_0, \sigma}(H_2(r_1,r_2)^c) + c \alpha \sqrt{\frac{n}{r_2-r_1}} \,\, \Big ].
 	\end{eqnarray*}		
\end{prop}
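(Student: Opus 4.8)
The plan is to adapt the two-phase coupling argument behind \cite[Theorem 5.1]{DLP} to the present generalized Curie--Weiss dynamics. Since $\mu=\mu_n$ is stationary, writing $\mu(\cdot)=\int\pp_\sigma(\xi_{r_2}\in\cdot)\,\mu(d\sigma)$ and using that $s^\star$ is the unique global maximizer of $G$ (so that $\mu_n((\Omega_n^\delta)^c)$ is exponentially small in $n$, cf.\ Lemma~\ref{munalp}), one reduces to bounding $\|\pp_{\sigma_0}(\xi_{r_2}\in\cdot)-\pp_{\sigma}(\xi_{r_2}\in\cdot)\|_{\sss\rm TV}$ for $\sigma_0,\sigma\in\Omega_n^\delta$, the exponentially small remainder being absorbed in the $c\alpha\sqrt{n/(r_2-r_1)}$ term. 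For a coupling $(\xi_t,\bar\xi_t)_{t\ge0}$ of the two dynamics with $\xi_0=\sigma_0$ and $\bar\xi_0=\sigma\in\Omega_n^\delta$, the coupling inequality gives $\|\pp_{\sigma_0}(\xi_{r_2}\in\cdot)-\pp_{\sigma}(\xi_{r_2}\in\cdot)\|_{\sss\rm TV}\le\pp_{\sigma_0,\sigma}(\xi_{r_2}\neq\bar\xi_{r_2})$, so it suffices to estimate the probability that the two configurations have not coalesced by time $r_2$.

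We run the coupling in two phases. In the first phase, $t\le r_1$, we couple the projection chains $(X_t)$ and $(\bar X_t)$ so as to bring them together: on $H_1(r_1)$ they coincide at some time $\tau\le r_1$, and for $t\ge\tau$ we keep the projections equal, $X_t=\bar X_t$ (the projection is itself Markov). In the second phase, $t>\tau$, we couple the single-site updates of the full chains so that, as long as both chains remain in $\Theta$ (the event $H_2(r_1,r_2)$), the signed discrepancy $\Delta(t):=U(\xi_t)-U(\bar\xi_t)$ evolves as a bounded-increment process on $\Z$, absorbed at $0$, which is contracting towards $0$ at rate of order $1/n$ and whose one-step variance is bounded below by a positive constant while $\Delta(t)\neq0$. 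The variance lower bound is exactly where $\Theta$ enters: if both configurations lie in $\Theta$, each of the four classes $\sigma_0^{\pm}\cap\xi^{\pm}$ (and likewise for $\bar\xi$) has at least $\delta n/8$ coordinates, so with probability bounded away from $0$ the uniformly chosen update index falls in a class where the two chains disagree, and the maximal coupling of the two single-site kernels --- which differ only through a one-unit shift in the environment magnetization, hence by $\kO(1/n)$ --- then moves $\Delta$ by one unit. Finally, one arranges by a matching coupling within the two exchangeable classes $\sigma_0^+$ and $\sigma_0^-$ that once $\Delta(t)=0$ together with $X_t=\bar X_t$ one has $\xi_t=\bar\xi_t$, after which the chains stay coalesced.

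Assembling the pieces, on the event
\[
H_1(r_1)\cap\{D(r_1)\le\alpha\sqrt n\}\cap H_2(r_1,r_2)\cap\{\Delta(s)=0\ \text{for some}\ s\in[r_1,r_2]\}
\]
the two chains agree at time $r_2$, whence
\[
\{\xi_{r_2}\neq\bar\xi_{r_2}\}\subseteq H_1(r_1)^c\cup\{D(r_1)>\alpha\sqrt n\}\cup H_2(r_1,r_2)^c\cup\{\Delta(s)\neq0\ \text{for all}\ s\in[r_1,r_2]\}.
\]
For the last event, conditionally on $D(r_1)=|\Delta(r_1)|\le\alpha\sqrt n$ and on staying in $H_2(r_1,r_2)$, $\Delta$ is a bounded-increment process absorbed at $0$ with non-degenerate one-step variance and drift towards $0$; a standard gambler's-ruin / reflection estimate for such processes (exactly as carried out in \cite{DLP}) gives $\pp_{\sigma_0,\sigma}(\Delta(s)\neq0\ \text{for all}\ s\in[r_1,r_2])\le c\,\alpha\sqrt{n/(r_2-r_1)}$. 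A union bound over the four events and then the maximum over $\sigma_0,\sigma\in\Omega_n^\delta$ give the claimed inequality.

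The main obstacle is the construction of the second-phase coupling with all the required properties at once: the magnetizations must stay glued, $\Delta$ must have bounded increments, drift towards $0$, and non-degenerate one-step variance on $H_2(r_1,r_2)$, and $\Delta(t)=0$ with $X_t=\bar X_t$ must force $\xi_t=\bar\xi_t$. This requires tracking the eight coordinate classes $\sigma_0^{\pm}\cap\xi^{\pm}$ and $\sigma_0^{\pm}\cap\bar\xi^{\pm}$ simultaneously and choosing the single-site coupling according to the class of the selected index; the defining condition of $\Theta$ is precisely what guarantees the lower bound on the variance that feeds the gambler's-ruin step. The remaining ingredients --- the coupling inequality, concentration of $\mu_n$, and the reflection estimate for a contracting bounded-increment walk --- are standard.
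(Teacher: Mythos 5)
First, note that the paper itself gives no proof of Proposition \ref{prop:lco}: it is imported verbatim from \cite[Theorem 5.1]{DLP}, so the relevant comparison is with the argument there. Your sketch reproduces the correct skeleton of that argument: the reduction, via stationarity and concentration of $\mu_n$, to comparing chains started from two states of $\Omega_n^\delta$; the first phase in which only the projection (magnetization) chains are made to coalesce (event $H_1(r_1)$); the role of $\Theta$ as a uniform lower bound on the one-step conditional variance of the discrepancy process; and the gambler's-ruin/hitting-time estimate which converts an initial discrepancy of at most $\alpha\sqrt n$ into the term $c\alpha\sqrt{n/(r_2-r_1)}$. All of this matches \cite{DLP}.

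The genuine gap is the coalescence step. You claim one can "arrange by a matching coupling" that at the first time $t$ with $\Delta(t)=0$ and $X_t=\bar X_t$ one has $\xi_t=\bar\xi_t$. No pathwise coupling can achieve this: at that (random) time the two configurations are already determined by the past, and equality of the total magnetization together with equality of the number of pluses inside $\sigma_0^+$ does not determine the configuration, so in general $\xi_t\neq\bar\xi_t$ at that moment. Forcing genuine full-configuration coalescence afterwards (e.g.\ by the class-wise matching coupling you describe) costs an extra $\Theta(n\log n)$ steps, which is neither available in the intended application (where $r_2-r_1=\Theta(n)$) nor accounted for by any term on the right-hand side. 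The way \cite{DLP} close this is not pathwise but via exchangeability: the law of $\xi_{r_2}$ started from the deterministic $\sigma_0$ is invariant under permutations preserving $\sigma_0^{+}$ and $\sigma_0^{-}$, hence conditioned on the pair $(U(\xi_{r_2}),X_{r_2})$ it is uniform on the corresponding orbit, and the same holds for $\mu$ (which is a function of $|\sigma_+|$ alone); consequently the total variation distance between the full laws equals that between the laws of the two-coordinate statistics, and only the two-coordinate chains need to be coupled, after which your event decomposition and hitting estimate apply verbatim. Without this (or an equivalent) step your argument does not deliver the stated bound. A minor additional imprecision: absorbing $\mu_n\big((\Omega_n^\delta)^c\big)$ into the term $c\alpha\sqrt{n/(r_2-r_1)}$ is not justified for arbitrary $r_1<r_2$ and $\alpha$; this restriction error has to be tracked as in \cite{DLP} rather than waved into the fluctuation term.
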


We next investigate the different terms appearing on the right hand side in Proposition \ref{prop:lco}, where we choose $r_1=r_1(\gamma)= T_n^+(\gamma)-Cn = T_n^+(\gamma-C) $, $r_2 =r_2(\gamma)= T_n^+(2\gamma) -Cn = T_n^+(2\gamma-C) $, with $C$ as in Lemma \ref{lem:hit} and $\alpha = \gamma^{1/4}$ for some $\gamma$ large enough:




\begin{lem}[Unlikely that projection chains remain uncoupled for a long time]
\label{lem:h1}
We have 
 	\begin{equation*}
 	\lim_{\gamma \rightarrow \infty} \limsup_{n\rightarrow \infty} \max_{\sigma,\sigma'} \pp_{\sigma,\sigma'}  (H_1(T_n^+(\gamma))^c) =0. 
 	\end{equation*}	
As a consequence, 
 	\begin{equation*}
 	\lim_{\gamma \rightarrow \infty} \limsup_{n\rightarrow \infty} \max_{\sigma,\sigma'} \pp_{\sigma,\sigma'}  (H_1(r_1(\gamma)))^c) =0. 
 	\end{equation*}		
\end{lem}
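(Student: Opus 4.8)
The plan is to prove that two copies of the projection chain $(X_t)$ and $(\bar X_t)$, started from arbitrary points, hit each other within $T_n^+(\gamma)$ steps with probability tending to $1$ as $\gamma\to\infty$, uniformly in $n$. The natural coupling is the monotone (order-preserving) coupling of the two birth--death chains: at each step pick the same uniform index $I\in[n]$, and use the same uniform random variable to decide the spin flip, so that the difference $X_t-\bar X_t$ never changes sign once the chains are ordered, and $|X_t-\bar X_t|$ is non-increasing. Under this coupling it suffices to bound the time for the gap to close.

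First I would reduce to the situation where both chains start in $\Omega_n^\delta$: by Lemma \ref{lem:hit}, each chain enters $\Omega_n^\delta$ after $\kO(n)$ steps except on an event of probability $\kO(1/n)$, so up to replacing $T_n^+(\gamma)$ by $T_n^+(\gamma-C)$ and paying an $\kO(1/n)$ error we may assume $X_0,\bar X_0\in\Omega_n^\delta$, i.e.\ $|X_0-\bar X_0|\le 2\delta n$. Next, by Lemma \ref{lem:stay}, with probability $1-\kO((\log n)^2/n^2)$ both chains stay inside $\Omega_n^{2\delta}$ throughout $[0,T_n^+(\gamma)]$; on this event the drift of each chain in $\Omega_n^{2\delta}$ satisfies $R'(s)\le -a<0$ by \eqref{def:del}, so the difference $D_t:=X_t-\bar X_t$ (taken with sign under the monotone coupling) is a supermartingale-type quantity with a multiplicative contraction: conditionally on not yet having coupled, $\E[D_{t+1}\mid\mathscr F_t]\le (1-a/n)D_t + \kO(1/n)$, exactly as in the single-chain analysis leading to \eqref{eyplr}. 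Iterating, $\E[D_t]\le \kO(n)(1-a/n)^t + \kO(1)$, which is $\kO(1)$ once $t\ge C'n\log n$ for suitable $C'$. Since $c_\star n\log n$ dominates any such $C'n\log n$ for $n$ large (absorbing the difference into the linear window term $\gamma n$), by time $r\approx c_\star n\log n - \tfrac12\gamma n$ the expected gap is bounded. Then a standard argument (the gap is a nonnegative integer-valued process that decreases by at most $1$ per step and is a supermartingale modulo the $\kO(1/n)$ drift) shows that over the remaining $\Theta(\gamma n)$ steps the gap hits $0$ with probability $1-\kO(1/\gamma)$: e.g.\ apply the optional stopping / maximal inequality to control $\pp(\tau_{\rm couple} > T_n^+(\gamma))$ by $\E[D_{r}]/1 \cdot$ (something small) plus the probability that a bounded-increment supermartingale started at $\kO(1)$ stays positive for $\Theta(\gamma n)$ steps, which is $\kO(\gamma^{-1})$ by a second-moment / Azuma-type estimate using that the chain actually has a strictly negative drift of size $\Theta(1/n)$ whenever $D_t>0$ and both chains sit in $\Omega_n^{2\delta}$.

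The ``as a consequence'' clause is immediate: $r_1(\gamma)=T_n^+(\gamma-C)$, and $H_1$ is monotone in its argument (if the chains have coupled by time $t$ they have coupled by any later time), so $\pp(H_1(r_1(\gamma))^c)\le \pp(H_1(T_n^+(\gamma-C))^c)$, and replacing $\gamma$ by $\gamma-C$ does not affect the $\gamma\to\infty$ limit. Thus the second display follows from the first applied with $\gamma-C$ in place of $\gamma$.

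The main obstacle I expect is the last step: turning the contraction estimate $\E[D_t]=\kO(1)$ at time $\approx c_\star n\log n$ into an actual coupling (i.e.\ $D_t=0$) with high probability within the extra $\gamma n$ steps. The subtlety is that a supermartingale with $\kO(1)$ expectation need not hit $0$; one genuinely needs the strict negative drift $\lesssim -1/n$ of $D_t$ on $\{D_t>0\}\cap\{\text{both chains in }\Omega_n^{2\delta}\}$, which comes from the monotonicity of $R$ there, combined with the fact that the two chains at unequal values of $X$ have transition probabilities differing by $\Theta(1/n)$ times the gap. Making this quantitative — essentially showing the hitting time of $0$ for a lazy, negatively-drifting nearest-neighbour walk on $\{0,1,\dots\}$ started at height $h=\kO(1)$ is $\kO(h n)$ with the right tail — is where the real work lies, and it parallels the corresponding estimates in \cite[Section 5]{DLP}; I would invoke or adapt their Lemma on coupling times rather than redo it from scratch.
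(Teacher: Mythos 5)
There is a genuine gap, and it sits exactly where you flagged the ``real work'': your coupling and your time accounting cannot deliver coalescence by $T_n^+(\gamma)=c_\star n\log n+\gamma n$. First, the identical-randomness monotone coupling does not make $|X_t-\bar X_t|$ non-increasing (an update at an agreement site can move one chain and not the other, increasing the gap), and, more seriously, under that coupling the difference process has conditional variance of order $(\text{gap})/n$ per step, so from a gap of order $\sqrt n$ (or even order $1$) it cannot reach $0$ within the $\Theta(\gamma n)$ slack: the drift $\approx -a\,\mathrm{gap}/n$ alone closes a gap $g_0$ only on time scale $n\log g_0$, and the diffusive fluctuations are too weak. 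The paper's proof instead never contracts the \emph{mutual} gap by drift; it uses Lemma \ref{lem:xt-ns}(ii) to place \emph{each} chain within $\ell\sqrt n$ of $ns^\star$ at time $T_n^+(\gamma)$, and then couples the two chains in an extra $O(n)$ window by letting the difference $M_t=X_t-\bar X_t$ evolve with genuinely random increments: $\tilde M_t$ is a supermartingale with increments bounded by $2$ and conditional variance bounded below by a constant (see \eqref{vmt}), so \cite[Lemma 3.5]{DLP2} gives $\pp(\tau_M\geq \ell^3 n)=\kO(M_0/\sqrt{\ell^3 n})=\kO(1/\sqrt\ell)$ from $M_0=2\ell\sqrt n$, followed by an excursion argument to pass from gap $2K$ to $0$. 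Your proposal has no source of order-one variance for $D_t$, so the final step fails rather than merely being technical.

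Second, your contraction estimate is quantitatively off. The uniform rate you can extract on $\Omega_n^{2\delta}$ is $a=\inf_{|s-s^\star|\le 2\delta}|R'(s)|$, which for fixed $\delta$ may be strictly smaller than $a_0=|R'(s^\star)|=1/c_\star$. Then $\E[D_t]\le \kO(n)(1-a/n)^t+\kO(1)$ becomes $\kO(1)$ only for $t\gtrsim (n/a)\log n$, and the discrepancy $(1/a-1/c_\star)\,n\log n$ is of order $n\log n$, which cannot be ``absorbed into the linear window term $\gamma n$'' as you claim. Getting the constant exactly $c_\star$ is precisely the content of the refined single-chain analysis in Lemma \ref{lem:xt-ns} (the $S_k$ recursion in the appendix, where the effective rate improves to $a_0$ as the chain approaches $ns^\star$); invoking that lemma, as the paper does, is how one stays inside the $c_\star n\log n+\gamma n$ budget. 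Your reduction of arbitrary starts to $\Omega_n^\delta$ via Lemma \ref{lem:hit}, and the ``as a consequence'' clause (monotonicity of $H_1$ plus $r_1(\gamma)=T_n^+(\gamma-C)$), are fine and agree with the paper; the middle of the argument is what needs to be replaced by the paper's two-stage scheme (single-chain drift to the $\sqrt n$ scale, then a fluctuation-driven supermartingale hitting-time bound for the difference).
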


\begin{proof} By Lemma \ref{lem:xt-ns}(ii)
 	\begin{equation*} 
 	\lim_{\ell \rightarrow \infty}\lim_{\gamma \rightarrow \infty} \limsup_{n\rightarrow \infty} \max_{\sigma \in \Omega} \pp_{\sigma}(|X_{T^+_n(\gamma)}-ns^\star| \geq \ell \sqrt{n}) = 0.
 	\end{equation*}
Hence, it suffices to show that
 	\begin{equation} 
	\label{tmag}
 	\lim_{\ell \rightarrow \infty} \lim_{\gamma \rightarrow \infty} \limsup_{n\rightarrow \infty} \max_{\sigma, \sigma' \in \Omega^{\ell}} \pp_{\sigma,\sigma'}(\inf\{t:X_t=\bar{X}_t\} \geq \gamma n ) = 0,
 	\end{equation}
where 
 	\begin{equation*}
 	\Omega^{\ell}=\{\sigma \in \Omega_n\colon |\sigma_{+}-ns^\star| \leq \ell \sqrt{n}\}.
 	\end{equation*}
By monotonicity, it is sufficient to prove \eqref{tmag} for the cases that  $X_0=ns^\star+\ell \sqrt{n}$ and $\bar{X}_0=ns^\star-\ell \sqrt{n}$. 
 	
By \eqref{ecxt},
 	\begin{equation} 
	\label{cf}
	 \max \Big \{ 	|\E[X_{t+1}-X_t\mid X_t] -R(X_t/n)|, \, |\E[\bar{X}_{t+1}-\bar{X}_t\mid \bar{X}_t] -R(\bar{X}_t/n)| \Big \} \leq C/n,
 	\end{equation}
where $C$ is a universal constant. Moreover, if $ ns^\star -  \delta n \leq \bar{X}_t \leq X_t \leq ns^\star + \delta n $, then
 	\begin{eqnarray} 
	\label{mtism}
 	R(X_t/n)-R(\bar{X}_t/n)   
 	& \leq&  -\frac{1}{n}\left[\inf_{|s-s^\star|\leq \delta} |R'(s)| (X_t-\bar{X}_t) \right] 
 	\end{eqnarray}
since $R'(s) < 0$ for all $|s-s^\star|\leq \delta$. Define 
 	\begin{equation} 
	\label{def-K}
 	K=\frac{2C}{\inf_{|s-s^\star| \leq \delta}|R'(s)|} \in (0, \infty),
 	\end{equation}
and 
 	\begin{eqnarray*}
 		\tau_{M} &=&\inf \{t\colon |X_t-\bar{X}_t|\leq 2K \textrm{ or } |X_t-ns^\star|\geq \delta n  \textrm{ or } |\bar{X}_t-ns^\star|\geq \delta n \}, \\
 		M_t&=&X_t-\bar{X}_t, \qquad\tilde{M}_t= M_t  \indic{\tau_M\geq t}.
 	\end{eqnarray*}
By \eqref{mtism}, $\tilde{M}_t$ is a supermartingale. Indeed, let $\kF_t=\sigma (X_s, \bar{X}_s\colon s \leq t)$ and observe that if $\tilde{M}_t=0$ then $\tilde{M}_{t+1}=0$, so that
 	\begin{eqnarray*}
 	\E[\tilde{M}_{t+1}-\tilde{M}_t\mid \kF_t] & \leq& \E[(\tilde{M}_{t+1}-\tilde{M}_t ) \indic{\tilde{M}_{t}>0}\mid \kF_t]\\ 
 	& = &\E[X_{t+1}-X_t\mid \kF_t] \indic{\tilde{M}_{t}>0}  -\E[\bar{X}_{t+1}-\bar{X}_t\mid \kF_t] \indic{\tilde{M}_{t}>0}\notag  \\
 	&\leq & \big[R(X_t/n) -R(\bar{X}_t/n)\big] \indic{\tilde{M}_{t}>0} + 2C/n \\
 	&\leq&  -\frac{1}{n}\left[\inf_{|s-s^\star|\leq \delta} |R'(s)| (2K) -2C \right] \leq 0,
	\end{eqnarray*}
where in the third line, we have used \eqref{cf}, while in the last line, we have used \eqref{mtism} and \eqref{def-K} with a notice that if $\tilde{M}_t>0$, then the condition $ns^\star -  \delta n \leq \bar{X}_t \leq X_t \leq ns^\star + \delta n $ needed to apply \eqref{mtism} holds. 
 
We claim that, on the event $\{\tau_M>t\}$,  	
  	\begin{equation} \label{vmt}
 	\mathrm{Var} (\tilde{M}_{t+1}\mid \kF_t) \geq \Delta,
 	\end{equation} 
for some universal constant $\Delta>0$. Indeed, when $\tau_M >t$,
 	\[
	\pp(\tilde{M}_{t+1}-\tilde{M}_t =0\mid \kF_t), \, \pp(\tilde{M}_{t+1}-\tilde{M}_t =1\mid \kF_t) \geq c,
	\]
for some positive constant $c>0$. Thus $\textrm{Var} (\tilde{M}_{t+1}\mid \kF_t) \geq c$.  	Moreover, as we have shown above, $\tilde{M}_t$ is a supermartingale with increment bounded by $2$.   Therefore, using \cite[Lemma 3.5]{DLP2} we obtain that if $M_0=2\ell \sqrt{n}$ then
	\begin{equation*}
	\pp (\tau_M \geq \ell^3 n) = \kO(M_0/\sqrt{\ell^3 n}) = \kO(1/\sqrt{\ell}).
	\end{equation*}
Hence, by definition of $\tau_M$ and Lemma \ref{lem:stay},
 	\begin{eqnarray*}
 		\pp(\inf\{t\colon M_t\leq 2K\} \geq \ell^3 n) &\leq&
 		\pp(\tau_M \geq \ell^3 n) + \pp (\inf\{t\colon |X_t-ns^\star| \geq \delta n\} \leq \ell^3 n) \\
 		& & \quad + \pp (\inf\{t\colon |\bar{X}_t-ns^\star| \geq \delta n\} \leq \ell^3 n) \\
 		& =&  \pp(\tau_M \geq \ell^3 n)  + \kO(n^{-3/2}) = \kO(1/\sqrt{\ell}).
 	\end{eqnarray*}
where we let $\ell^3 = o(\log n)$ and apply Lemma \ref{lem:stay} in the last line. 
\medskip

Thanks to the above estimate,  now we can assume that $M_0=2K$. We observe that 
 	\begin{equation} \label{ldmt}
 	\pp(M_{t+1}-M_t \in \{-1,-2\}) =1-\pp(M_{t+1}-M_t \in \{0,1,2\}) \geq c,
 	\end{equation}
for some universal constant $c$. Let $\tau_0=\tilde{\tau}_0 =0$ and  define for $i\geq 1$,  
 	\begin{eqnarray*}
 	\tau_i &=&\inf\{t>\tilde{\tau}_{i-1}:M_t\geq 4K \textrm{ or } M_t \leq 0\}, \\
 	\tilde{\tau}_i&=& \inf\{t>\tau_i: M_t\leq 2K\}.
 	\end{eqnarray*}
We observe that $((\tau_{i+1}-\tilde{\tau}_i, \tilde{\tau}_i-\tau_i))_{i\geq 0}$ are independent random variables. Moreover, using \eqref{vmt} and the fact that $|M_{t+1}-M_t| \leq 2$, we can show that 
 	\begin{equation*}
 	\sup_{i\geq 0} \left(\E[(\tau_{i+1}-\tilde{\tau}_i)^2] + \E[(\tilde{\tau}_i-\tau_{i})^2] \right) \leq C,
 	\end{equation*}
for some universal constant $C$. Therefore, using Chebyshev's  inequality, 
 	\begin{equation} \label{tlcl}
 	\pp(\tau_{\ell} \geq 4C \ell) =\kO(1/\ell).
 	\end{equation}
By \eqref{ldmt}, 
 	\begin{equation*}
 	\pp(M_{\tau_i}  \leq 0 \mid \tilde{\tau}_{i-1}) \geq c^{2K}.
 	\end{equation*}
Hence,
 	\begin{equation*}
 	\pp( \exists i \leq \ell \colon M_{\tau_i}  \leq 0 ) \geq 1-(1- c^{2K})^{\ell} =1-\kO(1/\ell).
 	\end{equation*}
Combining this estimate with \eqref{tlcl} we obtain that 
 	\begin{equation*}
 	\pp_{2K}\left(\inf\{t:M_t \leq 0\}\geq 4C\ell\right) =\kO(1/\ell).
 	\end{equation*}
This completes the proof of \eqref{tmag}.
 \end{proof} 
 
\begin{lem}[$(\xi_t)_{t\geq 0}$ and $(\bar{\xi}_t)_{t\geq 0}$ are about equally far from $\sigma_0$]
\label{lem:dtng}
Consider two Glauber dynamics $(\xi_t)_{t\geq 0}$ and $(\bar{\xi}_t)_{t\geq 0}$ started at some $\sigma_0 \in \Omega_n^\delta$ and $\sigma \in \Omega_n$, respectively. There exists a positive constant $C$, such that for all $\alpha >0$,
 	\begin{equation*}
 	\lim_{\gamma \rightarrow \infty} \limsup_{n\rightarrow \infty} \pp_{\sigma_0,\sigma} (D(T_n^+(\gamma))\geq \alpha \sqrt{n}) \leq C/\alpha.
 	\end{equation*}	
Consequently,
 	\begin{equation*}
 	\lim_{\gamma \rightarrow \infty} \limsup_{n\rightarrow \infty} \pp_{\sigma_0,\sigma} (D(r_1(\gamma))\geq \alpha \sqrt{n}) \leq C/\alpha.
 	\end{equation*}		
\end{lem}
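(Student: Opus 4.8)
The quantity $D(t) = |U(\xi_t) - U(\bar\xi_t)|$ measures the discrepancy in the number of spins that agree with $\sigma_0$ and are $+1$. The plan is to show that $D$ behaves, on the relevant time scale, like a mean-zero random walk whose variance grows linearly in time, so that after a time of order $n\log n$ it has typical size of order $\sqrt{n}$, and then a Markov/Chebyshev bound controls the upper tail uniformly in $\gamma$. First I would set up a monotone coupling of the two Glauber dynamics $(\xi_t)_{t\geq0}$ and $(\bar\xi_t)_{t\geq0}$ that updates the same coordinate $I$ at each step; under such a coupling the process $t\mapsto D(t)$ changes by at most $1$ per step. The key point is that the \emph{drift} of $D(t)$ is small: because the transition probabilities in the generalized Curie--Weiss model depend on the configuration only through $|\sigma_+|$ (i.e.\ through the projection chains $X_t$ and $\bar X_t$), and since by Lemma~\ref{lem:h1} together with Lemma~\ref{lem:stay} both projection chains stay within $\Omega_n^{2\delta}$ and become equal (coupled) before time $r_1(\gamma)$ with probability tending to $1$ as $\gamma\to\infty$, the only contribution to $\E[D(t+1)-D(t)\mid\mathscr F_t]$ comes from $\kO(1/n)$ correction terms as in \eqref{ecxt} plus a mean-reverting term that pushes $D$ back toward $0$.

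The main steps, in order, would be: (1) restrict to the event $\mathcal G$ that both projection chains enter $\Omega_n^{\delta}$ after $\kO(n)$ steps (Lemma~\ref{lem:hit}), stay in $\Omega_n^{2\delta}$ up to time $r_1(\gamma)$ (Lemma~\ref{lem:stay}), and become equal before time $r_1(\gamma)$ (Lemma~\ref{lem:h1}); on $\mathcal G^c$ the probability is $\op(1)$ as first $n\to\infty$ then $\gamma\to\infty$, so it contributes nothing to the limit. (2) On $\mathcal G$, once $X_t=\bar X_t$, the two chains are at the same ``level'' and the discrepancy $D(t)$ evolves as a supermartingale-type quantity: when $D(t)>0$, flipping a coordinate where the two configurations disagree with $\sigma_0$ tends to decrease $D$, while the drift contributed by disagreeing coordinates is $\kO(1/n)$, exactly as in the analysis of $\tilde M_t$ in Lemma~\ref{lem:h1}. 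Thus $\tilde D_t := D_t\indic{t\leq \tau}$ (with $\tau$ the appropriate exit/coupling time) is a nonnegative supermartingale with increments bounded by $1$ and variance bounded above by a universal constant. (3) Apply the maximal/hitting-time bound for such supermartingales — precisely \cite[Lemma 3.5]{DLP2}, already invoked inside the proof of Lemma~\ref{lem:h1} — to conclude $\pp_{\sigma_0,\sigma}(D(T_n^+(\gamma)) \geq \alpha\sqrt n) = \kO(D(0)/(\alpha\sqrt n)) + \op(1)$, where $D(0)\leq n$ but in fact, after the burn-in of $\kO(n)$ steps, $D$ has already been brought down to $\kO(\sqrt n)$ by the same supermartingale argument applied on the first window $[Cn, r_1]$; more carefully one uses that $\Var(D_t)=\kO(n)$ uniformly on the relevant time range (a consequence of Lemma~\ref{lem:supvar} applied to each of $U(\xi_t)$ and $U(\bar\xi_t)$, since $U$ is a coordinate-projection of the same type) plus a vanishing-mean estimate, and concludes by Chebyshev that $\pp(D(T_n^+(\gamma))\geq\alpha\sqrt n)\leq C/\alpha$.

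The deduction of the ``consequently'' statement is immediate: $r_1(\gamma) = T_n^+(\gamma-C)$, so applying the first assertion with $\gamma$ replaced by $\gamma - C$ and noting that $\lim_{\gamma\to\infty}$ absorbs the shift by $C$ gives $\lim_{\gamma\to\infty}\limsup_{n\to\infty}\pp_{\sigma_0,\sigma}(D(r_1(\gamma))\geq\alpha\sqrt n)\leq C/\alpha$.

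**The main obstacle.** The delicate point is controlling the drift and variance of $D(t)$ \emph{before} the two projection chains have coupled, i.e.\ while $X_t \neq \bar X_t$. During that phase $D(t) = |U(\xi_t)-U(\bar\xi_t)|$ can have a genuine (non-$\kO(1/n)$) drift coming from the different magnetizations of the two configurations, and one must argue that this phase lasts only $\kO(n\cdot\text{something small})$ steps (via Lemma~\ref{lem:h1}) and that during it $D$ cannot grow beyond $\kO(\sqrt n)$ in the relevant sense — which requires combining the $\kO(M_0/\sqrt{\ell^3 n})$-type estimates from Lemma~\ref{lem:h1} for the gap $X_t-\bar X_t$ with a comparison showing $|D_{t}-D_0| \lesssim |X_t - \bar X_t| + (\text{fluctuations})$. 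Making this chain of estimates uniform over the starting configuration $\sigma\in\Omega_n$ and over $\gamma$ large, while carefully tracking the order of limits ($n\to\infty$ then $\gamma\to\infty$ then, where needed, $\alpha\to\infty$), is the technical heart of the argument; everything else is a routine adaptation of the supermartingale machinery already developed in \cite{DLP,DLP2} and used above in Lemmas~\ref{lem:stay} and~\ref{lem:h1}.
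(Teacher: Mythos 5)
There is a genuine gap. Your whole scheme rests on controlling the quantity $U(\xi_t)=X_t(A)$, $A=\{i\colon\sigma_0(i)=1\}$, through the machinery developed for the projection chain, and in particular you claim $\Var(D_t)=\kO(n)$ ``as a consequence of Lemma \ref{lem:supvar} applied to each of $U(\xi_t)$ and $U(\bar{\xi}_t)$, since $U$ is a coordinate-projection of the same type.'' This transfer is not justified: Lemma \ref{lem:supvar} (like Lemmas \ref{lem:hit}, \ref{lem:stay}, \ref{lem:xt-ns}) is a statement about the autonomous birth--death chain $X_t=|\{i\colon\xi_t(i)=1\}|$, whose one-step law is a function of $X_t$ alone. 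The restricted count $X_t(A)$ is \emph{not} a Markov chain, its conditional drift depends jointly on $X_t(A)$ and $X_t$, and even a variance bound for it would not suffice --- you also need its mean to be pinned at $s^\star|A|$, which is exactly what has to be proved and does not follow from anything stated about $X_t$. This is precisely why the paper's proof introduces a separate estimate (Lemma \ref{xtasa}): a monotone coupling with the chain started from the all-plus configuration, together with exchangeability under $\pp_+$ (all coordinates have the same marginal and all pairs the same covariance), converts the known bounds $|\E_+[X_t]-ns^\star|=\kO(\sqrt n)$ and $\Var_+(X_t)=\kO(n)$ into $\sup_{A}\E\big(X_t(A)-s^\star|A|\big)^2=\kO(n)$; then $D(t)\le |X_t(A)-s^\star|A||+|\bar X_t(A)-s^\star|A||$ and Markov's inequality give the $C/\alpha$ bound, with no coupling of $(\xi_t)$ and $(\bar\xi_t)$ at all.

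The remaining steps of your sketch also do not close the argument as stated. The assertion that, once the projections agree, $\tilde D_t$ is a supermartingale ``exactly as in the analysis of $\tilde M_t$ in Lemma \ref{lem:h1}'' is not accurate: the negative drift of $\tilde M_t$ comes from $R'(s)<0$ for the one-dimensional projection, whereas the drift of $U(\xi_t)-U(\bar\xi_t)$ under an identity coupling is a genuinely two-dimensional computation (the DLP ``two-coordinate chain'' analysis) that you would have to carry out, including the phase before the projections meet, and maintaining $X_t=\bar X_t$ after meeting requires switching couplings, which interferes with the same-coordinate coupling you use to control $D$. Finally, \cite[Lemma 3.5]{DLP2} is a first-hitting-time bound for supermartingales; it does not yield a tail bound for $D$ at the fixed time $T_n^+(\gamma)$, and the claim that $D$ ``has already been brought down to $\kO(\sqrt n)$ by the same supermartingale argument'' in the burn-in window is unsubstantiated --- a supermartingale property alone gives no decay from order $n$ to order $\sqrt n$; one would need the quantitative contraction and variance control that are exactly the missing ingredients above. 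The ``consequently'' part of the statement is fine as you argue, but the main estimate needs the $X_t(A)$ concentration input (or a complete two-chain drift analysis) to be a proof.
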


\begin{lem}[Both chains are in $\Theta$ for all large times]
\label{lem:h2g}
 	Consider two Glauber dynamics $(\xi_t)_{t\geq 0}$ and $(\bar{\xi}_t)_{t\geq 0}$ started at some $\sigma_0 \in \Omega_n^{\delta}$ and $\sigma \in \Omega_n$ respectively. Then for $\gamma_2 > \gamma_1$,
 	\begin{equation}
	\label{gen-Lem7.9}
 	\pp_{\sigma_0,\sigma} (H_2(T_n^+(\gamma_1),T_n^+(\gamma_2))^c)  \leq  \frac{C(\gamma_2-\gamma_1)+C_1}{n}, 
 	\end{equation}	
 where $C$ is a universal constant and $C_1$ is a constant depending on $\gamma_1$.  Consequently,
 	\begin{equation}
	\label{needed-Lem7.9}
 	\lim_{\gamma \rightarrow \infty}  \limsup_{n\rightarrow \infty} \pp_{\sigma_0,\sigma} (H_2(r_1(\gamma), r_2(\gamma) )^c) =0. 
 	\end{equation}	
\end{lem}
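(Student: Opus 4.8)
The plan is to establish \eqref{gen-Lem7.9} first and then deduce \eqref{needed-Lem7.9} as a special case by plugging in $\gamma_1 = \gamma - C$ and $\gamma_2 = 2\gamma - C$ (so that $\gamma_2 - \gamma_1 = \gamma$, while the constant $C_1$ depending on $\gamma_1$ is then a fixed function of $\gamma$, so the right-hand side $(C\gamma + C_1(\gamma))/n$ tends to zero as $n \to \infty$ for each fixed $\gamma$, hence also after taking $\gamma \to \infty$ afterwards). So the whole content is in the quantitative bound \eqref{gen-Lem7.9}.

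\textbf{Strategy for \eqref{gen-Lem7.9}.} The event $H_2(T_n^+(\gamma_1),T_n^+(\gamma_2))^c$ is the event that at some time $t$ in the window $[T_n^+(\gamma_1),T_n^+(\gamma_2)]$ at least one of the two dynamics leaves the set $\Theta$, i.e.\ one of the four quantities $U(\xi_t)$, $V(\xi_t)$, $U(\sigma_0)-U(\xi_t)$, $V(\sigma_0)-V(\xi_t)$ (or the corresponding quantities for $\bar\xi_t$) drops below $\delta n/8$. I would handle the two chains separately and by symmetry treat $(\xi_t)_{t\geq0}$ only. First, by Lemma \ref{lem:hit}, after a time of order $n$ the chain is in $\Omega_n^\delta$ with probability $1-\kO(1/n)$; since $T_n^+(\gamma_1) \geq Cn$ for $n$ large, this already absorbs a term of order $1/n$ (this is where the constant $C_1$ depending on $\gamma_1$ enters, since we need $T_n^+(\gamma_1)$ to exceed the hitting-time scale $Cn$). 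So we may condition on $\xi_{T_n^+(\gamma_1)} \in \Omega_n^{\delta}$. Then I would run a union bound over the four coordinate processes. Take for instance $M_t := U(\xi_t)$, the number of coordinates where $\xi_t$ and $\sigma_0$ are both $+1$. This is not itself a nice Markov chain, but one checks that each step of the Glauber dynamics changes $U(\xi_t)$ by at most one, and — crucially — when $U(\xi_t)$ is small relative to both $|\sigma_{0,+}|$ and $X_t$, the conditional drift of $U(\xi_t)$ is \emph{positive}, i.e.\ $U$ is pushed away from $0$; the same holds for $V(\xi_t)$, while $U(\sigma_0)-U(\xi_t)$ and $V(\sigma_0)-V(\xi_t)$ are pushed away from $0$ by an analogous argument using that $\xi$ has about $ns^\star$ plus spins and $ns^\star$ minus spins throughout the window (this uses Lemma \ref{lem:stay}). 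Combined with a uniform lower bound on the conditional variance of the increment (of order $1$ when the relevant coordinate is not yet at its barrier), a standard supermartingale/hitting estimate as in \cite[Lemma 3.5]{DLP2}, applied to the reflected/stopped process, gives that the probability of $U(\xi_t)$ ever reaching $\delta n/8$ during a window of length $L$ is $\kO(L/n)$, once we start from a configuration with $U \geq \delta n/4$ say. Summing the four coordinates and the two chains gives the $\kO((\gamma_2-\gamma_1)/n)$ term.

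\textbf{Key steps, in order.} (1) Use Lemma \ref{lem:hit} to reduce to $\xi_{T_n^+(\gamma_1)},\bar\xi_{T_n^+(\gamma_1)}\in\Omega_n^\delta$, at cost $\kO(1/n)$; then use Lemma \ref{lem:stay} to ensure that on the whole window both projection chains $X_t,\bar X_t$ stay in $[ns^\star-2\delta n, ns^\star+2\delta n]$ with probability $1-\kO((\log n)^2/n^2)$. (2) Fix one of the eight quantities, say $U(\xi_t)$; compute its one-step conditional expectation and show the drift points away from the barrier $\delta n/8$ (and variance $\gtrsim 1$) whenever all four of $U,V,U(\sigma_0)-U,V(\sigma_0)-V$ exceed $\delta n/8$ — here one uses that $\sigma_0\in\Omega_n^\delta$ so $U(\sigma_0)+V(\sigma_0)$ is close to the appropriate value, and that $|\xi_{t,+}|$ is close to $ns^\star$ on the window. (3) Apply the supermartingale exit estimate (as in \cite[Lemma 3.5]{DLP2}, \cite[Lemma 4.12]{DLP}) to the process stopped when it hits the barrier or exits $\Omega_n^\delta$; conclude that the probability of hitting $\delta n/8$ in a window of length $L = T_n^+(\gamma_2)-T_n^+(\gamma_1) = (\gamma_2-\gamma_1)n$, starting from $\Omega_n^\delta$, is $\kO(L/n) = \kO(\gamma_2-\gamma_1)$... wait, that is too weak; one needs the barrier $\delta n/8$ to be at distance $\gtrsim \delta n$ from the starting value, so the hitting probability in a window of length $\kO(n)$ is actually $\kO(1/n)$ by the supermartingale maximal inequality, and over a window of length $(\gamma_2-\gamma_1)n$ it is $\kO(\gamma_2-\gamma_1)\cdot\kO(1/n)\cdot n = \kO(\gamma_2-\gamma_1)$... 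I will instead partition the window into $\kO(\gamma_2-\gamma_1)$ blocks of length $n$, apply the $\kO(1/n)$ bound on each block (re-entering $\Omega_n^\delta$ near $ns^\star$ with high probability at the start of each block via Lemma \ref{lem:hit} applied afresh), and sum, giving $\kO((\gamma_2-\gamma_1)/n)$. (4) Union bound over the eight quantities (four coordinates, two chains) and add the $\kO(1/n)$ and $\kO((\log n)^2/n^2)$ terms from Steps (1)–(2); absorb the $\gamma_1$-dependent pieces (the length-$Cn$ burn-in and the constants in Lemma \ref{lem:hit}) into $C_1 = C_1(\gamma_1)$. (5) Derive \eqref{needed-Lem7.9} by specializing.

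\textbf{Main obstacle.} The delicate point is Step (2): verifying that each of the four ``distance-to-boundary'' coordinates $U,V,U(\sigma_0)-U,V(\sigma_0)-V$ of $\xi_t$ (and similarly for $\bar\xi_t$) genuinely has a drift pointing away from its small-value barrier, uniformly over the window. This requires knowing that throughout $[T_n^+(\gamma_1),T_n^+(\gamma_2)]$ the chain $\xi_t$ has roughly $ns^\star$ plus and $ns^\star$ minus spins — which is exactly Lemma \ref{lem:stay} — and a short combinatorial computation of the Glauber transition rates restricted to each of the four coordinate types. One must be careful that the drift estimate is only valid \emph{inside} $\Theta$, so the argument is genuinely a coupled exit problem for a vector of four coordinates, not four independent one-dimensional problems; the clean way around this is to stop at the first time \emph{any} of the four (for either chain) reaches its barrier and run the supermartingale estimate up to that stopping time for the particular coordinate that is about to fail, which is legitimate because on $\{$all four $> \delta n/8\}$ each individual coordinate is a supermartingale after an affine shift. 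This is morally the argument of \cite[Lemma 3.3 and Proposition 5.2]{DLP2} / \cite[Section 5]{DLP}, and I would cite those for the mechanics while spelling out the drift computation specific to the generalized Curie–Weiss Hamiltonian \eqref{hamilt-GCW}.
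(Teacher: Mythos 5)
Your overall architecture (blocks of length $\asymp n$, an $\kO(1/n)$ failure bound per block, a union bound over $\kO(\gamma_2-\gamma_1)$ blocks, plus a $\gamma_1$-dependent burn-in term) matches the shape of the paper's bound, and your drift computation for the overlap coordinates is plausible. But there is a genuine gap: you conflate control of the magnetization with control of the overlap. The event $\Theta$ constrains the overlap of $\xi_t$ with the fixed reference configuration $\sigma_0$, i.e.\ the partial sums $X_t(A)$, $X_t(A^c)$ with $A=\{i\colon\sigma_0(i)=1\}$, and this is \emph{not} a function of $|\xi_{t,+}|$. Lemmas \ref{lem:hit} and \ref{lem:stay}, which you invoke both for the burn-in and for ``re-entering $\Omega_n^\delta$ at the start of each block'', only control the projection chain $X_t=|\xi_{t,+}|$ and say nothing about $X_t(A)$. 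The second dynamics starts from an arbitrary $\sigma\in\Omega_n$ (for instance $\sigma=\sigma_0$, for which $U(\sigma_0)-U(\sigma)=0<\delta n/8$), so at time $0$ — and, as far as your cited inputs go, still at time $T_n^+(\gamma_1)$ and at the start of each block — you have no bound on how close the four overlap coordinates are to their barriers. Your supermartingale/maximal-inequality step needs each coordinate to begin a block at macroscopic distance above $\delta n/8$, or else a distributional estimate on the coordinate at the block start; neither is provided. Relatedly, the per-block $\kO(1/n)$ is not what your drift mechanism produces: with constant positive drift near the barrier and bounded increments, the exit probability from a \emph{good} starting point is exponentially small, so the $\kO(1/n)$ per block that the final bound requires must come from the probability that the block \emph{starts badly} — exactly the estimate that is missing (your own mid-argument correction in Step (3) reflects this).

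The paper supplies that missing input by a different mechanism, with no drift analysis of the overlap at all: Lemma \ref{xtasa} gives the uniform-in-time second-moment bound $\E_{\sigma_0}\left(X_t(A)-s^\star|A|\right)^2\le Cn$ for all $A\subset[n]$ and all $t$ in the window, proved via a monotone coupling with the all-plus started dynamics together with symmetry/exchangeability of that chain. Chebyshev then shows that at any fixed time the overlaps are within $3\delta n$ of $(s^\star)^2n$, respectively $s^\star(1-s^\star)n$, outside an event of probability $\kO(1/n)$; since a single Glauber step changes $X_t(A)$ by at most one, being well inside $\Theta$ at the start of a block of length $\delta n$ makes exiting $\Theta$ during that block impossible, and a union bound over the $\kO(\gamma_2-\gamma_1)$ blocks plus the conditioning event (handled by Lemmas \ref{lem:hit} and \ref{lem:stay}, which is where $C_1(\gamma_1)$ arises) concludes. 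To make your route work you would have to prove an overlap analogue of Lemmas \ref{lem:hit}--\ref{lem:stay} (the overlap coordinates reach and remain in their attracting basin before time $T_n^+(\gamma_1)$, uniformly over the starting configuration $\sigma$), or else import a fixed-time concentration estimate of the type of Lemma \ref{xtasa}. As written, the argument does not close.
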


The proofs of Lemmas \ref{lem:dtng} and \ref{lem:h2g} are essentially the same as those of \cite[Lemmas 5.5 and 5.6]{DLP}, so we omit them here. 
\iflongversion
Their full proofs will be given in 
Appendix \ref{sec-proof-dntp-app}.
\else
Their full proofs will be given in \cite[Appendix A.2]{CanGiaGibHof19b} of the extended version.
\fi

\vspace{0.3 cm}
\noindent 
{\it Proof of \eqref{dntp}}. By Lemma \ref{lem:hit},
 	\begin{equation} \label{xtio}
 	\limsup_{n\rightarrow \infty} \max_{\sigma \in \Omega_n} 
 	\pp_\sigma(\xi_{Cn} \in \Omega_n^\delta) =1.
	\end{equation} 
Combining \eqref{xtio} and Lemmas \ref{lem:h1}, \ref{lem:dtng}, \ref{lem:h2g} and Proposition \ref{prop:lco} with $r_1= T_n^+(\gamma-Cn)$, $r_2 = T_n^+(2\gamma-Cn) $ and $\alpha = \gamma^{1/4}$, we arrive at
	\begin{equation*}
	\limsup_{n\to\infty}\max_{\sigma \in \Omega}  \|\pp_{\sigma} (\xi_{T_n^+(2\gamma)} \in \cdot) -\mu_n(\cdot)\|_{\sss \rm TV}  \leq C\gamma^{-1/4},
	\end{equation*}
for some $C>0$. This  proves \eqref{dntp} by taking $\gamma \rightarrow \infty$. \hfill $\square$
\iflongversion

\appendix
\section{Proof of Proposition \ref{prop:dntn}} 
\label{sec:dntn-app} 
In this appendix, we give the full proofs of some of the results used in Section \ref{sec:dntn}.
 
\subsection{A careful analysis of the projection chain: Proofs of 
Lemmas \ref{lem:hit}, \ref{lem:stay} and \ref{lem:xt-ns}} 
\label{sec-proj-chain-app} 
In this section, we give the full proofs of 
Lemmas \ref{lem:hit}, \ref{lem:stay} 
and \ref{lem:xt-ns}. Recall the notation at the start of Section \ref{sec-proj-chain}.

 \begin{proof}[Proof of Lemma \ref{lem:hit}]  Since $G$ is strictly increasing in $(0,s^\star)$ and strictly decreasing in $(s^\star, 1)$, there exists $\varepsilon$, such that
 	\begin{equation} 
	\label{gped-app}
 	G'(s) \geq \varepsilon \quad \textrm{if} \quad s\in (0,s^\star -\delta) \quad \textrm{and} \quad  G'(s) \leq -\varepsilon \quad \textrm{if} \quad s\in (s^\star +\delta, 1). 
 	\end{equation}	
 As in \eqref{nunk} and \eqref{nunk22}, the stationary measure of $(X)_{t\geq 0}$ is given by
 	\begin{equation}
 	\nu_n(k)=\frac{\pi_n(k)}{\pi_n(0) + \cdots + \pi_n(n)},
 	\end{equation}
 where
 	\begin{eqnarray} \label{pink-app}
 	\pi_n(k)  =\binom{n}{k} \e^{n[F_n(k)-F_n(0)]} \asymp \exp \left(n \Big[  G(k/n) -G(0)\Big]\right). 
	 \end{eqnarray}
For each $k$ and $\ell$, we define the waiting time for $(X_t)$ going from $k$ to $\ell$ as
 	\begin{equation*}
 	\tau_{k\rightarrow \ell} =\inf \{t\colon X_t=\ell \mid X_0=k\}.
 	\end{equation*}
To prove Lemma \ref{lem:hit}, it suffices to show that 
 	\begin{equation} 
	\label{taunnd-app}
 	\pp\left(\max\{\tau_{n\rightarrow n_{\delta}^+},  \tau_{0\rightarrow n_{\delta}^-}\} 
 	\geq Cn\right) \leq \frac{C}{n},
 	\end{equation}
where
 	\begin{equation*}
 	n_{\delta}^+=[n(s^\star + \delta)], \qquad n_{\delta}^-=[n(s^\star -\delta)].
 	\end{equation*}
Using Chebyshev's inequality, the estimate \eqref{taunnd-app} holds if there exists a constant $C=C(\delta)$ such that
 	\begin{equation} 
	\label{taunp-app}
 	\max \{\E(\tau_{n\rightarrow n_{\delta}^+}), \Var(\tau_{n\rightarrow n_{\delta}^+})\} \leq Cn,
 	\end{equation}
and 
 	\begin{equation} 
	\label{taunm-app}
 	\max \{\E(\tau_{0\rightarrow n_{\delta}^-}), \Var(\tau_{n\rightarrow n_{\delta}^-})\} \leq Cn.
 	\end{equation}
Indeed, by Chebyshev's inequality, for any variable $T$,
	\begin{equation}
	\pp(T > 2Cn) = \pp(T-\E[T] \geq 2Cn - \E[T]) \leq \frac{\Var (T)}{(2Cn-\E[T])^2} =\kO(1/n),
	\end{equation}
provided that $\E[T] \leq Cn$ and $\Var(T) \leq Cn$.

 We now prove \eqref{taunp-app}, the proof of \eqref{taunm-app} is essentially the same and is omitted. A standard calculus for the birth-death chain (see e.g.\ \cite[Proposition 2]{BBF}) gives that 
 	\begin{eqnarray}
 	\E(\tau_{k\rightarrow k-1}) &=& \frac{1}{q_n(k)} \sum_{j=k}^n \frac{\pi_n(j)}{\pi_n(k)},\nonumber\\
 	\E(\tau^2_{k\rightarrow k-1}) &=& \frac{2}{q_n(k)\pi_n(k)} \sum_{j=k}^n \E(\tau_{j\rightarrow k-1}) \pi_n(j) - \E(\tau_{k\rightarrow k-1}). \label{fetau2-app}
 	\end{eqnarray}
 We start computing $\E(\tau_{k\rightarrow k-1})$. By \eqref{qnk},
 	\begin{equation} 
	\label{asqnk-app}
 	q_n(k) \asymp \frac{k}{n}.
 	\end{equation}
By using \eqref{gped-app} and \eqref{pink-app}, we obtain that 
 	\begin{eqnarray} 
	\label{spinjk-app}
 	\sum_{j=k}^n \frac{\pi_n(j)}{\pi_n(k)} \asymp \sum_{j=k}^n  \exp \left(n \Big[  G(j/n) -G(k/n)\Big]\right) &\leq& \sum_{j=k+1}^n  \exp \left(-\varepsilon(j-k)\right) \notag \\
 	&\leq&\frac{\e^{-\varepsilon}}{1-\e^{-\varepsilon}}.
 	\end{eqnarray}
 Therefore, 
 	\begin{equation} 
	\label{etaunnp-app}
 	\E(\tau_{k\rightarrow k-1}) =\kO(n/k).  
 	\end{equation}
 Hence
 	\begin{equation} 
	\label{etaund-app}
 	\E(\tau_{n\rightarrow n_{\delta}^+})= \sum_{k=n_{\delta}^++1}^n \E(\tau_{k\rightarrow k-1})  =\kO(n).
 	\end{equation}
 We now compute $\E(\tau_{k\rightarrow k-1}^2)$. Using \eqref{fetau2-app} and \eqref{asqnk-app},
 	\begin{equation}
 	\E(\tau^2_{k\rightarrow k-1}) \lesssim \frac{n}{k} \sum_{j=k}^n \E(\tau_{j\rightarrow k-1}) \frac{\pi_n(j)}{\pi_n(k)}.
 	\end{equation}
Here and in the following, we write $f \lesssim g$ when $f\le c g$ for some $c>0$. Similarly to  \eqref{etaunnp-app} and \eqref{etaund-app}, if $n_{\delta}^+ \leq k \leq j-1$ then
 	\begin{eqnarray*}
 	\E(\tau_{j\rightarrow k-1 })= \kO(j-k).
 	\end{eqnarray*}
 As in \eqref{spinjk-app}, if  $n_{\delta}^+ \leq k \leq j-1$ then
 	\begin{equation}
 	\frac{\pi_n(j)}{\pi_n(k)} \lesssim \exp \left(-\varepsilon (j-k)\right).
 	\end{equation}
Therefore,
 	\begin{eqnarray}
 	\E(\tau^2_{k\rightarrow k-1})  &\lesssim & \sum_{j=k}^n (j-k) \exp \left(-\varepsilon(j-k)\right) =\kO(1).
 	\end{eqnarray}
In conclusion,
 	\begin{eqnarray}
 	\Var(\tau_{n\rightarrow n_{\delta}^+}) =\sum_{k=n_{\delta}^++1}^n \Var(\tau_{k\rightarrow k-1}) \leq \sum_{k=n_{\delta}^++1}^n \E(\tau^2_{k\rightarrow k-1}) =\kO(n). 
 	\end{eqnarray}
 Combining this with \eqref{etaund-app} we get \eqref{taunp-app}.
 \end{proof}

 \begin{proof}[Proof of Lemma \ref{lem:stay}]
 By \eqref{defR}, it holds that
 		\[
		a:=-\inf \{R'(s)\colon |s-s^\star| \leq 2\delta\} >0.
		\]
 Therefore there exists a function $R_{2 \delta}\colon [0,1] \mapsto \R$, such that
 		\begin{equation} \label{r2dt-app}
 		R_{2\delta} (s) = R(s) \textrm{ for } |s-s^\star|\leq 2 \delta \textrm{ and } 
 		R'_{2\delta}(s) \leq -a 
 	 		\textrm{ for all } s \in [0,1]. 
 		\end{equation}	 
Define also
 		\begin{equation}
 		T_1 =\inf\{t\colon |X_t-ns^\star| \geq 2 n\delta\}.
 		\end{equation}
Observe that if $|X_0-ns^\star| \leq 2 n\delta$ then   the waiting time $T_1$ depends only on the  transition probabilities $\{(p_n(k),q_n(k),r_n(k)): |k-ns^\star| \leq 2 n\delta \}$. We define an auxiliary birth-death chain $(X'_t)$ with transition probabilities $\{(p'_n(k),q'_n(k),r'_n(k)): k \in \Z \}$ defined as 
 		\begin{equation}
 		(p'_n(k),q'_n(k),r'_n(k)) = \begin{cases}
 		(p_n(k),q_n(k),r_n(k)) 
 		& {\rm if } \quad  |k-ns^\star| \leq 2 n \delta, \\
 		(R_{2 \delta} (k/n),0,1-R_{2 \delta} (k/n)) 
  		&{\rm otherwise.} 
 		\end{cases}
 		\end{equation}
Then $R_{2\delta}$ is related to the drift of $(X_t')_{t\geq 0}$ as
 		\[
		\E[X_{t+1}'-X_t' \mid X_t'] 
 		=R_{2\delta} (\tfrac{X_t'}{n})+\kO(\tfrac 1n).
		\]
Moreover,	
 		\begin{equation}
 		\sup_{|x_0-ns^\star| \leq \delta n} \pp_{x_0} (T_1 \leq T^{+}_n(\gamma)) = \sup_{|x_0'-ns^\star| \leq \delta n} \pp_{x_0'} (T'_1 \leq T^{+}_n(\gamma)),
 		\end{equation}
where 
 		\begin{equation*}
 		T'_1 =\inf\{t\colon |X'_t-ns^\star| \geq 2 n\delta\}.
 		\end{equation*}
 By the strong Markov property, 
  		\begin{eqnarray} \label{t12p-app}
 		&&\sup_{|x_0'-ns^\star| \leq \delta n} \pp_{x_0'} (T'_1 \leq T^{+}_n(\gamma)) \notag \\
 		&&\qquad\leq \sup_{|x_0'-ns^\star| \leq \delta n} \pp_{x_0'} (T'_2 \leq T^{+}_n(\gamma)) \times \sup_{|x_1'-ns^\star| \leq 3 n\delta/2} \pp_{x_1'} (T'_1 \leq T^{+}_n(\gamma)),
 		\end{eqnarray}
 where 
 		$$
		T_2'=\inf\{t\colon |X_t'-ns^\star|>3n\delta/2\}.
		$$
By \eqref{ecxt}, there exists a positive constant $C_1$, such that for all $1 \leq k\leq n$,
 		\begin{equation}
		\label{dnwos-app}
 		\Big | (p_n'(k)-q_n'(k))-R_{2\delta}(k/n)\Big | \leq C_1/n.
 		\end{equation}
We have
 		\begin{eqnarray*}
 		&&\E[(X'_{t+1}-ns^\star)^2 \mid X'_t ]  \\
 		&&\qquad =(X'_{t}-ns^\star +1)^2p_n'(X'_t) + (X'_{t}-ns^\star -1)^2q_n'(X'_t) + (X'_{t}-ns^\star )^2r_n'(X'_t) \\
 		&&\qquad = (X'_{t}-ns^\star)^2 + 2 (X'_{t}-ns^\star)[p_n'(X'_t)-q_n'(X'_t)] + [p_n'(X'_t)+q_n'(X'_t)].
 		\end{eqnarray*}
Define 
 		\[
		Y'_t=X_t'-ns^\star.
		\]
Then,
 		\begin{eqnarray*}
 		\E[(Y'_{t+1})^2\mid Y'_t] \leq  (Y_t')^2 +2Y_t' R_{2\delta} (s^\star + \tfrac{Y'_t}{n})+ C_1.
 		\end{eqnarray*}
Combining this with the fact that  $xR_{2 \delta}(s^\star +x) \leq 0$ yields that
 		\begin{equation*}
 		\E(Y_{t+1}'^2) \leq \E(Y_t'^2) + C_1,
 		\end{equation*}
and thus, for all $T$,
 		\begin{equation*}
 		\sup_{0\leq t \leq T}\E(Y_t'^2) \leq \E(Y_0'^2) + TC_1.
 		\end{equation*}
As a consequence, by the Cauchy-Schwarz inequality,
 		\begin{equation} \label{ex0p-app}
 		\sup_{0\leq t \leq T} |\E(X_t')-ns^\star| \leq  \sqrt{\E(|X_0'-ns^\star|^2) + TC_1}.
 		\end{equation}
We now estimate the variance of $X'_t$. We have by \eqref{dnwos-app},  
 		\begin{eqnarray*}
 		\E[(X'_{t+1})^2 \mid X'_t ]  &=& (X'_{t})^2  + 2X'_{t}[p_n'(X'_t)-q_n'(X'_t)] + [p_n'(X'_t)+q_n'(X'_t)] \notag \\
 		&\leq & (X'_{t})^2 \left(1-\tfrac{a}{n}\right)+ 2X'_{t} \left[R_{2 \delta}(X'_t/n) +a X'_t/n\right] + C, 
 		\end{eqnarray*}
and thus 
 		\begin{eqnarray}
 		\E[(X'_{t+1})^2] \leq \left(1-\tfrac{a}{n}\right) \E[(X'_{t})^2 ]+ 2 \E \left(X'_{t} \left[R(X'_t/n) +a X'_t/n\right] \right) + C.
 		\end{eqnarray}
Similarly,
 		\begin{eqnarray}
 		\E[X_{t+1}']^2&=& \Big[\E\big(\E[X'_{t+1} \mid X'_t ]\big)\Big]^2  =\Big[\E (X'_{t})+ \E[p_n'(X'_t)-q_n'(X'_t)]\Big]^2  \notag \\
 		&=& \Big(1-\tfrac{a}{n}\Big) \E(X'_t)^2 + 2 \E(X'_t) \E  \Big[ p_n(X_t')-q_n(X_t')+a X'_t/n \Big] + \Big(\E[p_n(X_t')-q_n(X_t')]\Big)^2  \notag \\
 		&\geq& \Big(1-\tfrac{a}{n}\Big) \E(X'_t)^2 + 2 \E(X'_{t}) \E  \Big[R_{2\delta}(X'_t/n) +a X'_t/n\Big] -C.
 		\end{eqnarray}
Therefore,
 		\begin{eqnarray}
 		\Var(X_{t+1}') &\leq& \left(1-\tfrac{a}{n}\right) \Var(X'_t) + 2 \E \left(X'_{t} \left[R(X'_t/n) +a X'_t/n\right] \right) \notag \\
 		&& \qquad -2 \E(X'_{t}) \E  \left[R_{2\delta}(X'_t/n) +a X'_t/n\right] +2C \notag \\
 		&\leq& \left(1-\tfrac{a}{n}\right) \Var(X'_t) + 2C.
 		\end{eqnarray}
Notice that here we have used the FKG inequality and the fact that $R_{2\delta}(x)-ax$ is decreasing by \eqref{r2dt} in the second line. Hence, by induction we can show that, for all $t$,
 		\begin{equation} 
		\label{varxtp-app}
 		\Var(X_t') \leq \frac{2C n }{a}.
 		\end{equation}
By \eqref{ex0p-app}, if $|x_0'-ns^\star| \leq n\delta$ then for all $n\geq n_0(\gamma)$,
 		\begin{equation*}
 		\sup_{0\leq t \leq T^+_n(2\gamma)} |\E(X_t')-ns^\star| \leq  9 n\delta/8.
 		\end{equation*}
Hence it follows from Chebyshev's inequality that 
 		\begin{eqnarray*}
 		\sup_{0\leq t \leq T^+_n(2\gamma)}  \pp \left(|X_t-ns^\star| \geq  5 n\delta/4\right) \leq \sup_{0\leq t \leq T^+_n(2\gamma)}  \pp \left(|X_t'-\E(X_t')| \geq   n\delta/8\right) \leq \frac{128 C  }{n a \delta^2}.
 		\end{eqnarray*}
As a consequence,
 		\begin{equation} 
		\label{eon-app}
 		\E(N) \leq C_1 \log n,
 		\end{equation}
where $C_1=C_1(a,\delta)$ is a large constant and 
 		\[
		N=\#\{t \leq T_n^+(2\gamma): |X_t'-ns^\star|\geq 5n\delta/4\}.
		\] 
Since $|X'_{t+1}-X_t'| \leq 1$ for all $t$,   if $|X_{t_0}'-ns^\star| \geq 3n\delta/2$ for some $t_0$ then $|X_{t}'-ns^\star| \geq 5n\delta/4$ for all $t_0 \leq t \leq t_0 + \delta n/4$. Hence 
 		\begin{equation} 
		\label{ceon-app}
 		\E[N \mid T_2' \leq T_n^+(\gamma)] \geq \delta n/4.
 		\end{equation}
Combining \eqref{eon-app} and \eqref{ceon-app}, we arrive at
 		\begin{equation*}
 		\sup_{|x_0'-ns^\star| \leq \delta n} \pp_{x_0'}(T_2' \leq T_n^+(\gamma)) \leq \frac{\E[N]}{\E[N \mid T_2' \leq T_n^+(\gamma)]} =\frac{\kO(\log n)}{n}.
 		\end{equation*}
 		Similarly, we also have 
 		\begin{equation*}
 		\sup_{|x_0'-ns^\star| \leq 3\delta n/2} \pp_{x_0'}(T_1' \leq T_n^+(\gamma)) =\frac{\kO(\log n)}{n}.
 		\end{equation*}
 		Now combining the last two estimates and \eqref{t12p}, we get the desired result.
 	\end{proof}

\begin{proof}[Proof of Lemma \ref{lem:xt-ns}] By Lemma \ref{lem:hit}, the chain $(X_t)_{t\geq 0}$ is in $\Omega_n^{\delta}$ after $\kO(n)$ steps with probability $1-\kO(1/n)$. Therefore, in the proof of Lemma \ref{lem:xt-ns}, we can assume that $X_0 \in \Omega_n^{\delta}$. 
 
By Lemma \ref{lem:stay},
	\begin{equation} \label{T1lTn-app}
	\pp(T_1\leq T_n^+(\gamma)) = \frac{\kO((\log n)^2)}{n^2},
	\end{equation}	
where 
	\begin{equation*}
	T_1 =\inf\{t\colon |X_t-ns^\star| \geq 2 n\delta\}.
	\end{equation*}	
Define 
 	$$
	Y_t = X_t -ns^\star \qquad \textrm{and} \qquad \tilde{Y}_t=Y_t \indic{t \leq T_1}.
	$$
Then,
 	\begin{eqnarray} 
 	\E[\tilde{Y}_{t+1}-\tilde{Y}_t \mid X_t] &=& \E[Y_{t+1} \indic{t+1 \leq T_1}-Y_t \indic{t \leq T_1}\mid X_t]  \\
 	&=& \E[(Y_{t+1}-Y_t) \indic{t \leq T_1}-Y_{t+1} \indic{T_1=t+1}\mid X_t] \notag \\
 	&=& \indic{t \leq T_1}\E[X_{t+1}-X_t \mid X_t] -\E[Y_{t+1} \indic{T_1=t+1} \mid X_t]. \notag 
 	\end{eqnarray}
Combining this with \eqref{ecxt} and the fact that $Y_{t+1} \leq n$, we get
 	\begin{eqnarray}
	\label{donaq-app}
 	\Big | \E[\tilde{Y}_{t+1}-\tilde{Y}_t \mid X_t] - R \left(X_t/n \right)\indic{t\leq T_1}\Big |\leq\frac{C}{n} +n\pp(T_1=t+1 \mid X_t). 
 	\end{eqnarray}
If $t\leq T_1$ then $X_t/n \in (s^\star - 2 \delta, s^\star + 2 \delta)$, and thus using $R(s^\star) =0$ we get
 	\begin{equation}
 	\Big |R \left(X_t/n\right)\indic{t\leq T_1} -R'(s^\star) \tilde{Y}_t/n \Big| = \Big |R \left(s^\star + \tilde{Y}_t/n\right) -R'(s^\star) \tilde{Y}_t/n \Big| 
	\leq  C_1 \left(\tilde{Y}_t/n \right)^2,
 	\end{equation}
where $C_1=\sup_{s\in(s^\star-2\delta, s^\star +2 \delta)} |R''(s)|/2$. Combining this estimate with \eqref{donaq-app}, we obtain 
 	\begin{eqnarray}
 	\Big |\E[\tilde{Y}_{t+1}-\tilde{Y}_t \mid X_t] - R'(s^\star) \tilde{Y}_t/n \Big | \leq C_1\left(\tilde{Y}_t/n \right)^2 
 	+n\pp(T_1=t+1 \mid X_t) +\frac{C}{n},
 	\end{eqnarray}
and thus by taking the expectation and using \eqref{T1lTn-app}, we get for $t\leq T_n^+(\gamma)$
 	\begin{eqnarray} 
	\label{eyplr-app}
 	\Big | \E[\tilde{Y}_{t+1}] -    \left(1+ \frac{R'(s^\star)}{n} \right) \E[\tilde{Y}_t] \Big | \leq  \frac{C_1\E[(\tilde{Y}_t)^2]}{n^2}  + \frac{2C}{n}.
 	\end{eqnarray}
By Lemma \ref{lem:supvar}, $\Var(Y_t') =\kO(n)$. Thus it follows from \eqref{eyplr-app} that 
 	\begin{eqnarray*} 
 	\Big | \E[\tilde{Y}_{t+1}] -    \left(1+ \frac{R'(s^\star)}{n} \right) \E[\tilde{Y}_t] \Big | \leq  C_1\left( \frac{\E[\tilde{Y}_t]}{n} \right)^2 + \frac{3C}{n}.
 	\end{eqnarray*} 
Let us denote $b_{t}=\E(\tilde{Y}_t)$. Then $|b_{t+1}-b_t| \leq 2$  since $|\tilde{Y}_{t+1}-\tilde{Y}_t| \leq 2$. Moreover, the above estimate gives that for all $t\geq 1$
 	\begin{equation} 
	\label{reofb-app}
 	\left(1-\frac{a_0}{n}\right) b_t - a_1\left(\frac{b_t}{n} \right)^2 -\frac{a_2}{n} \leq b_{t+1} \leq \left(1-\frac{a_0}{n}\right) b_t + a_1\left(\frac{b_t}{n} \right)^2 +\frac{a_2}{n},
 	\end{equation} 
with $|b_0| \leq \delta n$ and $a_0=-R''(s^\star)>0$ and $a_1=C_1=\sup_{s\in(s^\star-2\delta, s^\star +2 \delta)} |R''(s)|/2$. \\

We claim  that if $b_0=\delta n$ then 
 	\begin{equation} 
	\label{eypt-app}
 	b_{T_n^-(\gamma)}\geq \sqrt{n} \e^{\gamma a_0/2},
 	\end{equation}
and
 	\begin{equation} 
	\label{btnp-app}
 	\sqrt{n}e^{-2 \gamma a_0} \leq b_{T_n^+(\gamma)}\leq \sqrt{n}. 
 	\end{equation}
We first assume \eqref{eypt-app}, \eqref{btnp-app} and prove the result. By Lemma \ref{lem:supvar},
 	\begin{equation} 
	\label{varmp-app}
 	\Var(\tilde{Y}_{T_n^-(\gamma)}), \Var(\tilde{Y}_{T_n^+(\gamma)}) \leq Cn,
 	\end{equation}
 where $C$ is a universal constant. Using Chebyshev's inequality, \eqref{eypt-app} and \eqref{varmp-app}, 
 	\begin{eqnarray*}
 	\pp\left(\tilde{Y}_{T_n^-(\gamma)} \leq \alpha \sqrt{n} \right) \leq \pp\left(\tilde{Y}_{T_n^-(\gamma)} - b_{T_n^-(\gamma)} \leq (\alpha-\e^{\gamma a_0/2}  ) 
	\sqrt{n} \right) \leq \frac{\Var(\tilde{Y}_{T_n^-(\gamma)})}{n(\alpha-\e^{\gamma a_0/2})^2} =\kO(\e^{-\gamma a_0}).
 	\end{eqnarray*} 
 This implies (i). The proof of (ii) is similar. It follows from  Chebyshev's inequality, \eqref{btnp-app} and \eqref{varmp-app} that
 	\begin{eqnarray*}
 	\pp\left(\tilde{Y}_{T_n^+(\gamma)} \geq \ell \sqrt{n} \right) \leq \pp\left(\tilde{Y}_{T_n^+(\gamma)} - b_{T_n^+(\gamma)} \geq (\ell-1  ) \sqrt{n} \right) 
	\leq \frac{\Var(\tilde{Y}_{T_n^+(\gamma)})}{n(\ell-1)^2} =\kO(\ell^{-2}).
 	\end{eqnarray*} 
Taking $\ell \rightarrow \infty$ we get (ii). \\
 	
Now we prove \eqref{eypt-app} and \eqref{btnp-app}. Let us define 
 	\[
	S_k=\inf \{t\colon b_t \leq n \e^{-k}\}. 
	\]
Then $S_k=0$ for $k=0, \ldots, k_0$ with $k_0=[-\log \delta]$.  Hence, it holds that for all $T$ and $k$,
 	\begin{equation} 
	\label{btk-app}
 	b_T \geq k \quad \mbox{\rm  \, \, if } \quad 2+T \leq S_{[\log (n/k)]}.
 	\end{equation}
Let us define 
 	\[
	K_n(\gamma) = \tfrac{1}{2} \log n + 3 \gamma.
	\]
If $t\leq S_{K_n(\gamma)}$ then $b_t \geq \sqrt{n} \e^{- 3\gamma}$. Thus, for $\gamma \geq \gamma_0 =\gamma_0(a_1,a_2)$ we have 
 	\begin{equation*}
 	\e^{8\gamma} \left(\frac{b_t}{n}\right)^2 \geq a_1\left(\frac{b_t}{n} \right)^2 + \frac{a_2}{n}.
 	\end{equation*}  
Hence, by \eqref{reofb-app} for all  $t\leq S_{K_n(\gamma)}$ 
 	\begin{eqnarray*}
 	b_{t}\left(1-\frac{a_0}{n} + \e^{8\gamma} \frac{b_t}{n^2} \right) \geq b_{t+1} \geq b_{t}\left(1-\frac{a_0}{n} - \e^{8\gamma} \frac{b_t}{n^2} \right).
 	\end{eqnarray*}
In addition if $S_k\leq t<S_{k+1}$ then $n\e^{-(k+1)}\leq b_t \leq n\e^{-k}$, and thus 
 	\begin{equation*}
 	b_{t}\left(1-\frac{a_0 - \e^{8\gamma-k}} {n} \right)\geq b_{t+1} \geq b_{t}\left(1-\frac{a_0 + \e^{8\gamma-k}} {n} \right).
 	\end{equation*}
We have  $n\e^{-k} -2 \leq b_{S_k} \leq n\e^{-k}$, since $b_t$ decrease at most by $-2$ each time.  Therefore,
 	\begin{equation*}
 	\frac{n\e^{-(k+1)}}{n\e^{-k}-2} \geq \frac{b_{S_{k+1}}}{b_{S_k}} \geq \left(1-\frac{a_0 + \e^{8\gamma-k}} {n} \right)^{S_{k+1}-S_k},
 	\end{equation*}
and 
 	\begin{equation*}
 	\frac{n\e^{-(k+1)}-2}{n\e^{-k}} \leq \frac{b_{S_{k+1}}}{b_{S_k}} \leq \left(1-\frac{a_0 - \e^{8\gamma-k}} {n} \right)^{S_{k+1}-S_k}.
 	\end{equation*}
From the two above estimates, we can show that
 	\begin{eqnarray}
 	S_{k+1}-S_k = \frac{n}{a_0 + \e^{8\gamma-k}} \pm  \kO(\e^k).
 	\end{eqnarray}
Thus for all $k_0 \leq K \leq K_n(\gamma)$
 	\begin{eqnarray*}
 	S_K =\sum_{k= k_0}^{K-1} \big(S_{k+1}-S_k \big)&\ge& n \sum_{k= k_0}^{K-1} \frac{1}{a_0 + \e^{8\gamma-k}} \pm \kO(\e^{K+1}) \\
 	&= & \frac{n(K-k_0)}{a_0} - n \sum_{k= k_0}^{K-1} \frac{\e^{8\gamma-k}}{a_0(a_0 + \e^{8\gamma-k})} \pm \kO(\sqrt{n}e^{3\gamma})\\
 	&= & \frac{nK}{a_0} \pm   \kO(n \log \gamma). 
 	\end{eqnarray*}
Therefore,
 	\begin{equation} 
	\label{stnm-app}
 	S_{[\tfrac{1}{2} \log n -\tfrac{\gamma a_0}{2}]} \geq \frac{1}{2a_0} n \log n -\frac{\gamma n}{2} 
 	-  \kO(n \log \gamma) \geq T_n^-(\gamma) + \frac{\gamma n}{4}.
 	\end{equation}
This  estimate together with \eqref{btk-app} implies that $b_{T_n^-(\gamma)} \geq \sqrt{n} \e^{\gamma a_0/2}$, which proves \eqref{eypt-app}. Similarly,
 	\begin{equation} 
	\label{stnp-app}
 	S_{[\tfrac{1}{2} \log n + 2 \gamma a_0]} \geq \frac{1}{2a_0} n \log n +2 \gamma n 
 	-  \kO(n \log \gamma) \geq T_n^+(\gamma) + \frac{\gamma n}{2},
 	\end{equation} 
and thus the lower bound in \eqref{btnp-app} that $b_{T_n^+(\gamma)} \geq \sqrt{n} \e^{-2\gamma a_0}$ holds. We now finish the proof by showing the upper bound that $b_{T_n^+(\gamma)} \leq \sqrt{n}$.  By \eqref{stnp-app},  $b_t \geq n^{1/4}$ for all $t\leq T_n^+(\gamma)$. Moreover, by \eqref{reofb-app}
 	\begin{equation*}
 	b_{t+1} -b_t  \leq -a_0 \frac{b_t}{n} + a_1 \left(\frac{b_t}{n}\right)^2 + \frac{a_2}{n} \leq 0,
 	\end{equation*}
if $n^{1/4} \leq b_t \leq \delta n$ for all $\delta \leq \delta_0$ with $\delta_0$ some universal constant. Hence $b_t$ is decreasing in $t$ for $t\leq T_n^+(\gamma)$. Thus $b_t \leq k$ holds if $t\geq S_{[\log (n/k)]}$. In particular, since 
 	\begin{equation*}
 	S_{[\log n/2]} = \frac{1}{2a_0} n \log n \pm \kO(n \log \gamma) \leq T_n^+(\gamma) -\frac{\gamma n}{2},
 	\end{equation*}
we have $b_{T_n^+(\gamma)} \leq \sqrt{n}$. 
\end{proof}

\subsection{Proof of Lemmas \ref{lem:dtng} and \ref{lem:h2g} used in the proof of \eqref{dntp}}
\label{sec-proof-dntp-app}
In this section, we give the full proof of Lemmas \ref{lem:dtng} and \ref{lem:h2g}. Before giving the proof of Lemma \ref{lem:dtng}, we define some notation and state an auxiliary result.

For any $A\subseteq [n]$, let us define 
	\begin{equation}
	X_t(A) = \sum_{i \in A} \indic{\xi_t(i)=1}.
	\end{equation}

\begin{lem} \label{xtasa}
 	There exists a positive constant $C$ such that for all   $\gamma$ and sufficiently large  $n$
 	\begin{equation*}
 	\sup_{ T_n^+(1) \leq t \leq T_n^+(\gamma)}	\sup_{\sigma_0 \in \Omega_n^\delta} 
 	\sup_{A \subset [n]} 
 	\E_{\sigma_0} \left(X_t(A)-s^\star|A|\right)^2 \leq Cn.
 	\end{equation*}
 \end{lem}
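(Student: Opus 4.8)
The plan is to adapt the variance-bound argument already used for the projection chain (Lemmas \ref{lem:supvar} and \ref{lem:xt-ns}) to the localized counts $X_t(A)$. The key point is that for a fixed subset $A\subseteq[n]$, the process $t\mapsto X_t(A)$ is \emph{not} Markov by itself, but its conditional increment given $\xi_t$ is completely explicit: at each step a uniform index $I\in[n]$ is chosen, and the spin at $I$ is resampled according to the Glauber rule \eqref{trsp}. If $I\notin A$ then $X_{t+1}(A)=X_t(A)$, and if $I\in A$ then $X_{t+1}(A)$ changes by at most one, with the probability of being $+1$ after the update depending only on $|\sigma_+|=X_t$ through the ratio $\nu_n(\sigma^{+I})/(\nu_n(\sigma^{+I})+\nu_n(\sigma^{-I}))$, which by \eqref{apppq} is within $O(1/n)$ of $\e^{F'(X_t/n)}/(\e^{F'(X_t/n)}+1)$. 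Hence, writing $Z_t(A)=X_t(A)-s^\star|A|$, one gets a drift estimate of the shape
\begin{equation*}
\E[Z_{t+1}(A)\mid \kF_t] = \Big(1-\tfrac{|A|}{n}\Big)Z_t(A) + \tfrac{|A|}{n}\Big(X_t(A)\tfrac{\text{(absent)}}{}\Big)\ \ \text{up to }O(1/n),
\end{equation*}
more precisely $\E[X_{t+1}(A)-X_t(A)\mid\kF_t] = \tfrac{|A|}{n}\big(\tfrac{\e^{F'(X_t/n)}}{\e^{F'(X_t/n)}+1}-\tfrac{X_t(A)}{|A|}\big)+O(1/n)$.

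First I would condition on the high-probability event, supplied by Lemmas \ref{lem:hit} and \ref{lem:stay}, that $X_t$ stays in $\Omega_n^{2\delta}$, i.e. $|X_t/n-s^\star|\le 2\delta$, for all $T_n^+(1)\le t\le T_n^+(\gamma)$; on the complement (probability $O((\log n)^2/n^2)$, or $O(1/n)$ from the hitting estimate) the crude bound $(X_t(A)-s^\star|A|)^2\le n^2$ contributes only $O(n)$ to the expectation, exactly as in the proof of Lemma \ref{lem:supvar}, see \eqref{varxte}. On this good event, $\tfrac{\e^{F'(X_t/n)}}{\e^{F'(X_t/n)}+1}$ is close to $s^\star$: using $R(s^\star)=0$ and a Taylor expansion (as in the displays around \eqref{rrpst}), $\big|\tfrac{\e^{F'(X_t/n)}}{\e^{F'(X_t/n)}+1}-s^\star\big|\le C'|X_t/n-s^\star|$, and this quantity is itself controlled because $\Var(X_t)=O(n)$ on the good event by Lemma \ref{lem:supvar} together with the centering estimate $|\E[\tilde Y_t]|\le\sqrt n$ from \eqref{btnp}. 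So effectively $\E[X_{t+1}(A)-X_t(A)\mid\kF_t] = -\tfrac{|A|}{n}\tfrac{Z_t(A)}{|A|}\cdot|A| + (\text{error}) = -\tfrac{|A|}{n}\cdot\tfrac{X_t(A)-s^\star|A|}{|A|}|A|$... — concretely, it is $-\tfrac{|A|}{n}\,\tfrac{Z_t(A)}{1}\cdot\tfrac1{|A|}\cdot|A|$; the clean statement is that the centered drift of $X_t(A)$ toward $s^\star|A|$ has coefficient $-|A|/n<0$ up to an additive term bounded by $C(|X_t/n-s^\star|+1/n)$.

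With this in hand I would run the standard one-step recursion for the second moment, exactly mirroring the computation in the appendix proof of Lemma \ref{lem:stay} (the displays producing \eqref{varxtp-app}): writing $W_t=\E_{\sigma_0}[(X_t(A)\indicwo{\text{good}}-s^\star|A|)^2]$, expand $\E[(X_{t+1}(A)-s^\star|A|)^2\mid\kF_t]$ using that $X_{t+1}(A)-X_t(A)\in\{-1,0,1\}$, obtain
\begin{equation*}
W_{t+1}\le \Big(1-\tfrac{|A|}{n}\Big)W_t + C\sqrt{W_t}\cdot\E|X_t-s^\star n|^{1/2}\cdot(\cdots) + C,
\end{equation*}
and then absorb the cross term by Cauchy--Schwarz together with $\E(X_t-s^\star n)^2=O(n)$, getting $W_{t+1}\le(1-\tfrac{|A|}{n})W_t + C_1\sqrt{W_t}\sqrt n\,\tfrac{|A|}{n}+C_2$. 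A routine Gronwall/induction argument then gives $W_t\le Cn$ uniformly in $t$ in the stated range, $A\subseteq[n]$, and $\sigma_0\in\Omega_n^\delta$; note the worst case $|A|=n$ is just Lemma \ref{lem:supvar}, and for small $|A|$ the contraction is weak but then $W_t$ can only grow by $O(1)$ per step over at most $O(n\log n)$ steps — so one must be a little careful and instead use the bound $W_{t+1}-W_t\le -\tfrac{|A|}{n}W_t + C + C\tfrac{|A|}{\sqrt n}\sqrt{W_t}$, whose fixed point is $O(n)$ regardless of $|A|\ge1$. I expect the main obstacle to be precisely this uniformity over all subsets $A$ and all sizes $|A|$ simultaneously: one must verify that the error terms coming from \eqref{apppq} and from the Taylor expansion of $\tfrac{\e^{F'(s)}}{\e^{F'(s)}+1}$ are genuinely uniform in $A$ (they are, since they only involve $X_t$, not $X_t(A)$), and that the induction constant does not degrade as $|A|\to1$. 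Handling the non-Markovianity cleanly — by always conditioning on the full history $\kF_t=\sigma(\xi_s:s\le t)$ and only using the explicit one-step transition of the Glauber dynamics — is the conceptual crux, but once that is set up the estimate is a direct parallel of the already-established Lemma \ref{lem:supvar}.
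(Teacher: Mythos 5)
Your route is genuinely different from the paper's. The paper does not touch the conditional dynamics of $X_t(A)$ at all: it builds a monotone coupling of the chain started from $\sigma_0$ with the chain started from the all-plus configuration, uses the sandwich $|X_t(A)-s^\star|A||\leq |X_t-ns^\star|+|X_t^+(A)-s^\star|A||+|X_t^+(A^c)-s^\star|A^c||$, and then exploits exchangeability of the plus-started chain (all site marginals equal to some $a$ with $|a-s^\star|=\kO(1/\sqrt n)$, all pairwise covariances equal to some $\rho$), so that $\Var_+(X_t(A))\leq \max\{\Var_+(X_t),|A|\}=\kO(n)$ by a two-case argument on the sign of $\rho$; everything is thus inherited from the already proved $\kO(n)$ control of the full magnetization, with no per-subset relaxation argument. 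Your direct drift/second-moment recursion is a legitimate alternative in principle, and your displayed one-step formula is correct, but you then mis-simplify it: expanding $\tfrac{|A|}{n}\big(g(X_t/n)-\tfrac{X_t(A)}{|A|}\big)$ with $g(s)=\e^{F'(s)}/(\e^{F'(s)}+1)$ gives $\E[X_{t+1}(A)-X_t(A)\mid\kF_t]=-\tfrac1n\big(X_t(A)-s^\star|A|\big)+\tfrac{|A|}{n}\big(g(X_t/n)-s^\star\big)+\kO(1/n)$, so the restoring coefficient is $-1/n$, \emph{not} $-|A|/n$ (the factor $|A|/n$ is only the probability of touching $A$; the pull per touch is $g-X_t(A)/|A|$, and the product gives $-Z_t(A)/n$). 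Correspondingly the cross term in your $W$-recursion is of order $\tfrac{|A|}{n^{2}}\sqrt{W_t}\,\sqrt{\E(X_t-ns^\star)^2}$, not $\tfrac{|A|}{\sqrt n}\sqrt{W_t}$; your two slips happen to cancel in the fixed-point computation, but they invalidate the transient analysis, which is where the real difficulty sits.

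That is the genuine gap: you argue by a stationary fixed point "uniformly in $t$ in the stated range", but with the correct rate $1/n$ the recursion forgets its initial condition only after about $\tfrac n2\log n$ steps, while $W_0$ can be of order $|A|^2$ (take $\sigma_{0,+}=A$, so $Z_0(A)=|A|(1-s^\star)$), and the forcing $\tfrac{|A|}{n}|g(X_t/n)-s^\star|\asymp\tfrac{|A|}{n}|X_t-ns^\star|$ is of order $|A|$ until the projection chain has itself relaxed, i.e.\ the input $\E_{\sigma_0}(X_t-ns^\star)^2=\kO(n)$ you invoke is not available along most of the recursion (Lemma \ref{lem:supvar} controls the variance, not the displacement of the mean, before times of order $c_\star n\log n$). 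A repaired version must run the time-dependent coupled recursion $W_{t+1}\leq(1-\tfrac2n)W_t+\tfrac{C|A|}{n^{2}}\sqrt{W_tV_t}+C$ with $V_t=\E_{\sigma_0}(X_t-ns^\star)^2\lesssim n+n^2\e^{-2a_0t/n}$, $a_0=|R'(s^\star)|$, and this yields $W_{T_n^+(1)}=\kO\big(n+n^{2-2\min(1,a_0)/a_0}\big)$, which is $\kO(n)$ only when $a_0\leq 2$; conditions (C1) and (C3) do not imply such a bound on $a_0$ (it does hold for the annealed Ising application, where $F''\geq0$ forces $a_0\leq1$), and you do not address it. The paper's coupling-plus-symmetry argument sidesteps exactly this issue, because exchangeability of the plus-started chain transfers the $\kO(n)$ control of $X_t$ to every $X_t(A)$ at the \emph{same} time $t$, with no additional burn-in. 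So as written your proposal does not establish the lemma at the left endpoint $T_n^+(1)$ uniformly in $A$ and $\sigma_0$; either supply the missing time-dependent analysis together with a bound on $a_0$, or switch to the coupling/exchangeability argument.
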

 
 \begin{proof} We construct a monotone coupling of $(\xi_t)_{t\geq 0}$ and $(\xi_t^+)_{t\geq 0}$ starting from $\sigma_0$ and the all plus configuration respectively, such that $\xi_t \leq \xi_t^+$ for all $t$. Then
 	\begin{equation*}
 	X_t(A)-s^\star|A| \leq X_t^+(A)-s^\star |A| \leq |X_t^+(A)-s^\star |A||.
 	\end{equation*}	
Similarly
 	\begin{equation*}
 	X_t(A^c)-s^\star|A^c| \leq X_t^+(A^c)-s^\star |A^c| \leq |X_t^+(A^c)-s^\star |A^c||,
 	\end{equation*}	
and thus 
 	\begin{equation*}
 	X_t(A)-s^\star|A| \geq X_t-ns^\star - |X_t^+(A^c)-s^\star |A^c||.
 	\end{equation*}	
Therefore,
 	\begin{equation*}
 	| X_t(A)-s^\star|A|| \leq |X_t-ns^\star| + |X_t^+(A)-s^\star |A|| + |X_t^+(A^c)-s^\star |A^c||.
 	\end{equation*}
Using the Cauchy-Schwarz inequality, we get 
 	\begin{eqnarray} 
	\label{3e-app}
 	\E_{\sigma_0}  \left(X_t(A)-s^\star|A|\right)^2 &\leq& 3 \Big [ \E_{\sigma_0} \left(X_t-ns^\star\right)^2 + \E_+(X_t(A)-s^\star |A|)^2   \notag  \\
 	&& \qquad + \E_+ (X_t(A^c)-s^\star |A^c|)^2 \Big].
 	\end{eqnarray}
Using \eqref{varxtp-app},
 	\begin{equation} 
	\label{sO0-app}
 	\sup_{\sigma_0 \in \Omega_n^\delta}\E_{\sigma_0} \left(X_t-ns^\star\right)^2  = \kO(n).
 	\end{equation} 
We observe that by symmetry for all $i,j$ 
 	\begin{equation} 
	\label{a+ij-app}
 	a:=\pp_+(\xi_t(i)=1) = \pp_+(\xi_t(j)=1), 
 	\end{equation}
and for all pairs $(i,j)$ and $(k,\ell)$ with $i\neq j$ and $k\neq \ell$, 
 	\begin{equation*}
 	\rho:= \textrm{Cov}_+(\xi_t(i)=1,\xi_{t}(j)=1) = \textrm{Cov}_+(\xi(k)=1,\xi_{t}(\ell)=1). 
 	\end{equation*}
By \eqref{sO0-app},
 	\begin{equation} 
	\label{p_+-app}
 	|a-s^\star| = \kO(1/\sqrt{n}),
 	\end{equation}
and 
 	\begin{equation} 
	\label{var_+-app}
 	\Var_+(X_t) =\kO(n).
 	\end{equation}
Using \eqref{a+ij-app} and \eqref{p_+-app}, we get 
 	\begin{equation*}
 	\E_+(X_t(A))=\sum_{i\in A} \pp_+(\xi_t(i)=1) =|A|s^\star + \kO(|A|/\sqrt{n}).
 	\end{equation*}
Thus 
 	\begin{equation} 
	\label{se_+-app}
 	\sup_{A\subseteq [n]}\E_+(X_t(A)-|A|s^\star) =\kO(\sqrt{n}).
 	\end{equation}
If $\rho \geq 0$, then 
 	\begin{equation*}
 	\Var_+(X_t(A)) \leq \Var_+(X_t([n])) =\Var_+(X_t) = \kO(n),
 	\end{equation*}
by using \eqref{var_+-app}. Otherwise, if $\rho <0$ then
 	\begin{equation*}
 	\Var_+(X_t(A)) \leq \sum_{i\in A} \Var(\indic{\xi_t(i)=})) \leq |A| =\kO(n).
 	\end{equation*}
In conclusion,
 	\begin{equation*} 
 	\sup_{A\subset[n]}\Var_+(X_t(A)-|s|_\star|A|) =\kO(n).
 	\end{equation*}
Combining this estimate with \eqref{se_+-app} yields that
 	\begin{equation} 
	\label{se_+2-app}
 	\sup_{A\subset[n]} \E_+(X_t(A)-s^\star |A|)^2 =\kO(n).
 	\end{equation}
Combining \eqref{3e-app}, \eqref{sO0-app} and \eqref{se_+2-app} we obtain the desired result. 
 \end{proof}

\begin{proof}[Proof of Lemma \ref{lem:dtng}]
Let $A=\{i\colon \sigma_0(i)=1\}$ and define 
 	\begin{equation*}
 	\kE=\{|X_{T_n^+(\gamma/2)}-ns^\star | \leq n \delta, \, |\bar{X}_{T_n^+(\gamma/2)}-ns^\star | \leq n \delta \}.
 	\end{equation*}
By definition,
 	\begin{eqnarray*}
 	D(t) = |U(\xi_t)-U(\bar{\xi}_t)|& =& \Big | \sum_{i \in A} \xi_t(i) -\sum_{i \in A} \bar{\xi}_t(i) \Big | = |X_t(A)-\bar{X}_t(A)| \notag \\
 	&\leq & |X_t(A)-s^\star |A|| + |\bar{X}_t(A)-s^\star |A||.
 	\end{eqnarray*}
Combining this with Lemma \ref{xtasa}, we have 
 	\begin{equation*}
 	\E_{\sigma_0,\sigma} (D(T^+_n(\gamma))\mid \kE) \leq C\sqrt{n},
 	\end{equation*}
for some $C>0$. By Lemma \ref{lem:xt-ns}(ii)
 	\begin{equation*}
 	\lim_{\gamma \rightarrow \infty} \limsup_{n\rightarrow \infty}\pp_{\sigma_0,\sigma} (\kE^c) =0.
 	\end{equation*} 
Combining the last two estimates with Chebyshev's inequality we obtain the desired result.
 \end{proof}


 \begin{proof}[Proof of Lemma \ref{lem:h2g}]
 	Let $A=\{i\colon \sigma_0(i)=1\}$ and  define 
 	\begin{equation*}
 	\kE=\{|X_{T_n^+(\gamma_1/2)}-ns^\star | \leq n \delta, \, |\bar{X}_{T_n^+(\gamma_1/2)}-ns^\star | \leq n \delta \}.
 	\end{equation*}
We observe that by definition of $\Theta$,
 	\begin{eqnarray} 
	\label{xnit-app}
 	\{\xi_t \not \in \Theta\} &=& \{X_t(A) \leq \delta n/8\} \cup \{X_t(A^c) \leq \delta n/8\} \notag \\ 
 	&& \quad \cup \{|A|-X_t(A) \leq \delta n/8\}  \cup \{|A^c|-X_t(A^c) \leq \delta n/8\}.
 	\end{eqnarray}	
It follows from Lemma \ref{xtasa} and Chebyshev's inequality that for all $B \subset [n]$ and $t\geq T_n^+(\gamma_1)$,
 	\begin{equation} 
	\label{xtbs-app}
 	\pp((s^\star-\delta)|B| \leq X_t(B) \leq (s^\star+\delta) |B| \mid \kE ) \leq \frac{C_1 n}{|B|^2},
 	\end{equation}	
for some $C_1=C_1(\delta)$. Since $\sigma_0 \in \Omega_n^\delta$,
 	\begin{equation} 
	\label{asd-app}
 	(s^\star - 2 \delta) n \leq |A|\leq (s^\star + 2 \delta) n .
 	\end{equation}
Therefore, by \eqref{xtbs-app},
 	\begin{eqnarray} 
	\label{ftme-app}
 	\pp(F_t^c \mid \kE )  \leq C/n,
 	\end{eqnarray}
where $C=C(C_1,s^\star)=C(\delta,s^\star)>0$ and
 	\begin{equation*}
 	F_t =\{|X_t(A)-(s^\star)^2n| \leq 3\delta n, \, |X_t(A^c)-s^\star(1-s^\star)n| \leq 3\delta n \}.
 	\end{equation*}
Using \eqref{xnit-app}, \eqref{asd-app} and the fact that $|X_t(B)-X_{t+1}(B)| \leq 1$ for all $t$ and $B$, we achieve that for all $t_0$,
 	\begin{equation} 
	\label{xnit2-app}
 	\bigcup_{t=t_0}^{t_0+\delta n} \{\xi_t \not \in \Theta\}
 	\subset F_{t_0}.
 	\end{equation}
Therefore,
 	\begin{equation} \label{xi-t12}
 	\pp\left(\bigcup_{t=T_n^+(\gamma_1)}^{T_n^+(\gamma_2)} \{\xi_t \not \in \Theta\} \mid \kE \right) \leq \frac{C(\gamma_2-\gamma_1)}{n},
 	\end{equation}
by using \eqref{ftme-app}. Similarly,
 	\begin{equation} \label{bxi-t12}
 	\pp\left(\bigcup_{t=T_n^+(\gamma_1)}^{T_n^+(\gamma_2)} \{\bar{\xi}_t \not \in \Theta\} \mid \kE \right) \leq \frac{C(\gamma_2-\gamma_1)}{n}.
 	\end{equation}
 	
By Lemmas \ref{lem:hit} and \ref{lem:stay}, there exist positive constants $C_1=C_1(\delta)$ and $C_2=C_2(\delta, \gamma_1)$, such that 
	\begin{equation*}
	\pp \left( |X_{T_n^+(\gamma_1/2)}-ns^\star | \leq 2 n \delta \right) \leq \frac{C_1}{n}  + \frac{C_2 (\log n)^2}{n^2}.
	\end{equation*}
Using exactly the same proof as in Lemmas \ref{lem:hit} and \ref{lem:stay} (just replacing $2\delta$ by $\delta$ in the whole argument), we can show that 
	\begin{equation*}
	\pp \left( |X_{T_n^+(\gamma_1/2)}-ns^\star | \leq  n \delta \right) \leq \frac{C_3}{n}  + \frac{C_4(\log n)^2}{n^2},
	\end{equation*}	
for some $C_3=C_3(\delta)$ and $C_4=C_4(\delta, \gamma_1)$. The same inequality holds for $\bar{X}_{T_n^+(\gamma_1/2)}$. Therefore, 
 	\begin{equation} 
	\label{ke}
 	 \pp_{\sigma_0,\sigma}(\kE)  \leq C/n,
 	\end{equation}
for some $C=C(\gamma_1)$ (note that $\delta$ is fixed). Combining \eqref{xi-t12}, \eqref{bxi-t12} and \eqref{ke},  we obtain the desired result in \eqref{gen-Lem7.9}.
The conclusion in \eqref{needed-Lem7.9} follows immediately, as we first take $n\rightarrow \infty$ followed by $\gamma\rightarrow \infty$.
\end{proof}
 

\fi
\bigskip 

\begin{ack} {\rm We thank Anton Bovier 
for stimulating discussions and fruitful comments.  
The work of V. H. Can is supported by  the fellowship no. 17F17319 of the Japan Society for the Promotion of Science, and by the  Vietnam National Foundation for Science and Technology Development (NAFOSTED) under grant number 101.03--2019.310. 
The work of RvdH is supported by the Netherlands Organisation for Scientific Research (NWO) through the Gravitation Networks grant 024.002.003.    
The work of TK is supported by the 
JSPS KAKENHI Grant Number JP17H01093 and by the Alexander von Humboldt Foundation.}
\end{ack}

\end{document}